\pgfplotsset{compat=1.18}
\g@addto@macro{\UrlBreaks}{\UrlOrds}
\newcommand{\distas}[1]{\mathbin{\overset{#1}{\kern\z@\sim}}}%
\newsavebox{\mybox}\newsavebox{\mysim}
\newcommand{\distras}[1]{%
	\savebox{\mybox}{\hbox{\kern3pt$\scriptstyle#1$\kern3pt}}%
	\savebox{\mysim}{\hbox{$\sim$}}%
	\mathbin{\overset{#1}{\kern\z@\resizebox{\wd\mybox}{\ht\mysim}{$\sim$}}}%
}
\newtheorem{definition}{Definition}[section] 
\newtheorem{Condition}{Condition}[section] 
\newtheorem{proposition}{Proposition}[section] 
\newtheorem{theorem}{Theorem}[section] 
\newtheorem{corollary}{Corollary}[section] 
\newtheorem{lemma}[theorem]{Lemma} 
\renewcommand{\Re}{\mathop{\mathrm{Re}}}
\renewcommand{\Im}{\mathop{\mathrm{Im}}}
\newcommand{\Tr}{\mathop{\mathrm{Tr}}}
\newcommand{\Var}{\mathop{\mathrm{Var}}}
\newcommand{\mysqrt}[1]{\left(#1\right)^{\frac{1}{2}}}
\definecolor{lightgray}{gray}{0.5}
\title{Mesoscopic Edge Universality of Orthogonal Polynomial Ensembles}
\author{Wenkui Liu\footnote{Department of Mathematics, KTH Royal Institute of Technology, wenkui@kth.se. \\Supported by the Swedish Research Council (VR), grant no. 2021-06015, and the European Research Council (ERC), Grant Agreement No. 101002013.}}
\date{}
\begin{document}
	\maketitle
	
\begin{abstract}
		
		In this paper, we study the mesoscopic fluctuations at edges of orthogonal polynomial ensembles with both continuous and discrete measures. Our main result is a Central limit Theorem (CLT) for linear statistics at mesoscopic scales. We show that if the recurrence coefficients for the associated orthogonal polynomials are slowly varying, a universal CLT holds. Our primary tool is the resolvent for the truncated Jacobi matrices associated with the orthogonal polynomials. While the Combes-Thomas estimate has been successful in obtaining bulk mesoscopic fluctuations in the literature, it is too rough at the edges. Instead, we prove an estimate for the resolvent of Jacobi matrices with slowly varying entries. Particular examples to which our CLT applies are Jacobi, Laguerre and Gaussian unitary ensembles as well as discrete ensembles from random tilings. 
		
	\end{abstract}
	\tableofcontents{}
	
\section{Introduction}

Let $\mu$ be a Borel measure on $\mathbb{R}$ with finite moments, i.e., for all $k\in \mathbb{N}$, $\int_{\mathbb{R}}|x|^kd\mu(x)<\infty$ . The orthogonal polynomial ensemble (OPE) of order $n\in\mathbb{N}$ associated to $\mu$ is a probability measure on $\mathbb{R}^n$ proportional to 
\begin{equation}\label{Poly_ensemble}
\prod_{1\leq i<j\leq n}(x_i-x_j)^2d\mu(x_1)\cdots d\mu(x_n) .
\end{equation}
In this paper, we will be interested in studying the behaviour of this ensemble for large $n$.

OPEs arise naturally in many models of statistical mechanics, probability theory, combinatorics, and random matrix theory. For some surveys, we refer to \cite{borodin2009determinantal, hough2006determinantal,johansson2006random,konig2005orthogonal}. An important and well-known source of examples where OPEs appear are the eigenvalues of random Hermitian matrices, where the probability measure is invariant under the conjugation with unitary matrices. Classical examples are the Jacobi, Laguerre and Gaussian Unitary Ensembles. In these cases, the measure $\mu$ is absolutely continuous with an analytic density. The support of the measure can be compact, for example, the (modified) Jacobi Unitary Ensemble, or unbounded, for example the Gaussian and Laguerre Unitary Ensembles. When the support of the measure is unbounded, it is natural to rescale the measure with $n$, so that the eigenvalues accumulate on finitely many intervals with high probability as $n\to\infty$. For this reason, we will allow the measure $\mu$ to depend on an $n$ and write $\mu=\mu_n$.

OPEs with discrete measures are also natural objects to be considered. For instance, uniformly distributed random lozenge tilings of a hexagon give rise to (extended) OPEs associated to the Hahn measure, see \cite{johansson2006random}. Similarly, uniformly distributed random domino tilings of an Aztec diamond give rise to (extended) OPEs associated with the Krawtchouk measure, see \cite{johansson2002non}. In such cases, the measure $\mu$ is discrete and its parameters depend on the size of the tiling, i.e., $\mu=\mu_n$. This is another reason that we allow the measure to be $n$-dependent. 

 We will study the asymptotic behaviour as $n\to\infty$ and explore cases of $\mu$ (or $\mu_n$) both continuous and discrete in this paper. There are three regimes that one can look into, i.e., macroscopic, mesoscopic and microscopic regimes. In the macroscopic regime, one studies the global behaviour of the point processes, whereas in the microscopic regime, one zooms in near a point and studies the nearest neighbour interactions between points. This paper studies the mesoscopic regime which is intermediate between the other two and we will focus on the edges. 
 
 Let us, for clarity, start with the example of the the Laguerre Unitary Ensemble, whose measure is given by $d\mu_n(x) = x^\gamma e^{-n x}dx$ on $[0, +\infty)$, for some parameter $\gamma>-1$. It describes the eigenvalue distribution of a Wishart matrix. It is a classical example that has both a hard and soft edge and will, therefore, be an interesting  example for the techniques developed in this paper.   For the Laguerre Unitary Ensemble,  the eigenvalues accumulate on the interval $[0,4]$ almost surely as $n\to \infty$ and have the limiting distribution $d\rho_{\mu}(x)=\frac{1}{2\pi}\sqrt{\frac{4-x}{x}}dx$, see Figure~\ref{fig: Laguerre}.  In the bulk, i.e., any point in the interval $(0,4)$, the typical distance between neighbouring points is of order $\sim n^{-1}$. Near the edges points, the scaling is different. The origin is a hard edge of the interval, since no eigenvalues can be negative. Near the origin, the limiting density behaves as $d\rho_{\mu}(x)\sim 1/\sqrt x dx$, which is typical at hard edges, and the neighbouring points are of distance $\sim n^{-2}$.  The right-end point of the interval is a typical example of soft edge where the density vanishes as  square root $d\rho_{\mu}(x)\sim \sqrt{4-x} dx$  and the distance between neighbours is $\sim n^{-\frac{2}{3}}$. It is well-known that the microscopic process of this Laguerre Unitary Ensemble converges to the Bessel point process at the hard edge, sine point process in bulk and Airy point process at the soft edge.  These scaling limits on the microscopic scale are universal and observed in a wide variety of models, see \cite{deift1999orthogonal,deift2007universality, levin2008universality,pastur2011eigenvalue}. We also point out that \cite{kuijlaars2011universality} offers an overview of universality.

\begin{figure}
  \center
  \begin{tikzpicture}[scale=0.7]
    \begin{axis}[
      axis lines=middle,
      xlabel={$x$},
      ylabel={$\qquad \qquad d\rho_{\mu}(x)/dx$},
      xmin=0,
      xmax=4.5,
      ymin=0,
      ymax=5,
      xtick={0,4},
      ytick={0},
      domain=0.1:4,
      samples=200,
      smooth,
      restrict y to domain=0:5,
      enlargelimits=false,
      grid=major,
      grid style={dashed,gray!50},
      ]
      \addplot[black,ultra thick] {sqrt((4-x)/x)};
    \end{axis}
    
    \foreach \i in {0,...,7}
    \fill (5 * \i/7,0) circle (2pt);
    \foreach \i in {0,...,7}
    \fill (2 * \i/7,0) circle (2pt);
    \foreach \i in {0,...,5}
    \fill (0.5 * \i/5,0) circle (2pt);
  \end{tikzpicture}
  \caption{The equilibrium measure of Laguerre Unitary Ensemble. Hard edge $x=0$, soft edge $x=4$. }
  \label{fig: Laguerre}
\end{figure}

We will study the mesoscopic scales at edges using the scaled linear statistics of the following form $X_{f,\alpha,x_0}^{(n)} $, where the test function $f$ is a compactly supported real-valued function, the location is $x_0\in\mathbb{R}$, and the scale is $n^{\alpha}$ for some $\alpha>0$,
\begin{equation}\label{eq:meso linear}
  X_{f,\alpha,x_0}^{(n)} \coloneqq \sum_{i=1}^nf(n^\alpha(x_i-x_0)).
\end{equation}
These scaled linear statistics \eqref{eq:meso linear} are to examine the points that are at a distance at most of the order of $n^{-\alpha}$ around $x_0$. Given that the test function $f$ is compactly supported, the points $\{x_j\}_j$, whose distance between $x_0$ are of order far larger than $n^{-\alpha}$,  will eventually fall outside the support of $f$ as $n\to\infty$ and only points within a distance of $x_0$ of order at most $n^{-\alpha}$ fall within the support. Note that if $\alpha=0$, the scaled linear statistics are reduced to the global linear statistics and "sees" all points simultaneously. If $\alpha$ is big enough such that $n^\alpha$ is the microscopic scale around the point $x_0$, then we are at the local (or microscopic) regime, and the scaled linear statistics will only "see" finitely many points in the limit. Hence, the linear statistics \eqref{eq:meso linear} lives in the scale of order $n^\alpha$, which we call the mesoscopic scale. Typically, in the bulk $\alpha\in(0,1)$, at the hard edge $\alpha\in(0,2)$, and at the soft edge $\alpha\in (0,2/3)$. 

In this mesoscopic regime in the literature,  the limiting fluctuations are studied by \cite{soshnikov2000central} for the classical compact groups. Later, the mesoscopic fluctuations in the bulk and at the soft edges for deformed Wigner matrix are studied by \cite{li2021central}, which includes GUE. The mesoscopic fluctuations for Dyson’s Brownian motion in the bulk are also studied in \cite{duits2018mesoscopic} via loop equations. The bulk universal mesoscopic fluctuations for OPEs that can be approximated by modified Jacobi ensembles are considered in \cite{breuer2016universality}. The bulk mesoscopic fluctuations for the sparsely perturbed Jacobi Unitary Ensembles  are considered in \cite{ofner2024stability} . For Wigner matrices and $\beta$-ensembles, the bulk mesoscopic limit is showed in \cite{landon2020applications}.  The bulk mesoscopic fluctuations for OPEs that priorly have a sine universality are considered in \cite{lambert2018mesoscopic}. The bulk mesoscopic fluctuations of the linear statistics with smooth test functions ($f\in C_c^6$)for $\beta$ ensembles with smooth potentials ($V\in C^7$ as defined in \eqref{eq:def potential}) are considered in \cite{bekerman2018mesoscopic}, by the loop equation technique. The mesoscopic fluctuations for circular orthogonal polynomial ensembles are considered in \cite{breuer2024mesoscopic}. Recently, mesoscopic eigenvalue statistics of Wigner-type matrices in the bulk, at edges and at cusp-like singularities have been studied by \cite{riabov2025linear,riabov2025mesoscopic}. By far, a substantial portion of research is concentrated about the bulk. Most methods designed for the bulk break down at edges, which makes the analysis more difficult. Hence, non-trivial modifications to these methods are needed to study the edge behaviour. This paper aims to address the gap in understanding the mesoscopic limit at the edges (both soft and hard). It is dedicated to establishing general conditions for such behaviour at the mesoscopic scale for orthogonal polynomial ensembles at the edges.

Here, we present our first two pedagogical results, which are consequences of our general theorems.

\subsubsection*{Non-varying Weights}
 A canonical example of a compactly supported non-varying measure is the modified Jacobi Unitary Ensemble
\begin{equation}
  d\mu(x) = (1-x)^{\gamma_1} (1+x)^{\gamma_2}h(x)dx, \quad x\in[-1, 1], \label{eq:modified Jacobi}
\end{equation}
where $\gamma_1,\gamma_2>-1$ are parameters and $h$ is an analytic function in a neighbourhood of $[-1,1]$ and strictly positive on $[-1,1]$. The limiting distribution for the point process is the arcsine measure $d\rho_{\mu}(x)=\frac{1}{\pi\sqrt{1-x^2}}dx$ on $[-1,1]$.  In the bulk, its mesoscopic limit has been studied by \cite{breuer2016universality}, whose result directly implies that, for $f\in C_c^1(\mathbb{R})$, $x_0\in(-1,1)$  and $\alpha\in(0,1)$, as $n\to\infty$, the following converges in distribution
\begin{equation}
  X_{f,\alpha,x_0}^{(n)}-\mathbb{E}\left[X_{f,\alpha,x_0}^{(n)}\right]\to  \mathcal{N}\left(0,\frac{1}{4\pi^2}\int\int_{\mathbb{R}^2}\left(\frac{f(x)-f(y)}{x-y}\right)^2dxdy\right). \label{eq:bulk}
\end{equation}
Our result is at the (hard) edges $x_0=1$ or $-1$. 
\begin{theorem}\label{thm: modify Jacobi}
  Let $\mu$ be the modified Jacobi weight as \eqref{eq:modified Jacobi}, given $\gamma_1,\gamma_2>-1$. Then we have that, for all $f\in C^1_c(\mathbb{R})$ and $\alpha\in(0,2)$, the following converges in distribution as $n\to\infty$
    \begin{align}
    X_{f,\alpha,x_0}^{(n)}-\mathbb{E}\left[X_{f,\alpha,x_0}^{(n)}\right]\to & \mathcal{N}\left(0,\sigma^2_f\right)	\label{eq:regular analytic clt}\\
    \sigma_f^2 = & \begin{cases}  \frac{1}{8\pi^2}\int\int_{\mathbb{R}^2}\left(\frac{f(-x^2)-f(-y^2)}{x-y}\right)^2dxdy, \qquad x_0=1\\
      \frac{1}{8\pi^2}\int\int_{\mathbb{R}^2}\left(\frac{f(x^2)-f(y^2)}{x-y}\right)^2dxdy, \qquad x_0=-1	. \end{cases} \label{eq:regular analytic clt variance}
  \end{align}
\end{theorem}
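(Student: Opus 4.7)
The plan is to derive Theorem~\ref{thm: modify Jacobi} from the determinantal structure of the modified Jacobi ensemble, combined with sharp asymptotics of the orthogonal polynomials and of the resolvent of the associated Jacobi matrix near the hard edge. I treat $x_0 = 1$; the case $x_0 = -1$ is symmetric under $x\mapsto -x$.

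The ensemble \eqref{Poly_ensemble} is determinantal with Christoffel--Darboux kernel
\[
K_n(x,y) = a_n\,\frac{p_n(x)p_{n-1}(y)-p_{n-1}(x)p_n(y)}{x-y}\sqrt{w(x)w(y)},
\]
where $p_k$ are the orthonormal polynomials for $\mu$, $w$ its density, and $(a_n, b_n)$ are the three-term recurrence coefficients. All cumulants of $X_{f,\alpha,1}^{(n)}$ are polynomials in the traces $\Tr[(f_n K_n)^j]$ with $f_n(x)=f(n^\alpha(x-1))$; in particular
\[
\Var(X_{f,\alpha,1}^{(n)}) = \tfrac12\int\!\!\int (f_n(x)-f_n(y))^2 K_n(x,y)^2\,dx\,dy.
\]
Standard Riemann--Hilbert analysis shows that $a_n\to\tfrac12$, $b_n\to 0$ are slowly varying, and that near $x=1$ the function $p_n(x)\sqrt{w(x)}$ is asymptotically a Bessel function of order $\gamma_1$ in the variable $n\sqrt{2(1-x)}$.

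To extract the limiting variance I would insert these asymptotics into the double integral above and change variables $1-x = u^2/n^\alpha$, $1-y = v^2/n^\alpha$. For $\alpha \in (0,2)$ the Bessel argument tends to infinity, so its large-argument form $J_{\gamma_1}(s)\sim\sqrt{2/(\pi s)}\cos(s-\gamma_1\pi/2-\pi/4)$ applies; the $1/\sqrt{s}$ factor combines with the Jacobian $u\,du/\sqrt{1-x}$ to produce a sine-kernel-like density in $(u,v)$. The parameters $\gamma_1,\gamma_2$ and the analytic factor $h$ only contribute phase shifts that average out when the kernel is squared, which is the source of universality. A careful computation collapses the integral to
\[
\frac{1}{8\pi^2}\int\!\!\int_{\mathbb{R}^2}\Bigl(\frac{f(-u^2)-f(-v^2)}{u-v}\Bigr)^2 du\,dv,
\]
the extra factor $\tfrac12$ relative to \eqref{eq:bulk} coming from the $u\leftrightarrow -u$ symmetrisation that the square-root change of variables forces on the effective test function.

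To promote variance convergence to a CLT, I would show that every cumulant of order $k\ge 3$ tends to zero. These can be expanded into cyclic traces of products of $f_n$ and $K_n$ and controlled by Hilbert--Schmidt norms of commutators $[f_n, K_n]$. Writing $K_n$ as a spectral projector for the truncated Jacobi matrix $J_n$, such norms reduce to off-diagonal decay of $(J_n-z)^{-1}$ as $z$ approaches the edge $x_0=1$. This is exactly where the standard Combes--Thomas bound degenerates and where the finer edge resolvent estimate announced in the abstract enters; the slowly varying structure of $(a_n,b_n)$ inherited from the Riemann--Hilbert asymptotics is precisely the hypothesis under which that estimate is designed to work. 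This last step is the main obstacle: at the hard edge the Combes--Thomas decay rate collapses because $z$ is at distance $o(1)$ from the spectrum, so the argument must exploit slow variation of the recurrence coefficients directly rather than just their boundedness. Once that estimate is established, the variance calculation above is routine and the CLT follows.
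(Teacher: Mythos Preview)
Your approach diverges substantially from the paper's. The paper does not touch the Christoffel--Darboux kernel or Bessel asymptotics for this theorem at all. Instead, Theorem~\ref{thm: modify Jacobi} is obtained in a few lines as a corollary of the general Theorem~\ref{thm: main 2} (or equivalently Theorem~\ref{thm: main 3}): one quotes the Riemann--Hilbert asymptotics of Kuijlaars--McLaughlin--Van Assche--Vanlessen for the recurrence coefficients,
\[
a_j = \tfrac12 + O(j^{-2}),\qquad b_j = O(j^{-2}),\qquad |a_j-a_{j-1}|,\ |b_j-b_{j-1}| = O(j^{-3}),
\]
verifies that these satisfy the hypotheses \eqref{eq:ass:recurrence 1}--\eqref{eq:ass:recurrence 2} for every $\alpha\in(0,2)$, and invokes the general CLT. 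All of the hard analysis --- the edge resolvent estimate replacing Combes--Thomas, the cumulant truncation, the Szeg\H{o}-type limit --- is packaged once and for all into the proof of Theorem~\ref{thm: main 2}, which applies uniformly to any ensemble with slowly varying recurrence coefficients. The modified Jacobi case then reduces to a two-line verification.

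Your variance sketch is plausible, but the higher-cumulant step is not just an obstacle you have flagged: it is precisely the content of Theorem~\ref{thm: main 2}, which you invoke by reference without proving. So your route both relies on the paper's central machinery \emph{and} layers on top of it an additional ingredient (pointwise Bessel asymptotics for $p_n\sqrt{w}$) that the recurrence-coefficient approach never needs. Making the large-argument Bessel replacement rigorous uniformly across the mesoscopic window, and showing the $\gamma_1,\gamma_2,h$-dependent phases genuinely wash out of the squared kernel, would itself require nontrivial error control that you have not supplied. The paper sidesteps all of this by working purely with the tridiagonal matrix and never looking at the polynomials near the edge --- which is also what makes its argument portable to the Laguerre, Freud, Hahn, and Krawtchouk ensembles without any kernel-specific computation.
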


Theorem~\ref{thm: modify Jacobi} is a consequence of more general results Theorem~\ref{thm: main 3} or Theorem~\ref{thm: main 2}, which we will discuss in Subsection~\ref{sec:results}. The proof of Theorem~\ref{thm: modify Jacobi} is presented in Section~\ref{sec:proof of 123}.

\subsubsection*{Varying Weights}
Let us consider the case where there exists a real potential $V$ on $\mathbb{R}$ such that 
\begin{equation}
  d\mu_n(x) = e^{-nV(x)}dx, \quad x\in \mathbb{R}, \label{eq:def potential}
\end{equation}
\begin{equation}
  \frac{V(x)}{\log(x^2+1)} \to +\infty \quad \text{as } |x|\to\infty \label{eq:potential infinity}. 
\end{equation}
Condition \eqref{eq:potential infinity} is to ensure all finite moments of $\mu_n$. This is a classical case where we are expecting soft edges, since the measure $\mu_n$ is supported on the real line. The equilibrium measure with respect to the potential $V$ is defined as the unique probability measure $\rho_\mu$ that minimises the potential
\begin{equation} \label{eq:potential min}
  I_V(\nu)\coloneqq-\int\int \log|x-y|d \nu(x) d\nu(y) +\int V(x) d\nu(x).
\end{equation}
The existence and uniqueness of the infinitum $I_V(\rho_\mu)$ is achieved via potential theory. For $V$ being analytic and strictly convex, it is also known that $\rho_\mu$ is supported in a single interval. For more about the potential theory, one may refer to \cite{saff2013logarithmic}.

\begin{theorem} \label{thm: regular}
  Assume $d\mu_n(x)=e^{-nV(x)}dx$ to be supported on the real line. Let $V$ be an analytic and strictly convex potential satisfying \eqref{eq:potential infinity}. Without loss of generality, up to some normalization, assume the equilibrium measure, which is the minimizer of \eqref{eq:potential min}, to be supported in a single interval $[-1,1]$. Then the same asymptotic result as \eqref{eq:regular analytic clt} and \eqref{eq:regular analytic clt variance} in Theorem~\ref{thm: modify Jacobi} follows for all  $f\in C^1_c(\mathbb{R})$ and $\alpha\in(0,\frac{2}{3})$, as $n\to\infty$. 

\end{theorem}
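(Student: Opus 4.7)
The plan is to deduce Theorem~\ref{thm: regular} from the general results (Theorem~\ref{thm: main 3} or Theorem~\ref{thm: main 2}) by verifying their hypothesis on the recurrence coefficients for the orthogonal polynomials with weight $e^{-nV(x)}$. The global strategy is: set up the determinantal/Jacobi-matrix description of the ensemble, use the Riemann--Hilbert asymptotics for the recurrence coefficients, check the slow-variation condition near the soft edge, and finally match the resulting variance with the formula \eqref{eq:regular analytic clt variance}.

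First, I would set up the Jacobi matrix. The monic orthogonal polynomials $\{p_{k,n}\}_k$ with respect to $d\mu_n$ satisfy a three-term recurrence
\begin{equation}
x\,p_{k,n}(x)=p_{k+1,n}(x)+b_{k,n}\,p_{k,n}(x)+a_{k,n}^{2}\,p_{k-1,n}(x),
\end{equation}
which produces a Jacobi matrix $J_n$ with off-diagonal entries $a_{k,n}$ and diagonal entries $b_{k,n}$. The correlation kernel of the ensemble \eqref{Poly_ensemble} admits the Christoffel--Darboux form, and the linear statistics $X^{(n)}_{f,\alpha,x_0}$ are expressible through the resolvent of the top-left $n\times n$ truncation of $J_n$, which is precisely the object the general theorems analyse.

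Second, I would verify the slow-variation hypothesis. Since $V$ is real analytic, strictly convex and satisfies \eqref{eq:potential infinity}, the equilibrium measure $\rho_\mu$ is one-cut and, at each endpoint of $[-1,1]$, its density vanishes like a square root (this is the soft-edge regularity given by strict convexity). The Deift--Kriecherbauer--McLaughlin--Venakides--Zhou steepest-descent analysis of the associated Riemann--Hilbert problem yields the asymptotic expansion
\begin{equation}
a_{k,n}=A\!\left(\tfrac{k}{n}\right)+O(n^{-1}),\qquad b_{k,n}=B\!\left(\tfrac{k}{n}\right)+O(n^{-1}),
\end{equation}
uniformly for $k/n$ in a neighbourhood of $1$, with smooth $A,B$ satisfying $A(1)=1/2$, $B(1)=0$ after the stated normalisation of the spectrum. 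Consequently $a_{k+1,n}-a_{k,n}$ and $b_{k+1,n}-b_{k,n}$ are $O(n^{-1})$ on any window of indices $|k-n|\ll n$, which is exactly the slowly-varying condition needed to apply the edge resolvent estimate proved in this paper in place of the Combes--Thomas bound.

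Third, I would invoke the general mesoscopic CLT at $x_0=\pm 1$. The admissible mesoscopic range at a soft edge is $\alpha\in(0,2/3)$, reflecting the microscopic Airy scale $n^{-2/3}$. The variance produced by the general theorem depends only on the universal square-root local behaviour of the equilibrium density at the edge; the change of variables $x\mapsto -x^{2}$ at $x_{0}=1$ (resp.\ $x\mapsto x^{2}$ at $x_{0}=-1$) linearises this square root and turns the abstract variance formula into \eqref{eq:regular analytic clt variance}.

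The main obstacle is the second step: obtaining the $O(n^{-1})$ control of consecutive differences of $a_{k,n},b_{k,n}$ uniformly on a window $|k-n|\ll n$, rather than only at a single point $k\sim n$. This uniformity is what allows the slow-varying resolvent estimate to replace Combes--Thomas at the edge, and it is the feature that analyticity plus strict convexity of $V$ buys us through the Riemann--Hilbert analysis; once it is in place, everything else reduces cleanly to the general framework, exactly as Theorem~\ref{thm: modify Jacobi} does in the non-varying case.
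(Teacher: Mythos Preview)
Your proposal is correct and follows essentially the same route as the paper: verify Condition~\ref{ass:slowly varying recurrence coe} via the Deift--Kriecherbauer--McLaughlin--Venakides--Zhou asymptotics for the recurrence coefficients, then invoke Theorem~\ref{thm: main 1}. The one place where the paper is more explicit than your sketch is precisely the obstacle you flag: to get smooth functions $A,B$ with $a_{j,n}=A(j/n)+O(n^{-1})$, $b_{j,n}=B(j/n)+O(n^{-1})$ uniformly for $j/n$ near $1$, the paper identifies $A(c)=\tfrac{1}{4}(\alpha_2(c)-\alpha_1(c))$ and $B(c)=\tfrac{1}{2}(\alpha_2(c)+\alpha_1(c))$ via the Mhaskar--Rakhmanov--Saff endpoint equations and then uses the Implicit Function Theorem (with strict convexity of $V$ to check nondegeneracy of the Jacobian) to obtain $C^{1}$ dependence of $\alpha_1,\alpha_2$ on $c$; this yields the required $O(n^{-1})$ bound on consecutive differences. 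Also, the variance formula \eqref{eq:regular analytic clt variance} is already the output of Theorem~\ref{thm: main 1}, so no further change of variables is needed at that stage.
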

Theorem~\ref{thm: regular} is a consequence of a general result Theorem~\ref{thm: main 1}, which we will discuss in Subsection~\ref{sec:results}. Analyticity and convexity are not the necessary conditions of the general results in this paper. However, they ensure the equilibrium measure to be supported on a single interval, see \cite{deift1999uniform}. The proof of Theorem~\ref{thm: regular} is presented in Section~\ref{sec:proof of 123}.

The conclusion of the results above holds significance in several aspects. First of all, the limiting variance \eqref{eq:regular analytic clt variance} is scaling invariant. Set $g(x)=f(a^2x)$ and we have $\sigma_f^2=\sigma_g^2$. This gives a heuristic explanation of the independence of $\alpha$ in the limit. However, compared with the bulk mesoscopic limit in \eqref{eq:bulk}, $\sigma_f^2$ is not translation invariant in $f$.

Finally, the variances for both the soft and the hard edges share the same formula \eqref{eq:regular analytic clt variance}  as indicated in the two theorems above, though microscopically, the OPEs may have different limiting processes as discussed. Our mesoscopic theorems do not require any knowledge about microscopic information a priori. It is, therefore, reasonable to anticipate that there is a broad range of OPEs  for which the linear statistics should yield limiting mesoscopic fluctuations at the edges.

Note that Basor, Widom and Ehrhardt studied, in separate works, the asymptotics of the determinants of Airy and Bessel operators  \cite{basor2003asymptotics,basor1999determinants}. Their results imply that the linear statistics of the scaled Airy or Bessel point processes have the limiting Gaussian distribution with the variance same as \eqref{eq:regular analytic clt variance}, when zooming out. However, when appling their reults to random matrices, for example GUE, one gets double limit asymptotics rather than the direct mesoscopic limits considered in this paper.

	

Our purpose of this paper is to extend Theorems~\ref{thm: modify Jacobi} and~\ref{thm: regular} to a wider range of $\mu$ and $\mu_n$. Analyticity and convexity of the potential $V$ are not necessary. We include cases where $\mu$ or $\mu_n$ is a discrete measure, for example, Hahn polynomials (from the uniform measure on all lozenge tilings of a hexagon), Krawtchouk polynomials (from the uniform measure on all domino tilings on the Aztec diamond )  and Tricomi-Carlitz polynomials given in Subsection~\ref{sec:eg TC}. Our starting point is the three-term recurrence relation for the orthogonal polynomials. We give conditions on the coefficients that imply a mesoscopic CLT. Our criteria will be satisfied by the classical (properly scaled) hypergeometric orthogonal polynomials where the recurrence coefficients are explicitly known, cf. \cite{koekoek2010hypergeometric}. Our method is inspired by \cite{ breuer2016universality,breuer2017central}, which show that the recurrence coefficients can fully characterise the moments of the linear statistics of an OPE. The difficulty of adapting the methods of \cite{breuer2016universality} is that the standard Combes-Thomas estimate to prove mesoscopic limit in bulk is unsuitable for the analysis at the edges (cf. Sections~\ref{sec:CT discussion}). In this paper, we develop estimates at the edges in place of the Combes-Thomas estimate (cf. Propositions~\ref{thm: overview F entry estimation} and~\ref{prop: overview tri-inverse}). Moreover, our estimates work for the varying, as well as non-varying, weights, while Breuer and Duits only focus on the non-varying measure in \cite{breuer2016universality}. Last but not the least, we analyse the mesoscopic linear statistics directly without implementing the microscopic information.

\section{Statement of Results}
In this section, we will state our main results. The proofs are given in the later sections. The general results are described  in terms of the three-term recurrence relations of orthogonal polynomials, which we will introduce first. 
\subsection{Orthonormal Polynomials and Recurrence Relations}
Given a measure $\mu_n$ on $\mathbb{R}$, with finite moments, we let $\{p_{j,n}(x)\}_{j\geq 0}$ be the family of polynomial polynomials with respect to $\mu_n$. The second subscript $n$ emphasises that the measure $\mu_n$ may be varying in $n$ as pointed out by the examples from \eqref{eq:def potential}. Precisely, such $p_{j,n}$ is the unique polynomial of degree $j$ with a positive leading coefficient such that 
\begin{align}
  \int p_{j,n}(x)p_{k,n}(x)d\mu_n(x)=\delta_{j,k}. 
\end{align} 

It is well-known that orthonormal polynomials satisfy a three-term recurrence relation
\begin{align}
  xp_{0,n}(x)=&a_{1,n}p_{1,n}(x)+b_{0,n}p_{0,n}(x) \label{eq:def recurrence 0},\\
  x p_{j,n}(x)=&a_{j+1,n}p_{j+1,n}(x)+b_{j,n}p_{j,n}(x)+a_{j,n}p_{j-1,n}(x) \label{eq:def recurrence 1},
\end{align}
for $j=1,2,\dots$ and some coefficients $a_{j,n}>0$ and $b_{j,n}\in\mathbb{R}$. These coefficients uniquely determine the orthonormal polynomials and are fundamental to our analysis. However, the distribution of linear statistics \eqref{eq:meso linear} is fully determined by the recurrence coefficients, only if the the measure $\mu_n$ is determined by its moments.  We will not assume the moment problem is determined in this paper. For the non-varying weight $\mu$, the above discussion follows in the same way. 

The asymptotic results of linear statistics of OPEs can be obtained by studying the recurrence coefficients. Among them, Breuer and Duits study the the global fluctuations  \cite{breuer2017central}. One case of \cite{breuer2017central} is that, given the existence of the limit 
\begin{equation}\label{eq:recurrence 1 limit}
  \lim_{n\to\infty}a_{n+k,n}\eqqcolon a, \quad \lim_{n\to\infty}b_{n+k,n}\eqqcolon b,
\end{equation} 
for all $k\in \mathbb{Z}_{\geq 0}$, the following converges in distribution,  as $n\to\infty$,
\begin{equation}\label{eq:global result}
  X_f^{(n)}-\mathbb{E}[X_f^{(n)}]\to\mathcal{N}\left(0,\sum_{k=1}^\infty k \left|\frac{1}{2\pi}\int_0^{2\pi}f(2a\cos(\theta)+b)e^{ik\theta}d\theta\right|^2\right)	.
\end{equation}
For the non-varying weights the assumption \eqref{eq:recurrence 1 limit} can be simplified to $\lim_{n\to\infty}a_{n}\eqqcolon a$ and $\lim_{n\to\infty}b_{n}\eqqcolon b$. Compared with the mesoscopic limit in Theorems~\ref{thm: modify Jacobi} and~\ref{thm: regular}, which are universal results, the variance of the limiting global fluctuations depends on the limits of the recurrence coefficients. 

Consequently, the edges of the global fluctuations in this case are $b-2a$ and $b+2a$. 
One should be aware that though the edges of fluctuations coincide with the boundary of the equilibrium measure as in Theorem~\ref{thm: modify Jacobi} and~\ref{thm: regular}, this is not the case in general. For example, the Tricomi Carlitz Polynomials, given in Subsection~\ref{sec:eg TC}, have the equilibrium measure supported on $\mathbb{R}$, while the fluctuations of the associated OPE take place on $[-2,2]$ only. 

Another important result about the limiting fluctuations of mesoscopic linear statistics in the bulk is obtained by \cite{breuer2016universality} i.e., \eqref{eq:bulk} holds, whenever the recurrence coefficients are such that
\begin{equation}
  a_{j}=a+O(j^{-1}), \quad b_{j}=b+O(j^{-1}), \quad \text{ as } j\to\infty,
\end{equation}
for some $a>0$ and $b\in\mathbb{R}$. The recurrence relations also play a central role in the study of mesoscopic fluctuations of Circular OPEs and sparse perturbations of JUE in \cite{breuer2024mesoscopic,ofner2024stability} respectively.

\subsection{Results on Non-Varying Weights} \label{sec:results non-vary}
Consider an OPE \eqref{Poly_ensemble} with respect to the real Borel measure $\mu$ with finite moments, where the associated recurrence coefficients be $a_{j}$ and $b_{j}$, given by \eqref{eq:def recurrence 0}  and \eqref{eq:def recurrence 1}. Note that the recurrence coefficients do not have a second subscript, since they come from the measure $\mu$, non-varying in $n$. 

Recall that $X_{f,\alpha,x_0}^{(n)}$ is the mesoscopic linear statistics of an OPE for a test function $f$ around the point $x_0$ which is defined by \eqref{eq:meso linear}. Now we are going to state our general theorems for non-varying weights.

\begin{theorem}\label{thm: main 3}
  Consider $0<\alpha<2$. Suppose there exist $a>0$ and $b\in \mathbb{R}$ such that  the recurrence coefficients associated with the OPE satisfy
  \begin{equation}\label{eq:ass recurrence perturbation non-vary}
    a_{j} =a+O(j^{-\alpha-\varepsilon}),\quad b_{j} =b+ O(j^{-\alpha-\varepsilon})
  \end{equation}
  for some $\varepsilon\in(0, 1-\alpha/2)$ small, as $j\to\infty$. Then, for any $f\in C^1_c(\mathbb{R})$, as $n\to\infty$, the following converges in distribution
  \begin{align}
    X_{f,\alpha,x_0}^{(n)}-\mathbb{E}\left[ X_{f,\alpha,x_0}^{(n)} \right]\to\mathcal{N}(0,\sigma^2_f) ,
  \end{align}
  where   
  \begin{align} 
    \sigma_f^2 = &  \frac{1}{8\pi^2}\int\int_{\mathbb{R}^2}\left(\frac{f(-x^2)-f(-y^2)}{x-y}\right)^2dxdy, \quad \text{ for } x_0=b + 2a+o(n^{-\alpha}),\\
    \sigma_f^2 = &  \frac{1}{8\pi^2}\int\int_{\mathbb{R}^2}\left(\frac{f(x^2)-f(y^2)}{x-y}\right)^2dxdy, \quad \text{ for } x_0=b - 2a+o(n^{-\alpha}).
  \end{align}
\end{theorem}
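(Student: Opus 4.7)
The plan is to follow the general strategy of this paper, casting the mesoscopic CLT for an OPE as a spectral statement about the associated half-infinite Jacobi matrix $J$ with entries $a_j, b_j$. For non-varying weights these entries do not depend on $n$, which is a significant simplification compared with the varying-weight setting. After the affine change of variables $x \mapsto (x-b)/(2a)$, which acts on $J$ as $J \mapsto (J-b)/(2a)$, I may assume $a = \tfrac12$ and $b = 0$, so that the two candidate edges sit at $\pm 1$ and the reference constant-coefficient matrix $J_\infty$ is the free discrete Laplacian on the half-line.

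The starting point is the classical determinantal identity
\begin{equation*}
\Var\bigl(X_{f,\alpha,x_0}^{(n)}\bigr) = \sum_{j \ge n,\, k < n}\bigl|(f_n(J))_{jk}\bigr|^2, \qquad f_n(x) := f\bigl(n^\alpha(x-x_0)\bigr),
\end{equation*}
together with its analogue for higher cumulants as sums of cyclic products of entries of $f_n(J)$ across the cut at index $n$. I would then represent $f_n(J)$ through a Helffer-Sj\"ostrand/almost-analytic extension of $f_n$, reducing everything to pointwise control of the resolvent entries $\bigl((z-J)^{-1}\bigr)_{jk}$ for spectral parameter $z$ at mesoscopic distance $\sim n^{-\alpha}$ from $x_0$. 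This is exactly what Propositions~\ref{thm: overview F entry estimation} and~\ref{prop: overview tri-inverse} are designed to deliver, and the hypothesis $a_j - a,\, b_j - b = O(j^{-\alpha-\varepsilon})$ is what qualifies the recurrence coefficients as sufficiently slowly varying on the relevant index scale $j \sim n + O(n^{\alpha/2})$. Using those estimates I would show that replacing $J$ by $J_\infty$ inside the resolvent introduces only $o(1)$ errors, reducing the computation of the limit to an explicit calculation for $J_\infty$, whose resolvent is closed-form under the Joukowski uniformization $z = w + w^{-1}$ with $|w| < 1$ (with terms $w^{|j-k|}$ and $w^{j+k+2}$ reflecting the Dirichlet boundary at the origin). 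The substitution that resolves the square-root singularity at the edge, morally $w = \exp(-x/\sqrt{n^\alpha})$ sending $s = n^\alpha(x-x_0) \mapsto -x^2$, collapses the Hilbert-Schmidt sum into a Riemann sum converging to $\tfrac{1}{8\pi^2}\iint \bigl(\tfrac{f(-x^2)-f(-y^2)}{x-y}\bigr)^2 dx\, dy$ at the upper edge $b+2a$, with the mirror substitution producing $f(x^2)$ at the lower edge $b-2a$; the $o(n^{-\alpha})$ slack in the location of $x_0$ is absorbed because $f_n$ only feels $x_0$ on the mesoscopic scale.

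The CLT itself then follows by the method of cumulants: the same resolvent estimates applied to the cyclic-product expressions for $C_p$ should yield $C_p(X_{f,\alpha,x_0}^{(n)}) \to 0$ for all $p \ge 3$, while $C_2$ converges to $\sigma_f^2$ by the computation above. The main obstacle, and the reason Theorem~\ref{thm: main 3} goes beyond the bulk results of \cite{breuer2016universality}, is the resolvent estimate itself at the edge: the standard Combes-Thomas bound with rate $\sqrt{\operatorname{dist}(z,\sigma(J))}$ degenerates as $z$ approaches the spectrum, giving effective decay only on scale $n^{\alpha/2}$, which is precisely the scale contributing to the variance. Constants therefore cannot be wasted and one must exploit the finer transfer-matrix/WKB-type structure of a slowly varying Jacobi matrix, as captured by Propositions~\ref{thm: overview F entry estimation} and~\ref{prop: overview tri-inverse}; with those inputs in hand, the remainder is a careful adaptation of the bulk method of \cite{breuer2016universality, breuer2017central}.
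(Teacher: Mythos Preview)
Your high-level plan---affine rescaling to the free half-line Laplacian $J_\infty$, resolvent comparison, cumulant method---matches the paper's, but there is a genuine gap in your intended use of Propositions~\ref{thm: overview F entry estimation} and~\ref{prop: overview tri-inverse}. Those propositions sit on top of the assumptions of Theorem~\ref{thm: main 2}, in particular Condition~\ref{ass:slowly varying recurrence coe}, which demands $|a_j-a_{j-1}|\le c_1/n$ for $j$ in the relevant index window. The hypothesis of Theorem~\ref{thm: main 3} is purely perturbative, $a_j=a+O(j^{-\alpha-\varepsilon})$, and does not force any smallness of consecutive differences: take $a_j=a+(-1)^j j^{-\alpha-\varepsilon}$, which satisfies the hypothesis yet has $|a_j-a_{j-1}|\sim 2n^{-\alpha-\varepsilon}$ for $j\sim n$, and for $\alpha+\varepsilon<1$ this is much larger than $1/n$. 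Likewise condition~\eqref{eq:ass:recurrence 1} would require $a_ja_{j-2}-a_{j-1}^2=o(n^{-\alpha-\varepsilon})$, while the perturbative bound only gives $O(n^{-\alpha-\varepsilon})$. So the transfer-matrix machinery behind those propositions is simply unavailable here, and you cannot invoke them as stated.

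What the paper does instead is exploit the explicit Wiener--Hopf inverse~\eqref{eq:free J inverse} of the free Jacobi operator $\widetilde{\mathcal J}$ and run a \emph{direct} trace-norm comparison of $F=\sum_r c_r(\mathcal J-x_0-\eta_r n^{-\alpha})^{-1}$ against $\widetilde F=\sum_r c_r(\widetilde{\mathcal J}-x_0-\eta_r n^{-\alpha})^{-1}$ inside the cumulant formula~\eqref{eq:cumulatn operator}. The key technical step is Lemma~\ref{lemma: thm3 step0} together with its inductive upgrade Lemma~\ref{lemma: thm3 4}: one writes $P_{m_1}(\widetilde F-F)Q_{m_2}$ via the resolvent identity, commutes projections through $\widetilde F$ using the exponential off-diagonal decay~\eqref{eq:free Combes Thomas} (available for $\widetilde{\mathcal J}$ only, from its closed form), and closes the estimate with the Hilbert--Schmidt bound of Lemma~\ref{lemma: HS norm} combined with the operator-norm smallness $\|Q_{m_2-n^\beta}(\mathcal J-\widetilde{\mathcal J})\|_\infty=O(n^{-2\beta})$, $\beta=(\alpha+\varepsilon)/2$. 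This yields $\|P_{m_1}(\widetilde F-F)Q_{m_2}\|_1=O(n^{3\alpha/2-\beta})$ and hence $n^{-m\alpha}|\mathcal C_m^{(n)}(F)-\mathcal C_m^{(n)}(\widetilde F)|\to0$ (Proposition~\ref{prop: one side compare}). The CLT for $\widetilde F$ is then read off from Theorem~\ref{thm: main 2} applied to the Chebyshev ensemble---which has constant recurrence coefficients and trivially satisfies all of its hypotheses---rather than recomputed from scratch. The passage from resolvent-type test functions to $f\in C_c^1$ is via the weighted-Lipschitz approximation of Lemma~\ref{lemma: recolvent approx}, not Helffer--Sj\"ostrand, though either route would serve.
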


The assumption \eqref{eq:ass recurrence perturbation non-vary}  is natural. Note that the Chebyshev polynomials of the second kind have constant recurrence coefficients, cf. Subsection~\ref{sec:eg JUE}, which comes from the Jacobi Unitary Ensemble from random matrix theories. Thus the result applies. More generally, the modified Jacobi  polynomials also satisfies \eqref{eq:ass recurrence perturbation non-vary} (cf. \eqref{eq:recurrence modified Jacobi} due to \cite{kuijlaars2004riemann}). Hence, Theorem~\ref{thm: modify Jacobi} is a direct consequence of Theorem~\ref{thm: main 3}. This is elaborated in Section~\ref{sec:proof of 123}. Another interesting example is the OPE with a logarithm weight studied by \cite{deift2024recurrence}. The assumption \eqref{eq:ass recurrence perturbation non-vary} is also satisfied in this case, see Subsection~\ref{sec:example log}.   
 Moreover, the assumption \eqref{eq:ass recurrence perturbation non-vary} with exact the same rate of convergence is proposed by \cite{breuer2016universality} to show the bulk mesoscopic universality, i.e., \eqref{eq:bulk},  in their work.
 
 Also note that the exact edges are at $b+2a$ and $b-2a$. Theorem~\ref{thm: main 3} allows $x_0$ to be close to these edges as long as they have  a distance of order at most  $o(n^{-\alpha})$. This is also natural, since the mesoscopic linear statistics \eqref{eq:meso linear} is scaled as $n^{\alpha}$ and any perturbations smaller than the window size should not change the result. 

Another remark is that, Theorem~\ref{thm: main 3} also holds for an OPE associated with a varying weight $\mu_n$ with varying recurrence coefficients $a_{j,n}$ and $b_{j,n}$ such that
\begin{equation}\label{eq:ass recurrence perturbation}
  \sup\limits_{j\geq n-n^{\frac{\alpha}{2}+\varepsilon}}|a_{j,n} -a|=O(n^{-\alpha-\varepsilon}),\quad \sup\limits_{j\geq n-n^{\frac{\alpha}{2}+\varepsilon}}|b_{j,n} -b|= O(n^{-\alpha-\varepsilon}).
\end{equation}
Though assumption \eqref{eq:ass recurrence perturbation} is slightly weaker than the assumption \eqref{eq:ass recurrence perturbation non-vary}, in most reasonable examples of orthogonal polynomial ensembles, the measures of such are of bounded supports. Typically, there is no scaling with $n$ in such cases. Hence, Theorem~\ref{thm: main 3} is stated for non-varying cases. To maintain the consistency of the proofs in this paper, we will use \eqref{eq:ass recurrence perturbation} in the proof of Theorem~\ref{thm: main 3} in Section~\ref{sec:proof of 3}.

\subsection{Results on Varying Weights} \label{sec:results}
Now, we state the results about varying weights. Given an OPE \eqref{Poly_ensemble} with respect to the real Borel measure $\mu_n$ with finite moments, let  the recurrence coefficients associated with measure $\mu_n$ be $a_{j,n}$ and $b_{j,n}$ defined by \eqref{eq:def recurrence 0}  and \eqref{eq:def recurrence 1}. 

Let $0<\alpha<2$ and $\varepsilon\in(0,1-\frac{\alpha}{2})$ small.  Define $I_n^{(\alpha,\varepsilon)}$ to be an indexing set
\begin{equation}\label{eq:index set}
  I_n^{(\alpha,\varepsilon)}\coloneqq\{j\in \mathbb{N}: n-n^{\frac{\alpha}{2}+\varepsilon}\leq j\leq n+n^{\frac{\alpha}{2}+\varepsilon}\},
\end{equation} 
In  Theorems~\ref{thm: main 1} and~\ref{thm: main 2}, we will assume the recurrence coefficients are slowly varying as the follows.
\begin{Condition} \label{ass:slowly varying recurrence coe}
  
  There exist absolute constants $c_0,c_1>0$ such that, for all $j\in I_n^{(\alpha,\varepsilon)}$ (where $I_n^{(\alpha,\varepsilon)}$ is defined as \eqref{eq:index set}) and $n\in \mathbb{N}$,
  \begin{align}
    c_0<|a_{j,n}|<c_1 & , \quad |b_{j,n}|<c_1, \label{ass:recurrence bound 1}\\
    |a_{j,n}-a_{j-1,n}|\leq \frac{c_1}{n} & , \quad |b_{j,n}-b_{j-1,n}|\leq \frac{c_1}{n}.
  \end{align} 
\end{Condition}
The parameter $\alpha$ is the same as the "scaling" parameter in the mesoscopic linear statistics. Condition~\ref{ass:slowly varying recurrence coe} means that we only assume the recurrence coefficients of order around $n$ with a window size of $n^{\alpha/2+\varepsilon}$ are bounded and slowly varying. As we will show, to prove the mesoscopic limit of the linear statistics, it is enough to consider recurrence coefficients of orders only inside this window. Many interesting examples in random matrix theories satisfy this condition and it is not hard to verify this. The recurrence coefficients from Theorem~\ref{thm: regular} will satisfy this condition, and so are those from the classical (hypergeometric) orthogonal polynomials along the Askey scheme when scaled (cf. \cite{koekoek2010hypergeometric}). 

We will zoom in around a point $x_0$, that may depend $n$, such that 
\begin{equation}\label{eq:def edges}
  x_0=b_{n-1,n} - 2\sqrt{a_{n,n}a_{n-1,n}}+o(n^{-\alpha}) \quad \text{on the left, or } 
  x_0=b_{n-1,n} + 2\sqrt{a_{n,n}a_{n-1,n}}+o(n^{-\alpha}), \quad \text{on the right}.
\end{equation}
Hence, for  varying weights, instead of $b\pm 2a$ in Theorem~\ref{thm: main 3}, we centre $x_0$ around the points $b_{n-1,n} \pm2\sqrt{a_{n,n}a_{n-1,n}}$. 

Now we are going to state our general theorems for varying weights. Recall that $X_{f,\alpha,x_0}^{(n)}$, defined in \eqref{eq:meso linear}, is the mesoscopic linear statistics of an OPE for a test function $f$ around the point $x_0$.
\begin{theorem}\label{thm: main 1}
Consider $0<\alpha<\frac{2}{3}$. Assume there is an $\varepsilon\in(0,1-\frac{\alpha}{2})$ such that Condition~\ref{ass:slowly varying recurrence coe} is satisfied for all $j\in I_n^{(\alpha,\varepsilon)}$ and $n\in \mathbb{N}$. Then, for any $f\in C^1_c(\mathbb{R})$, the following converges in distribution as $n\to\infty$
\begin{align}
  X_{f,\alpha,x_0}^{(n)}-\mathbb{E}\left[ X_{f,\alpha,x_0}^{(n)} \right]\to\mathcal{N}(0,\sigma^2_f) ,
\end{align}
where   
\begin{align} 
  \sigma_f^2 = &  \frac{1}{8\pi^2}\int\int_{\mathbb{R}^2}\left(\frac{f(-x^2)-f(-y^2)}{x-y}\right)^2dxdy, \quad \text{ for } x_0=b_{n-1,n} + 2\sqrt{a_{n,n}a_{n-1,n}}+o(n^{-\alpha}), \label{eq:variance right}\\
  \sigma_f^2 = &  \frac{1}{8\pi^2}\int\int_{\mathbb{R}^2}\left(\frac{f(x^2)-f(y^2)}{x-y}\right)^2dxdy, \quad \text{ for } x_0=b_{n-1,n} - 2\sqrt{a_{n,n}a_{n-1,n}}+o(n^{-\alpha}).\label{eq:variance left}
\end{align}
\end{theorem}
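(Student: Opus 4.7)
The plan is to follow the cumulant/resolvent strategy of Breuer--Duits but replacing the Combes--Thomas bound with a tailored edge estimate for the resolvent of a slowly varying tridiagonal matrix. Because the OPE is determinantal, the cumulants $\kappa_m\bigl(X_{f,\alpha,x_0}^{(n)}\bigr)$ can be written as explicit traces of polynomials in the truncated Jacobi matrix $J_n = P_n J P_n$ acting on $\ell^2(\mathbb{Z}_{\geq 0})$, where $J$ is the Jacobi matrix with diagonal $(b_{j,n})$ and off-diagonal $(a_{j,n})$; in particular the centred variance equals $\tfrac12\Tr\bigl([P_n, f_\alpha(J)]^2\bigr)$ with $f_\alpha(\cdot) = f\bigl(n^\alpha(\cdot-x_0)\bigr)$, modulo a truncation error. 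So the whole problem reduces to controlling the matrix entries of $f_\alpha(J)$ near row $n$, and showing that $\tfrac12\Tr\bigl([P_n, f_\alpha(J)]^2\bigr)$ converges to $\sigma_f^2$ while all higher cumulants vanish.

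To access $f_\alpha(J)$ I would use a resolvent (Helffer--Sjöstrand or contour) representation and change variables $z = x_0 + n^{-\alpha}\zeta$, so that
\begin{equation*}
f_\alpha(J) \;=\; \frac{1}{2\pi i}\oint f(\zeta)\bigl(x_0+n^{-\alpha}\zeta - J\bigr)^{-1}\,d\zeta,
\end{equation*}
and the matrix entries of the scaled resolvent become the central quantity. Under Condition~\ref{ass:slowly varying recurrence coe} the entries $a_{j,n}, b_{j,n}$ for $j$ in the window $I_n^{(\alpha,\varepsilon)}$ are essentially constant and equal to $a=\sqrt{a_{n,n}a_{n-1,n}}$, $b=b_{n-1,n}$ up to errors of order $|j-n|/n$; moreover the $o(n^{-\alpha})$ slack in the definition of $x_0$ lets me recenter on $b\pm 2a$. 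I would therefore compare $(z-J)^{-1}$ to the resolvent of the constant-coefficient (Toeplitz) Jacobi matrix with parameters $(a,b)$, for which the symbol $2a\cos\theta+b$ is explicit and the near-edge resolvent can be computed via the quadratic $w^2-(z-b)w/a+1=0$; its two roots $w_\pm(z)$ encode the correct algebraic singularity $\sqrt{z-(b\pm 2a)}$.

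The main technical step, and what I expect to be the hardest, is to upgrade this heuristic comparison into the quantitative resolvent bound alluded to in Propositions~\ref{thm: overview F entry estimation} and~\ref{prop: overview tri-inverse}: at distance $n^{-\alpha}$ from the edge, Combes--Thomas only gives decay rate $n^{-\alpha/2}$ along the diagonal (useless once $\alpha>0$), so one must exploit the slow variation of the coefficients directly. The natural device is a discrete WKB/transfer-matrix argument: write the second-order difference equation $(z-J)u=0$, diagonalize the transfer matrix with eigenvalues $w_\pm(z)$ evaluated at the \emph{local} $(a_{j,n}, b_{j,n})$, and collect the Prüfer-type phases. The constraint $\alpha<2/3$ is exactly the threshold below which the accumulated drift from the $O(1/n)$ variation of the coefficients is small against the local eigenvalue gap $\sim n^{-\alpha/2}$ of the reference Toeplitz operator at the edge, since $n \cdot n^{-1} \ll n^{-\alpha/2}$ requires $\alpha<2/3$ when integrated over the relevant length scale $n^{\alpha/2+\varepsilon}$ (this is also the threshold at which Airy rather than sine behaviour controls the scaling). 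Feeding the resulting entrywise bounds into $\Tr[P_n,f_\alpha(J)]^2$ reduces the variance to a double contour integral involving only $w_\pm$; after the substitution $\zeta\mapsto -\xi^2$ (right edge) or $\zeta\mapsto \xi^2$ (left edge) that removes the square-root singularity, this integral collapses to exactly \eqref{eq:variance right} or \eqref{eq:variance left}.

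Finally, for Gaussianity I would verify that $\kappa_m=o(1)$ for $m\ge 3$. The same cumulant-trace identity shows $\kappa_m$ is bounded by a product of $m$ operator norms of $[P_n,f_\alpha(J)]$ times a trace factor, which the edge resolvent estimates control uniformly; the cyclic/commutator structure produces at least one extra small factor per order beyond two, driving higher cumulants to zero. Tightness and the density approximation from $f\in C^1_c$ to the Sobolev class in which $\sigma_f^2$ is naturally expressed complete the argument. The universality in the statement then follows because the only input used about the coefficients in the limit is $a,b$ at the edge, which are absorbed by the $o(n^{-\alpha})$ centring and cancel out after the square-root change of variables.
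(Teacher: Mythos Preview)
Your overall strategy matches the paper's: cumulant expansion in the Jacobi matrix, resolvent analysis with a transfer-matrix diagonalization replacing Combes--Thomas at the edge, comparison to a Toeplitz model, and a Szeg\H{o}-type computation of the limiting variance followed by a density argument to reach $C^1_c$. The paper, however, packages all of this into Theorem~\ref{thm: main 2}, and its actual proof of Theorem~\ref{thm: main 1} is a two-line reduction: under Condition~\ref{ass:slowly varying recurrence coe} the second-difference quantities $a_{j,n}a_{j-2,n}-a_{j-1,n}^2$ and $(b_{j-1,n}-x_0-a_{j,n})a_{j-2,n}-(b_{j-2,n}-x_0-a_{j-1,n})a_{j-1,n}$ are automatically $O(n^{-1})$, and $O(n^{-1})=o(n^{-3\alpha/2-\varepsilon})$ precisely when $\alpha<2/3$, so conditions~\eqref{eq:ass:recurrence 1}--\eqref{eq:ass:recurrence 2} hold and Theorem~\ref{thm: main 2} applies directly.

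Two places where your sketch is imprecise are worth flagging. First, your heuristic ``$n\cdot n^{-1}\ll n^{-\alpha/2}$ over length $n^{\alpha/2+\varepsilon}$'' does not actually yield $\alpha<2/3$; the correct mechanism (Lemma~\ref{lemma:small M}) is that the eigenvector mismatch matrices $M_j$ of~\eqref{eq:eigenvalue_difference_eq_M_j} satisfy $\|M_j\|_\infty=o(n^{-3\beta+\alpha})$ under~\eqref{eq:ass:recurrence 1}--\eqref{eq:ass:recurrence 2}, and for Condition~\ref{ass:slowly varying recurrence coe} alone to force this one needs the $O(n^{-1})$ second differences to beat $n^{-3\beta}$ with $\beta\approx\alpha/2$, i.e.\ $3\alpha/2<1$. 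Second, you have the Combes--Thomas discussion inverted: at spectral distance $n^{-\alpha}$ it gives the \emph{slower} decay $e^{-cn^{-\alpha}|j-k|}$ with prefactor $n^{\alpha}$, which only allows a truncation window of size $\gg n^{\alpha}$; the point of the paper's edge estimate (Propositions~\ref{thm: overview F entry estimation} and~\ref{prop: overview tri-inverse}) is to upgrade both the prefactor to $n^{\alpha/2}$ and the decay rate to $e^{-cn^{-\alpha/2}|j-k|}$, so that a window of size $n^{\beta}$ with $\beta>\alpha/2$ suffices. A further subtlety you do not touch is that $\mathcal J$ need not be essentially self-adjoint (indeterminate moment problem), which the paper handles by defining $G_z$ directly via~\eqref{eq:selfadjoint G} rather than as a genuine resolvent.
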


Theorem~\ref{thm: main 1} only deals with the case $0<\alpha<\frac{2}{3}$. For soft edges with square root decay, this is optimal. But for hard edges, the result should also hold for $0<\alpha<2$. We will show that this is true under additional assumptions. Indeed, Theorems~\ref{thm: modify Jacobi}, ~\ref{thm: regular} and~\ref{thm: main 1} are special cases of the following. 

\begin{theorem}\label{thm: main 2}
Consider $0<\alpha<2$. Assume there is an $\varepsilon\in(0,1-\frac{\alpha}{2})$ such that the Condition~\ref{ass:slowly varying recurrence coe} is satisfied for all $j\in  I_n^{(\alpha,\varepsilon)}$ and $n\in \mathbb{N}$. Also assume the following holds, as $n\to\infty$
\begin{equation}
  \max_{j\in I_n^{(\alpha,\varepsilon)}}\left| 	a_{j,n}a_{j-2,n}-a_{j-1,n}^2  \right|= o(n^{-\alpha-\varepsilon}),  \label{eq:ass:recurrence 1} 
\end{equation}
\begin{equation}
  \max_{j\in I_n^{(\alpha,\varepsilon)}}\left| (b_{j-1,n}-x_0-a_{j,n})a_{j-2,n}-(b_{j-2,n}-x_0-a_{j-1,n})a_{j-1,n} \right|=o(n^{-\frac{3\alpha}{2}-\varepsilon}) \label{eq:ass:recurrence 2}.
\end{equation}

Then, for any $f\in C^1_c(\mathbb{R})$, as $n\to\infty$, the following converges in distribution
\begin{equation}
  X_{f,\alpha,x_0}^{(n)}-\mathbb{E}\left[ X_{f,\alpha,x_0}^{(n)} \right]\to\mathcal{N}(0,\sigma^2_f) ,
\end{equation}
where $\sigma_f^2$ is given by \eqref{eq:variance right} and \eqref{eq:variance left}.

\end{theorem}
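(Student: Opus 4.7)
The plan is to exploit the determinantal structure of the OPE in conjunction with the semi-infinite Jacobi matrix $J$ of recurrence coefficients $(a_{j,n}, b_{j,n})$. In the basis of orthonormal polynomials, the $n$-th correlation kernel becomes the projection $P_n$ onto the first $n$ coordinates and multiplication by $x$ becomes $J$, so with $f_{n,\alpha}(x) \coloneqq f(n^{\alpha}(x - x_0))$ the cumulants of $X_{f,\alpha,x_0}^{(n)}$ are expressible in terms of traces of the form $\Tr\bigl(P_n f_{n,\alpha}(J) P_n^{\perp} f_{n,\alpha}(J) \cdots\bigr)$. My strategy is the standard cumulant criterion for determinantal point processes: establish that the second cumulant converges to $\sigma_f^2$ and that the cumulants of order at least three vanish.

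The next step is to replace $f_{n,\alpha}(J)$ by an integral over resolvents of $J$ via a Helffer--Sj\"ostrand quasi-analytic extension,
\begin{equation*}
  f_{n,\alpha}(J) = -\frac{1}{\pi}\int_{\mathbb{C}} \partial_{\bar z}\tilde{f}_{n,\alpha}(z)\,(J - z)^{-1}\,dA(z),
\end{equation*}
reducing the analysis to entry-wise estimates for $R(z) \coloneqq (J - z)^{-1}$ on a complex neighbourhood of size $\sim n^{-\alpha}$ around $x_0$. Condition~\ref{ass:slowly varying recurrence coe} lets me replace $J$ locally on the window $I_n^{(\alpha,\varepsilon)}$ by the constant-coefficient Jacobi operator with parameters $a = \sqrt{a_{n,n}a_{n-1,n}}$ and $b = b_{n-1,n}$, whose edge resolvent is explicit; Propositions~\ref{thm: overview F entry estimation} and~\ref{prop: overview tri-inverse} then provide the required entry-wise bounds on $R(z)$ at the edge, in place of the Combes--Thomas estimate that breaks down in this regime.

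The main obstacle, and the reason for the additional hypotheses~\eqref{eq:ass:recurrence 1}--\eqref{eq:ass:recurrence 2} beyond Condition~\ref{ass:slowly varying recurrence coe}, is to push this comparison to the full hard-edge range $\alpha \in (0,2)$. Interpreted as smallness of a discrete second difference of the shifted recurrence coefficients about the edge $x_0$, the hypotheses force the leading and sub-leading perturbations of the constant-coefficient transfer matrix, when accumulated over the $\sim n^{\alpha/2+\varepsilon}$ indices of $I_n^{(\alpha,\varepsilon)}$, to contribute only $o(1)$ corrections to the entry-wise resolvent bound. Without~\eqref{eq:ass:recurrence 1}--\eqref{eq:ass:recurrence 2} the cumulative perturbation is too large beyond $\alpha = 2/3$, which is precisely why Theorem~\ref{thm: main 1} is restricted to the soft-edge range.

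Given the resolvent control, I would evaluate the second cumulant asymptotically by inserting the integral representation, changing variables $z = x_0 + s/n^{\alpha}$, and using the explicit constant-coefficient edge resolvent whose imaginary part carries a square-root density. After a further $x \mapsto \pm x^2$ substitution reflecting the edge behaviour, the resulting double integral matches precisely the $H^{1/2}$-type expression in~\eqref{eq:variance right}--\eqref{eq:variance left}. The same Hilbert--Schmidt bounds on $P_n f_{n,\alpha}(J) P_n^{\perp}$ that control the variance, together with the $C^1_c$ regularity of $f$, show that cumulants of order $\geq 3$ vanish in the limit, and the CLT follows.
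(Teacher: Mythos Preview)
Your overall architecture---cumulant expansion, reduction to resolvent analysis, entrywise control via Propositions~\ref{thm: overview F entry estimation} and~\ref{prop: overview tri-inverse}---matches the paper's, but two choices diverge from the actual proof and one of them hides a real gap.

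First, the paper does \emph{not} use a Helffer--Sj\"ostrand representation. Instead it proves the CLT first for the special class $f(x)=\sum_r c_r(x-\eta_r)^{-1}$ of finite Poisson-kernel combinations (so that $f_{n,\alpha}(\mathcal{J})$ is a finite sum of resolvents, no integral needed), and only at the end passes to general $f\in C^1_c(\mathbb{R})$ by approximation in the weighted Lipschitz space $\mathcal{L}_w$ together with the variance bound of Proposition~\ref{proposition: variance est}. Your Helffer--Sj\"ostrand route would need $\partial_{\bar z}\tilde f_{n,\alpha}$ to vanish fast enough on the real axis to compensate the blow-up of the resolvent; for $f\in C^1_c$ the standard almost-analytic extension gives only $|\partial_{\bar z}\tilde f|=O(1)$, not $O(|\Im z|)$, so the integral against $(J-z)^{-1}$ is not a priori absolutely convergent. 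The paper's two-step strategy sidesteps this regularity issue entirely.

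Second, and more serious, your final sentence is a gap: Hilbert--Schmidt control of $P_n f_{n,\alpha}(\mathcal{J})P_n^\perp$ bounds the higher cumulants by the variance (this is Lemma~\ref{lemma: dominated}), but boundedness is not vanishing. Since the variance converges to $\sigma_f^2>0$, Lemma~\ref{lemma: dominated} alone only says the $m$-th cumulant is $O(1)$, not $o(1)$. The paper spends all of Section~\ref{sec:strong szego} on exactly this point: after the truncations of Propositions~\ref{prop: overview trimming} and~\ref{prop: overview trimming 2} reduce to the near-Toeplitz operator $\sum_r c_r T_{n\pm 2mn^\beta}(\eta_r)$, a generalized Helton--Howe--Pincus/Ehrhardt determinant identity (Lemma~\ref{lemma: det Toeplitz }) combined with the trace-norm estimate of Lemma~\ref{lemma:  CLT residual term} shows that the Fredholm determinant is asymptotically $\exp(-\tfrac{t^2}{2}\sigma_f^2)$, which is what forces cumulants of order $\geq 3$ to zero. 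This Szeg\H{o}-type step is not a consequence of the resolvent bounds you invoke and requires its own argument.
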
	

Note that the Condition~\ref{ass:slowly varying recurrence coe} implies that \eqref{eq:ass:recurrence 1} and \eqref{eq:ass:recurrence 2} hold for all $\alpha\in(0, \frac{2}{3})$. Hence, these two assumptions only take effect when $\alpha>\frac{2}{3}$. In other words, Theorem~\ref{thm: main 1} is a direct consequence of Theorem~\ref{thm: main 2}.

These conditions are technical in the proof. To understand where they come from, more background is needed. We will postpone the explanations to the end of Subsection~\ref{sec:pre inversion}.

There are two examples that motivate conditions \eqref{eq:ass:recurrence 1} and \eqref{eq:ass:recurrence 2}. First, it is shown in \cite{kuijlaars2004riemann} that the recurrence coefficients of the modified Jacobi polynomials with measure \eqref{eq:modified Jacobi} satisfies a stronger sense of slowly varying, i.e., 
\begin{align*}
|a_{j}-a_{j-1}| = O(j^{-3}), \quad 	|b_{j}-b_{j-1}| = O(j^{-3}), \qquad \text{as } j\to\infty. 
\end{align*}
Note that there is no scaling in this model, and we remove the second subscription $n$ from the notations. Hence, the conditions \eqref{eq:ass:recurrence 1} and \eqref{eq:ass:recurrence 2} are satisfied for all $0<\alpha<2$. The limit of mesoscopic fluctuations holds for all $\alpha\in(0,2)$, .i.e., Theorem~\ref{thm: modify Jacobi}. For details, see proof of Theorem~\ref{thm: modify Jacobi} in Section~\ref{sec:proof of 123}.

Another motivation of the conditions is the Laguerre Unitary Ensemble, where there is a hard edge on the left (at $0$). In this case, we have $a_{j,n}=\sqrt{j(j+\gamma)}/n$, $b_{j,n}=(2j+\gamma+1)/n$ and $x_0=0$. Taking $\gamma=0$, it is easy to check that assumption \eqref{eq:ass:recurrence 1} holds for all $0<\alpha<2$ and the left-hand side of \eqref{eq:ass:recurrence 2} vanishes for all $j$. Hence, Theorem~\ref{thm: main 3} applies and we obtain the the limit of mesoscopic fluctuations for all $\alpha\in(0,2)$ at the hard (left) edge. As is observed in the Laguerre case, assumptions in Theorem~\ref{thm: main 2} reveals that although the recurrence coefficients are not slowly varying in a stronger sense, cancellation exists that leads to an order reduction. For general $\gamma>0$, Theorem~\ref{thm: main 2} also applies, see Subsection~\ref{sec:eg Laguerre} for details.

\subsection{Overview of the Rest of the Paper}

The rest of the paper is organised as the following.

In the Preliminaries, Section~\ref{sec:preliminary}, we introduce the cumulant expansion of OPEs. We will express it in terms of recurrence coefficients. Special care is needed in the case where the moment problem is indeterminate. We will also present the main tool of analysis, an estimate of a three-diagonal matrix with slowly varying entries and explain why the usual Combes-Thomas estimate is insufficient for our setup. 

The varying weights are more difficult to prove than the non-varying ones. Hence, most of the content of this paper is to develop techniques for the varying weights, the proof of which we will show first. 

In Section~\ref{sec:overview of proof}, we provide the proof of Theorem~\ref{thm: main 2}. We illustrate the essence of proof by demonstrating several key propositions. The proofs of these key propositions are deferred in later Sections~\ref{sec:improve CB},~\ref{sec:cumulant truncation} and~\ref{sec:strong szego}.

In Section~\ref{sec:proof of 123}, we explore the conditions of Theorem~\ref{thm: main 2} and subsequently prove Theorems~\ref{thm: modify Jacobi}, ~\ref{thm: main 1} and~\ref{thm: regular}, in that specific order.

Theorem~\ref{thm: main 3} requires a different strategy of proof than the other theorems, since it is essentially a result of non-varying weights. It is explained and proved in Section~\ref{sec:proof of 3}.

In Section~\ref{sec:examples}, we give several examples that Theorems~\ref{thm: main 3},~\ref{thm: main 1} and~\ref{thm: main 2} are applicable. There includes examples of continuous and discrete weights.

\section{Preliminaries}\label{sec:preliminary}
In this section, we will introduce some notations and recall some preliminary facts regarding cumulants for linear statistics. We will follow the approach proposed by Breuer and Duits, \cite{breuer2016universality,breuer2017central}. They are the first to discover that the cumulants of the linear statistics can be expressed in terms of the Jacobi (semi-finite) matrix corresponding to the measure $\mu_n$ that defines the OPE. This discovery successfully leads to several results about linear statistics of OPEs, regrading both global and mesoscopic scales. This approach can also be applied to extended-OPEs that have extra parameters indicating the time transition, that rises naturally in many random tiling models, see \cite{duits2018global, duits2024lozenge}.  

We will also introduce the Combes-Thomas estimate, which is an essential element proving the mesoscopic limit in the bulk, \cite{breuer2016universality}. However, as we will explain in this section, at the edges, it is too rough and its improvement is required. To this end, we will introduce a formula for symmetric tri-diagonal matrices, that is well-known, but a key element for what follows. 

\subsection{Some Notations}
We start this section by some notations that we will use, for a general reference see \cite{simon2005trace}. For a compact operator $A$ on a (separable) Hilbert space, we denote the singular values by $\sigma_j(A)$, which are the square roots of the eigenvalues of the compact self-adjoint operator $A^*A$. Then we define 
\begin{enumerate}
\item $\|A\|_\infty\coloneqq \sup_j\sigma_j(A)$ to be the operator norm,
\item $\|A\|_1\coloneqq \sum_j\sigma_j(A)$ to be the trace norm,
\item $\|A\|_2\coloneqq \sqrt{\sum_j\sigma_j(A)^2}$ to be the Hilbert-Schmidt norm.
\end{enumerate}
Then we have the following inequalities
\begin{enumerate}
\item For $j=1,2,\infty$, 
\begin{equation*}
  \|AB\|_j\leq \|A\|_j\|B\|_\infty, \quad \|AB\|_j\leq \|A\|_\infty\|B\|_j ,
\end{equation*}
\begin{equation*}
  \|AB\|_1\leq\|A\|_2\|B\|_2 .
\end{equation*}
\item If $A$ is trace class, 
\begin{equation*}
  |\Tr A|\leq \|A\|_1.
\end{equation*}
\end{enumerate}
We will frequently view a semi-finite matrix $A=\left( (A)_{i,j} \right)_{i,j\in \mathbb{N}}$ with entries $(A)_{i,j}\in \mathbb{C}$, as an operator on $l^2(\mathbb{N})$. Then
\begin{equation*}
\|A\|_\infty\leq \sum_{j=-\infty}^\infty\sup_k\left| (A)_{k,k+j}  \right|,
\end{equation*}
\begin{equation*}
\|A\|_1\leq \sum_{i,j=1}^\infty\left| (A)_{i,j}  \right|,
\end{equation*}
\begin{equation*}
\|A\|_2=\left( \sum_{i,j=1}^\infty|(A)_{i,j}|^2  \right)^{1/2}.
\end{equation*}

If $A$ is further Hermitian, $\eta\in \mathbb{C}$ with $\Im \eta\neq 0$ and $Id$ is the identity operator, the following holds
\begin{equation}\label{eq:hermitian inequality}
\|(A-\eta Id)^{-1}\|_\infty\leq \frac{1}{|\Im \eta|}.
\end{equation} 
Note that among many trace norm inequalities, the one above may not be the optimal nor elegant one. However, in our cases, it is sufficient. 
\subsection{Bounded  and Unbounded Jacobi Operators}
Let the Jacobi matrix $\mathcal{J}$ be an semi-finite tri-diagonal matrix associated with the measure $\mu_n$ with entries being the recurrence coefficients defined as \eqref{eq:def recurrence 0} and \eqref{eq:def recurrence 1}, i.e., 

\begin{equation}\label{Jacobi}
\mathcal{J} \coloneqq  \begin{pmatrix}b_{0,n} & a_{1,n}      \\a_{1,n} & b_{1,n}       & a_{2,n}        \\ & a_{2,n} & b_{2,n} & a_{3,n} &   \\&  & a_{3,n} & b_{3,n} & a_{4,n} & \\& & &\ddots &\ddots & \ddots \end{pmatrix}. 
\end{equation}
Such $\mathcal{J}$ is also called the Jacobi operator associated with the measure $\mu_n$, viewed as a linear operator from $l^2(\mathbb{N})$ to $l^2(\mathbb{N})$ acting on the space of finite sequences. It is a bounded linear operator only if both sequences $\{a_{j,n}\}_j$ and $\{b_{j,n}\}_j$ are bounded.  For example, $\mathcal{J}$ is bounded for the Chebyshev polynomials where the recurrence coefficients are constants.  However, in many cases $\mathcal{J}$ is not necessarily a bounded operator. For example, $\mathcal{J}$ is unbounded for the Hermite and Laguerre polynomials where $a_{j,n}$ tends to infinity as $j\to\infty$. These examples are included in Section~\ref{sec:examples}. Breuer and Duits studied the resolvent of bounded $\mathcal{J}$  and they successfully obtained the mesoscopic fluctuations in the bulk for the non-varying cases \cite{breuer2016universality}.   However, though $\mathcal{J}$ is symmetric with all entries real numbers, its resolvent is only well-defined if $\mathcal{J}$  is a self-adjoint operator. However, $\mathcal{J}$  is essentially self-adjoint, if and only if the measure $\mu_n$  is fully characterized by its moments, see Theorems~$6.10.$ and~$6.16$ in \cite{schmudgen2017moment}. In terms of recurrence coefficients, that is if $\sum_{j\geq 1}a_{j,n}^{-1}=+\infty$, the moment problem is determinant. If $\{b_{j,n}\}_j$ is bounded, $a_{j-1,n}a_{j+1,n}\leq a_{j,n}^2$ for all $j\geq j_0$ for some $j_0\in\mathbb{N}$ and $\sum_{j\geq 1}a_{j,n}^{-1}<+\infty$, the moment problem is indeterminate. See Corollary $6.19.$ in \cite{schmudgen2017moment} and Theorem~$1.5$ in \cite{berezanskiui1968expansions}. For example, for Freud weight $d\mu_n(x)=e^{-n|x|^\gamma}dx$, $\gamma>0$, shown in \cite{kriecherbauer1999strong,lubinsky1988proof}, the recurrence coefficients have the asymptotics as
\begin{equation}
a_{j,n}=c_{\gamma}\left( \frac{j}{n} \right)^{\frac{1}{\gamma}}(1+O(j^{-1})), \quad \text{as } j\to\infty, \quad\text{and } b_{j,n}=0,
\end{equation}
where $c_{\gamma}$ is an universal constant only depending on $\gamma$. For a precise statement, see Section~\ref{sec:eg Freud}. Hence, its Jacobi operator is essentially self-adjoint if and  only if $\gamma\geq 1$. 

In the rest of the paper, we will avoid calling a general $\mathcal{J}$ an operator and only treat it as a semi-finite tri-diagonal matrix, which is always well-defined as \eqref{Jacobi}. We will construct a bounded linear operator from $l^2(\mathbb{N})$ to $l^2(\mathbb{N})$ associated with $\mathcal{J}$, that will be suitable for our further analysis.

Let $P(x)$ be the column vector of the orthonormal polynomials 
\begin{equation}\label{eq:vertor of poly}
P(x)= (p_{0,n}(x), p_{1,n}(x), \cdots)^{T},
\end{equation}
where the superscript $T$ is the transpose of a vector. We define a  semi-finite matrix
\begin{equation}\label{eq:selfadjoint G}
G_z\coloneqq \int_{\mathbb{R}} (x-z)^{-1}P(x)P(x)^{T}d\mu_n(x), \quad z\in\mathbb{C}, \quad \Im z\neq 0.
\end{equation}
Note that each entry of $G_z$ is well-defined and bounded, i.e., 
\begin{multline}\label{eq:selfadjoint inverse bound}
\left|  \int_{\mathbb{R}} (x-z)^{-1}p_{j,n}(x)p_{k,n}(x)d\mu(x) \right| \leq \frac{1}{|\Im z|} \int_{\mathbb{R}} \left| p_{j,n}(x)p_{k,n}(x) \right| d\mu(x) \\
\leq \frac{1}{|\Im z|}\left(  \int_{\mathbb{R}} \left|p_{j,n}(x)\right|^2d\mu_n(x)\int_{\mathbb{R}} \left|p_{k,n}(x)\right|^2d\mu_n(x) \right)^{\frac{1}{2}}=  \frac{1}{|\Im z|},
\end{multline}
where we use  the fact $	\left| (x-z)^{-1} \right|\leq \frac{1}{|\Im z|}$ for the first inequality, the Cauchy-Schwartz inequality for the second inequality and the normality of the orthonormal polynomials for the last equality. 

\begin{lemma}\label{lemma: selfadjoint 1}
For any $z\in\mathbb{C}$ with $\Im z\neq 0$, $G_{z}$ represents a bounded linear operator from $l^2(\mathbb{N})$ to $l^2(\mathbb{N})$, given the canonical basis in $l^2(\mathbb{N})$. Its operator norm is bounded by 
\begin{equation}
  \left\| G_z \right\|_{\infty}\leq |\Im z|^{-1}.
\end{equation} 
\end{lemma}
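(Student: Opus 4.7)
The plan is to prove the bound directly via a quadratic form argument on finite sequences, then extend to $l^2(\mathbb{N})$ by density. This is a standard argument that exploits the fact that the coordinates of $l^2(\mathbb{N})$ can be turned into $L^2(\mu_n)$ functions using the orthonormal polynomials $\{p_{j,n}\}$.

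First I would fix $u,v\in l^2(\mathbb{N})$ that are finitely supported and compute
\begin{equation*}
\langle u, G_z v\rangle_{l^2} = \sum_{j,k}\bar{u}_j (G_z)_{j,k} v_k = \sum_{j,k}\bar{u}_j v_k \int_{\mathbb{R}}\frac{p_{j,n}(x)p_{k,n}(x)}{x-z}\, d\mu_n(x).
\end{equation*}
Since the sums are finite, Fubini lets me exchange summation and integration. Introducing the auxiliary polynomials $U(x)=\sum_j u_j p_{j,n}(x)$ and $V(x)=\sum_k v_k p_{k,n}(x)$, this becomes
\begin{equation*}
\langle u, G_z v\rangle_{l^2} = \int_{\mathbb{R}} \frac{\overline{U(x)}\,V(x)}{x-z}\, d\mu_n(x).
\end{equation*}

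Next I would use $|(x-z)^{-1}|\leq |\Im z|^{-1}$, apply Cauchy–Schwarz in $L^2(\mu_n)$, and invoke the orthonormality of $\{p_{j,n}\}$ with respect to $\mu_n$, which gives $\|U\|_{L^2(\mu_n)}^2=\sum_j|u_j|^2 =\|u\|_{l^2}^2$ and similarly for $V$. Combining these yields
\begin{equation*}
|\langle u, G_z v\rangle_{l^2}| \leq \frac{1}{|\Im z|}\|U\|_{L^2(\mu_n)}\|V\|_{L^2(\mu_n)} = \frac{1}{|\Im z|}\|u\|_{l^2}\|v\|_{l^2}.
\end{equation*}

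Finally, since each entry of $G_z$ is already known to be bounded by \eqref{eq:selfadjoint inverse bound}, the bilinear form $(u,v)\mapsto \langle u,G_z v\rangle$ is well-defined on the dense subspace of finitely supported sequences, and the estimate above shows it is bounded with norm at most $|\Im z|^{-1}$. By standard extension, $G_z$ defines a bounded linear operator on $l^2(\mathbb{N})$ with $\|G_z\|_\infty\leq |\Im z|^{-1}$. I do not anticipate a serious obstacle: the main care needed is simply to exchange sum and integral legitimately (automatic for finite sums) and to recognize the Parseval-type identity $\|U\|_{L^2(\mu_n)}=\|u\|_{l^2}$, which holds because $\{p_{j,n}\}$ is orthonormal (no completeness assumption is required for this direction, only Bessel's equality on finite linear combinations).
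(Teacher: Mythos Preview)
Your proof is correct and follows essentially the same approach as the paper: both transfer the problem to $L^2(\mu_n)$ via the orthonormal polynomials and exploit the pointwise bound $|(x-z)^{-1}|\le|\Im z|^{-1}$. The paper packages this as the operator identity $G_z=\mathcal{U}\mathcal{P}(\mathcal{M}-z)^{-1}\mathcal{P}\mathcal{U}^{-1}$ for the multiplication operator $\mathcal{M}$, whereas you compute the same thing as a bilinear-form estimate on finitely supported sequences and extend by density; the underlying argument is identical.
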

\begin{proof}
Note that in the case where $\mu_n=\mu$ is non-varying or generally $\mu_n$ is varying with its moment problem being determinant, $\mathcal{J}$ give a self-adjoint operator and its resolvent is well defined. By the recurrence relations, we have $G_z=(\mathcal{J}-z)^{-1}$. The lemma holds directly. 

However, in the case where moment problem is indeterminant, and $\mathcal{J}$ does not admits an unique self-adjoint extension, a more careful treatment is required. 

Define the multiplication operator $\mathcal{M}:L^2(\mu_n)\to L^2(\mu_n)$ via $\mathcal{M}h(x)=xh(x)$, for $h\in Dom(\mathcal{M})\eqqcolon \{h\in L^2(\mu_n): \mathcal{M}h\in L^2(\mu_n)\}$. Hence, $(\mathcal{M}-z)^{-1}$ exists, in particular, $(\mathcal{M}-z)^{-1}h(x)=(x-z)^{-1}h(x)$, for $h\in L^2(\mu_n)$. The multiplication operator $\mathcal{M}$ is self-adjoint and hence its operator norm is bounded by 
\begin{equation}
  \left\| (\mathcal{M}-z)^{-1} \right\|_{\infty}\leq \left| \Im z \right|^{-1}.
\end{equation}

Let $e_j=(0,0, \dots, 1, 0, 0,\dots)^T$ be the $j$-th vector of the canonical basis for $l^{2}(\mathbb{N})$, i.e., the sequence with only the $j$-th element to be $1$ and the rests are zeros. Then,  each entry  of $G_z$ equals to
\begin{multline}\label{eq:selfadjoint inner product}
  (e_j,G_ze_k)_{l^2}=\int_{\mathbb{R}} (x-z)^{-1}p_{j-1}(x)p_{k-1}(x)d\mu_n(x) \\=\int_{\mathbb{R}} (\mathcal{M}-z)^{-1}p_{j-1}(x)p_{k-1}(x)d\mu_n(x) = \left((\mathcal{M}-z)^{-1}p_{j-1},p_{k-1}\right)_{L^2(\mu_n)}.
\end{multline}
Let the subspace $\mathcal{H}_{\mu_n}\coloneqq \overline{span\{p_{j,n}: j\geq 0 \}}^{L^2(\mu_n)}$ be the closure of the the linear space spanned by the orthonormal polynomials. Clearly, $L^2(\mu_n)=\mathcal{H}_{\mu_n}\oplus\mathcal{H}_{\mu_n}^{\perp}$.  Let $\mathcal{P}:L^2(\mu_n)\mapsto \mathcal{H}_{\mu_n}$ be the orthogonal projection operator onto  $\mathcal{H}_{\mu_n}$. Define an unitary operator $\mathcal{U}:\mathcal{H}_{\mu_n} \mapsto l^2(\mathbb{N})$ such that $\mathcal{U}p_{j,n}=e_{j+1}$. Then  \eqref{eq:selfadjoint inner product} shows that 
\begin{equation}\label{eq:selfadjoint unitary}
  G_z=\mathcal{U}\mathcal{P}\left( \mathcal{M}-z \right)^{-1}\mathcal{P}\mathcal{U}^{-1}.
\end{equation}
Therefore, $G_z$ is a bounded operator and
\begin{equation}\label{eq:selfadjoint norm}
  \left\| G_z \right\|_{\infty}\leq \left\| (\mathcal{M}-z)^{-1} \right\|_{\infty}.
\end{equation}

Hence, $\left\| G_z \right\|_{\infty}\leq |\Im z|^{-1}$. 
\end{proof}
Note that the equality of \eqref{eq:selfadjoint norm} holds only if  the space $L^2(\mu_n)$ can be spanned by the polynomials $\{p_{j,n}\}$. 

By the three-term recurrence relation \eqref{eq:def recurrence 0} and \eqref{eq:def recurrence 1}, $G_z$ as an infinite matrix defined in \eqref{eq:selfadjoint G} is the algebraic inverse of $\mathcal{J}-z$, i.e., the following holds  entry-wise as an infinite matrix,
\begin{equation}\label{eq:selfadjoint alg 1}
\left( (\mathcal{J}-z)G_z \right)_{j,k}=\left( G_z(\mathcal{J}-z) \right)_{j,k}=\begin{cases}
  1, \quad & j=k,\\0,\quad & j\neq k.
\end{cases}
\end{equation}
For any bounded operator $A:l^2\mapsto l^2$ whose inverse $A^{-1}$ exists and is also a bounded linear operator, the following holds algebraically,  i.e., the following holds  entry-wise as an infinite matrix,
\begin{equation}\label{eq:selfadjoint resolvent identity}
G_z-A^{-1}=G_z(A-\mathcal{J}+z)A^{-1}.
\end{equation}
Note that $\mathcal{J}$ is a tri-diagonal matrix, and hence $\mathcal{J}A^{-1}$ is always well-defined as a semi-finite matrix. If further $\mathcal{J}-z-A$ represents a bounded linear operator from $l^2(\mathbb{N})$ to $l^2(\mathbb{N})$, the right-hand side of \eqref{eq:selfadjoint resolvent identity} can be viewed as the operator compositions.  In such case, with an abuse of terminology, we call \eqref{eq:selfadjoint resolvent identity} to be the resolvent identity.

The bounded operator $G_z$  agrees with the resolvent of $\mathcal{J}$, whenever $\mathcal{J}$ is of self-adjoint. In such case, $G_z=(\mathcal{J}-z)^{-1}$.
\begin{corollary}\label{lemma: selfadjoint 2}
Recall that we define $G_z$ as \eqref{eq:selfadjoint G}. Then, for $z\in \mathbb{C}$ with $\Im z\neq 0$, $e^{G_z}$ is well defined and admits a formula 
\begin{equation}
  e^{G_z}= \int_{\mathbb{R}} 	e^{(x-z)^{-1}}P(x)P(x)^{T}d\mu_n(x).
\end{equation}
\end{corollary}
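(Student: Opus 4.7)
The plan is to verify the identity by expanding the operator exponential as a power series and matching matrix entries against the canonical basis of $l^2(\mathbb{N})$.

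First I would check that the right-hand side, call it $\tilde G_z := \int_{\mathbb{R}} e^{(x-z)^{-1}} P(x) P(x)^{T} d\mu_n(x)$, is a well-defined bounded operator on $l^2(\mathbb{N})$ with $\|\tilde G_z\|_\infty \le e^{|\Im z|^{-1}}$. This follows from the uniform bound $|e^{(x-z)^{-1}}|\le e^{|\Im z|^{-1}}$ by an argument identical in structure to Lemma~\ref{lemma: selfadjoint 1}, which also gives the representation $\tilde G_z = \mathcal{U}\mathcal{P} e^{(\mathcal{M}-z)^{-1}}\mathcal{P}\mathcal{U}^{-1}$. Since $G_z$ is bounded by Lemma~\ref{lemma: selfadjoint 1}, the series $e^{G_z} = \sum_{k\ge 0} G_z^k/k!$ converges absolutely in operator norm with $\|e^{G_z}\|_\infty \le e^{|\Im z|^{-1}}$.

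The core step is the induction claim
\begin{equation*}
  (G_z^k)_{i+1,j+1} = \int_{\mathbb{R}} (x-z)^{-k}\, p_{i,n}(x)\, p_{j,n}(x)\, d\mu_n(x), \qquad k \ge 1.
\end{equation*}
The base case is the definition \eqref{eq:selfadjoint G}. For the inductive step one writes $(G_z^{k+1})_{i+1,j+1} = \sum_m (G_z^k)_{i+1,m+1}(G_z)_{m+1,j+1}$, substitutes the inductive hypothesis, and exchanges sum and integral to recognise $\sum_{m\ge 0} p_{m,n}(x) p_{m,n}(y) = K(x,y)$ as the reproducing kernel of $\mathcal{H}_{\mu_n}$; the reproducing identity $\int K(x,y) g(y)\, d\mu_n(y) = g(x)$ applied to $g(y) = (y-z)^{-1} p_{j,n}(y)$ collapses the double integral and advances the induction. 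With the claim in hand, summing on $k$ and exchanging sum with integral (justified by $\sum_k |x-z|^{-k}/k! \le e^{|\Im z|^{-1}}$ together with $\int |p_{i,n} p_{j,n}|\, d\mu_n \le 1$ via Cauchy--Schwarz) yields the stated formula entry by entry.

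The main obstacle is the reproducing-kernel step, which requires $(y-z)^{-1} p_{j,n}(y) \in \mathcal{H}_{\mu_n}$, i.e., approximability of this function by polynomials in $L^2(\mu_n)$. This is automatic whenever $\mu_n$ has compact support (by Weierstrass) or whenever its moment problem is determinate, since then $\mathcal{H}_{\mu_n} = L^2(\mu_n)$; these cases cover every concrete example treated in this paper. In the fully general setting one reduces the identity to the operator-level statement $e^{T_z} = \mathcal{P} e^{(\mathcal{M}-z)^{-1}}|_{\mathcal{H}_{\mu_n}}$ with $T_z = \mathcal{P}(\mathcal{M}-z)^{-1}|_{\mathcal{H}_{\mu_n}}$, which follows from the invariance of $\mathcal{H}_{\mu_n}$ under $(\mathcal{M}-z)^{-1}$ combined with the spectral theorem for the self-adjoint multiplication operator $\mathcal{M}$ applied to the bounded continuous function $e^{(\cdot-z)^{-1}}$.
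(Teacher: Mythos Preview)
Your overall structure---induction on $k$ to prove $G_z^k=\int (x-z)^{-k}P(x)P(x)^T d\mu_n$, then sum---matches the paper. The divergence is in the inductive step, and your version has a real gap in the indeterminate case.

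Your reproducing-kernel argument requires $(y-z)^{-1}p_{j,n}(y)\in\mathcal{H}_{\mu_n}$, and you correctly flag this. But your fallback, invariance of $\mathcal{H}_{\mu_n}$ under $(\mathcal{M}-z)^{-1}$, is \emph{false} in general: for a non-N-extremal solution $\mu_n$ of an indeterminate moment problem, $(x-z)^{-1}\cdot 1\notin\mathcal{H}_{\mu_n}$ (indeed, if $(x-z)^{-1}$ were in the polynomial closure for all non-real $z$, density of resolvents would force $\mathcal{H}_{\mu_n}=L^2(\mu_n)$). So the operator identity $e^{T_z}=\mathcal{P}e^{(\mathcal{M}-z)^{-1}}|_{\mathcal{H}_{\mu_n}}$ cannot be deduced this way.

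The paper sidesteps the issue entirely by a purely algebraic inductive step that never leaves the polynomial world. Using the recurrence $(x-z)P(x)^T=P(x)^T(\mathcal{J}-z)$ and the entrywise identity \eqref{eq:selfadjoint alg 1}, one writes
\[
G_z^{k+1}=\int (x-z)^{-k-1}P(x)\,(x-z)P(x)^T\,d\mu_n\cdot G_z
=\int (x-z)^{-k-1}P(x)P(x)^T\,d\mu_n\cdot(\mathcal{J}-z)G_z,
\]
and the last factor collapses to the identity. This needs only that $\mathcal{J}$ is tridiagonal (so the matrix products are well defined columnwise) and works uniformly in the determinate and indeterminate cases. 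You should replace your reproducing-kernel step with this recurrence-based cancellation.
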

\begin{proof}
We will first prove for any $k\in\mathbb{N}$, by induction
\begin{equation}\label{eq:selfadjoint hypo}
  G_z^k=\int_{\mathbb{R}} (x-z)^{-k}P(x)P(x)^{T}d\mu_n(x).
\end{equation}
Note that by Lemma~\ref{lemma: selfadjoint 1}, \eqref{eq:selfadjoint hypo} holds for $k=1$. Moreover, $G_z$ is a bounded linear operator and so is $G_z^k$. Then by the uniqueness of the representation it is sufficient to show each entry coincides. Note that by the three-term recurrence relation we have $(x-z)P(x)^{T}=P(x)^{T}(\mathcal{J}-z)$ and \eqref{eq:selfadjoint alg 1} algebraically (i.e., to understand the equation entry-wise as infinite matrices). Use the induction hypothesis to obtain the following, which holds algebraically, (i.e., to understand the following equations entry-wise as semi-finite matrices),
\begin{multline}
  G_z^{k+1}=\int_{\mathbb{R}} (x-z)^{-k}P(x)P(x)^{T}d\mu_n(x)G_z=\int_{\mathbb{R}} (x-z)^{-k-1}P(x)(x-z)P(x)^{T}d\mu_n(x)G_z\\
  =\int_{\mathbb{R}} (x-z)^{-k-1}P(x)P(x)^{T}d\mu_n(x)(\mathcal{J}-z)G_z=\int_{\mathbb{R}} (x-z)^{-k-1}P(x)P(x)^{T}d\mu_n(x).
\end{multline}
Recall that $G_z^{k+1}$ is a bounded linear operator and the matrix representation of a bounded linear operator is unique. Hence, we conclude \eqref{eq:selfadjoint hypo} by induction. 

Now we have algebraically, (i.e., to understand the following entry-wise as semi-finite matrices),
\begin{multline}
  e^{G_z}=\sum_{k\geq 0}\frac{G_z^k}{k!} =\sum_{k\geq 0}\frac{1}{k!} \int_{\mathbb{R}} (x-z)^{-k}P(x)P(x)^{T}d\mu_n(x) \\= \int_{\mathbb{R}} \sum_{k\geq 0}\frac{1}{k!}(x-z)^{-k}P(x)P(x)^{T}d\mu_n(x) = \int_{\mathbb{R}} e^{(x-z)^{-1}}P(x)P(x)^{T}d\mu_n(x) .
\end{multline}
Note that the fourth equality above is valid due to $ \int_{\mathbb{R}} |(x-z)^{-k}p_j(x)p_k(x)|d\mu_n(x)\leq |\Im z|^{-k}$ by the same argument as \eqref{eq:selfadjoint inverse bound}. By Lemma~\ref{lemma: selfadjoint 1}, $e^{G_z}$  is a bounded linear operator, and we conclude the corollary.
\end{proof}

\subsection{Cumulants via Recurrence Relations}\label{sec:pre cumulant}
For a real-valued random variable $X$, whose moments are all finite, the cumulants $\mathcal{C}_m(X)$ are uniquely defined by the cumulant generating function, for $t\in\mathbb{C}$, 
$$\log\mathbb{E}[e^{tX}]=\sum_{m\geq 1}\frac{t^m}{m!}\mathcal{C}_m(X).$$ 
From this expression, the moments of $X$ can be recovered from its cumulants and vice versa. For example, the mean and variance of $X$ are given by $\mathcal{C}_1(X)$ and $\mathcal{C}_2(X)$ respectively. Note also that, if $X$ follows a  Gaussian distribution, we have $\log\mathbb{E}[e^{tX}]=t \mathcal{C}_1(X) + t^2 \mathcal{C}_2(X)/2$.  

Hence, to show that the linear statistics $X_{f,\alpha,x_o}^{(n)}$of an OPE as defined as \eqref{eq:meso linear} converges to a Gaussian distribution for a suitable class of test functions $f$, it is sufficient for us to study the asymptotic behaviour of its cumulants. First, we consider the test function $f$ to be
\begin{equation}\label{eq:f imaginary}
f(x)=\Im\sum_{r=1}^M  d_r \frac{1}{x-\lambda_r}
\end{equation}
where $M\in\mathbb{N}$,  $d_r\in\mathbb{R}$, $\Im$ takes the imaginary part of a complex number, and $\Im \lambda_r>0$. We point out that any compactly supported and continuously differentiable function can be well approximated by such $f$ in a certain Lipschitz space. This claim will be explained in Lemma ~\ref{lemma: recolvent approx}, which is a result in \cite{breuer2016universality}. 

Denote $\overline{\lambda}_r$ to be the complex conjugate of $\lambda_j$.  Then $f$ can be rewritten as 
\begin{multline}  \label{eq:resolvent fun}
f(x)=\sum_{r=1}^{2M}c_r \frac{1}{x-\eta_r}, \qquad\\ \text{ where} \quad 
c_r \coloneqq \begin{cases}
  \frac{d_r}{2i} \qquad r=1,\dots,M \\ 	-\frac{d_{r-M}}{2i} \quad r=1+M,\dots,2M\\
\end{cases} \eta_r \coloneqq \begin{cases}
  \lambda_r \quad r=1,\dots,M \\ \overline{\lambda}_{r-M} \qquad r=1+M,\dots,2M\\
\end{cases}.
\end{multline}

and, for some $x_0\in \mathbb{R}$, define 
\begin{equation}
f_{\alpha,x_0}^{(n)}(x)\coloneqq f(n^\alpha(x-x_0)). 
\end{equation}
\begin{equation}\label{eq:mesoscopic f}
f_{\alpha,x_0}^{(n)}(\mathcal{J})\coloneqq \int_{\mathbb{R}} 	f_{\alpha,x_0}^{(n)}(x)P(x)P(x)^{T}d\mu_n(x)=\sum_{r=1}^{2M}\frac{c_r}{n^{\alpha}}G_{x_0+\frac{\eta_r}{n^{\alpha}}},
\end{equation}
where $G_{x_0+\frac{\eta_r}{n^{\alpha}}}$ is defined as \eqref{eq:selfadjoint G}. Note that $f_{\alpha,x_0}^{(n)}(\mathcal{J})$  is not necessarily a self-adjoint operator on $l^2(\mathbb{N})$. However, this is true if and only if the measure $\mu_n$ is determined by its moments. Nevertheless, similar to the properties of $G_z$, in general, the following holds for $	f_{\alpha,x_0}^{(n)}(\mathcal{J})$.

\begin{corollary}\label{lemma: selfadjoint 3}
$f_{\alpha,x_0}^{(n)}(\mathcal{J})$ is a well defined infinite matrix and entries are real and symmetric, i.e., 
\begin{equation}
  \left( f_{\alpha,x_0}^{(n)}(\mathcal{J}) \right)_{j,k}=\left( f_{\alpha,x_0}^{(n)}(\mathcal{J}) \right)_{k,j}=\overline{\left( f_{\alpha,x_0}^{(n)}(\mathcal{J}) \right)_{k,j}}, \quad j,k\in\mathbb{N},
\end{equation}
where the overline is the complex conjugate. Moreover, 	$f_{\alpha,x_0}^{(n)}(\mathcal{J})$ represents a bounded linear operator from $l^2(\mathbb{N})$ to $l^2(\mathbb{N})$. Its operator norm is estimated by 
\begin{equation}\label{eq:selfadjoint fJ norm}
  \left\| f_{\alpha,x_0}^{(n)}(\mathcal{J}) \right\|_{\infty}\leq \sum_{r=1}^{2M}\left| \frac{c_r}{\Im \eta_r} \right|.
\end{equation}
Furthermore, the bounded linear operator $	e^{f_{\alpha,x_0}^{(n)}(\mathcal{J})}$ is also well-defined with matrix representation, under the canonical basis of $l^2(\mathbb{N})$, 
\begin{equation}\label{eq:selfadjoint efJ }
  e^{f_{\alpha,x_0}^{(n)}(\mathcal{J})}= \int_{\mathbb{R}} 	e^{f_{\alpha,x_0}^{(n)}(x)}P(x)P(x)^{T}d\mu_n(x).
\end{equation}
\end{corollary}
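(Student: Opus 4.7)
The plan is to lift Lemma~\ref{lemma: selfadjoint 1} and Corollary~\ref{lemma: selfadjoint 2} from a single resolvent-type operator $G_z$ to the finite linear combination appearing in \eqref{eq:mesoscopic f}, since $f_{\alpha,x_0}^{(n)}(\mathcal{J}) = \sum_{r=1}^{2M}\frac{c_r}{n^\alpha} G_{x_0+\eta_r/n^\alpha}$ is just a sum of such operators.

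First I would verify boundedness and the norm bound. Setting $z_r := x_0+\eta_r/n^\alpha$, so that $|\Im z_r| = |\Im \eta_r|/n^\alpha$, Lemma~\ref{lemma: selfadjoint 1} gives $\|G_{z_r}\|_\infty \leq n^\alpha/|\Im \eta_r|$, and the triangle inequality applied to \eqref{eq:mesoscopic f} immediately yields \eqref{eq:selfadjoint fJ norm}. The $(j,k)$ matrix entry is the integral $\int f_{\alpha,x_0}^{(n)}(x) p_{j-1,n}(x) p_{k-1,n}(x) d\mu_n(x)$, which is manifestly symmetric in $j$ and $k$; reality follows from the fact that $f$ is real-valued on $\mathbb{R}$ (by its definition \eqref{eq:f imaginary} as the imaginary part of a meromorphic function, or, equivalently from \eqref{eq:resolvent fun}, because the $r$-th and $(r+M)$-th summands are complex conjugates of one another).

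To establish \eqref{eq:selfadjoint efJ }, I would first prove by induction on $k\geq 1$ that
\begin{equation*}
f_{\alpha,x_0}^{(n)}(\mathcal{J})^k = \int_{\mathbb{R}} f_{\alpha,x_0}^{(n)}(x)^k P(x) P(x)^T d\mu_n(x),
\end{equation*}
mimicking the induction in the proof of Corollary~\ref{lemma: selfadjoint 2}. The base case $k=1$ is the definition \eqref{eq:mesoscopic f}. The inductive step hinges on the algebraic identity $P(x)^T G_z = (x-z)^{-1} P(x)^T$, which follows from $P(x)^T(\mathcal{J}-z)=(x-z)P(x)^T$ combined with \eqref{eq:selfadjoint alg 1}; summing this over $r$ yields $P(x)^T f_{\alpha,x_0}^{(n)}(\mathcal{J}) = f_{\alpha,x_0}^{(n)}(x) P(x)^T$, after which the induction closes exactly as in Corollary~\ref{lemma: selfadjoint 2}. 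Summing the Taylor series $e^A = \sum_{k\geq 0} A^k/k!$ and interchanging sum with integral then gives \eqref{eq:selfadjoint efJ }. The interchange is justified by the uniform bound $|f_{\alpha,x_0}^{(n)}(x)| \leq \sum_r |c_r|/|\Im \eta_r| =: C$, together with Cauchy--Schwarz giving $\int |p_{j-1,n}(x) p_{k-1,n}(x)| d\mu_n(x) \leq 1$, so that each entry of the $k$-th summand is dominated in absolute value by $C^k/k!$ and $\sum_k C^k/k! = e^C < \infty$.

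The main obstacle is conceptual rather than computational: when the moment problem for $\mu_n$ is indeterminate, $\mathcal{J}$ need not be essentially self-adjoint, and I cannot appeal to a functional calculus for $\mathcal{J}$ directly. Every manipulation must instead be carried out either at the level of matrix entries or via the auxiliary self-adjoint multiplication operator $\mathcal{M}$ on $L^2(\mu_n)$ used in the proof of Lemma~\ref{lemma: selfadjoint 1}. This is precisely why an explicit induction, using only the three-term recurrence and \eqref{eq:selfadjoint alg 1}, must replace any spectral-calculus shortcut; once that bookkeeping is in place no further difficulty is expected.
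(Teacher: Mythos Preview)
Your proposal is correct and follows essentially the same approach as the paper's proof, which simply invokes Lemma~\ref{lemma: selfadjoint 1} together with the triangle inequality for the norm bound, notes that reality and symmetry are ``clear'' from \eqref{eq:f imaginary} and \eqref{eq:mesoscopic f}, and refers back to the argument of Corollary~\ref{lemma: selfadjoint 2} for \eqref{eq:selfadjoint efJ }. You have spelled out the induction and the dominated-convergence justification that the paper leaves implicit, but the underlying strategy is identical.
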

\begin{proof}
By definition \eqref{eq:f imaginary} and \eqref{eq:mesoscopic f}, it is clear that 	$f_{\alpha,x_0}^{(n)}(\mathcal{J})$ has real and symmetric entries. By Lemma~\ref{lemma: selfadjoint 1} and the definition \eqref{eq:mesoscopic f}, $f_{\alpha,x_0}^{(n)}(\mathcal{J}): l^2(\mathbb{N})\mapsto l^2(\mathbb{N})$ is a bounded linear functional, due to linearity. The estimate \eqref{eq:selfadjoint fJ norm} follows form triangle inequality and Lemma~\ref{lemma: selfadjoint 1}. The second statement \eqref{eq:selfadjoint efJ } follows from the same argument as Corollary~\ref{lemma: selfadjoint 2}.
\end{proof}

Breuer and Duits obtained a beautiful formula of the cumulant generating function of the linear statistics in terms of $\mathcal{J}$, see the preliminaries of \cite{breuer2016universality}. They showed the formula for $\mathcal{J}$ being a bounded infinite matrix. By Corollary~\ref{lemma: selfadjoint 3}, we now extend their result to general $\mathcal{J}$, which can be unbounded. 
\begin{lemma}[Breuer-Duits]\label{lemma: cumulant generating}
The cumulant generating function for the linear statistics $X_{f,\alpha,x_0}^{(n)}$ of an OPE has the following determinantal structure,
\begin{equation}
  \log\mathbb{E}\left[\exp\left(tX_{f,\alpha,x_0}^{(n)}\right)\right] = \log \det \left(Id+P_n\left(e^{t f_{\alpha,x_0}^{(n)}(\mathcal{J})}-Id\right)P_n \right),
\end{equation}
where  $f_{\alpha,x_0}^{(n)}(\mathcal{J})$ is defined as \eqref{eq:mesoscopic f}, $Id$ is the identity operator and $P_n$ is the cardinal projection onto the first $n$ coordinates. 
\end{lemma}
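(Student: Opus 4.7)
The starting point is that the OPE \eqref{Poly_ensemble} is a determinantal point process whose correlation kernel is the Christoffel-Darboux kernel $K_n(x,y)=\sum_{j=0}^{n-1}p_{j,n}(x)p_{j,n}(y)$ with respect to $\mu_n$. For any bounded measurable $g$ with $e^g-1$ sufficiently integrable, the general Laplace-functional identity for DPPs gives
\begin{equation*}
\mathbb{E}\Bigl[\prod_{i=1}^n e^{g(x_i)}\Bigr]=\det\bigl(\mathrm{Id}_{L^2(\mu_n)}+(e^g-1)K_n\bigr),
\end{equation*}
where $K_n$ is understood as the integral operator on $L^2(\mu_n)$ with that kernel. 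Applied to $g(x)=tf_{\alpha,x_0}^{(n)}(x)$, the left-hand side is precisely $\mathbb{E}[\exp(t X_{f,\alpha,x_0}^{(n)})]$. So the task is to convert this Fredholm determinant on $L^2(\mu_n)$ into the one on $\ell^2(\mathbb{N})$ stated in the lemma.

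The transfer is made through the partial isometry of Lemma~\ref{lemma: selfadjoint 1}: let $\mathcal{P}$ be the orthogonal projection $L^2(\mu_n)\to\mathcal{H}_{\mu_n}$ and let $\mathcal{U}:\mathcal{H}_{\mu_n}\to\ell^2(\mathbb{N})$ be the unitary sending $p_{j,n}$ to $e_{j+1}$. Then $K_n=\sum_{j=0}^{n-1}p_{j,n}\otimes p_{j,n}$ satisfies $\mathcal{U}\mathcal{P}K_n\mathcal{P}\mathcal{U}^{-1}=P_n$, and multiplication by $(e^{tf_{\alpha,x_0}^{(n)}}-1)$ on $L^2(\mu_n)$ is pushed, exactly as in the derivation of \eqref{eq:selfadjoint unitary} and \eqref{eq:selfadjoint efJ } in Corollaries~\ref{lemma: selfadjoint 2} and~\ref{lemma: selfadjoint 3}, to the operator $e^{tf_{\alpha,x_0}^{(n)}(\mathcal{J})}-\mathrm{Id}$ on $\ell^2(\mathbb{N})$. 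Because $K_n$ has finite rank and is supported on the polynomial subspace $\mathcal{H}_{\mu_n}$, inserting $\mathcal{P}$ on both sides of $K_n$ is free, and the Fredholm determinant is unaffected. Using the cyclic identity $\det(\mathrm{Id}+AB)=\det(\mathrm{Id}+BA)$ with $A=(e^{tf_{\alpha,x_0}^{(n)}}-1)$ and $B=K_n$, and noting that $K_n=K_nP_nK_n$ in the appropriate sense (or equivalently $K_n=\mathcal{U}^{-1}P_n\mathcal{U}$ on $\mathcal{H}_{\mu_n}$), one rewrites the determinant as
\begin{equation*}
\det\bigl(\mathrm{Id}+P_n(e^{tf_{\alpha,x_0}^{(n)}(\mathcal{J})}-\mathrm{Id})P_n\bigr),
\end{equation*}
which is a determinant of an $n\times n$ matrix, hence well defined. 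Taking logs and using the definition of cumulants yields the claim.

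The step I expect to be most delicate is the justification of all these manipulations when the moment problem is indeterminate, i.e.\ when $\mathcal{H}_{\mu_n}\subsetneq L^2(\mu_n)$ and $\mathcal{J}$ is not essentially self-adjoint. In that regime $e^{tf_{\alpha,x_0}^{(n)}(\mathcal{J})}$ is not a functional calculus object, only the matrix object constructed in Corollary~\ref{lemma: selfadjoint 3}. One has to check that the series expansion of $e^g$ can be exchanged with the Fredholm determinant's trace expansion uniformly, and that the extra $\mathcal{H}_{\mu_n}^\perp$ directions, which sit inside $\ker K_n$, contribute trivial factors of $1$ to the determinant. Once this is handled for $t$ in a small neighborhood of $0$ (where absolute convergence of the cumulant expansion is automatic from the bound $\|f_{\alpha,x_0}^{(n)}(\mathcal{J})\|_\infty\le\sum_r|c_r/\Im\eta_r|$ of Corollary~\ref{lemma: selfadjoint 3} together with the finite rank of $P_n$), both sides are entire functions of $t$ and the identity extends globally by analytic continuation, completing the proof.
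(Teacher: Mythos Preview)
Your argument is correct but takes a different route from the paper. The paper does not pass through the DPP Laplace-functional identity or Fredholm determinants on $L^2(\mu_n)$ at all; instead it works directly from the density \eqref{Poly_ensemble}. It rewrites the squared Vandermonde via orthonormal polynomials as $\bigl(\det(p_{j-1,n}(x_i))_{i,j=1}^n\bigr)^2$, applies Andr\'eief's identity \eqref{prop: Andreief} to the resulting $n$-fold integral, and lands immediately on the $n\times n$ determinant $\det\bigl(\int e^{tf_{\alpha,x_0}^{(n)}(x)}p_{j-1,n}(x)p_{k-1,n}(x)\,d\mu_n(x)\bigr)_{j,k=1}^n$, which Corollary~\ref{lemma: selfadjoint 3} identifies with $\det(P_n e^{tf_{\alpha,x_0}^{(n)}(\mathcal{J})}P_n+Q_n)$.

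The trade-off is this: the paper's approach is entirely finite-dimensional from the moment Andr\'eief is applied, so the indeterminate moment problem never causes trouble---the only functional-analytic input is the matrix identity \eqref{eq:selfadjoint efJ }, which was established in Corollary~\ref{lemma: selfadjoint 3} precisely to cover that case. Your approach, by contrast, routes through an infinite-dimensional Fredholm determinant and then has to argue (as you correctly flag) that the $\mathcal{H}_{\mu_n}^\perp$ directions are harmless; this is true because $K_n$ has range inside $\mathcal{H}_{\mu_n}$, but it is an extra layer of justification that the Andr\'eief route avoids entirely. Both arguments are standard; the paper's is the more elementary one here.
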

\begin{proof}
Note that the Vandermonde determinant admits the following formula
\begin{equation}
  \prod_{1\leq i<j\leq n}(x_i-x_j) =\frac{1}{\prod_{j=1}^n\gamma_{j,n}}\det (p_{j-1,n}(x_i))_{i,j=1}^n
\end{equation}
where $p_{j,n}$ is the orthonormal polynomial and $\gamma_{j,n}$ is its leading coefficients. 

Hence, use \eqref{Poly_ensemble} to obtain that
\begin{multline}
  \mathbb{E}\left[\exp\left(tX_{f,\alpha,x_0}^{(n)}\right)\right] =\frac{1}{Z_n}\int\cdots \int \prod_{i= n}^ne^{t	f_{\alpha,x_0}^{(n)}(x_i)}\left( \det (p_{j-1,n}(x_i))_{i,j=1}^n \right)^2\mu_n(x_1)\cdots d\mu_n(x_n)\\
  =\frac{1}{Z_n}\int\cdots \int\det (e^{t	f_{\alpha,x_0}^{(n)}(x_i)}p_{j-1,n}(x_i))_{i,j=1}^n \det (p_{j-1,n}(x_i))_{i,j=1}^n\mu_n(x_1)\cdots d\mu_n(x_n), 
\end{multline}
where $Z_n>0$ is some normalising constant.
Recall the Andrei\'ef's identity, for a measure $\mu$ and measurable functions $f_j,g_j\in L^2(\mu)$ for $j=1,\dots n$, 
\begin{equation}\label{prop: Andreief}
  \int\cdots \int\det\left(f_j(x_k)\right)_{j,k=1}^n\det\left(g_j(x_k)\right)_{j,k=1}^n d\mu(x_1)\cdots d\mu(x_n) 
  = n! \det\left(\int f_j(x)g_k(x)d\mu(x)\right)_{j,k=1}^n.
\end{equation}
Then,
\begin{equation}
  \mathbb{E}\left[\exp\left(tX_{f,\alpha,x_0}^{(n)}\right)\right] 
  =\frac{n!}{Z_n} \det\left(\int e^{t	f_{\alpha,x_0}^{(n)}(x)}p_{j-1,n}(x)p_{i-1,n}(x)d\mu_n(x)\right)_{j,k=1}^n.
\end{equation}
Rewrite it in the matrix form and use Corollary~\ref{lemma: selfadjoint 3} to obtain
\begin{equation}
  \mathbb{E}\left[\exp\left(tX_{f,\alpha,x_0}^{(n)}\right)\right] 
  =\frac{n!}{Z_n} \det\left(P_n	e^{tf_{\alpha,x_0}^{(n)}(\mathcal{J})} P_n+Q_n\right).
\end{equation}
where $P_n$ is the canonical projection onto the first $n$ coordinates. 

In particular, take $t=0$ to obtain $\mathbb{E}\left[1\right] 
=\frac{n!}{Z_n} \det\left(P_n	+Q_n\right)$. Hence $Z_n=n!$. This concludes the Lemma.  
\end{proof}

Breuer and Duits also computed the cumulants in Sections $2.2$ and $2.3$ of \cite{breuer2016universality}, which we extend and formulate as the following lemma. We also give the proof for completeness.
\begin{lemma}[Breuer-Duits]
The cumulants for the linear statistics $X_{f,\alpha,x_0}^{(n)}$ of an OPE can be expressed to be
\begin{equation}
  \mathcal{C}_1(X_{f,\alpha,x_0}^{(n)}) = 	\mathbb{E}[X_{f,\alpha,x_0}^{(n)}] = \Tr\left(P_nf_{\alpha,x_0}^{(n)}(\mathcal{J})P_n\right).
\end{equation}
For $m\geq2$ 
\begin{multline}\label{eq:cumulant 2}
  \mathcal{C}_m(X_{f,\alpha,x_0}^{(n)})= m!\sum_{j=2}^m\frac{(-1)^{j+1}}{j}
  \sum_{l_1+\dots+l_j=m, l_i\geq 1}\\
  \frac{\Tr\left( P_n\left( f_{\alpha,x_0}^{(n)}(\mathcal{J}) \right)^{l_1}P_n \left( f_{\alpha,x_0}^{(n)}(\mathcal{J}) \right)^{l_2}P_n \dots \left( f_{\alpha,x_0}^{(n)}(\mathcal{J}) \right)^{l_j}P_n   \right)-\Tr\left( \left(  f_{\alpha,x_0}^{(n)}(\mathcal{J}) \right)^{m}P_n   \right)}{l_!!\cdots l_j!}.
\end{multline}
In particular, 
\begin{equation}\label{var:1d}
  \mathcal{C}_2(X_{f,\alpha,x_0}^{(n)}) =  \Var[X_{f,\alpha,x_0}^{(n)}]= \Tr\left(P_n f_{\alpha,x_0}^{(n)}(\mathcal{J})Q_n f_{\alpha,x_0}^{(n)}(\mathcal{J})P_n\right).
\end{equation}
\end{lemma}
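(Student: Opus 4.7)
The plan is to start from Lemma~\ref{lemma: cumulant generating}, which already identifies the cumulant generating function with $\log\det\bigl(Id + K_t\bigr)$ for $K_t \coloneqq P_n\bigl(e^{tf_{\alpha,x_0}^{(n)}(\mathcal{J})}-Id\bigr)P_n$. Because $P_n$ is a finite-rank projection, $K_t$ is a finite-rank operator and, by Corollary~\ref{lemma: selfadjoint 3}, its operator norm is uniformly bounded on compact sets of $t$; so for $|t|$ small enough, the standard series expansion $\log\det(Id+K_t) = \sum_{j\geq 1}\frac{(-1)^{j+1}}{j}\Tr(K_t^j)$ is valid. Then the cumulants are recovered as $\mathcal{C}_m = m! \cdot [t^m]\log\det(Id+K_t)$, and the asymptotic identification of the power series with a convergent series of entire functions in $t$ extends the formula to all $t$ inside the convergence radius, which suffices since the coefficients are uniquely determined.

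Next I would expand $e^{tf_{\alpha,x_0}^{(n)}(\mathcal{J})} - Id = \sum_{l\geq 1}\frac{t^l}{l!}\bigl(f_{\alpha,x_0}^{(n)}(\mathcal{J})\bigr)^l$ inside the definition of $K_t$, insert this into $K_t^j$, and collect the terms of a fixed total degree $m = l_1+\dots+l_j$. This yields, for each $m\geq 1$,
\begin{equation*}
\mathcal{C}_m = m!\sum_{j=1}^{m}\frac{(-1)^{j+1}}{j}\sum_{\substack{l_1+\dots+l_j=m \\ l_i\geq 1}}\frac{\Tr\bigl(P_n f(\mathcal{J})^{l_1}P_n\cdots P_n f(\mathcal{J})^{l_j}P_n\bigr)}{l_1!\cdots l_j!},
\end{equation*}
where I write $f$ for $f_{\alpha,x_0}^{(n)}$. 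The $j=1$ contribution is $\Tr\bigl(P_n f(\mathcal{J})^m P_n\bigr) = \Tr\bigl(f(\mathcal{J})^m P_n\bigr)$ by cyclicity of the trace and $P_n^2 = P_n$; for $m=1$ this is exactly $\mathbb{E}[X_{f,\alpha,x_0}^{(n)}]$. For $m\geq 2$, to recast the $j=1$ term into the statement's form I would invoke the combinatorial identity $\sum_{j=1}^{m}\frac{(-1)^{j+1}}{j}\sum_{l_1+\dots+l_j=m,\,l_i\geq 1}\frac{m!}{l_1!\cdots l_j!} = 0$, which follows from comparing coefficients in $\log(1+(e^x-1)) = x$. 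Multiplying this identity by $\Tr\bigl(f(\mathcal{J})^m P_n\bigr)$ and subtracting from the previous display absorbs the $j=1$ summand and produces exactly the subtraction-form \eqref{eq:cumulant 2}.

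Finally, for the variance formula \eqref{var:1d} I would specialise the above to $m=2$: the only pair with $l_1+l_2=2$ and $l_i\geq 1$ is $(1,1)$, giving $\mathcal{C}_2 = \Tr\bigl(f(\mathcal{J})^2 P_n\bigr) - \Tr\bigl(P_n f(\mathcal{J})P_n f(\mathcal{J})P_n\bigr)$. Inserting $Id = P_n + Q_n$ between the two factors of $f(\mathcal{J})$ in $\Tr\bigl(P_n f(\mathcal{J})^2 P_n\bigr) = \Tr\bigl(f(\mathcal{J})^2 P_n\bigr)$ and cancelling yields $\Tr\bigl(P_n f(\mathcal{J}) Q_n f(\mathcal{J}) P_n\bigr)$, which is precisely \eqref{var:1d}. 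I expect the main (though still modest) obstacle to be the rigorous handling of the logarithm expansion in the possibly unbounded-$\mathcal{J}$ setting: one must check that all trace manipulations make sense as formal identities between semi-finite matrix entries that actually extend to finite-rank truncations, using Corollary~\ref{lemma: selfadjoint 3} to guarantee boundedness of each $f_{\alpha,x_0}^{(n)}(\mathcal{J})^{l}$ after a single application of $P_n$; once that finite-rank reduction is made, all operations take place in $M_n(\mathbb{C})$ and the remaining arguments are purely algebraic.
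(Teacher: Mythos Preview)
Your proposal is correct and follows essentially the same route as the paper: expand $\log\det(Id+K_t)$ as $\sum_{j\geq 1}\frac{(-1)^{j+1}}{j}\Tr(K_t^j)$, insert the exponential series for $K_t$, collect terms by total degree to obtain the raw cumulant formula, and then invoke the identity $\log(1+(e^x-1))=x$ to subtract the $\Tr\bigl(f(\mathcal{J})^m P_n\bigr)$ term and reach \eqref{eq:cumulant 2}. Your specialisation to $m=2$ via $Id=P_n+Q_n$ is the standard derivation of \eqref{var:1d}, which the paper states without spelling out; your remark about the finite-rank reduction through $P_n$ is the right way to make the expansions rigorous in the possibly unbounded setting.
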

\begin{proof}
Note that for any non-singular square matrix $A$, we have $\log\det A=\Tr\log A$. Similarly,
\begin{equation}
  \log \det \left(Id+P_n\left(e^{t f_{\alpha,x_0}^{(n)}(\mathcal{J})}-Id\right)P_n \right)=\Tr \log \left(Id+P_n\left(e^{t f_{\alpha,x_0}^{(n)}(\mathcal{J})}-Id\right)P_n \right).
\end{equation}
Then for $t$ sufficiently small, we expand the $\log$ and $\exp$ and reorder the summations to obtain, 
\begin{multline}\label{eq:Fredholm det}
  \log\mathbb{E}\left[\exp\left(tX_{f,\alpha,x_0}^{(n)}\right)\right] = \Tr \sum_{j=1}^\infty\frac{(-1)^{j+1}}{j} \left(P_n\left(e^{tf_{\alpha,x_0}^{(n)}(\mathcal{J})}-Id\right)P_n \right)^j\\
  = \Tr \sum_{j=1}^\infty\frac{(-1)^{j+1}}{j} \sum_{l_1\geq 1, l_2\geq 1, \dots, l_j\geq 1}\frac{t^{l_1+l_2+\dots+l_j}}{l_1!l_2!\dots l_j!}P_n\left( f_{\alpha,x_0}^{(n)}(\mathcal{J}) \right)^{l_1}P_n \dots \left( f_{\alpha,x_0}^{(n)}(\mathcal{J}) \right)^{l_j}P_n \\
  = \sum_{j=1}^\infty\sum_{m=j}^\infty t^m\frac{(-1)^{j+1}}{j} \sum_{l_1+\dots+l_j=m,l_j\geq 1}\frac{\Tr\left(P_n\left( f_{\alpha,x_0}^{(n)}(\mathcal{J}) \right)^{l_1}P_n \dots \left( f_{\alpha,x_0}^{(n)}(\mathcal{J}) \right)^{l_j}P_n\right)}{l_1!l_2!\dots l_j!}\\
  = \sum_{j=1}^\infty\sum_{m=j}^\infty t^m\frac{(-1)^{j+1}}{j} \sum_{l_1+\dots+l_j=m,l_j\geq 1}\frac{\Tr\left(P_n\left( f_{\alpha,x_0}^{(n)}(\mathcal{J}) \right)^{l_1}P_n \dots \left( f_{\alpha,x_0}^{(n)}(\mathcal{J}) \right)^{l_j}P_n\right)}{l_1!l_2!\dots l_j!}\\
  = \sum_{m=1}^\infty t^m\sum_{j=1}^m \frac{(-1)^{j+1}}{j} \sum_{l_1+\dots+l_j=m,l_j\geq 1}\frac{\Tr\left( P_n\left( f_{\alpha,x_0}^{(n)}(\mathcal{J}) \right)^{l_1}P_n \dots \left( f_{\alpha,x_0}^{(n)}(\mathcal{J}) \right)^{l_j}P_n  \right)}{l_1!l_2!\dots l_j!}.
\end{multline}

Then the cumulants of the linear statistics can be written as the follows, by expanding the right-hand side of \eqref{eq:Fredholm det}
\begin{equation}\label{eq:cumulant 20}
  \mathcal{C}_m(X_{f,\alpha,x_0}^{(n)})=m!\sum_{j=1}^m\frac{(-1)^{j+1}}{j}\sum_{l_1+\dots+l_j=m, l_i\geq 1}\frac{\Tr\left( P_n\left( f_{\alpha,x_0}^{(n)}(\mathcal{J}) \right)^{l_1}P_n \dots \left( f_{\alpha,x_0}^{(n)}(\mathcal{J}) \right)^{l_j}P_n   \right)}{l_!!\cdots l_j!}.
\end{equation}
Similarly to \eqref{eq:Fredholm det}, by expanding the logarithm and exponential and reordering the summations, we obtain
\begin{multline*}
  \log (1 + (e^x -1)) =\sum_{j=1}^\infty \frac{(-1)^j}{j}\sum_{l_1\geq 1, l_2\geq 1,\dots, l_j\geq 1 }\frac{x^{l_1+\dots+l_j}}{l_1!\cdots l_j!} = \sum_{j=1}^\infty\sum_{m=j}^\infty\frac{(-1)^j}{j}\sum_{l_1+\cdots +l_j=m, l_i\geq 1}\frac{x^m}{l_1!\cdots l_j!} \\= \sum_{m=1}^\infty x^m\sum_{j=1}^m\frac{(-1)^j}{j}\sum_{l_1+\cdots +l_j=m, l_i\geq 1}\frac{1}{l_1!\cdots l_j!},
\end{multline*}
for all $x\in\mathbb{R}$. Note that $\log (1 + (e^x -1))=x$. Hence, $\sum_{j=1}^{m}\frac{(-1)^j}{j}\sum_{l_1+\cdots +l_j=m, l_i\geq 1}\frac{1}{l_1!\cdots l_j!}=0$ for all $m\geq 2$. 
Then, for $m\geq 2$, \eqref{eq:cumulant 20} can be rewritten to be \eqref{eq:cumulant 2}
\end{proof}

For any linear operator $\mathcal{A}$, we define 

\begin{equation}
\mathcal{C}_m^{(n)}(\mathcal{A}) \coloneqq m! \sum_{j=2}^{m}\frac{(-1)^{j+1}}{j}\sum_{l_1+\dots +l_j=m, l_i\geq 1}\frac{\Tr(\mathcal{A})^{l_1}P_n\dots (\mathcal{A})^{l_j}P_n-\Tr(\mathcal{A}^mP_n)}{l_1!\dots l_j!}. \label{eq:cumulatn operator}
\end{equation}

The extra term $\Tr(\mathcal{A}^mP_n)$ enables us to have good estimates of the cumulants. We will use this extensively to show that the cumulants of the linear statistics of an OPE only depends on the recurrence coefficients of order around $n$, given assumptions of Theorem~\ref{thm: main 2} (cf. Proposition~\ref{prop: overview trimming 2}). Another consequence is the following lemma, which is a by-product of Lemma~$2.2$ in \cite{breuer2014nevai}. We will state it in terms of linear operators and give a proof for completeness. 
\begin{lemma}[Breuer-Duits]\label{lemma: dominated}
For a bounded linear operator $\mathcal{A}$, whose matrix representations under the carnonial basis of $l^2(\mathbb{N})$ is such that 
\begin{equation}\label{eq:selfadjoint A}
  \left( \mathcal{A} \right)_{j,k}=\overline{\left( \mathcal{A} \right)_{j,k}}, \quad j,k\in\mathbb{N}.
\end{equation} 

Then we have for $m\geq 2$
\begin{equation}\label{eq:lemma dominated}
\left| 	\mathcal{C}_m^{(n)}(\mathcal{A}) \right| \leq  \sqrt{\frac{2}{\pi }} m!m^{3/2}\|\mathcal{A}\|_\infty^{m-2}e^m\mathcal{C}_2^{(n)}(\mathcal{A}).
\end{equation}
In particular, for $|t|<(e\left\| \mathcal{A} \right\|_\infty )^{-1}$, there exists some constant $c>0$ such that
\begin{equation}
  \sum_{m\geq 2}\left| \frac{t^m}{m!}\mathcal{C}_m^{(n)}(\mathcal{A})  \right|\leq c t^2\mathcal{C}_2^{(n)}(\mathcal{A}). 
\end{equation}
\end{lemma}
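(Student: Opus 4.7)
The plan is to bound each summand in the definition \eqref{eq:cumulatn operator} by a fixed multiple of $\|\mathcal{A}\|_\infty^{m-2}\mathcal{C}_2^{(n)}(\mathcal{A})$, to carry out the combinatorial sum over compositions $l_1+\cdots+l_j=m$ and over $j$, and to apply Stirling's inequality at the end. Note that since $\mathcal{A}$ has real symmetric entries (in view of Corollary~\ref{lemma: selfadjoint 3}), $\mathcal{A}$ is self-adjoint as a bounded operator on $l^2(\mathbb{N})$, so $\mathcal{C}_2^{(n)}(\mathcal{A}) = \Tr(P_n \mathcal{A} Q_n \mathcal{A} P_n) = \|P_n \mathcal{A} Q_n\|_2^2$ by the cyclicity of the trace and $(P_n\mathcal{A}Q_n)^*=Q_n\mathcal{A}P_n$.

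The analytic core is the Hilbert-Schmidt estimate $\|P_n \mathcal{A}^l Q_n\|_2 \leq l\|\mathcal{A}\|_\infty^{l-1}\sqrt{\mathcal{C}_2^{(n)}(\mathcal{A})}$, which I would prove by iterating the splitting
\begin{equation*}
P_n \mathcal{A}^l Q_n = P_n \mathcal{A}^{l-1} P_n \mathcal{A} Q_n + P_n \mathcal{A}^{l-1} Q_n \mathcal{A} Q_n
\end{equation*}
(a consequence of $P_n+Q_n=Id$) and using sub-multiplicativity of $\|\cdot\|_2$ against $\|\cdot\|_\infty$. Next, I would reduce each trace difference inside \eqref{eq:cumulatn operator} to expressions containing exactly one $Q_n$. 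Applying the identity $P_n \mathcal{A}^{a+b} P_n = P_n \mathcal{A}^a P_n \mathcal{A}^b P_n + P_n \mathcal{A}^a Q_n \mathcal{A}^b P_n$ recursively yields
\begin{equation*}
P_n \mathcal{A}^m P_n - P_n \mathcal{A}^{l_1} P_n \cdots P_n \mathcal{A}^{l_j} P_n = \sum_{k=1}^{j-1} P_n \mathcal{A}^{l_1} P_n \cdots P_n \mathcal{A}^{l_{k-1}} P_n \mathcal{A}^{l_k} Q_n \mathcal{A}^{l_{k+1}+\cdots+l_j} P_n .
\end{equation*}
For each summand, the trace inequality $|\Tr(XY)|\leq\|X\|_\infty\|Y\|_1$ together with $\|Y\|_1\leq\|P_n\mathcal{A}^{l_k}Q_n\|_2\|Q_n\mathcal{A}^{l_{k+1}+\cdots+l_j}P_n\|_2$ and the Hilbert-Schmidt bound above gives a contribution of size $l_k(l_{k+1}+\cdots+l_j)\|\mathcal{A}\|_\infty^{m-2}\mathcal{C}_2^{(n)}(\mathcal{A})$; summing over $k$ and using $\sum_{k<k'}l_k l_{k'}\leq m^2/2$ bounds the full difference by $(m^2/2)\|\mathcal{A}\|_\infty^{m-2}\mathcal{C}_2^{(n)}(\mathcal{A})$.

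To finish, I would plug this into \eqref{eq:cumulatn operator} and exploit $\sum_{l_1+\cdots+l_j=m,\,l_i\geq 1}\frac{1}{l_1!\cdots l_j!}\leq\frac{j^m}{m!}$ (read off from the expansion of $e^{jx}$) together with $\sum_{j=2}^m j^{m-1}\leq m\cdot m^{m-1}=m^m$ to obtain $|\mathcal{C}_m^{(n)}(\mathcal{A})|\leq\frac{m^{m+2}}{2}\|\mathcal{A}\|_\infty^{m-2}\mathcal{C}_2^{(n)}(\mathcal{A})$. The Stirling inequality $m^m\leq m!e^m/\sqrt{2\pi m}$ then converts this into the bound \eqref{eq:lemma dominated}. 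For the summability conclusion, setting $r=e|t|\|\mathcal{A}\|_\infty<1$, I factor $t^2$ out of $\sum_{m\geq 2}|t|^m m^{3/2}e^m\|\mathcal{A}\|_\infty^{m-2}$ and control the remaining series $\sum_{m\geq 2}m^{3/2}r^{m-2}$ by a finite constant depending only on $r$, producing the claimed bound with some $c>0$.

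The main obstacle I anticipate is choosing the right decomposition of the trace difference. A naive expansion $P_n=Id-Q_n$ at each of the $j-1$ middle slots produces $2^{j-1}$ terms and, after summing over compositions, a bound of order $(2m)^m$, which is too large to fit inside the $m!\,m^{3/2}e^m$ envelope of \eqref{eq:lemma dominated}. The telescoping identity above keeps exactly one $Q_n$ per summand and extracts the crucial $m^2/2$ factor, rather than something exponential in $m$, making the combinatorial count over $j$ and compositions manageable.
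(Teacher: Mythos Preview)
Your proposal is correct and follows essentially the same approach as the paper: both reduce the trace difference via a telescoping that leaves exactly one $Q_n$ per summand, apply the Hilbert--Schmidt bound $\|P_n\mathcal{A}^lQ_n\|_2\leq l\|\mathcal{A}\|_\infty^{l-1}\|P_n\mathcal{A}Q_n\|_2$ (the paper phrases this equivalently via $\|[\mathcal{A}^l,P_n]\|_2$), sum the compositions using $\sum\frac{1}{l_1!\cdots l_j!}\leq j^m/m!$, and close with Stirling. The only cosmetic differences are that the paper telescopes by grouping the leading powers (you group the trailing ones) and bounds $\sum_k(l_1+\cdots+l_{k-1})l_k$ crudely by $(j-1)m^2$ rather than your sharper $m^2/2$, which is why your constant comes out a factor $4$ smaller than the stated $\sqrt{2/\pi}$.
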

Note that in the original work, this lemma holds for $\mathcal{A}$ to be self-adjoint. However, it is clear in the proof that we can relax it to the condition \eqref{eq:selfadjoint A}, which we will show below.
\begin{proof}
Recall the definition of a commutator of two operators $[A,B]= AB-BA$.

For $m=2$ the estimate \eqref{eq:lemma dominated} holds trivially. 

For $m\geq 3$
Let $Q_n=Id-P_n$. We write 
\begin{equation*}
  \mathcal{A}^mP_n=\mathcal{A}^{l_1}(P_n+Q_n)\mathcal{A}^{l_2}(P_n+Q_n)\cdots \mathcal{A}^{l_j}P_n.
\end{equation*} 
By expanding the formula above, we find
\begin{multline}
  \mathcal{A}^{l_1}P_n\cdots \mathcal{A}^{l_j}P_n-\mathcal{A}^mP_n = -	\mathcal{A}^{l_1}Q_n\mathcal{A}^{l_2}P_n\cdots \mathcal{A}^{l_j}P_n-	\mathcal{A}^{l_1+l_2}Q_n\mathcal{A}^{l_3}P_n\cdots \mathcal{A}^{l_j}P_n\\
  -\mathcal{A}^{l_1+l_2+l_3}Q_n\mathcal{A}^{l_4}P_n\cdots \mathcal{A}^{l_j}P_n
  -\ldots-\mathcal{A}^{l_1+\dots+l_{j-1}}Q_n\mathcal{A}^{l_j}P_n.
\end{multline}
Using the cyclic property of the trace, we get
\begin{equation}\label{eq:lemma dominated 1}
  \Tr\left(\mathcal{A}^{l_1}P_n\cdots \mathcal{A}^{l_j}P_n\right)-\Tr\left(\mathcal{A}^mP_n\right)	= - \sum_{k=2}^j\Tr\left(\mathcal{A}^{l_{k+1}}P_n\cdots P_n\mathcal{A}^{l_j}P_n\mathcal{A}^{l_1+\cdots+l_{k-1}}Q_n\mathcal{A}^{l_{k}}P_n\right) . 
\end{equation}
Note that since $P_nP_n=P_n$ and $Q_n=Id-P_n$, we have, by the definition of a commutator, $BQ_nAP_n=-[B,P_n][A,P_n]P_n$. Therefore, 
\begin{equation}\label{eq:lemma dominated 2}
  P_n\mathcal{A}^{l_1+\cdots+l_{k-1}}Q_n\mathcal{A}^{l_{k}}P_n=-[\mathcal{A}^{l_1+\cdots+l_{k-1}},P_n][\mathcal{A}^{l_{k}},P_n]P_n.
\end{equation}
Plug \eqref{eq:lemma dominated 2} into \eqref{eq:lemma dominated 1}, use the trace norm inequality $\|ABC\|_1\leq\|A\|_\infty\|B\|_2\|C\|_2$ to obtain
\begin{equation}\label{eq:lemma dominated 3}
  \left| 	\Tr\left(\mathcal{A}^{l_1}P_n\cdots \mathcal{A}^{l_j}P_n\right)-\Tr\left(\mathcal{A}^mP_n\right)	 \right|\leq \sum_{k=2}^j\|\mathcal{A}\|_\infty^{l_{k+1}+\cdots+l_j}\left\| [\mathcal{A}^{l_1+\cdots+l_{k-1}},P_n]\right\|_2\left\| [\mathcal{A}^{l_{k}},P_n]  \right\|_2.
\end{equation}
For any $l\in\mathbb{N}$, by writing 
\begin{equation}
  [\mathcal{A}^l,P_n]=\sum_{j=1}^l\mathcal{A}^{l-j}[\mathcal{A},P_n]\mathcal{A}^{j-1},
\end{equation}
we see that $	\left\| [\mathcal{A}^l,P_n] \right\|_2\leq l\left\| \mathcal{A} \right\|_\infty^{l-1}\left\| [\mathcal{A},P_n] \right\|_2$. Given $l_1+\dots+l_j=m$, we have, from \eqref{eq:lemma dominated 3},
\begin{equation}
  \left| 	\Tr\left(\mathcal{A}^{l_1}P_n\cdots \mathcal{A}^{l_j}P_n\right)-\Tr\left(\mathcal{A}^mP_n\right)	 \right|\leq (j-1)m^2\|\mathcal{A}\|_\infty^{m-2}\left\| [\mathcal{A},P_n]  \right\|_2^2.
\end{equation}
Hence, we have for $m\geq 3$
\begin{equation}\label{eq:lemma dominated 4}
  \left| \mathcal{C}_m^{(n)}(\mathcal{A}) \right|\leq m!m^2\|\mathcal{A}\|_\infty^{m-2}\left\| [\mathcal{A},P_n]  \right\|_2^2\sum_{j=2}^m\sum_{l_1+\dots +l_j=m, l_i\geq 1}\frac{1}{l_1!\dots l_j!}.
\end{equation}
Now, by expanding $m^m=(1+\dots+1)^m$ and Stirling's approximation
\begin{equation}\label{eq:lemma dominated 5}
  \sum_{j=2}^m\sum_{l_1+\dots +l_j=m, l_i\geq 1}\frac{1}{l_1!\dots l_j!}<\frac{m^m}{m!}\leq \frac{e^m}{\sqrt{2\pi m}}.
\end{equation}

Note that by the definition of commutator,
\begin{equation}\label{eq:lemma dominated C2}
  \mathcal{C}_2^{(n)}(\mathcal{A}) =\Tr\mathcal{A}Q_n\mathcal{A}P_n=\frac{1}{2}\Tr[\mathcal{A},P_n][P_n,\mathcal{A}].
\end{equation}
Since $\mathcal{A}$ satisfies \eqref{eq:selfadjoint A}, $P_n\mathcal{A}^2P_n$ and $P_n\mathcal{A}P_n\mathcal{A}P_n$ are self-adjoint. We have $	\mathcal{C}_2^{(n)}(\mathcal{A}) =\frac{1}{2}\left\| [\mathcal{A},P_n] \right\|_2^2$. Therefore, plug \eqref{eq:lemma dominated 5} into \eqref{eq:lemma dominated 4} to obtain that, for $m\geq 3$,
\begin{equation}
  \mathcal{C}_m^{(n)}(\mathcal{A})\leq\sqrt{\frac{2}{\pi }} m!m^{3/2}\|\mathcal{A}\|_\infty^{m-2}e^m\mathcal{C}_2^{(n)}(\mathcal{A}) .
\end{equation}
Note that, $\sum_{m\geq 2}m^{3/2}\|\mathcal{A}\|_\infty^{m-2}e^m t^{m-2}<\infty$, for $|t|<(e\left\| \mathcal{A} \right\|_\infty )^{-1}$. We conclude this lemma. 
\end{proof}

Rather than to study the series directly, Lemma~\ref{lemma: dominated} allows us to study the asymptotics of each cumulant as $n\to\infty$ and to apply the dominated convergent theorem to the series.

Let us define $F\coloneqq n^{\alpha}f_{\alpha,x_0}^{(n)}(\mathcal{J})$. Then, by definition \eqref{eq:mesoscopic f}, $F=\sum_{r=1}^{2M}c_rG_{x_0+ \frac{\eta_r}{n^{\alpha}}} $, which is a bounded linear operator due to Corollary~\ref{lemma: selfadjoint 3}. Then, for $m\geq 2$,
\begin{equation}\label{eq:cumulant relation}
\mathcal{C}_m(X_{f,\alpha,x_0}^{(n)})  =	\mathcal{C}^{(n)}_m(n^{-\alpha}F)= n^{-m\alpha}	\mathcal{C}^{(n)}_m(F),
\end{equation}
where  $\mathcal{C}^{(n)}_m$ on the right-hand side is given by \eqref{eq:cumulatn operator}. The study of the cumulants of the mesoscopic linear statistics for large $n$, then, boils down to the asymptotic behaviour of $G_{x_0+ \frac{\eta_r}{n^{\alpha}}}$, defined in \eqref{eq:selfadjoint G}. In the case that $\mathcal{J}$ is self-adjoint, $G_{x_0+ \frac{\eta_r}{n^{\alpha}}}$ is the equivalent to the resolvent of the Jacobi operators, i.e., $\left( \mathcal{J}-x_0- \frac{\eta_r}{n^{\alpha}} \right)^{-1}$.

\subsection{Combes–Thomas Estimate and its Limitation} \label{sec:CT discussion}
In our analysis, we will need to estimate entries of the resolvent that are far away from the main diagonal. One way of doing that (which is successful in \cite{breuer2016universality}) is by the Combes-Thomas estimate \cite{combes1973asymptotic}. However, for our purpose, this Combes-Thomas estimate is not sufficient, and we will use conditions in Theorem~\ref{thm: main 2} to improve the estimate. Before doing so, we will recall the Combes-Thomas estimate and show its limitations for our purposes. 

\textbf{Combes–Thomas Estimate}
Given $\eta\in \mathbb{C}$ with $\Im \eta \neq 0$. Consider 
\begin{equation} 
  J = \begin{pmatrix}b_{0} & a_{1}  \\a_{1} & b_{1} & a_{2}  \\ & a_{2} & b_{2} & a_{3} \\ & &\ddots &\ddots & \ddots \end{pmatrix}  -\left( x_0+\frac{\eta}{n^\alpha} \right)Id, 
\end{equation}
where $b_{j},x_0\in\mathbb{R}$, $0<a_{j} \leq c$ for some constant $c>0$ and $Id$ is the identity operator. Then for any $j,k \in \mathbb{N}$ we have 
\begin{equation}
  \left| \left( J^{-1} \right)_{j,k}  \right| \leq \frac{2n^\alpha}{|\Im \eta|} e^{-\min\{1,\frac{|\Im \eta|}{4ecn^\alpha}\} |j-k|}.
\end{equation}
For a proof of the Combes–Thomas estimate, see \cite{combes1973asymptotic} or Proposition 2.3. in \cite{breuer2016universality}.

Note that the Combes-Thomas estimate works for both varying $a_{j,n}$ and $b_{j,n}$ and non-varying $a_{j}$ and $b_{j}$ . Here we omit the second subscript for the convenience of following discussions.

The Combes-Thomas estimate is a useful result and is successful in studying the bulk universality for OPEs, see \cite{breuer2016universality}. However, in our set up, it has some major limitations. First of all, $a_{j}$ or $a_{j,n}$ can be unbounded. In case of the scaled Hermite polynomials (corresponding to the Gaussian Unitary Ensemble of the random matrix theory) we have $a_{j,n} = \sqrt{\frac{j}{n}}$ and $b_{j,n}=0$. However, in the cumulant expansion \eqref{eq:cumulant 2}, one only needs to estimate $J^{-l}P_n$. The following argument enables us to replace $J$ by a truncated version. Take $l=1$ for example. Take $N=n+\delta$ for some $\delta>0$ and define
\begin{equation*}
J_N\coloneqq P_N J P_N +Q_N.
\end{equation*}
By the resolvent identity and the trace norm inequality, we have 
\begin{equation}
\| (J_N^{-1}-J^{-1})P_n\|_1 \leq \|J^{-1}\|_{\infty}\|(J-J_N)J_N^{-1}P_n\|_1 \label{eq:CT cutoff}.
\end{equation}
Recall that $J_N$ is a block matrix and $N>n$. Hence, $J_N^{-1}P_n=P_NJ_N^{-1}P_n$. Also, recall that $J$ and $J_N$ are tri-diagonal matrices. Hence, the only non-zero entry of $(J-J_N)P_N$ is the $N+1, N$-th entry whose value is $a_N$. Now we estimate the trace norm with entries, i.e., 
\begin{equation}
\|(J-J_N)J_N^{-1}P_n\|_1=\|(J-J_N)P_NJ_N^{-1}P_n\|_1\leq \sum_{k=1}^n\left| a_N\left( J_N^{-1} \right)_{N,k} \right|.\label{eq:CT cutoff ess}
\end{equation}
Then, we only need to estimate $\left( J_N^{-1} \right)_{j,k}  $ for $j=N$ and $k=1,2,\dots, n$. We use the Combes-Thomas estimate for $J_N$ to conclude that \eqref{eq:CT cutoff ess} is exponentially small if $\delta\gg n^\alpha$. Note that $\|J^{-1}\|_\infty\leq \frac{n^\alpha}{|\Im \eta|}$, and hence \eqref{eq:CT cutoff} is exponentially small as well. Recall that the cumulant formula given by \eqref{eq:cumulatn operator}. This implies that the difference of the second cumulants $	\mathcal{C}^{(n)}_2(J^{-1})$ and $	\mathcal{C}^{(n)}_2(J_N^{-1})$ is exponentially small. The cumulants of higher order can also be compared in a similar way. The truncation $J_N$ can be a good estimate of $J$ in the cumulant.

However, this reveals a second limitation. This argument works well for $\delta =n ^\beta$ and $0<\alpha<\beta<1$, which is not good enough near the edge. In some examples, for instance, the modified Jacobi case, we want to zoom in further and need an estimate that holds for any $0<\alpha<2$.

Note that if $J$ is a Toeplitz operator, the Combes-Thomas estimate can be improved near the edges. For example, take $b_{j,n}=0$ and $a_{j,n}=1$, which are the recurrence coefficients for Chebyshev polynomials. Take $x_0=2$. One can use Wiener-Hopf factorization to compute the resolvent explicitly, that is 

\begin{equation}\label{eq:CB should be finer}
\left( J^{-1} \right)_{j,k} = \frac{\omega(\eta)}{1-\omega(\eta)^2}\left( \frac{1}{\omega(\eta)^{|j-k|}} +\frac{1}{\omega(\eta)^{j+k}}\right), 
\end{equation}
where 
\begin{equation*}
\omega(\eta) \coloneqq \frac{2+\frac{\eta}{n^{\alpha}}+\mysqrt{\left( 2+\frac{\eta}{n^{\alpha}} \right)^2-4}}{2}.
\end{equation*}
We take the principle square root so that $|\omega(\eta)|>1$. 
Hence, we estimate, 
\begin{equation}\label{eq:CB should be}
\left| \left( J^{-1} \right)_{j,k}  \right| \leq d_1n^{\frac{\alpha}{2}} e^{-d_2n^{-\frac{\alpha}{2}}|j-k|},
\end{equation}
for some constant $d_1,d_2>0$. Hence, we tell that for $|j-k|>n^{\beta}$ with $0<\frac{\alpha}{2}<\beta<1$, $\left| \left( J^{-1} \right)_{j,k}  \right| $ is exponentially small. One should note that $\frac{\alpha}{2}<\beta$ is possible since the spectral density of $J$ vanishes at the edge of the spectrum as a square root. If $x_0\in (-2,2)$ is chosen, we will still get the region of exponential decay to be $0<\alpha<\beta<1$. 

Therefore, these limitations suggest a finer estimation about $J_N^{-1}$, which should resemble \eqref{eq:CB should be}, for some special class of $J$. In a nutshell, the Combes-Thomas estimate works for general Jacobi matrices, but can be improved under further conditions.  

\subsection{Inverse of a Symmetric Tri-diagonal Matrix}\label{sec:pre inversion}
Recall that $J_N$ is a tri-diagonal symmetric matrix. To find the inverse of such a matrix is a classical algebraic question and can be reduced to compute the first and last columns of $J_N^{-1}$. For a review, see \cite{meurant1992review}. Here we only present the expressions needed. 

Let us look at some general properties of an $N\times N$ non singular symmetric tri-diagonal matrix with $a_j\in\mathbb{R}, b_j\in\mathbb{R}$, $N\in\mathbb{N}$, and $z\in \mathbb{C}$,
\begin{equation} \label{eq:pre assumption_J_inverse}
J_{N}= \begin{pmatrix}b_{0} & a_{1} \\a_{1} & b_{1} & a_{2}  \\& a_{2} & b_{2} & a_{3} \\ & &\ddots &\ddots & \ddots \\& && a_{N-2} & b_{N-2} & a_{N-1} \\& & &  & a_{N-1} & b_{N-1}  \end{pmatrix} -z Id, 
\end{equation}
where $Id$ is the identity matrix. 

Then for $j< k$
\begin{align}
(J_N^{-1})_{j,k} & = (-1)^{k-j}a_ja_{j+1}\cdots a_{k-1}\frac{d_{k+1}\cdots d_{N}}{\delta_{j}\cdots \delta_{N}} ,\label{tri-inverse raw 1} \\
(J_N^{-1})_{j,j} & = \frac{d_{j+1}\cdots d_{N}}{\delta_{j}\cdots \delta_{N}}, \label{tri-inverse raw 2}
\end{align}
where $d_{N}=b_{N-1}-z$ and, for all $j=1,\cdots N-1$, $d_j$ are solutions to the following difference equation
\begin{equation}
d_j  = b_{j-1}-z-\frac{a_{j}^2}{d_{j+1}}\label{eq:pre inverse_difference_u}.
\end{equation}
We also have $\delta_{1}=b_{0}-z$ and, for all $j=2,\cdots N$, $\delta_j$ are solutions to the following difference equation
\begin{equation}
\delta_j  = b_{j-1}-z-\frac{a_{j-1}^2}{\delta_{j-1}}\label{eq:pre inverse_difference_l}.
\end{equation}

Note that $J_N^{-1}$ is symmetric and we have $(J_N^{-1})_{k,j} =(J_N^{-1})_{j,k} $. Hence, we have an expression for each entry. 

In the following we will linearize both difference equations above and rewrite the inverse formula in different ways that are suitable for our further analysis. 

\subsection*{Linearization of the difference equation I}
Recursively define $\beta_j$  by  
\begin{equation*}
\beta_{N} = a_{N-1} ,\quad \beta_{N-1} = b_{N-1}-z ,\quad \frac{\beta_j}{\beta_{j-1}} = \frac{a_{j-1}}{d_j}, \quad \text{for $j=1,\dots,N$}.
\end{equation*}
Hence, $\beta_j$ is the solution to the following linear difference equation, by \eqref{eq:pre inverse_difference_u},
\begin{equation}
\begin{pmatrix}
  \beta_j \\ \beta_{j-1} 
\end{pmatrix} =  A_j \begin{pmatrix}
  \beta_{j+1} \\ \beta_{j} 
\end{pmatrix}, \quad \text{ where } 	A_j \coloneqq \begin{pmatrix}0 & 1 \\ -\frac{a_{j}}{a_{j-1}} & \left( b_{j-1}-z  \right)\frac{1}{a_{j-1}}\end{pmatrix}, \label{eq:pre transfer matrix A}
\end{equation}
for $j=2,\dots, N-1$. Iteratively, 
\begin{equation}\label{eq:pre beta recurrsive}
\begin{pmatrix}
  \beta_j \\ \beta_{j-1} 
\end{pmatrix} =  A_jA_{j+1}\cdots A_{N-1} \begin{pmatrix}
  \beta_{N} \\ \beta_{N-1} 
\end{pmatrix} .
\end{equation}

Let $N_0\in\mathbb{N}$ and $N_0\leq N$. Take the ratio of $\left(J_N^{-1}\right)_{j,k}$ and $\left( J_N^{-1} \right)_{j,N_0}$ which are found by \eqref{tri-inverse raw 1} or \eqref{tri-inverse raw 2}, and we rewrite the inverse formula for $j\leq N_0<k$ to be
\begin{equation}\label{eq:pre  difference_beta 1}
\left(J_N^{-1}\right)_{j,k}  = (-1)^{k-N_0}\frac{\beta_{k}}{\beta_{N_0}}\left( J_N^{-1} \right)_{j,N_0}  .
\end{equation} 

One purpose of the coming Section~\ref{sec:improve CB} is to show that in the case $b_j=b_{j,n}-x_0$, $a_j=a_{j,n}$, $z=\eta/n^\alpha$, $N=n+2mn^{\alpha/2+\varepsilon/3}$ and $N_0=n-2mn^{\alpha/2+\varepsilon/3}$ for $m\in \mathbb{N}$, given assumptions of Theorem~\ref{thm: main 2}, we have $|\beta_k/\beta_{N_0}|\leq e^{-d_2(k-N_0)n^{-\alpha/2}}$, for some constant $d_2>0$. The same is also true for the case  $b_j=b_{j+n-2mn^{\alpha/2+\varepsilon/3},n}-x_0$, $a_j=a_{j+n-2mn^{\alpha/2+\varepsilon/3},n}$, $z=\eta/n^\alpha$, $N=4mn^{\alpha/2+\varepsilon/3}$ and $N_0=2mn^{\alpha/2+\varepsilon/3}$ for $m\in \mathbb{N}$. Hence, we have an estimate as in Proposition~\ref{thm: overview F entry estimation}, which  is an analogue of the exponential part of \eqref{eq:CB should be}.

To this end, the following quantities are essential in this estimate. The eigenvalues of $A_j$ are 
\begin{equation}
\omega_j^+ \coloneqq  \frac{b_{j-1}-z +\mysqrt{\left( b_{j-1}-z \right)^2-4a_ja_{j-1}}}{2a_{j-1}},\quad \omega_j^-\coloneqq \frac{ b_{j-1}-z -\mysqrt{\left(b_{j-1}-z \right)^2-4a_ja_{j-1}}}{2a_{j-1}} . \label{eq:pre eigenvalue_difference_eq}
\end{equation}
The square root is taken with respect to the principle branch, so that $|\omega_j^+|>|\omega_j^-|$, whenever $\Re (b_j-z)>0$. Note that $A_j$ can be diagonalized as 

\begin{equation*}
A_j = V_j \Omega_j V_j^{-1}, \quad \text{where } V_j \coloneqq \begin{pmatrix}
  1 & 1 \\ \omega_j^+ & \omega_j^- 
\end{pmatrix}, \quad \Omega_j  \coloneqq \begin{pmatrix}
  \omega_j^+ & 0 \\0 &  \omega_j^- 
\end{pmatrix}.
\end{equation*}

Also of importance is 
\begin{equation}
M_j \coloneqq V_j^{-1}V_{j+1}-Id=  \frac{1}{\omega_j^--\omega_j^+}\begin{pmatrix}\omega_{j}^+-\omega_{j+1}^+ & \omega_{j}^--\omega_{j+1}^-\\ \omega_{j+1}^+-\omega_{j}^+ & \omega_{j+1}^--\omega_{j}^-\end{pmatrix} \label{eq:eigenvalue_difference_eq_M_j},
\end{equation}
where $Id$ is the identity matrix.

Note that Proposition~\ref{thm: overview F entry estimation} only matches the exponential part of \eqref{eq:CB should be}. To match the $n^{\alpha/2}$ factor of \eqref{eq:CB should be}, we need the following ingredients as well.
\subsection*{Linearization of the difference equation II}
Similarly, we recursively define $\gamma_j$ by  
\begin{equation*}
\gamma_{1} = a_{1} ,\quad \gamma_{2} = b_{0} -z,\quad \frac{\gamma_j}{\gamma_{j+1}} = \frac{a_{j}}{\delta_j} , \quad  \text{for $j=1,\dots,N$} .
\end{equation*}

Then  $\gamma_j$ also admits a recursive formula by equation \eqref{eq:pre inverse_difference_l} 
\begin{equation}
\begin{pmatrix}
  \gamma_j \\ \gamma_{j+1} 
\end{pmatrix} =  B_j\begin{pmatrix}
  \gamma_{j-1}\\ \gamma_{j} 
\end{pmatrix}, \quad \text{ where } B_j \coloneqq \begin{pmatrix}0 & 1 \\ -\frac{a_{j-1}}{a_{j}} & \left( b_{j-1}-z \right)\frac{1}{a_{j}}\end{pmatrix}, \label{eq:pre transfer matrix B}
\end{equation}
for all $j=2,\dots, N-1$. The matrix $B_j$ is called the transfer matrix for $\gamma_j$. Iteratively
\begin{equation*}
\begin{pmatrix}
  \gamma_j \\ \gamma_{j+1} 
\end{pmatrix} =  B_jB_{j-1}\dots B_{2} \begin{pmatrix}
  \gamma_{1} \\ \gamma_{2} 
\end{pmatrix}.
\end{equation*}

Choose any $ a_{N}\neq 0 $ (e.g. One can pick $a_N=a_{N-1}$), rewrite \eqref{tri-inverse raw 1} and \eqref{tri-inverse raw 2} by $\beta_j$ and $\gamma_j$,  and we have another inverse formula for any $j\leq k$
\begin{equation}\label{eq:pre  difference_beta 2}
(J_N^{-1})_{j,k}  = \frac{(-1)^{k-j}\gamma_{j}\beta_{k}}{\beta_{N}a_{N}\gamma_{N+1}}  .
\end{equation}
Note that the inverse formula is independent on the choice of $a_{N}$ since $a_{N}\gamma_{N+1}=\gamma_{N}\delta_N$ and $\gamma_{N}$ and $\delta_N$ are independent of $a_N$. Also note that $J_N$ is symmetric, and hence we have an expression for all entries. 

One purpose of the coming Section~\ref{sec:cumulant truncation} is to show that in the case $b_j=b_{j+n-2mn^{\alpha/2+\varepsilon/3},n}-x_0$, $a_j=a_{j+n-2mn^{\alpha/2+\varepsilon/3},n}$, $z=\eta/n^\alpha$, $N=4mn^{\alpha/2+\varepsilon/3}$ for $m\in \mathbb{N}$, we have precise asymptotics of $\beta_k/\beta_{N_0}$ and $\gamma_j/\gamma_{N+1}$ as $n\to\infty$ whenever the assumptions of Theorem~\ref{thm: main 2} are satisfied. Hence, we will have an asymptotic analogue to \eqref{eq:CB should be finer}, cf. Proposition~\ref{prop: overview tri-inverse}

To this end, the following quantities are essential in this estimate. The eigenvalues of $B_j$ are 
\begin{equation}
\lambda_j^+\coloneqq \frac{b_{j-1}-z+\mysqrt{\left( b_{j-1} -z\right)^2-4a_ja_{j-1}}}{2a_{j}}, \quad \lambda_j^-\coloneqq  \frac{b_{j-1}-z-\mysqrt{\left( b_{j-1} -z\right)^2-4a_ja_{j-1}}}{2a_{j}} .
\end{equation}
The square root is taken with respect to the principle branch, such that $|\lambda_j^+|>|\lambda_j^-|$, whenever $\Re (b_j-z)>0$. Note that $B_j$ can be diagonalized as
\begin{equation*}
B_j  = W_j \Lambda_j W_j^{-1}, \quad 
\text{where }  W_j \coloneqq \begin{pmatrix}
  1 & 1 \\ \lambda_j^+ & \lambda_j^- 
\end{pmatrix} , \quad  \Lambda_j  \coloneqq \begin{pmatrix}
  \lambda_j^+ & 0 \\0 &  \lambda_j^- 
\end{pmatrix} .
\end{equation*}
Similarly we define $E_j$, which is an essential element for the further analysis, to be 
\begin{equation}
E_j \coloneqq W_j^{-1}W_{j-1}-Id=  \frac{1}{\lambda_j^--\lambda_j^+}\begin{pmatrix}\lambda_{j}^+-\lambda_{j+1}^+ & \lambda_{j}^--\lambda_{j+1}^-\\ \lambda_{j+1}^+-\lambda_{j}^+ & \lambda_{j+1}^--\lambda_{j}^-\end{pmatrix} \label{eq:eigenvalue_difference_eq_E_j}.
\end{equation}

The formulae \eqref{eq:pre  difference_beta 1} and \eqref{eq:pre  difference_beta 2} are the corner stones of key Propositions~\ref{thm: overview F entry estimation} and~\ref{prop: overview tri-inverse} respectively.

\section{Proof of Theorem~\ref{thm: main 2}}\label{sec:overview of proof}
In this section, we decompose the proof of Theorem~\ref{thm: main 2} into a number of propositions, which we will prove in the upcoming sections, cf. Sections~\ref{sec:improve CB}, ~\ref{sec:cumulant truncation},  and~\ref{sec:strong szego}.

Our strategy is to first prove Theorem~\ref{thm: main 2} for some special test functions as described in \eqref{eq:resolvent fun} , i.e.,   $$f(x)=\sum_{r=1}^{2M}c_r \frac{1}{x-\eta_r}$$  for some $\eta_r\in\{x+iy | x\in\mathbb{R}, y \neq 0\}$, in Subsection~\ref{sec:resolvent proof}. Then, extend this result to compactly supported and continuously differentiable test functions in Subsection~\ref{sec:Lipschitz extension}. 

\subsection{Proof of Theorem~\ref{thm: main 2} for Special Class of Test Functions}\label{sec:resolvent proof}
Recall that $\mathcal{J}$ defined in \eqref{Jacobi} is the Jacobi matrix associated with an OPE and $f_{\alpha,x_0}^{(n)}\left( \mathcal{J} \right)$ is defined as \eqref{eq:mesoscopic f}. Let us further define
\begin{equation}\label{eq:pre-truncation}
J^{(r)}\coloneqq \mathcal{J}-x_0-\frac{\eta_r}{n^\alpha}, \quad F\coloneqq n^{\alpha}f_{\alpha,x_0}^{(n)}\left( \mathcal{J} \right). 
\end{equation}
Recall that we say $x_0\in \mathbb{R}$ is near the edges if one of the conditions in \eqref{eq:def edges} is satisfied.
Recall, from Section~\ref{sec:pre cumulant}, that the $m$-th cumulant with $m\geq 2$ for the mesoscopic linear statistics is given, via \eqref{eq:cumulant 2}, by
\begin{equation}
\mathcal{C}_m(X^{(n)}_{f,\alpha,x_0}) = 
\frac{m!}{n^{\alpha m}}\sum_{j=2}^m\frac{(-1)^{j+1}}{j}\sum_{l_1+\dot+l_j=m, l_i\geq 1}\frac{\Tr\left(F^{l_1}P_n\cdots F^{l_j}P_n\right)-\Tr\left(F^{m}P_n\right)}{l_!!\cdots l_j!} .
\end{equation}	
That is $\mathcal{C}_m(X^{(n)}_{f,\alpha,x_0}) =\mathcal{C}_m^{(n)}(n^{-\alpha}F)$ or equivalently $n^{\alpha m}\mathcal{C}_m(X^{(n)}_{f,\alpha,x_0}) =\mathcal{C}_m^{(n)}(F)$, as in the relation \eqref{eq:cumulant relation}. To show that the limiting fluctuations of $X^{(n)}_{f,\alpha,x_0}$ are Gaussian, it is sufficient to show that the second cumulant converges to a positive number and all cumulants of order $m\geq 3$ converge to zero, as indicated by Lemma~\ref{lemma: dominated}. 

It turns out that the asymptotics of the $m$-th cumulant ($m\in \mathbb{N}$) only depends on the recurrence coefficients of order around $n$, i.e.,  $a_{j,n}$, $b_{j,n}$ for all $j\sim n$ as $n\to\infty$. The precise window depends on the scale considered. This has been observed in various setups as in \cite{breuer2016universality,breuer2017central,duits2024lozenge}. We will show that this is the case in our setup as well. To this end, we will truncate the (semi-finite) Jacobi matrix $\mathcal{J}$ into the block around the $n,n$-th entry. For technical reasons, we will conduct a two-step truncation. To be precise, let $\beta=\frac{\alpha}{2}+\frac{\varepsilon}{3}$ such that $0<\frac{\alpha}{2}<\beta<\frac{\alpha+1}{3}<1$. Define

\begin{equation}\label{eq:truncation 1}
J_{n+2mn^\beta}^{(r)} \coloneqq P_{n+2mn^\beta}J^{(r)}P_{n+2mn^\beta}+Q_{n+2mn^\beta} , \qquad F_{n+2mn^\beta}\coloneqq \sum_{r=1}^{2M}c_r  \left( J_{n+2mn^\beta}^{(r)} \right)^{-1},
\end{equation}
\begin{equation}
J_{n\pm 2mn^\beta}^{(r)}\coloneqq P_{n+2mn^\beta}Q_{n-2mn^\beta}J^{(r)}Q_{n-2mn^\beta}P_{n+2mn^\beta} +P_{n-2mn^\beta}+Q_{n+2mn^\beta}, \label{eq:truncation 2}
\end{equation}
\begin{equation}
F_{n\pm 2mn^\beta}\coloneqq \sum_{r=1}^{2M}c_r  \left( J_{n\pm 2mn^\beta}^{(r)} \right)^{-1} .\label{eq:truncation 3}
\end{equation}
The subscript $n\pm 2mn^\beta$ emphasises that the relevant entries are those with indices ranging between $n-2mn^\beta$ and $n+2mn^\beta$. 

The following is our first finding about these truncated matrices, which is an alternative to the Combes-Thomas estimates for $J_{n+ 2mn^\beta}^{(r)}$ and  $	J_{n\pm 2mn^\beta}^{(r)}$. 
\begin{figure}[ht]
\begin{center}
  \begin{tikzpicture}
    \node at (-2,2) {$J_{n+ 2mn^\beta}^{(r)} =$};
    \draw (-1,0) rectangle (3,4);
    \fill[lightgray] (1,0) rectangle (3,2);
    \fill(1,2) circle (2pt);
    \node[above left] at (1,2) {$n-2mn^\beta$};
    \fill(3,0) circle (2pt);
    \node[below] at (3,0) {$n+2mn^\beta$};
    \node at (2,1.5) {Conditions in};
    \node at (2,1) {Theorem~\ref{thm: main 2}};
    
    \node at (5,2) {$ \left( J_{n+ 2mn^\beta}^{(r)}  \right)^{-1}=$};
    \draw (7,0) rectangle (11,4);
    \draw (9,0) rectangle (11,2);
    \fill[lightgray] (7,0) --(10,0)--(9,1)--(7,1)--cycle;
    \fill[lightgray] (10,4)--(10,2)--(11,1)--(11,4)--cycle;
    \fill (9,2) circle (2pt);
    \node[above left] at (9,2) {$n-2mn^\beta$};
    \fill (11,0) circle (2pt);
    \node[below] at (10,0) {$n+2mn^\beta$};
    \fill (7,1) circle (2pt);
    \node[left] at (7,1) {$n-(2m-1)n^{\beta}$};
    \fill (10,4) circle (2pt);
    \node[above] at (10,4) {$n-(2m-1)n^{\beta}$};
    \node at (8,0.5) {exp small};
    \node[rotate=90] at (10.5, 3) {exp small};
  \end{tikzpicture}.
  \begin{tikzpicture}
    \node at (-1,2) {$J_{n\pm2mn^\beta}^{(r)} =$};
    \fill[lightgray] (2,0) rectangle (4,2);
    \fill (2,2) circle (2pt);
    \node[above left] at (2,2) {$n-2mn^\beta$};
    \fill (4,0) circle (2pt);
    \node[below] at (4,0) {$n+2mn^\beta$};
    \node at (3,1.5) {Conditions in};
    \node at (3,1) {Theorem~\ref{thm: main 2}};
    
    \node at (5.8,2) {$ \left( J_{n\pm 2mn^\beta}^{(r)}  \right)^{-1}=$};
    \draw (9,0) rectangle (11,2);
    \fill[lightgray] (9,0) --(10,0)--(9,1)--cycle;
    \fill[lightgray] (10,2)--(11,2)--(11,1)--cycle;
    \fill (9,2) circle (2pt);
    \node[above left] at (9,2) {$n-2mn^\beta$};
    \fill (11,0) circle (2pt);
    \node[below] at (11,0) {$n+2mn^\beta$};
    \fill (9,1) circle (2pt);
    \node[left] at (9,1) {$n-(2m-1)n^{\beta}$};
    \fill (10,2) circle (2pt);
    \node[above right] at (9.5,2) {$n-(2m-1)n^{\beta}$};
    \node at (8,0.3) {exp small};
    \node at (11.1, 1.7) {exp small};
  \end{tikzpicture}.
\end{center}
\caption{Illustration of Proposition~\ref{thm: overview F entry estimation}. Given the entries of $J^{(r)}_{n+2mn^\beta}$ and $J^{(r)}_{n\pm2mn^\beta}$ with both indices ranging between $n-2mn^\beta$ and $n+2mn^\beta$ are well behaved, we have that the entries of $\left( J^{(r)}_{n+2mn^\beta} \right)^{-1}$ and $\left( J^{(r)}_{n\pm2mn^\beta} \right)^{-1}$ are of exponentially small in the shaded areas respectively. Note that the matrices are block matrices and we ignore the trivial blocks in the illustration. }\label{fig: overview tri-inverse-rough}
\end{figure}
\begin{proposition}\label{thm: overview F entry estimation}
Let $\beta=\frac{\alpha}{2}+\frac{\varepsilon}{3}$ be such that $0<\frac{\alpha}{2}<\beta<\frac{\alpha+1}{3}<1$. Consider $\mathcal{J}$ described above whose entries are $(a_{l,n},b_{l,n})_{l\geq 1}$.  Let $m\in\mathbb{N}$. Assume conditions in Theorem~\ref{thm: main 2} are satisfied. Consider $x_0$ to be near the edges, i.e., \eqref{eq:def edges}. Then there exist constants $C_0>0$,  $d_0>0$ $n_0\in\mathbb{N}$ such that, for any $n>n_0$, $j,k$ with  $|j-k|\geq n^{\beta}$ and $\max\{j,k\}\geq n-(2m-1)n^{\beta}$, 
\begin{align}
  \left| \left( \left( J_{n+ 2mn^\beta}^{(r)} \right)^{-1}\right)_{j,k} \right|\leq & C_0n^{\alpha}e^{-d_0n^{\beta-\frac{\alpha}{2}}} , \label{eq:overview improve CT J1}\\
  \left| \left(\left( J_{n\pm 2mn^\beta}^{(r)} \right)^{-1} \right)_{j,k} \right|\leq & C_0n^{\alpha}e^{-d_0n^{\beta-\frac{\alpha}{2}}}. \label{eq:overview improve CT J2}
\end{align}
Consequently, 
\begin{align}
  \left| \left( F_{n+2mn^\beta} \right)_{j,k} \right|\leq & C_0n^{\alpha}e^{-d_0n^{\beta-\frac{\alpha}{2}}} , \label{eq:overview improve CT 1}\\
  \left| \left( F_{n\pm 2mn^\beta} \right)_{j,k} \right|\leq & C_0n^{\alpha}e^{-d_0n^{\beta-\frac{\alpha}{2}}}. \label{eq:overview improve CT 2}
\end{align}
\end{proposition}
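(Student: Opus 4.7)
The plan is to exploit the inverse formula for symmetric tri-diagonal matrices from Section~\ref{sec:pre inversion}. Recall that by \eqref{eq:pre  difference_beta 1}, for $j\leq N_0<k$,
$$\left(J_N^{-1}\right)_{j,k}=(-1)^{k-N_0}\frac{\beta_{k}}{\beta_{N_0}}\left(J_N^{-1}\right)_{j,N_0}.$$
The factor $(J_N^{-1})_{j,N_0}$ is uniformly bounded by $\|J_N^{-1}\|_\infty\leq n^{\alpha}/|\Im\eta_r|$ via \eqref{eq:hermitian inequality}, so the whole task reduces to producing exponential decay for $|\beta_{k}/\beta_{N_0}|$. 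I would take $N=n+2mn^{\beta}$ and $N_0=n-2mn^{\beta}$ for \eqref{eq:overview improve CT J1}; a symmetric argument using the formula \eqref{eq:pre  difference_beta 2} with $\gamma_j$ handles \eqref{eq:overview improve CT J2} on the block $J^{(r)}_{n\pm 2mn^\beta}$. The bounds \eqref{eq:overview improve CT 1}--\eqref{eq:overview improve CT 2} then follow by linearity from the definitions \eqref{eq:truncation 1} and \eqref{eq:truncation 3}.

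To analyze $\beta_{k}/\beta_{N_0}$, I would use the transfer matrix representation \eqref{eq:pre beta recurrsive} together with the diagonalization $A_j=V_j\Omega_jV_j^{-1}$. With $x_0$ near $b_{n-1,n}\pm 2\sqrt{a_{n,n}a_{n-1,n}}$ the discriminant $(b_{j-1,n}-x_0-\eta_r/n^\alpha)^2-4a_{j,n}a_{j-1,n}$ is of order $n^{-\alpha}$ uniformly for $j\in I_n^{(\alpha,\varepsilon)}$, so by \eqref{eq:pre eigenvalue_difference_eq} the eigenvalues satisfy $|\omega_j^{-}|=1-c_0 n^{-\alpha/2}+o(n^{-\alpha/2})$ and $|\omega_j^{+}|=1+c_0 n^{-\alpha/2}+o(n^{-\alpha/2})$, with the correct branch being selected by $\Im\eta_r\neq 0$. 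Writing the telescoping product
$$A_{N_0+1}A_{N_0+2}\cdots A_{k-1}=V_{N_0+1}\Omega_{N_0+1}(Id+M_{N_0+1})\Omega_{N_0+2}\cdots(Id+M_{k-2})\Omega_{k-1}V_{k-1}^{-1},$$
the diagonal factors provide a decay of the form $|\omega^{-}/\omega^{+}|^{k-N_0}\lesssim e^{-2c_0(k-N_0)n^{-\alpha/2}}$ for the appropriate component, and under the assumption $|k-N_0|\geq n^{\beta}$ this gives the targeted factor $e^{-d_0 n^{\beta-\alpha/2}}$.

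The main obstacle is to show that the perturbations $M_j$ appearing in \eqref{eq:eigenvalue_difference_eq_M_j} do not destroy this separation between $\omega_j^{+}$ and $\omega_j^{-}$. The two entries of $M_j$ involve differences $\omega_{j+1}^{\pm}-\omega_{j}^{\pm}$ divided by $\omega_j^{-}-\omega_j^{+}=O(n^{-\alpha/2})$, so a crude bound from Condition~\ref{ass:slowly varying recurrence coe} alone gives only $\|M_j\|=O(n^{\alpha/2-1})$. When $\alpha<2/3$ this yields $n^{\beta}\|M_j\|\ll 1$ and a standard perturbative iteration closes the argument; however, when $\alpha\geq 2/3$ one needs the refined cancellations provided by the hypotheses \eqref{eq:ass:recurrence 1} and \eqref{eq:ass:recurrence 2} of Theorem~\ref{thm: main 2}. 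I would rewrite each entry of $M_j$ so that the numerator is exactly of the form $a_{j,n}a_{j-2,n}-a_{j-1,n}^2$ or $(b_{j-1,n}-x_0-a_{j,n})a_{j-2,n}-(b_{j-2,n}-x_0-a_{j-1,n})a_{j-1,n}$ modulo tame factors, so that \eqref{eq:ass:recurrence 1}--\eqref{eq:ass:recurrence 2} directly deliver $\|M_j\|=o(n^{-\alpha-\varepsilon})/n^{-\alpha/2}=o(n^{-\alpha/2-\varepsilon})$ and $o(n^{-\alpha-\varepsilon})$ respectively, keeping the accumulated error $\sum_{j}\|M_j\|$ over a window of length $n^{\beta}$ bounded by $n^{\beta-\alpha/2-\varepsilon}$, which is $o(1)$ for the choice $\beta=\alpha/2+\varepsilon/3$.

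Finally, the condition $\max\{j,k\}\geq n-(2m-1)n^\beta$ in the statement places at least one of the indices inside the good window $I_n^{(\alpha,\varepsilon)}$; the other index can be handled by using symmetry of $J_N^{-1}$ or by the bound $\|J_N^{-1}\|_\infty\leq n^\alpha/|\Im\eta_r|$ which already dominates the prefactor $C_0 n^\alpha$ in the conclusion. The indices lying outside $I_n^{(\alpha,\varepsilon)}$ never participate in the transfer matrix chain of length $|k-N_0|$ whose eigenvalue analysis was just carried out, so the argument is unaffected. Assembling the decay bound on $|\beta_{k}/\beta_{N_0}|$ with the uniform bound on $|(J_N^{-1})_{j,N_0}|$ yields \eqref{eq:overview improve CT J1}, and the symmetric construction using $\gamma_j$ gives \eqref{eq:overview improve CT J2}.
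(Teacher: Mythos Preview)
Your approach coincides with the paper's: reduce to the ratio $\beta_k/\beta_{N_1}$ via \eqref{eq:pre  difference_beta 1}, control it through the transfer-matrix product with the eigenvalue asymptotics $|\omega_j^{\pm}|=1\pm c n^{-\alpha/2}+o(n^{-\alpha/2})$ (Lemma~\ref{lemma: finner est omega}), and show the perturbations $M_j$ are negligible using the hypotheses of Theorem~\ref{thm: main 2} (Lemma~\ref{lemma:small M}). Two technical points need tightening. First, your fixed choice $N_0=n-2mn^\beta$ fails the requirement $j\le N_0$ for pairs such as $j=n$, $k=n+n^\beta$; the paper instead takes the sliding index $N_1=k-n^\beta$, which always satisfies $j\le N_1$ (from $|j-k|\ge n^\beta$) and $N_1\ge n-2mn^\beta$ (from $k\ge n-(2m-1)n^\beta$), so the transfer chain lies entirely in $I_{n,m}^{(\beta)}$. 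Second, the numerators of the entries of $M_j$ are not literally the quantities in \eqref{eq:ass:recurrence 1}--\eqref{eq:ass:recurrence 2}; one must rationalise the difference of square roots in $\omega_j^{\pm}-\omega_{j-1}^{\pm}$ as in \eqref{eq:difference of omega} and combine both hypotheses (subtracting them yields the auxiliary bound $|b_{j-1}a_{j-2}-b_{j-2}a_{j-1}|=o(n^{-3\beta+\alpha/2})$, which is what actually controls the non-square-root part). For \eqref{eq:overview improve CT J2} the paper does not invoke the $\gamma_j$ chain but simply observes that the nontrivial block of $J_{n\pm 2mn^\beta}^{(r)}$ is again a tri-diagonal matrix with entries satisfying the same conditions, so the identical argument applies after relabelling.
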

For an illustration see Figure~\ref{fig: overview tri-inverse-rough}. For the proof of Proposition~\ref{thm: overview F entry estimation}, see Section~\ref{sec:proofs overview CT}.

Remark that the factor $n^\alpha$ is not the optimal in the estimates $\left( J_{n\pm 2mn^\beta}^{(r)} \right)^{-1}$ or $F_{n\pm 2mn^\beta}^{(r)}$. These are improved in Proposition~\ref{prop: overview tri-inverse}. However, this is enough for the next step, which is the following proposition lying in the core of the proof of Theorem~\ref{thm: main 2}. 
\begin{proposition} \label{prop: overview trimming} \label{trimming}
Let $\beta=\frac{\alpha}{2}+\frac{\varepsilon}{3}$ such that $0<\frac{\alpha}{2}<\beta<\frac{\alpha+1}{3}<1$. 	Let $m\in\mathbb{N}$ and $m\geq 2$.  Assume the conditions in Theorem~\ref{thm: main 2} are satisfied and $x_0$ is near the edges, i.e., \eqref{eq:def edges}. Then there exist constants $d'>0$, $n_0\in\mathbb{N}$, and   $C_m'>0$ that only depend on $m$ and $M$ such that for all $n>n_0$
\begin{equation}\label{eq:overview cutoff} 
  \left| n^{\alpha m}\mathcal{C}_m(X^{(n)}_{f,\alpha,x_0})-\mathcal{C}_m^{(n)}\left( F_{n\pm 2mn^\beta} \right) \right|\leq C_m' e^{-d'n^{\beta-\frac{\alpha}{2}}} .
\end{equation}
\end{proposition}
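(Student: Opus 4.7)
The plan is to prove the bound in two stages, replacing $F$ by $F_{n+2mn^\beta}$ in the first stage, and then replacing $F_{n+2mn^\beta}$ by $F_{n\pm 2mn^\beta}$ in the second. By \eqref{eq:cumulant relation}, $n^{\alpha m}\mathcal{C}_m(X_{f,\alpha,x_0}^{(n)}) = \mathcal{C}_m^{(n)}(F)$, so after the triangle inequality it suffices to establish, for each of the pairs $(A,B) \in \{(F,F_{n+2mn^\beta}),\ (F_{n+2mn^\beta},F_{n\pm 2mn^\beta})\}$, the bound $|\mathcal{C}_m^{(n)}(A)-\mathcal{C}_m^{(n)}(B)| \leq C\, e^{-d\, n^{\beta-\alpha/2}}$. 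Expanding each cumulant via \eqref{eq:cumulatn operator}, this reduces to controlling finitely many differences of traces of the form $\Tr(A^{l_1}P_n\cdots A^{l_j}P_n)-\Tr(B^{l_1}P_n\cdots B^{l_j}P_n)$ and $\Tr(A^m P_n)-\Tr(B^m P_n)$.

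I would handle each such trace difference by a double telescoping: first over the position $i$ of the swap, and then inside each $A^{l_i}-B^{l_i}$ via the identity $A^{l_i}-B^{l_i}=\sum_{k=0}^{l_i-1} A^k(A-B)B^{l_i-1-k}$. Each resulting summand has the form $\Tr(X(A-B)Y)$ where $X,Y$ are bounded products of $A$, $B$ and $P_n$, with the outermost factor being $P_n$. The heart of the argument is then to show that $\Tr(X(A-B)Y)$ is exponentially small. For this, I would use the entrywise resolvent identity \eqref{eq:selfadjoint resolvent identity} to write, for each resolvent component,
\begin{equation*}
G^{(r)} - \bigl(J^{(r)}_{n+2mn^\beta}\bigr)^{-1} = G^{(r)}\,\bigl(J^{(r)}_{n+2mn^\beta}-J^{(r)}\bigr)\,\bigl(J^{(r)}_{n+2mn^\beta}\bigr)^{-1},
\end{equation*}
and observe that $J^{(r)}_{n+2mn^\beta}-J^{(r)}$ splits into a rank-one, $O(1)$ piece supported at the boundary indices $(n+2mn^\beta, n+2mn^\beta+1)$ (and its transpose), plus a piece supported entirely on the range of $Q_{n+2mn^\beta}$. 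After cycling the trace to place a $P_n$ projection adjacent to $A-B$, the $Q_N$ piece is annihilated, and the surviving rank-one piece forces a matrix element of $\bigl(J^{(r)}_{n+2mn^\beta}\bigr)^{-1}$ linking row $n+2mn^\beta$ to a column $\leq n$, which by \eqref{eq:overview improve CT J1} is bounded by $C_0 n^\alpha e^{-d_0 n^{\beta-\alpha/2}}$ since $(2m-1)n^\beta \geq n^\beta$.

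To convert these entrywise estimates into the desired trace estimate, I would combine them with $\|F\|_\infty,\|F_{n+2mn^\beta}\|_\infty = O(n^\alpha)$ (from Lemma~\ref{lemma: selfadjoint 1} and Corollary~\ref{lemma: selfadjoint 3}), together with $\|AB\|_1 \leq \|A\|_2\|B\|_2$, $|\Tr(XY)| \leq \|X\|_1\|Y\|_\infty$, and $\|P_n X\|_2 \leq \sqrt{n}\,\|X\|_\infty$. Each of the finitely many summands in the telescoping is then bounded by a polynomial in $n$ of degree depending only on $m$ and $M$, times $e^{-d_0 n^{\beta-\alpha/2}}$. Since $\beta > \alpha/2$, the polynomial is absorbed by the exponential at the cost of relaxing $d_0$ to some $d' \in (0,d_0)$. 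The second truncation $F_{n+2mn^\beta} \to F_{n\pm 2mn^\beta}$ runs along the same lines, the only change being that the relevant rank-one perturbation is now located near index $n-2mn^\beta$ and the corresponding entry estimate is furnished by \eqref{eq:overview improve CT J2}.

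The main obstacle is the bookkeeping of the $P_n$ factors inside the traces: after the inner telescoping splits $A^{l_i}-B^{l_i}$ into a sum with $A-B$ flanked by pure $A$'s and $B$'s (with no intervening $P_n$), one must carefully cycle the trace so that a $P_n$ becomes adjacent to $A-B$ in order to apply the entrywise decay from Proposition~\ref{thm: overview F entry estimation}. The combinatorial count of such rearrangements is controlled by $m$, and the accumulated polynomial powers of $n^\alpha$ are finite; all that is needed is the strict inequality $\beta > \alpha/2$, which is built into the choice $\beta = \alpha/2 + \varepsilon/3$ together with the admissible range of $\varepsilon$.
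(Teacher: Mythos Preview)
Your overall architecture---two-stage truncation, double telescoping, resolvent identity reducing $A-B$ to a rank-one boundary coupling---matches the paper's, but there is a real gap at the step you yourself flag as the main obstacle. After the inner telescope a typical summand is
\[
\Tr\bigl(B^{l_1}P_n\cdots B^{l_{i-1}}P_n\,A^k(A-B)B^{l_i-1-k}\,P_n A^{l_{i+1}}\cdots A^{l_j}P_n\bigr),
\]
and no cyclic rotation of the trace can move a $P_n$ past the flanking factors $A^k$, $B^{l_i-1-k}$ to sit directly next to $A-B$. The $Q_N$ part of the resolvent difference is indeed killed (since $B^{l_i-1-k}P_n$ has range in $P_N$), but the surviving rank-one piece only pins the \emph{row} index of $(J_N^{(r)})^{-1}$ to $N=n+2mn^\beta$; the column it sees is whatever $B^{l_i-1-k}P_n$ feeds it, which a priori ranges over all of $\{1,\dots,N\}$, not just $\{1,\dots,n\}$. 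For columns in $(N-n^\beta,N]$ the bound \eqref{eq:overview improve CT J1} is unavailable, and the Hilbert--Schmidt inequalities you list (in particular $\|P_nX\|_2\le\sqrt n\,\|X\|_\infty$) give only polynomial control there.

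The missing ingredient is a \emph{propagation} estimate: the paper proves by induction on $l$ (Lemma~\ref{cutoff}) that
\[
\bigl\|Q_{n+ln^\beta}\,F_{n+2mn^\beta}^{\,l}\,P_n\bigr\|_1 \le C_m\,e^{-d' n^{\beta-\alpha/2}},\qquad l\le m,
\]
which says $B^{l}P_n$ has exponentially small mass above row $n+ln^\beta$. This lets one insert $P_{n+(l_i-1-k)n^\beta}$ between $(A-B)$ and $B^{l_i-1-k}$ at exponentially small cost, after which the relevant column of $(J_N^{(r)})^{-1}$ is at distance $\ge (m+1)n^\beta$ from row $N$ and \eqref{eq:overview improve CT J1} applies. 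The paper in fact packages Step~1 more cleanly by deducing $\|(F^l-F_N^l)P_n\|_1\le C_m e^{-d'n^{\beta-\alpha/2}}$ directly and then using only the \emph{outer} telescope. Step~2 is not a carbon copy: it needs the analogous propagation on the $Q_n$ side (Lemma~\ref{cutoff 2}) together with the decomposition \eqref{eq:cumulant telescopic}, which first rewrites each block $\Tr(\mathcal A^{l_1}P_n\cdots)-\Tr(\mathcal A^m P_n)$ as a sum of terms carrying an explicit $Q_n$.
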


We give the proof for $m=2$ here. The proof for general $m$ follows the same principles, but is rather technical and we postpone the details to Section~\ref{sec:proof overview trim}.

\begin{proof}[Proof of Propsition~\ref{prop: overview trimming} ]

Consider the case $m=2$.

For any linear operator $\mathcal{A}$, the second cumulant is
\begin{equation*}
  \mathcal{C}_2^{(n)}(\mathcal{A}) =  \Tr \left( \mathcal{A}P_n\mathcal{A}P_n \right)-\Tr\left( \mathcal{A}^2P_n \right).
\end{equation*}
Write $Q_n\coloneqq Id-P_n$ and 
\begin{equation*}
  \mathcal{C}_2^{(n)}(\mathcal{A}) = -\Tr \left( \mathcal{A}Q_n\mathcal{A}P_n \right).
\end{equation*}
For any linear operators $\mathcal{A}$ and $\mathcal{B}$, we can write the difference of the cumulants by adding and subtracting extra terms to be
\begin{equation}\label{eq:overview algebra of cumulant 2}
  \mathcal{C}_2^{(n)}(\mathcal{A})-\mathcal{C}_2^{(n)}(\mathcal{B}) = -\Tr \left( \left( \mathcal{A}-\mathcal{B} \right)Q_n\mathcal{A}P_n \right) -\Tr \left(\mathcal{B}Q_n\left( \mathcal{A}-\mathcal{B} \right)P_n \right).
\end{equation}
Assume  $\mathcal{A}$ and $\mathcal{B}$ are symmetric (not necessarily Hermitian). Use the trace norm inequality $\|AB\|_1\leq \|A\|_1\|B\|_\infty$ to obtain 
\begin{equation}\label{eq:overview cumulant cancellation}
  \left| \mathcal{C}_2^{(n)}(\mathcal{A})-\mathcal{C}_2^{(n)}(\mathcal{B}) \right| \leq \left\|P_n\left( \mathcal{A}-\mathcal{B} \right)Q_n\right\|_1\left( \|\mathcal{A}\|_\infty+\|\mathcal{B}\|_\infty\right).
\end{equation}
Then take $\mathcal{A}=F$ as \eqref{eq:pre-truncation} and $\mathcal{B}=F_{n+2mn^\beta}$ as \eqref{eq:truncation 1}. We have 
\begin{equation}\label{eq:truncationAB1}
  P_n\left( \mathcal{A}-\mathcal{B} \right)
  =\sum_rc_rP_n\left(G_{x_0+\frac{\eta_r}{n^\alpha}}-\left( J^{(r)}_{n+2mn^\beta} \right)^{-1}\right).
\end{equation}
Note that $n+2mn^\beta>n$, and we have $P_n\left( J^{(r)}_{n+2mn^\beta} \right)^{-1}=P_n\left( J^{(r)}_{n+2mn^\beta} \right)^{-1}P_{n+2mn^\beta}$. Define the linear operator $E$ with all but two entries being zeros, i.e., 
\begin{equation}
  \begin{cases}
    (E)_{i,j}=(E)_{j,i}=a_{n+2mn^\beta}, &\quad \text{for $i=n+2mn^\beta$ and $j=n+2mn^\beta+1$,}\\
    (E)_{i,j}=0, & \quad \text{otherwise}.
  \end{cases}
\end{equation}
Hence, 
\begin{equation}\label{eq:E preview}
  P_n\left( J^{(r)}_{n+2mn^\beta} \right)^{-1}\left(J^{(r)}- J^{(r)}_{n+2mn^\beta}  \right)=P_n\left( J^{(r)}_{n+2mn^\beta} \right)^{-1}E.
\end{equation}
Note that the entries of matrix \eqref{eq:E preview} are zeros except for those at the first $n$ rows of the last column. Then apply the resolvent identity \eqref{eq:selfadjoint resolvent identity} (also see the comment below  \eqref{eq:selfadjoint resolvent identity} ) to \eqref{eq:truncationAB1} to obtain 
\begin{equation}\label{eq:aminusb}
P_n\left( \mathcal{A}-\mathcal{B} \right)=-\sum_rc_rP_n\left( J^{(r)}_{n+2mn^\beta} \right)^{-1}\left(J^{(r)} - J^{(r)}_{n+2mn^\beta}  \right)G_{x_0+\frac{\eta_r}{n^{\alpha}}}, 
\end{equation}
where $G_{x_0+\frac{\eta_r}{n^{\alpha}}}$ is given by \eqref{eq:selfadjoint G}.
Hence, plug \eqref{eq:aminusb} and \eqref{eq:E preview} into \eqref{eq:overview cumulant cancellation}, use the triangle inequality to obtain
\begin{equation}\label{eq:preview estimate diff1}
  \left| \mathcal{C}_2^{(n)}(F)-\mathcal{C}_2^{(n)}(F_{n+2mn^\beta}) \right| 
  \leq \sum_r|c_r| \left\|P_n\left( J^{(r)}_{n+2mn^\beta} \right)^{-1}E\right\|_1\left\|G_{x_0+\frac{\eta_r}{n^{\alpha}}} \right\|_\infty\left( \|F\|_\infty+\|F_{n+2mn^\beta}\|_\infty\right).
\end{equation}
The trace norm above is experientially small by \eqref{eq:overview improve CT J1} of Proposition~\ref{thm: overview F entry estimation}. All the operator normals are of order $O(n^\alpha)$, since $\mathcal{J}$ is real symmetric and $\Im(\eta_r)\neq 0$. Hence, \eqref{eq:preview estimate diff2} is exponentially small.  

Next, we take $\mathcal{A}=F_{n+2mn^\beta}$ as \eqref{eq:truncation 1} and $\mathcal{B}=F_{n\pm2mn^\beta}$ as \eqref{eq:truncation 3}. Similarly, we use the resolvent identity and the fact that $\left( J^{(r)}_{n\pm 2mn^\beta} \right)^{-1}Q_n=Q_{n-2mn^\beta}\left( J^{(r)}_{n\pm 2mn^\beta} \right)^{-1}Q_n$ to obtain
\begin{equation*}
  \left( \left( J^{(r)}_{n+2mn^\beta} \right)^{-1} -\left( J^{(r)}_{n\pm2mn^\beta} \right)^{-1}\right)Q_n=-\left( J^{(r)}_{n+2mn^\beta} \right)^{-1} \tilde{E}\left( J^{(r)}_{n\pm2mn^\beta} \right)^{-1}Q_n,
\end{equation*}
where $\tilde{E}$ only has two entries that are not zeros, i.e., 
\begin{equation}
  \begin{cases}
    (\tilde{E})_{i,j}=(\tilde{E})_{j,i}=a_{n-2mn^\beta}, &\quad \text{for $i=n-2mn^\beta$ and $j=n-2mn^\beta-1$,}\\
    (\tilde{E})_{i,j}=0, & \quad \text{otherwise}.
  \end{cases}
\end{equation}
Note that the entries of matrix $\tilde{E}\left( J^{(r)}_{n\pm2mn^\beta} \right)^{-1}Q_n$ are zeros except for those at the last $2mn^\beta$ columns of the first row. Hence, 
\begin{multline}\label{eq:preview estimate diff2}
  \left| \mathcal{C}_2^{(n)}(F_{n+2mn^\beta})-\mathcal{C}_2^{(n)}(F_{n\pm2mn^\beta}) \right| \\
  \leq \sum_r|c_r| \left\|\left(J^{(r)}_{n+2mn^\beta} \right)^{-1}  \right\|_\infty\left\|\tilde{E}\left( J^{(r)}_{n\pm2mn^\beta} \right)^{-1}Q_n\right\|_1\left( \|F_{n+2mn^\beta}\|_\infty+\|F_{n\pm2mn^\beta}\|_\infty\right).
\end{multline}
Similarly, by \eqref{eq:overview improve CT J2} of Proposition~\ref{thm: overview F entry estimation}, we have that \eqref{eq:preview estimate diff2} is of exponentially small.

The estimates \eqref{eq:preview estimate diff1} and \eqref{eq:preview estimate diff2} show that  $\left| \mathcal{C}_m(F)-\mathcal{C}_m(F_{n\pm2mn^\beta}) \right| $ is of exponentially small for $m=2$.

For the general case $m>2$, the algebra of the difference of the cumulants is more complicated than \eqref{eq:overview algebra of cumulant 2}. However, the strategy is similar. For details, see the complete proof of Proposition~\ref{prop: overview trimming} in Section~\ref{sec:proof overview trim}.

\end{proof}

Proposition~\ref{prop: overview trimming} implies that it is sufficient to study the truncated operator $F_{n\pm2mn^\beta}$ as $n\to\infty$. By the definition of $F_{n\pm2mn^\beta}$ in \eqref{eq:truncation 3}, essentially, we are left to study the resolvent $\left( J_{n\pm 2mn^\beta}^{(r)} \right)^{-1} $ defined in \eqref{eq:truncation 2}.

\begin{proposition}\label{prop: overview tri-inverse}
Let $\beta=\frac{\alpha}{2}+\frac{\varepsilon}{3}$ such that $0<\frac{\alpha}{2}<\beta<\frac{\alpha+1}{3}<1$. Assume conditions in Theorem~\ref{thm: main 2} are satisfied. Consider $x_0$ to be near the edges, i.e., \eqref{eq:def edges}. Then there exists a decomposition of  $\left( J_{n\pm 2mn^\beta}^{(r)} \right)^{-1}  $ such that

\begin{equation}\label{eq:overview J inverse}
  \left( J_{n\pm 2mn^\beta}^{(r)} \right)^{-1}  = T_{n\pm 2mn^\beta}(\eta_r) + H_{n\pm 2mn^\beta}(\eta_r) 
\end{equation} 
where $ T_{n\pm 2mn^\beta}(\eta_r)$ and $ H_{n\pm 2mn^\beta}(\eta_r) $ are  such that as $n\to\infty$,  for all $ j, k=n-2mn^\beta,\dots, n+2mn^\beta$,
\begin{equation}\label{eq:overview T Toeplitz}
  (T_{n\pm 2mn^\beta}(\eta_r))_{j,k} 
  =  \frac{(-1)^{|k-j|}\prod_{l=\min\{j,k\}}^{\max\{j,k\}-1}\left( 1- \mysqrt{\frac{-sgn(b_{n-1,n}-x_0)\eta_r}{ a_{n,n}n^\alpha}+\xi_l^{(r)}}\right)}{2a_{n,n}\mysqrt{\frac{-sgn(b_{n-1,n}-x_0)\eta_r}{a_{n,n}n^\alpha }}}\left( 1+o(1) \right),
\end{equation}
where $\max_{l=n-2mn^\beta,\dots, n+2mn^\beta}|\xi_l^{(r)}|=o(n^{-\alpha/2})$ and by convention $\prod_{l=j}^{j-1}\equiv 1$. We also have as $n\to\infty$, for all $j,k,j+l,k+l=n-2mn^\beta,\dots, n+2mn^\beta$,
\begin{equation}
  \frac{(T_{n\pm 2mn^\beta}(\eta))_{j,k}  }{(T_{n\pm 2mn^\beta}(\eta))_{j+l,k+l} }
  = 1+o\left( n^{-\beta+\frac{\alpha}{2}} \right),  \label{eq:overview tri-inverse T ratio}
\end{equation}
\begin{equation}\label{eq:overview H small}
  \left| (H_{n\pm 2mn^\beta}(\eta_r))_{j,k} \right| \leq C n^{\frac{\alpha}{2}} \left( e^{-d n^{-\frac{\alpha}{2}}(\max\{j,k\}-n+2mn^\beta)}+e^{-d n^{-\frac{\alpha}{2}}(n+2mn^\beta-\min\{j,k\})} \right), 
\end{equation}
for some constant $C,d>0$.
\end{proposition}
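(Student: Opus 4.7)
The plan is to exploit the closed-form expression for the inverse of a symmetric tri-diagonal matrix given in \eqref{eq:pre  difference_beta 2}, combined with a perturbative analysis of the transfer matrix products $A_jA_{j+1}\cdots$ and $B_jB_{j-1}\cdots$ under the slow-variation assumptions on the recurrence coefficients. Reindexing so that the non-trivial block of $J^{(r)}_{n\pm 2mn^\beta}$ becomes an $(N+1)\times(N+1)$ tri-diagonal matrix with $N=4mn^\beta$ and entries $\tilde a_l=a_{l+n-2mn^\beta,n}$, $\tilde b_l=b_{l+n-2mn^\beta,n}-x_0-\eta_r/n^\alpha$, I compute its inverse through the sequences $\beta_k,\gamma_j$ driven by the transfer matrices $A_l=V_l\Omega_l V_l^{-1}$ and $B_l=W_l\Lambda_l W_l^{-1}$ of \eqref{eq:pre transfer matrix A} and \eqref{eq:pre transfer matrix B}.

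First I analyze the eigenvalues $\omega_l^\pm,\lambda_l^\pm$ near the selected edge. Substituting $x_0=b_{n-1,n}\pm 2\sqrt{a_{n,n}a_{n-1,n}}+o(n^{-\alpha})$ and using the edge-type hypotheses \eqref{eq:ass:recurrence 1}, \eqref{eq:ass:recurrence 2} to telescope the differences $\tilde a_l\tilde a_{l-1}-a_{n,n}a_{n-1,n}$ and $\tilde b_{l-1}-b_{n-1,n}$ uniformly over the window, the discriminant $(\tilde b_{l-1}-\eta_r/n^\alpha)^2-4\tilde a_l\tilde a_{l-1}$ collapses to $-4\,\mathrm{sgn}(b_{n-1,n}-x_0)\sqrt{a_{n,n}a_{n-1,n}}\,\eta_r/n^\alpha+o(n^{-\alpha})$. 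After dividing by $2\tilde a_{l-1}$ and using $\tilde a_l\approx a_{n,n}$, the decaying eigenvalue of $A_l$ factorizes as $\pm\bigl(1-\sqrt{-\mathrm{sgn}(b_{n-1,n}-x_0)\eta_r/(a_{n,n}n^\alpha)+\xi_l^{(r)}}\bigr)$ up to a uniform multiplicative factor $1+o(1)$, with $\max_l|\xi_l^{(r)}|=o(n^{-\alpha/2})$; the same structure is obtained for $\lambda_l^\pm$. This is exactly the factor that must appear in \eqref{eq:overview T Toeplitz}.

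Next, I telescope the transfer matrix product through $V_l^{-1}V_{l+1}=Id+M_l$ to obtain
\[
A_jA_{j+1}\cdots A_{N-1}=V_j\,\Omega_j(Id+M_j)\Omega_{j+1}(Id+M_{j+1})\cdots\Omega_{N-1}V_{N-1}^{-1},
\]
and analogously for $B_l$. The defects $M_l,E_l$ from \eqref{eq:eigenvalue_difference_eq_M_j}, \eqref{eq:eigenvalue_difference_eq_E_j} depend on $\omega_{l+1}^\pm-\omega_l^\pm$ and $\lambda_{l+1}^\pm-\lambda_l^\pm$, and under Condition~\ref{ass:slowly varying recurrence coe} together with \eqref{eq:ass:recurrence 1}, \eqref{eq:ass:recurrence 2} they satisfy $\sum_l\|M_l\|,\sum_l\|E_l\|=o(1)$. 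Keeping only the purely diagonal contribution from the expansion yields $\beta_k\approx C_1\prod_{l=k}^{N-1}\omega_l^-$ and $\gamma_j\approx C_2\prod_{l=2}^{j}\lambda_l^-$, the decaying branch being selected by the boundary data $\beta_N=a_{N-1}$, $\gamma_1=a_1$. Substituting these into \eqref{eq:pre  difference_beta 2}, the factors in $\gamma_j\beta_k$ telescope against those in the normalizing Wronskian $\beta_Na_N\gamma_{N+1}$, leaving exactly $|k-j|$ surviving factors of the prescribed form $(1-\sqrt{\cdots})$; the residual normalization, a Wronskian-type product, reduces asymptotically to $2a_{n,n}\sqrt{-\mathrm{sgn}(b_{n-1,n}-x_0)\eta_r/(a_{n,n}n^\alpha)}$. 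This identifies $T_{n\pm 2mn^\beta}(\eta_r)$ and verifies \eqref{eq:overview T Toeplitz}. The ratio property \eqref{eq:overview tri-inverse T ratio} then follows from the same expression, because a joint shift by $l$ modifies only $O(l)$ boundary factors, each contributing $1+o(n^{-\alpha/2})$, for a total perturbation $o(n^{\beta-\alpha/2})$.

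Finally, I collect into $H_{n\pm 2mn^\beta}(\eta_r)$ all terms of the expansion containing at least one factor of $M_l$ or $E_l$. An insertion at position $l$ swaps an $\omega^-$ factor for an $\omega^+$ of modulus larger than one, so the exponential decay survives only on the side of the insertion; summing over the insertion location against the small bounds on $\sum_l\|M_l\|$ and $\sum_l\|E_l\|$ produces precisely the boundary-localized estimate \eqref{eq:overview H small}. The main technical obstacle will be the uniform book-keeping: the perturbative expansion has $O(n^\beta)$ factors, and a naive estimate cumulates to $O(n^{\beta-\alpha/2})$, which is not $o(1)$. The sharper hypotheses \eqref{eq:ass:recurrence 1} and \eqref{eq:ass:recurrence 2} are exactly what one needs in order to upgrade the per-factor bounds to $\sum_l\|M_l\|=o(1)$; without them the $\xi_l^{(r)}$ perturbations would fail to be uniformly $o(n^{-\alpha/2})$, and the clean Toeplitz identification would collapse. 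Separating this translation-invariant ``bulk'' contribution from the boundary remainder while preserving the uniform error across the full window $|j-k|\le 4mn^\beta$ is the crux of the proof.
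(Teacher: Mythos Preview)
Your overall strategy matches the paper's: reduce to the explicit tri-diagonal inverse formula \eqref{eq:pre  difference_beta 2}, diagonalize the transfer matrices $A_l,B_l$, telescope through $V_l^{-1}V_{l+1}=Id+M_l$, and read off the leading product $\prod\omega_l^-$. However, your identification of $T$ versus $H$ is inverted, and this is a genuine gap.

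You write that $H$ collects ``all terms of the expansion containing at least one factor of $M_l$ or $E_l$'', and that an $M_l$-insertion produces the boundary-localized decay in \eqref{eq:overview H small}. This is not the mechanism. The boundary data $(\beta_N,\beta_{N-1})=(a_{N-1},b_{N-1}-z)$ is \emph{not} an eigenvector of $A_{N-1}$, so even with all $M_l=0$ the diagonal product $V_k\prod_l\Omega_l\,V_{N-1}^{-1}$ applied to it yields \emph{both} branches: a dominant $\prod\omega_l^+$ and a subdominant $\prod\omega_l^-$. It is precisely the subdominant pieces---the factors $\frac{\tilde\beta_2}{\tilde\beta_1}\prod_{l=k}^{N-1}\frac{\omega_l^-}{\omega_l^+}$ and $\frac{\tilde\gamma_2}{\tilde\gamma_1}\prod_{l=2}^{j}\frac{\omega_l^-}{\omega_l^+}$---that produce the exponential decay from the two endpoints and constitute $H$ (see the paper's Proposition~\ref{prop: tri-inverse} and its application in Section~\ref{sec:proof tri-inverse}). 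The $M_l,E_l$ corrections, by contrast, are bounded by $\varepsilon_1=N\max_l\|M_l\|=o(n^{-2\beta+\alpha})$ \emph{uniformly} across the block, with no boundary structure at all; they are absorbed into the $(1+o(1))$ multiplicative factor of $T$. If you put them into $H$ as you propose, $H$ would fail \eqref{eq:overview H small}: its entries would be $o(n^{-2\beta+3\alpha/2})$ everywhere, not exponentially decaying from the edges, and the subsequent Proposition~\ref{prop: overview trimming 2} (which relies on $\|H\,P_{n+mn^\beta}Q_{n-mn^\beta}\|_1$ being exponentially small) would break.

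A second, smaller gap: your ratio argument for \eqref{eq:overview tri-inverse T ratio} uses only $\omega_m^-=1+O(n^{-\alpha/2})$, giving a cumulative error $o(n^{\beta-\alpha/2})$ which diverges. What is actually needed is the consecutive-difference bound $|\omega_l^--\omega_{l-1}^-|=o(n^{-3\beta+\alpha/2})$ of Lemma~\ref{lemma:small M} (this is where hypotheses \eqref{eq:ass:recurrence 1}--\eqref{eq:ass:recurrence 2} enter), so that $\prod_{m=j}^{k-1}\omega_m^-/\omega_{m+l}^-=(1+o(n^{-3\beta+\alpha/2}))^{(k-j)l}$ with $(k-j)l=O(n^{2\beta})$, yielding the required $1+o(n^{-\beta+\alpha/2})$.
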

For the proof of Proposition~\ref{prop: overview tri-inverse}, see Section~\ref{sec:proof tri-inverse}. 

Compared with \eqref{eq:CB should be finer},  \eqref{eq:overview J inverse} can be regarded as an analogue of the inverse formula of a Toeplitz operator. Formula~\eqref{eq:overview T Toeplitz} implies that the entries along the same diagonals of $T_{n\pm 2mn^\beta}$ are almost the same. The error is controlled by \eqref{eq:overview tri-inverse T ratio}. 

Note that the trace norm of $ H_{n\pm 2mn^\beta}(\eta_r) $ is much smaller than that of the $ T_{n\pm 2mn^\beta}(\eta_r) $ by \eqref{eq:overview H small}. We will also show that its contribution to the cumulant is exponentially small in the following proposition.

\begin{proposition} \label{prop: overview trimming 2} \label{trimming 20}
Let $\beta=\frac{\alpha}{2}+\frac{\varepsilon}{3}$ such that $0<\frac{\alpha}{2}<\beta<\frac{\alpha+1}{3}<1$. 	Let $m\in\mathbb{N}$ and $m\geq 2$.  Assume conditions in Theorem~\ref{thm: main 2} are satisfied. Consider $x_0$ to be at the edges.  Then there exists a constant $C_m'>0$ that only depends on $m$ and $M$ such that 
\begin{equation}\label{eq:trim_cumulant}
  \left| n^{\alpha m}\mathcal{C}_m(X^{(n)}_{f,\alpha,x_0})-\mathcal{C}_m^{(n)}\left( \sum_{r}c_rT_{n\pm 2mn^\beta}\left(\eta_r \right) \right) \right|\leq  C_m' e^{-d'n^{\beta-\frac{\alpha}{2}}}, 
\end{equation}
for all $n>n_0$ for some constant $n_0\in\mathbb{N}$.
\end{proposition}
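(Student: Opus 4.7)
The plan is to combine Proposition~\ref{prop: overview trimming} with the decomposition of Proposition~\ref{prop: overview tri-inverse}. Setting
\[
T_n \coloneqq \sum_{r=1}^{2M} c_r\, T_{n\pm 2mn^\beta}(\eta_r), \qquad H_n \coloneqq \sum_{r=1}^{2M} c_r\, H_{n\pm 2mn^\beta}(\eta_r),
\]
so that $F_{n\pm 2mn^\beta} = T_n + H_n$ on the window, the triangle inequality reduces \eqref{eq:trim_cumulant} to showing that $|\mathcal{C}_m^{(n)}(F_{n\pm 2mn^\beta}) - \mathcal{C}_m^{(n)}(T_n)|$ is exponentially small in $n^{\beta - \alpha/2}$.

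The key observation is that the ``crossing'' piece $P_n H_n Q_n$ has exponentially small trace norm. Since $\sum_{r=1}^{2M} c_r = 0$ (directly from \eqref{eq:resolvent fun}), both $F_{n\pm 2mn^\beta}$ and $T_n$, and therefore $H_n$, are supported on indices $j, k \in [n-2mn^\beta, n+2mn^\beta]$. For any crossing pair in this window, namely $j \leq n < k$ (or $k \leq n < j$), one has both $\max\{j,k\} - (n - 2mn^\beta) \geq 2mn^\beta$ and $(n + 2mn^\beta) - \min\{j,k\} \geq 2mn^\beta$, so both exponentials in \eqref{eq:overview H small} are bounded by $e^{-2dm\, n^{\beta-\alpha/2}}$. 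Summing over the at most $(2mn^\beta)^2$ such entries gives
\[
\|P_n H_n Q_n\|_1 \leq C\, n^{2\beta + \alpha/2}\, e^{-2dm\, n^{\beta - \alpha/2}},
\]
which is exponentially small since $\beta > \alpha/2$.

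For $m = 2$, the identity
\[
\mathcal{C}_2^{(n)}(F_{n\pm 2mn^\beta}) - \mathcal{C}_2^{(n)}(T_n) = -\Tr\bigl(H_n Q_n F_{n\pm 2mn^\beta} P_n\bigr) - \Tr\bigl(T_n Q_n H_n P_n\bigr),
\]
together with $\|AB\|_1 \leq \|A\|_1 \|B\|_\infty$ and the operator-norm bounds $\|F_{n\pm 2mn^\beta}\|_\infty, \|T_n\|_\infty = O(n^\alpha)$ (from Lemma~\ref{lemma: selfadjoint 1} applied to $J^{(r)}_{n\pm 2mn^\beta}$ and the fact that $\Im \eta_r \neq 0$), yields the exponential bound directly. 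For $m \geq 3$, expand each $(T_n + H_n)^{l_i}$ inside the trace in \eqref{eq:cumulatn operator} and collect the monomials that contain at least one factor of $H_n$. For each such monomial, apply the telescoping identity \eqref{eq:lemma dominated 1}, which already produces a factor of $Q_n$ adjacent to some block of the product, and then use the cyclic property of the trace to bring this $Q_n$ next to an occurrence of $H_n$; this extracts a factor of $\|P_n H_n Q_n\|_1$ from the trace, while the remaining factors are controlled by $\|T_n\|_\infty + \|H_n\|_\infty + \|F_{n\pm 2mn^\beta}\|_\infty = O(n^\alpha)$.

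The main obstacle is the combinatorial bookkeeping for $m \geq 3$: each of the $O(1)$ monomials containing at least one $H_n$ must admit such a cyclic rearrangement producing an exponentially small Schatten-norm factor, and one has to verify uniformly that no companion factor exceeds $O(n^{(m-1)\alpha})$. Since $m$ and $M$ are fixed, the total number of terms is bounded by a constant depending only on $m$ and $M$, and the polynomial blow-up $n^{(m-1)\alpha}$ is annihilated by $e^{-c\, n^{\beta-\alpha/2}}$, delivering \eqref{eq:trim_cumulant} with a constant $C_m'$ depending only on $m$ and $M$.
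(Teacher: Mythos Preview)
Your overall architecture is right: combine Proposition~\ref{prop: overview trimming} with the decomposition $F_{n\pm 2mn^\beta}=T_n+H_n$ and show $|\mathcal{C}_m^{(n)}(F_{n\pm 2mn^\beta})-\mathcal{C}_m^{(n)}(T_n)|$ is exponentially small. Your estimate $\|P_nH_nQ_n\|_1\le Cn^{2\beta+\alpha/2}e^{-2dm\,n^{\beta-\alpha/2}}$ is correct, and the $m=2$ case goes through as you wrote (minor quibble: $\|T_n\|_\infty=O(n^\alpha)$ does not follow from Lemma~\ref{lemma: selfadjoint 1}, since $T_n$ is not a resolvent; it follows from the entrywise bound \eqref{eq:overview T Toeplitz} and $\|A\|_\infty\le\sum_l\sup_k|(A)_{k,k+l}|$, exactly as the paper does).

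The gap is in your $m\ge 3$ step. The cyclic property of the trace does \emph{not} let you bring the $Q_n$ adjacent to an arbitrary occurrence of $H_n$. Concretely, for $j=3$, $l_1=l_2=l_3=1$, one of the monomials after telescoping is $\Tr(T_nP_nH_nP_nT_nQ_n)$: here $H_n$ is sandwiched between two $P_n$'s, and no cyclic permutation produces a factor $P_nH_nQ_n$ or $Q_nH_nP_n$. You cannot fall back on $\|P_nH_nP_n\|_1$ either, because near the lower boundary $j,k\approx n-2mn^\beta$ the first exponential in \eqref{eq:overview H small} is $O(1)$, so $\|P_nH_nP_n\|_1$ is only polynomially bounded, not exponentially small.

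What is missing is exactly the mechanism the paper supplies: the operators $F_{n\pm 2mn^\beta}$ and $T_n$ are \emph{approximately banded} of width $n^\beta$ (Lemmas~\ref{cutoff}, \ref{cutoff 2} and the analogue for $T_n$ via \eqref{eq:overview T Toeplitz}), so one may commute $P_n$ through $\ell$ factors to $P_{n+\ell n^\beta}$ and $Q_n$ through $\ell$ factors to $Q_{n-\ell n^\beta}$, each time incurring only exponentially small error. After this ``widening'' of the projections, the difference $\mathcal{A}-\mathcal{B}=H_n$ is flanked by $P_{n+mn^\beta}$ and $Q_{n-mn^\beta}$, and now \emph{both} boundaries are at distance $\ge mn^\beta$, so \eqref{eq:overview H small} gives $\|H_nP_{n+mn^\beta}Q_{n-mn^\beta}\|_1$ exponentially small. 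Your sketch implicitly needs this commutation step; without it the argument for $m\ge3$ does not close.
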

For the proof of Proposition~\ref{prop: overview trimming 2}, see Section~\ref{sec:proof overview trim 2}.

Proposition~\ref{prop: overview trimming} implies that the asymptotics of the cumulants of the mesoscopic linear statistics is reduced to studying the operator $T_{n\pm 2mn^\beta}\left(\eta_{r} \right)$. To this end, we will show the cumulant generating function of  $\sum_{r}c_rT_{n\pm 2mn^\beta}\left(\eta_{r} \right)$ converges to that of a Gaussian. That is an analogue of the strong Szeg\H{o}'s limit theorem to this operator whose non-trivial block is close to a Toeplitz matrix but entries vary slowly along the diagonals. 

\begin{proposition}\label{thm: overview strong szego}
Let $\beta=\frac{\alpha}{2}+\frac{\varepsilon}{3}$ such that $0<\frac{\alpha}{2}<\beta<\frac{\alpha+1}{3}<1$. Assume conditions in Theorem~\ref{thm: main 2} are satisfied. Consider $x_0$ to be at the edges. The test function $f$ is defined as \eqref{eq:resolvent fun}. Then

\begin{equation}
  \lim_{n\to\infty}\mathcal{C}_k^{(n)}\left(\sum_{r}c_rn^{-\alpha}T_{n\pm 2mn^\beta}\left(\eta_r \right)\right)
  =\begin{cases}
    \sigma_f^2, \quad &k=2\\
    0, \quad &k=3,\dots m.
  \end{cases}
\end{equation}
where, $\sigma_f^2$ is given by \eqref{eq:variance right} and \eqref{eq:variance left}. 

Consequently, Theorem~\ref{thm: main 2} holds for such test functions $f$ as defined in \eqref{eq:resolvent fun}.
\end{proposition}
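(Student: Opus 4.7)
The plan is to treat the operator
\[
A_n \;:=\; \sum_{r=1}^{2M} c_r\, n^{-\alpha}\, T_{n\pm 2mn^\beta}(\eta_r)
\]
as a small perturbation of a banded Toeplitz operator, and then run a strong-Szeg\H{o}--type argument. By \eqref{eq:overview T Toeplitz} the entries of $T_{n\pm 2mn^\beta}(\eta_r)$ are, to leading order,
\[
(T)_{j,k} \;\sim\; \frac{(-1)^{|k-j|}(1-w_r)^{|k-j|}}{2a_{n,n}\,w_r},
\qquad w_r \;=\; \sqrt{\tfrac{-\operatorname{sgn}(b_{n-1,n}-x_0)\eta_r}{a_{n,n} n^\alpha}} \;=\; O(n^{-\alpha/2}),
\]
and by \eqref{eq:overview tri-inverse T ratio} entries along each diagonal vary only by a factor $1+o(n^{-\beta+\alpha/2})$. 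I would first freeze the slowly varying correction $\xi_l^{(r)}$ at $l=n$ to obtain a genuine banded Toeplitz operator $\widetilde A_n$; combining \eqref{eq:overview tri-inverse T ratio} with the geometric decay from $|1-w_r|<1$ in a trace-norm estimate shows $|\mathcal{C}_m^{(n)}(A_n)-\mathcal{C}_m^{(n)}(\widetilde A_n)|\to 0$ for each fixed $m$.

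For the higher cumulants $m\geq 3$, I would apply Lemma~\ref{lemma: dominated}, which gives
\[
|\mathcal{C}_m^{(n)}(A_n)| \;\leq\; C_m\, \|A_n\|_\infty^{m-2}\, \mathcal{C}_2^{(n)}(A_n).
\]
Summing the entries in each row via the geometric decay yields $\|n^{-\alpha}T_{n\pm 2mn^\beta}(\eta_r)\|_\infty = O(n^{-\alpha/2})$ (the row sum is roughly $(2a_{n,n}w_r)^{-1}\cdot (1/w_r)\cdot n^{-\alpha}$), so $\|A_n\|_\infty = O(n^{-\alpha/2})$. Once $\mathcal{C}_2^{(n)}(A_n)=O(1)$ is established, this forces $\mathcal{C}_m^{(n)}(A_n)=O(n^{-\alpha(m-2)/2})\to 0$ for every $m\geq 3$.

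The heart of the argument is the second cumulant. Since $A_n$ is real and symmetric, Lemma~\ref{lemma: dominated}'s identity \eqref{eq:lemma dominated C2} gives
\[
\mathcal{C}_2^{(n)}(A_n) \;=\; -\operatorname{Tr}(A_n Q_n A_n P_n) \;=\; \sum_{j\leq n<k} (A_n)_{j,k}^2.
\]
Restricting the double sum to the active window $n\pm 2mn^\beta$ introduces only exponentially small errors thanks to the geometric decay and \eqref{eq:overview H small}. Inside the window the near-translation invariance \eqref{eq:overview tri-inverse T ratio} collapses the sum to $\sum_{l\geq 1} l\cdot \bigl((A_n)_{n,n+l}\bigr)^2 + o(1)$. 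Plugging in the explicit symbol from \eqref{eq:overview T Toeplitz}, rescaling $l = n^{\alpha/2}s$, and summing over $r$ using \eqref{eq:resolvent fun}, this Riemann sum converges to an integral over $s\in(0,\infty)$. A final change of variable that translates the edge rescaling $n^\alpha(x-x_0)\mapsto \mp x^2$ and undoes the partial-fraction decomposition \eqref{eq:resolvent fun} back into $f$ identifies the limit with $\sigma_f^2$ in \eqref{eq:variance right}--\eqref{eq:variance left}; the sign and the two cases are produced by $\operatorname{sgn}(b_{n-1,n}-x_0)$ inside $w_r$.

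The main obstacle is this last step: one must perform the Riemann-sum-to-integral passage uniformly in $r$ and then recognise the resulting quadratic form in the Fourier-type variables $\{w_r\}$ as the $H^{1/2}$ seminorm of $f$ composed with a square function. This identification is cleanest if one writes the single-$r$ contribution as the matrix element of the resolvent of a derivative operator on the half-line (the Bessel/Airy symbol at the edge), so that pairing the resolvent representation \eqref{eq:resolvent fun} back reproduces $f(\pm x^2)$ in the integrand. Once the cumulant limits are in hand, Lemma~\ref{lemma: dominated} plus dominated convergence yields $\log\mathbb{E}[e^{t(X_{f,\alpha,x_0}^{(n)}-\mathbb{E}X_{f,\alpha,x_0}^{(n)})}] \to \tfrac{t^2}{2}\sigma_f^2$, which via Proposition~\ref{prop: overview trimming 2} establishes Theorem~\ref{thm: main 2} for test functions of the form \eqref{eq:resolvent fun}.
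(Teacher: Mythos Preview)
Your argument for the vanishing of the higher cumulants has a genuine gap. The bound $\|A_n\|_\infty = O(n^{-\alpha/2})$ is incorrect: each term $n^{-\alpha}T_{n\pm 2mn^\beta}(\eta_r)$ is an approximation of $n^{-\alpha}\bigl(\mathcal J-x_0-\eta_r/n^\alpha\bigr)^{-1}$, whose operator norm is of order $n^{-\alpha}\cdot\operatorname{dist}\bigl(x_0+\eta_r/n^\alpha,\sigma(\mathcal J)\bigr)^{-1}\asymp 1$ since $x_0$ sits at the spectral edge. Your own row--sum computation confirms this: $(2a_{n,n}w_r)^{-1}\cdot w_r^{-1}\cdot n^{-\alpha}=O(n^{\alpha/2}\cdot n^{\alpha/2}\cdot n^{-\alpha})=O(1)$, not $O(n^{-\alpha/2})$. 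With only $\|A_n\|_\infty=O(1)$, Lemma~\ref{lemma: dominated} yields $|\mathcal C_m^{(n)}(A_n)|\le C_m\,\mathcal C_2^{(n)}(A_n)=O(1)$, which gives no decay whatsoever for $m\ge 3$. The higher cumulants therefore remain completely uncontrolled in your scheme.

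The paper handles this by a structural rather than a norm argument. It splits $T=\sum_r c_r T_{n\pm 2mn^\beta}(\eta_r)$ into its lower and strictly upper triangular parts $T_+,T_-$ and invokes the Ehrhardt/Helton--Howe--Pincus identity (Lemmas~\ref{lemma:eee expansion}--\ref{lemma: det Toeplitz }) to factor the Fredholm determinant as
\[
e^{-\tfrac{t^2}{2n^{2\alpha}}\Tr[T_+,T_-]}\;\det\!\bigl(Id+Q_n(R(t,\eta)^{-1}-Id)\bigr).
\]
The first factor is manifestly Gaussian; the work is to show the second factor contributes nothing to cumulants up to order $m$. This follows because the commutator $[T_+,T_-]$ concentrates near the lower corner of the window, so $Q_{n-(2m-1)n^\beta}[T_+,T_-]$ has vanishing trace norm (Lemma~\ref{lemma: T trace norm}); feeding this through the expansion of $R^{-1}-Id$ (Lemma~\ref{lemma:  CLT residual term}) shows $\|Q_n(R(t,\eta)^{-1}-Id)\|_1=O(|t|^{2m+2})$. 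Thus the residual determinant only affects cumulants of order $\ge 2m+2$, and cumulants $3,\dots,m$ vanish for algebraic reasons, not because of a small operator norm. Your second--cumulant calculation is close in spirit to the paper's Lemma~\ref{lemma: variance} (which computes $\lim n^{-2\alpha}\Tr[T_+,T_-]$ via residues), but that piece alone does not suffice without the triangular--factorisation mechanism for the higher cumulants.
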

For the proof of Proposition~\ref{thm: overview strong szego}, see Section~\ref{sec proof overview strong szego}.

The last step of the proof of Theorem~\ref{thm: main 2} is to extend such $f$ to compactly supported functions following a standard procedure, for example, \cite{breuer2016universality, duits2018mesoscopic}. This is elaborated in Section~\ref{sec:Lipschitz extension}.

\subsection{Proof of Theorem~\ref{thm: main 2} }\label{sec:Lipschitz extension}
To finish the proof of Theorem~\ref{thm: main 2}, we need to extend the result of Proposition~\ref{thm: overview strong szego} to the test functions $f\in C^1_c$. An important role of extension is played by the following space of functions. 

\begin{definition} \label{def: weighted Lip}
Let $\mathcal{L}_w$ be a space of functions of $f: \mathbb{R}\to \mathbb{R}$ such that $\lim_{x\to\pm \infty} f(x)=0$ and 
\begin{align}
  \|f\|_{\mathcal{L}_w} \coloneqq \sup_{x,y\in\mathbb{R}} \sqrt{1+x^2}\sqrt{1+y^2}\left| \frac{f(x)-f(y)}{x-y} \right| <\infty.
\end{align}
\end{definition}
$\mathcal{L}_w$ is a normed space with the weighted Lipschitz norm  $\|f\|_{\mathcal{L}_w}$. Note that $C_c^1\subset \mathcal{L}_w$.

\begin{proposition}[Proposition 5.1 in\cite{breuer2016universality}]\label{proposition: variance est}
Let $g(x) = \frac{1}{x-i}$. Then for any $f\in\mathcal{L}_w$ and $n\in\mathbb{N}$ we have 
\begin{equation}\label{eq:var inequality}
  \Var \left(X_{f,\alpha,x_0}^{(n)}\right) \leq \|f\|_{\mathcal{L}_w}^2\left( \Var \left(X_{\Im g,\alpha,x_0}^{(n)}\right)  +\Var \left(X_{\Re g,\alpha,x_0}^{(n)}\right) \right).
\end{equation}
\end{proposition}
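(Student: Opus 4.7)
The plan rests on two ingredients: the standard variance identity for a determinantal point process with a projection kernel, and a pointwise inequality that reads off immediately from the definition of $\mathcal{L}_w$. First I would recall that the OPE \eqref{Poly_ensemble} is a determinantal point process whose correlation kernel $K_n(x,y)$ is the Christoffel--Darboux kernel built from the orthonormal polynomials $\{p_{j,n}\}_{j=0}^{n-1}$. Since $K_n$ is symmetric and a reproducing (rank-$n$) projection, the well-known identity
\begin{equation*}
\Var\!\left(X_{h,\alpha,x_0}^{(n)}\right)=\frac{1}{2}\int\!\!\int \bigl(h_{\alpha,x_0}^{(n)}(x)-h_{\alpha,x_0}^{(n)}(y)\bigr)^2 K_n(x,y)^2\,d\mu_n(x)d\mu_n(y)
\end{equation*}
holds for any real-valued test function $h$ for which both sides make sense, where $h_{\alpha,x_0}^{(n)}(x)=h(n^\alpha(x-x_0))$. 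In particular this applies with $h=f$, and, since $\Re g$ and $\Im g$ are both bounded and smooth, also with $h=\Re g$ and $h=\Im g$.

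Next I would establish the core pointwise bound: for every $f\in\mathcal{L}_w$ and every $u,v\in\mathbb{R}$,
\begin{equation*}
\bigl(f(u)-f(v)\bigr)^2\leq \|f\|_{\mathcal{L}_w}^{2}\bigl|g(u)-g(v)\bigr|^2.
\end{equation*}
This is a one-line calculation: the factorisation $1+u^2=|u-i|^2$ combined with
\begin{equation*}
g(u)-g(v)=\frac{1}{u-i}-\frac{1}{v-i}=\frac{v-u}{(u-i)(v-i)}
\end{equation*}
yields $|g(u)-g(v)|^2=(u-v)^2/\bigl(|u-i|^2|v-i|^2\bigr)=(u-v)^2/\bigl((1+u^2)(1+v^2)\bigr)$, and the definition of $\|f\|_{\mathcal{L}_w}$ delivers the estimate. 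Splitting $|g(u)-g(v)|^2=(\Re g(u)-\Re g(v))^2+(\Im g(u)-\Im g(v))^2$ is then immediate.

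Finally I would substitute $u=n^\alpha(x-x_0)$ and $v=n^\alpha(y-x_0)$ into this pointwise inequality, plug the result into the variance identity above applied to $f$, and recognise the two terms on the right as the variance identity applied to $\Re g$ and $\Im g$ respectively. This yields \eqref{eq:var inequality} at once. There is no serious obstacle in the argument; the only conceptual point is the choice of the reference function $g(x)=1/(x-i)$, which is precisely calibrated so that $|g(u)-g(v)|^2$ matches the weight $(1+u^2)^{-1}(1+v^2)^{-1}$ appearing in the definition of $\mathcal{L}_w$.
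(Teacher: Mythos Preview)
Your argument is correct. The paper does not give its own proof of this proposition; it simply quotes it as Proposition~5.1 of \cite{breuer2016universality}. The proof you outline---the variance identity $\Var(X_h^{(n)})=\tfrac12\iint(h(x)-h(y))^2K_n(x,y)^2\,d\mu_n(x)d\mu_n(y)$ for a determinantal process with a projection kernel, combined with the pointwise bound $(f(u)-f(v))^2\le\|f\|_{\mathcal{L}_w}^2|g(u)-g(v)|^2$ coming from $|g(u)-g(v)|^2=(u-v)^2/((1+u^2)(1+v^2))$---is exactly the argument used in \cite{breuer2016universality}, so there is nothing to add.
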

A consequence of Proposition~\ref{proposition: variance est} is the following corollary.
\begin{corollary}\label{corollary: variance norm}
Assume all conditions of Theorem~\ref{thm: main 2} are satisfied. Then for any $f\in\mathcal{L}_w$ we have 
\begin{align}
  \limsup_{n\to\infty}\Var \left(X_{f,\alpha,x_0}^{(n)}\right) \leq \frac{1}{8}\|f\|_{\mathcal{L}_w}^2.
\end{align}
\end{corollary}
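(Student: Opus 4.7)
The plan is to combine Proposition~\ref{proposition: variance est} with the limiting variance formula obtained in Proposition~\ref{thm: overview strong szego} and then verify by direct calculation that the two resolvent contributions sum to exactly $1/8$. Concretely, for $f\in\mathcal{L}_w$, Proposition~\ref{proposition: variance est} gives
\begin{equation*}
\Var\bigl(X_{f,\alpha,x_0}^{(n)}\bigr)\le \|f\|_{\mathcal{L}_w}^{2}\Bigl(\Var\bigl(X_{\Im g,\alpha,x_0}^{(n)}\bigr)+\Var\bigl(X_{\Re g,\alpha,x_0}^{(n)}\bigr)\Bigr),
\end{equation*}
so it is enough to prove $\limsup_{n\to\infty}\bigl(\Var X_{\Im g,\alpha,x_0}^{(n)}+\Var X_{\Re g,\alpha,x_0}^{(n)}\bigr)\le 1/8$ with equality.

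First I would handle $\Im g(x)=\frac{1}{x^{2}+1}$. This is literally of the form \eqref{eq:f imaginary} with $M=1$, $d_{1}=1$, $\lambda_{1}=i$, hence Proposition~\ref{thm: overview strong szego} applies directly and yields $\lim_{n\to\infty}\Var X_{\Im g,\alpha,x_0}^{(n)}=\sigma^{2}_{\Im g}$. For $\Re g(x)=\frac{x}{x^{2}+1}=\tfrac12\bigl(\frac{1}{x-i}+\frac{1}{x+i}\bigr)$, the function is a real-valued linear combination of resolvents of the form \eqref{eq:resolvent fun} but with $c_1=c_2=\tfrac12$ (real) rather than the pure-imaginary coefficients coming from \eqref{eq:f imaginary}. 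The key observation is that the whole cumulant machinery of Section~\ref{sec:overview of proof} is phrased in terms of the operator $F=\sum_{r}c_{r}G_{x_{0}+\eta_{r}/n^{\alpha}}$, and Propositions~\ref{thm: overview F entry estimation}, \ref{prop: overview trimming}, \ref{prop: overview tri-inverse}, \ref{prop: overview trimming 2} and~\ref{thm: overview strong szego} require only that $\Im\eta_{r}\neq 0$ and that $f$ be real-valued; they do not use the specific origin of the $c_r$'s. Thus the same argument gives $\lim_{n\to\infty}\Var X_{\Re g,\alpha,x_0}^{(n)}=\sigma^{2}_{\Re g}$ with the same variance formula \eqref{eq:variance right}/\eqref{eq:variance left}.

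Next I would compute the sum. Using $(\Re w)^{2}+(\Im w)^{2}=|w|^{2}$ and the telescoping identity
\begin{equation*}
\frac{g(u)-g(v)}{u-v}=\frac{-1}{(u-i)(v-i)},
\end{equation*}
the substitution $(u,v)=(\mp x^{2},\mp y^{2})$ together with $\frac{u-v}{x-y}=\mp(x+y)$ yields
\begin{equation*}
\left|\frac{g(\mp x^{2})-g(\mp y^{2})}{x-y}\right|^{2}=\frac{(x+y)^{2}}{(x^{4}+1)(y^{4}+1)}.
\end{equation*}
Adding the two variance integrals in \eqref{eq:variance right}--\eqref{eq:variance left} thus produces
\begin{equation*}
\sigma^{2}_{\Im g}+\sigma^{2}_{\Re g}=\frac{1}{8\pi^{2}}\iint_{\mathbb{R}^{2}}\frac{(x+y)^{2}}{(x^{4}+1)(y^{4}+1)}\,dx\,dy.
\end{equation*}
Expanding $(x+y)^{2}=x^{2}+2xy+y^{2}$, the cross term vanishes by oddness, and using the standard evaluations $\int_{\mathbb{R}}\frac{dx}{x^{4}+1}=\int_{\mathbb{R}}\frac{x^{2}\,dx}{x^{4}+1}=\frac{\pi}{\sqrt{2}}$ gives $\sigma^{2}_{\Im g}+\sigma^{2}_{\Re g}=\frac{1}{4\pi^{2}}\cdot\frac{\pi^{2}}{2}=\frac{1}{8}$. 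Combined with the inequality from Proposition~\ref{proposition: variance est}, this concludes the proof.

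The main obstacle is the justification that the CLT/variance limit carries over to $\Re g$, since $\Re g$ is not in the class \eqref{eq:f imaginary} as stated. This is essentially bookkeeping: one checks that every operator-level argument in Section~\ref{sec:overview of proof} (the resolvent truncation in Proposition~\ref{prop: overview trimming}, the inverse decomposition in Proposition~\ref{prop: overview tri-inverse}, and the Szeg\H{o}-type computation in Proposition~\ref{thm: overview strong szego}) depends only on the sum $\sum_{r}c_{r}G_{x_{0}+\eta_{r}/n^{\alpha}}$ representing a real-symmetric operator with $\Im\eta_{r}\neq 0$, a condition $\Re g$ satisfies. Should one wish to avoid this extension, an equivalent route is to absorb $\Re g$ into the general $f$ by noting $\|\Re g\|_{\mathcal{L}_w}$ is explicit and recycling the bound, but the cleanest path is the direct extension above.
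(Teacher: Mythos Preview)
Your proof is correct and follows essentially the same approach as the paper: apply Proposition~\ref{proposition: variance est}, then use Proposition~\ref{thm: overview strong szego} on both $\Im g$ and $\Re g$, and add the resulting limiting variances. The paper computes $\sigma_{\Im g}^{2}=3/32$ and $\sigma_{\Re g}^{2}=1/32$ separately and sums them, whereas you combine the two integrands via $(\Re w)^{2}+(\Im w)^{2}=|w|^{2}$ and evaluate the single integral $\frac{1}{8\pi^{2}}\iint\frac{(x+y)^{2}}{(x^{4}+1)(y^{4}+1)}\,dx\,dy=\tfrac18$ directly; this is a cosmetic difference. Your explicit discussion of why Proposition~\ref{thm: overview strong szego} extends to $\Re g$ (real coefficients rather than those coming from \eqref{eq:f imaginary}) is in fact more careful than the paper, which simply applies the proposition to both functions without comment---the point being exactly the one you make, that the entire argument only uses that $F=\sum_{r}c_{r}G_{x_{0}+\eta_{r}/n^{\alpha}}$ has real symmetric matrix entries and $\Im\eta_{r}\neq 0$.
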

\begin{proof}
We are to estimate the right-hand side of \eqref{eq:var inequality}. Note that for $g(x)=\frac{1}{x-i}$ we have
\begin{equation*}
  \Im g(x)  =\frac{1}{2i} \left( \frac{1}{x-i}- \frac{1}{x+i} \right), \qquad \Re g(x)  =\frac{1}{2} \left( \frac{1}{x-i}+ \frac{1}{x+i} \right). 
\end{equation*}
Using Proposition~\ref{thm: overview strong szego} for these two functions, we find that at the right edge

\begin{align*}
  \lim\limits_{n\to\infty} \Var \left(X_{\Im g,\alpha,x_0}^{(n)}\right) = \frac{1}{8\pi^2}\int\int_{\mathbb{R}^2}\left(\frac{\Im g (-x^2)-\Im g(-y^2)}{x-y}\right)^2dxdy = \frac{3}{32}, \\
  \lim\limits_{n\to\infty} \Var \left(X_{\Re g,\alpha,x_0}^{(n)}\right)  = \frac{1}{8\pi^2}\int\int_{\mathbb{R}^2}\left(\frac{\Re g (-x^2)-\Re g(-y^2)}{x-y}\right)^2dxdy  = \frac{1}{32}.
\end{align*}		
Similarly, we also obtain that at the left edge
\begin{align*}
  \lim\limits_{n\to\infty} \Var \left(X_{\Im g,\alpha,x_0}^{(n)}\right) = \frac{1}{8\pi^2}\int\int_{\mathbb{R}^2}\left(\frac{\Im g (x^2)-\Im g(y^2)}{x-y}\right)^2dxdy = \frac{3}{32}, \\
  \lim\limits_{n\to\infty} \Var \left(X_{\Re g,\alpha,x_0}^{(n)}\right) = \frac{1}{8\pi^2}\int\int_{\mathbb{R}^2}\left(\frac{\Re g (x^2)-\Re g(y^2)}{x-y}\right)^2dxdy  = \frac{1}{32}.
\end{align*}
These imply that at both edges
\begin{equation*}
  \limsup_{n\to\infty}\left( \Var \left(X_{\Im g,\alpha,x_0}^{(n)}\right)  +\Var \left(X_{\Re g,\alpha,x_0}^{(n)}\right) \right)\leq \frac{1}{8}.
\end{equation*}
Then combine with Proposition~\ref{proposition: variance est} and we conclude this corollary.		
\end{proof}

\begin{lemma}[Lemma 5.3 in \cite{breuer2016universality}]\label{lemma: recolvent approx}
Let $f\in C_c^1(\mathbb{R})$. For any $\varepsilon>0$, there exists a $M\in \mathbb{N}$, $d_r\in \mathbb{R}$, and $\lambda_r\in \mathbb{C}$ with $\Im (\lambda_r)>0$ for all $r=1,\dots, M$ such that, 

\begin{equation}
  \left\| f(x)-\Im \sum_{r=1}^M \frac{d_r}{x-\lambda_r}\right\|_{\mathcal{L}_w}<\varepsilon, 
\end{equation}
where the weighted Lipschitz norm is defined in Definition ~\ref{def: weighted Lip}.
\end{lemma}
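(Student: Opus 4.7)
The plan is to construct the approximation in two stages: first replace $f$ by its Poisson harmonic extension at a small height $\delta>0$, then discretize the resulting Poisson integral into a Riemann sum indexed by finitely many points on the line $\Im z=\delta$. Throughout, fix $R>0$ such that $\mathrm{supp}(f)\subset[-R,R]$.

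\emph{Stage 1 (smoothing).} For $\delta>0$ set
$$f_\delta(x):=(P_\delta*f)(x)=\frac{1}{\pi}\,\Im\int_{-R}^{R}\frac{f(t)}{x-t-i\delta}\,dt.$$
Classical Poisson-kernel estimates combined with $f\in C_c^1$ give $\|f-f_\delta\|_\infty=O(\delta)$ and uniform convergence of first divided differences on compact sets, while for $|x|\gg R$ one has the explicit tail $f_\delta(x)=O(\delta/x^2)$. Splitting cases according to whether $|x|,|y|$ are both $\leq 2R$, one inside and one outside $[-2R,2R]$, or both outside, one checks directly that $\|f-f_\delta\|_{\mathcal{L}_w}\to 0$ as $\delta\to 0$.

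\emph{Stage 2 (discretization).} With $\delta>0$ fixed, take a uniform partition of $[-R,R]$ with nodes $t_r=-R+r\Delta$, $\Delta=2R/N$, for $r=1,\dots,N$, and set
$$\lambda_r:=t_r+i\delta,\qquad d_r:=f(t_r)\Delta/\pi,\qquad g_N(x):=\Im\sum_{r=1}^{N}\frac{d_r}{x-\lambda_r}.$$
This is a Riemann sum for $f_\delta(x)$. Its divided difference equals $-\Im\sum_r d_r/((x-\lambda_r)(y-\lambda_r))$, a Riemann sum for the divided difference of $f_\delta$. Since each $\lambda_r$ lies in the bounded strip $[-R,R]+i\delta$, the function $t\mapsto f(t)/((x-t-i\delta)(y-t-i\delta))$ has $C^1$-norm in $t$ which, after multiplication by $\sqrt{(1+x^2)(1+y^2)}$, is bounded uniformly in $x,y\in\mathbb{R}$. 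The standard Riemann-sum error estimate then yields $\|g_N-f_\delta\|_{\mathcal{L}_w}\to 0$ as $N\to\infty$. Picking $\delta$ and then $N$ so that each error is less than $\varepsilon/2$ proves the lemma with $M=N$.

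The main obstacle is that the norm $\mathcal{L}_w$ contains the weight $\sqrt{(1+x^2)(1+y^2)}$, so pointwise (or even uniform) approximation is not enough: one must propagate correct polynomial decay at infinity through both stages. In Stage 1 this is delivered by the explicit $O(\delta/x^2)$ tail of the Poisson extension of a compactly supported function, which absorbs the weight and leaves a vanishing error as $\delta\to 0$. In Stage 2 it is guaranteed by the boundedness of the nodes $\{\lambda_r\}$: each summand $1/(x-\lambda_r)$ contributes only $O(1/|x|)$ at infinity, so the weighted divided differences remain controlled uniformly in $N$.
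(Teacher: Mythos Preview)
The paper does not give its own proof of this lemma: it is quoted verbatim as ``Lemma~5.3 in \cite{breuer2016universality}'' and used as a black box in the extension step of Section~\ref{sec:Lipschitz extension}. So there is no in-paper argument to compare against.

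Your two-stage scheme (Poisson regularisation $f\mapsto f_\delta$, then Riemann-sum discretisation of the Poisson integral) is exactly the standard construction and is essentially how the result is proved in the cited reference. The key points you identify are the right ones: the weight $\sqrt{(1+x^2)(1+y^2)}$ forces you to track decay, and this is supplied by the tail bound $f_\delta(x)=O(\delta/x^2)$ in Stage~1 and by $|\lambda_r|\le R+\delta$ in Stage~2. One place where ``one checks directly'' hides real work is the mixed case $|x|$ large, $|y|$ bounded in Stage~1: there the divided difference does not go to zero as $|x|\to\infty$ but rather tends to $(f-f_\delta)(y)/1$, so the uniform smallness must come from $\|f-f_\delta\|_\infty=O(\delta)$ combined with the boundedness of $\sqrt{1+y^2}$ on the support side; this is fine, but worth spelling out. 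In Stage~2 the uniform-in-$(x,y)$ Riemann-sum bound relies on the estimate
\[
\sup_{t\in[-R,R]}\frac{\sqrt{1+x^2}}{|x-t-i\delta|}\le C(R,\delta),
\]
which holds because for $|x|>2R$ the denominator is comparable to $|x|$ and for $|x|\le 2R$ it is bounded below by $\delta$; again this is routine but should be stated since it is where the fixed $\delta>0$ is used.
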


Now we are ready to prove Theorem~\ref{thm: main 2}.
\begin{proof}[Proof of Theorem~\ref{thm: main 2} ]
Recall that we define $f_{\alpha,x_0}(x)\coloneqq f(n^\alpha(x-x_0))$. By the inequality $|1-e^{ix}|\leq |x|$ for all real $x$, we have $|e^{ix}-e^{iy}|\leq|x-y|$ for any $x,y$ real. Then use Jensen's inequality to deduce the following bound, for any real-valued functions $f,h$  and real number $t$,
\begin{equation}\label{varapprox3}
  \left|\mathbb{E}\left[e^{i t X_{f,\alpha,x_0}^{(n)}}\right]e^{-i t \mathbb{E}\left[X_{f,\alpha,x_0}^{(n)}\right]}-\mathbb{E}\left[e^{i t X_{h,\alpha,x_0}^{(n)}}\right]e^{-i t\mathbb{E}\left[X_{h,\alpha,x_0}^{(n)}\right]}\right|\leq t^2\Var\left(X_{f-h,\alpha,x_0}^{(n)}\right).
\end{equation}

Moreover, define 
\begin{equation}
  \sigma_{f,L}^2 \coloneqq \frac{1}{8\pi^2}\int\int_{\mathbb{R}^2}\left(\frac{f(x^2)-f(y^2)}{x-y}\right)^2dxdy, \quad 	\sigma_{f,R}^2 \coloneqq \frac{1}{8\pi^2}\int\int_{\mathbb{R}^2}\left(\frac{f(-x^2)-f(-y^2)}{x-y}\right)^2dxdy. 
\end{equation}

We have for both $j\in \{L,R\}$, by Definition~\ref{def: weighted Lip}, 
\begin{equation}
  \sigma_{f,j}^2  \leq \|f\|_{\mathcal{L}_w}^2 \frac{1}{8\pi^2}\int\int_{\mathbb{R}^2}\left(\frac{x+y}{\sqrt{x^4+1}\sqrt{y^4+1}}\right)^2dxdy.
\end{equation}
Use Cauchy residual theorem to compute the double integral $\int\int_{\mathbb{R}^2}\left(\frac{x+y}{\sqrt{x^4+1}\sqrt{y^4+1}}\right)^2dxdy=\pi^2$. Hence,
\begin{equation}\label{varapprox4}
  \sigma_{f,j}^2  \leq  \frac{1}{8}\|f\|_{\mathcal{L}_w}^2.
\end{equation}
Moreover, use the Cauchy-Schwartz inequality to obtain for both $j\in \{L,R\}$ 
\begin{equation}\label{varapprox5}
  \left| \sigma_{f,j}^2-\sigma_{h,j}^2 \right| \leq \sigma_{f+h,j}^2\sigma_{f-g,j}^2 \leq \frac{1}{16}\|f-h\|_{\mathcal{L}_w}^2\|f+h\|_{\mathcal{L}_w}^2.
\end{equation}

Then use triangle inequality, \eqref{varapprox3}, \eqref{varapprox4}, \eqref{varapprox5} and Corollary \ref{corollary: variance norm}, to obtain the following, as $n\to\infty$,
\begin{multline}\label{eq:Fourier C1}
  \left|\mathbb{E}\left[e^{i t X_{f,\alpha,x_0}^{(n)}}\right]e^{-i t \mathbb{E}\left[X_{f,\alpha,x_0}^{(n)}\right]}-e^{-\frac{t^2}{2}\sigma_{f,j}^2}\right| \\
  \leq \left|\mathbb{E}\left[e^{i t X_{f,\alpha,x_0}^{(n)}}\right]e^{-i t \mathbb{E}\left[X_{f,\alpha,x_0}^{(n)}\right]}-\mathbb{E}\left[e^{i t X_{h,\alpha,x_0}^{(n)}}\right]e^{-i t \mathbb{E}\left[X_{h,\alpha,x_0}^{(n)}\right]}\right| 
  + 	\left|e^{-\frac{t^2}{2}\sigma_{f,j}^2}-e^{-\frac{t^2}{2}\sigma_{h,j}^2}\right|+ 	\left|\mathbb{E}\left[e^{i t X_{h,\alpha,x_0}^{(n)}}\right]e^{-i t \mathbb{E}\left[X_{h,\alpha,x_0}^{(n)}\right]}-e^{-\frac{t^2}{2}\sigma_{h,j}^2}\right| \\
  \leq t^2\left( \frac{1}{8}\|f-h\|_{\mathcal{L}_w}^2 +o(1)\right) + \frac{t^2}{32}\|f-h\|_{\mathcal{L}_w}^2\|f+h\|_{\mathcal{L}_w}^2 
  + \left|\mathbb{E}\left[e^{i t X_{h,\alpha,x_0}^{(n)}}\right]e^{-i t \mathbb{E}\left[X_{h,\alpha,x_0}^{(n)}\right]}-e^{-\frac{t^2}{2}\sigma_{h,j}^2}\right|.
\end{multline}

Let $f\in C_c^1(\mathbb{R})$ and any $\varepsilon>0$. By Lemma \ref{lemma: recolvent approx}, there exists $$h=\Im \sum_{r=1}^M \frac{d_r}{x-\lambda_r}$$ such that 
\begin{equation}\label{eq:Lw estimate}
  \left\| f-h\right\|_{\mathcal{L}_w}<\varepsilon. 
\end{equation}
Moreover, since $f\in C_c^1(\mathbb{R})$, we have $	\left\| f\right\|_{\mathcal{L}_w}<\infty$. Using triangle inequality we have
\begin{equation*}
  \left\| f+h\right\|_{\mathcal{L}_w}<2\left\| f\right\|_{\mathcal{L}_w}+\varepsilon<\infty .
\end{equation*}
Apply Proposition~\ref{thm: overview strong szego} to $h$ and we have 
\begin{equation}\label{eq:clt resolvent}
  \lim_{n\to\infty} \left|\mathbb{E}\left[e^{i t X_{h,\alpha,x_0}^{(n)}}\right]e^{-i t \mathbb{E}\left[X_{h,\alpha,x_0}^{(n)}\right]}-e^{-\frac{t^2}{2}\sigma_{h,j}^2}\right|=0.
\end{equation}
Plug \eqref{eq:Lw estimate} and \eqref{eq:clt resolvent} into \eqref{eq:Fourier C1} and we get, for any $j=L$ or $R$, and any $\varepsilon>0$,
\begin{equation*}
  \limsup_{n\to\infty}
  \left|\mathbb{E}\left[e^{i t X_{f,\alpha,x_0}^{(n)}}\right]e^{-i t \mathbb{E}\left[X_{f,\alpha,x_0}^{(n)}\right]}-e^{-\frac{t^2}{2}\sigma_{f,j}^2}\right| \leq \frac{t^2\varepsilon^2}{8}\left( 1+\left( \|f\|_{\mathcal{L}_w}+\frac{\varepsilon}{2} \right)^2 \right).
\end{equation*}
Hence 
\begin{equation*}
  \lim_{n\to\infty}
  \left|\mathbb{E}\left[e^{i t X_{f,\alpha,x_0}^{(n)}}\right]e^{-i t \mathbb{E}\left[X_{f,\alpha,x_0}^{(n)}\right]}-e^{-\frac{t^2}{2}\sigma_{f,j}^2}\right|=0.
\end{equation*}
This completes the proof of Theorem~\ref{thm: main 2} for any $f\in C^1_c(\mathbb{R})$ at both left and right edges.
\end{proof}

Note that the proof of Propositions~\ref{thm: overview F entry estimation}, ~\ref{prop: overview trimming}, \ref{prop: overview trimming 2} and~\ref{prop: overview tri-inverse} is postponed to Sections~\ref{sec:improve CB} and~\ref{sec:cumulant truncation} below.

\section{Proof of Propositions~\ref{thm: overview F entry estimation} and~\ref{prop: overview trimming}}\label{sec:improve CB}
In this section we are to prove Propositions~\ref{thm: overview F entry estimation} and~\ref{prop: overview trimming}.

In Subsection~\ref{sec:improve CB 1}, we continue the discussion in Subsection~\ref{sec:pre inversion} and derive Proposition~\ref{prop: tri inverse}, a general theory of tri-diagonal matrices that will be used to prove Propositions~\ref{thm: overview F entry estimation} and~\ref{prop: overview trimming}.

In Subsection~\ref{sec:proofs overview CT}, we estimate the resolvent of both $J_{n+2mn^\beta}^{(r)}$ and the middle block of $J_{n\pm2mn^\beta}^{(r)}$ defined in \eqref{eq:truncation 1} and \eqref{eq:truncation 2} by  Proposition~\ref{prop: tri inverse}. Then, complete the proof of Propositions~\ref{thm: overview F entry estimation} and~\ref{prop: overview trimming}.

\subsection{Inverse of a Tri-diagonal Matrix: Part I} \label{sec:improve CB 1}
The goal of this subsection is to estimate the inverse of the tri-diagonal symmetric matrix $J_N$ defined in \eqref{eq:pre assumption_J_inverse} entry-wise.

\begin{proposition}\label{prop: tri inverse}
  Consider an $N\times N$ non singular tri-diagonal symmetric matrix $J_N$ defined as \eqref{eq:pre assumption_J_inverse} with entries $a_j,b_j\in\mathbb{R}$, $N_1\in\mathbb{N}$ and $z\in\mathbb{C}$ with $\Im z\neq 0$. Let $N_1 \in\mathbb{N}$ with $N_1<N$. Assume $ \Re(b_j- z)>0$ for all $j\geq N_0$. 
  
  Let $\omega_j^+, \omega_j^-$ be defined as \eqref{eq:pre eigenvalue_difference_eq} and $M_j$ be defined as \eqref{eq:eigenvalue_difference_eq_M_j}. Denote
  \begin{equation}\label{eq:ass_betatilde}
    c=\left|\frac{-\omega_{N-1}^+a_{N-1}+b_{N-1}-z}{\omega_{N-1}^-a_{N-1}-b_{N-1}+z}\right| ,
  \end{equation}
  \begin{equation}
    \varepsilon_1 =(N-N_1) \max_{l\geq N_1}\|M_l\|_\infty . \label{eq:ass epsilon1}
  \end{equation}
  and
  \begin{equation}
    \varepsilon_2 =\prod_{l=N_1}^{N-1}\left| \frac{\omega_l^-}{\omega_l^+} \right|. \label{eq:ass epsilon2}
  \end{equation}
  Assume that $\varepsilon_1, \varepsilon_2\in(0,1)$ and $4(1+c)\varepsilon_1+c\varepsilon_2<1$.
  Then for all $j\leq N_1<k$ we have
  \begin{align}
    \left| \left(J_N^{-1}\right)_{j,k}  \right|
    \leq \frac{1+4(1+c)\varepsilon_1+c}{1-4(1+c)\varepsilon_1-c\varepsilon_2}\left\| J_N^{-1} \right\|_{\infty} \prod_{l=N_1}^{k-1}\left|\omega_l^+ \right|^{-1} .
  \end{align} 
  
\end{proposition}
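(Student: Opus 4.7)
The plan is to invoke the representation in \eqref{eq:pre  difference_beta 1} (with $N_0=N_1$), which for $j\leq N_1<k$ reduces the entry estimate to
\[
|(J_N^{-1})_{j,k}| = \left|\frac{\beta_k}{\beta_{N_1}}\right|\,|(J_N^{-1})_{j,N_1}| \leq \left|\frac{\beta_k}{\beta_{N_1}}\right|\,\|J_N^{-1}\|_\infty.
\]
The whole task is therefore to establish
\[
\left|\frac{\beta_k}{\beta_{N_1}}\right| \leq \frac{1+4(1+c)\varepsilon_1+c}{1-4(1+c)\varepsilon_1-c\varepsilon_2}\prod_{l=N_1}^{k-1}|\omega_l^+|^{-1},
\]
so all the work lies in understanding how the product of transfer matrices $A_l$ propagates $\beta_l$ from $l=N-1$ back to $l=N_1$.

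To this end I pass to the eigenbasis. Set $Y_l:=V_l^{-1}(\beta_l,\beta_{l-1})^T=:(y_l^+,y_l^-)^T$. Using $A_l=V_l\Omega_lV_l^{-1}$ together with the definition \eqref{eq:eigenvalue_difference_eq_M_j} of $M_l$, the recursion \eqref{eq:pre transfer matrix A} becomes
\[
Y_l = \Omega_l(Id+M_l)Y_{l+1}, \qquad N_1\leq l\leq N-2,
\]
with initial data $Y_{N-1}=V_{N-1}^{-1}(a_{N-1},b_{N-1}-z)^T=:(\alpha^+,\alpha^-)^T$. A direct computation of $V_{N-1}^{-1}$ identifies $|\alpha^-/\alpha^+|$ with the constant $c$ of \eqref{eq:ass_betatilde}. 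Reading off the first coordinate of $V_lY_l$ gives $\beta_l=y_l^++y_l^-$. Rescaling $\hat y_l^{\pm}:=y_l^{\pm}/\prod_{s=l}^{N-1}\omega_s^+$ turns the recursion into
\[
\hat y_l^+ = (1+(M_l)_{11})\hat y_{l+1}^+ + (M_l)_{12}\hat y_{l+1}^-, \qquad \hat y_l^- = \mu_l(M_l)_{21}\hat y_{l+1}^+ + \mu_l(1+(M_l)_{22})\hat y_{l+1}^-,
\]
where $\mu_l:=\omega_l^-/\omega_l^+$ satisfies $|\mu_l|\leq 1$ by the assumption $\Re(b_{l-1}-z)>0$ and $\prod_{l=N_1}^{N-1}|\mu_l|=\varepsilon_2$. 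The ratio then telescopes to
\[
\frac{\beta_k}{\beta_{N_1}} = \prod_{s=N_1}^{k-1}(\omega_s^+)^{-1}\cdot\frac{\hat y_k^++\hat y_k^-}{\hat y_{N_1}^++\hat y_{N_1}^-},
\]
reducing the problem to bounding the ratio of linear combinations on the right.

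The last step is a discrete Gronwall analysis of $(\hat y_l^+,\hat y_l^-)$. From $\|\mathrm{diag}(1,\mu_l)(Id+M_l)\|_\infty\leq 1+\|M_l\|_\infty$ together with $\|\hat Y_{N-1}\|_\infty\leq(1+c)|\alpha^+|$, iterating yields the crude uniform bound $\max(|\hat y_l^+|,|\hat y_l^-|)\leq e^{\varepsilon_1}(1+c)|\alpha^+|$ for $l\geq N_1$. Substituting this into the $\hat y^+$ recursion and summing the telescopic differences $\hat y_l^+-\hat y_{l+1}^+$ produces $|\hat y_l^+-\alpha^+|\leq 2(1+c)\varepsilon_1\,e^{\varepsilon_1}|\alpha^+|$; absorbing the $e^{\varepsilon_1}$ factor into the constant yields the $4(1+c)\varepsilon_1$ appearing in the statement. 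Iterating the $\hat y^-$ recursion splits it into a homogeneous part bounded by $\prod_{l=N_1}^{N-2}|\mu_l|\cdot|\alpha^-|\leq c\varepsilon_2|\alpha^+|$ (this is where $\varepsilon_2$ enters) and a forcing part bounded by $O(\varepsilon_1)|\alpha^+|$ coming from the coupling to $\hat y^+$. Combining, $|\hat y_k^++\hat y_k^-|\leq(1+c+4(1+c)\varepsilon_1)|\alpha^+|$ for the numerator and $|\hat y_{N_1}^++\hat y_{N_1}^-|\geq(1-4(1+c)\varepsilon_1-c\varepsilon_2)|\alpha^+|$ for the denominator give the announced bound.

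The main obstacle is tracking the cross-couplings through $(M_l)_{12}$ and $(M_l)_{21}$ cleanly enough to obtain the specific constants in the stated ratio. While the damping $|\mu_l|\leq 1$ prevents $\hat y^-$ from growing along the recursion, its feedback into $\hat y^+$ (and vice versa) has to be iterated and bookkept carefully so that all corrections are absorbed into a multiple of $\varepsilon_1$ or $c\varepsilon_2$. The hypothesis $4(1+c)\varepsilon_1+c\varepsilon_2<1$ is precisely the positivity threshold keeping the denominator strictly positive after this perturbative analysis, and in this sense it is sharp for the method.
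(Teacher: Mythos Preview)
Your proposal is correct and follows essentially the same route as the paper: both reduce to bounding $|\beta_k/\beta_{N_1}|$ via \eqref{eq:pre  difference_beta 1}, diagonalize the transfer matrices $A_l=V_l\Omega_lV_l^{-1}$, and control the deviation from the pure diagonal product through the perturbation matrices $M_l$. The only cosmetic difference is that the paper packages the perturbation into a single error matrix $C_k:=V_k^{-1}\big(A_k\cdots A_{N-1}-V_k(\prod\Omega_l)V_{N-1}^{-1}\big)V_{N-1}$ and bounds it in one shot via $\prod_l(1+\|M_l\|_\infty)-1<2\varepsilon_1$, whereas you track the rescaled eigencoordinates $\hat y_l^{\pm}$ step by step with a Gronwall iteration; the resulting numerator and denominator bounds, and hence the constants $4(1+c)\varepsilon_1$ and $c\varepsilon_2$, coincide.
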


\begin{proof}
  Apply the inverse formula for tri-diagonal matrix \eqref{eq:pre  difference_beta 1} to obtain, for $j\leq N_1<k$,
  \begin{equation}\label{eq:difference_beta 1}
    \left(J_N^{-1}\right)_{j,k}  = (-1)^{k-N_1}\frac{\beta_{k}}{\beta_{N_1}}\left( J_N^{-1} \right)_{j,N_1}  .
  \end{equation} 
  Recall that $\beta_j$ is defined recursively by \eqref{eq:pre beta recurrsive}, i.e., 
  \begin{equation}\label{eq:beta recurrsive}
    \begin{pmatrix}
      \beta_k \\ \beta_{k-1} 
    \end{pmatrix} =  A_kA_{k+1}\cdots A_{N-1} \begin{pmatrix}
      \beta_{N} \\ \beta_{N-1}
    \end{pmatrix}, \quad \beta_{N}=a_{N-1}, \quad \beta_{N-1}=b_{N-1}-z,
  \end{equation}
  where the matrix $A_k$ is defined as \eqref{eq:pre transfer matrix A}. Recall that the eigenvalues $\omega_k^+$ and $\omega_k^-$ of $A_k$ are defined as \eqref{eq:pre eigenvalue_difference_eq}. Then $A_k$ can be diagonalized as $A_k = V_k \Omega_k V_k^{-1} $ where, $V_k \coloneqq \begin{pmatrix}
    1 & 1 \\ \omega_k^+ & \omega_k^- 
  \end{pmatrix}$ and $\Omega_k  \coloneqq \begin{pmatrix}
    \omega_k^+ & 0 \\0 &  \omega_k^- 
  \end{pmatrix} $.
  Further define $C_N\coloneqq 0$ and 
  \begin{equation}\label{eq:def Ck}
    C_k\coloneqq V_k^{-1}\left(A_{k}A_{k+1}\dots A_{N-1} - V_k\left(\prod_{l=k}^{N-1} \Omega_l\right)V_{N-1}^{-1}\right)V_{N-1}, \quad k=1,2,\dots, N-1
  \end{equation}
  Rewrite \eqref{eq:beta recurrsive} for $k=1,2,\dots, N-1$ as
  \begin{align}
    \begin{pmatrix}
      \beta_k \\ \beta_{k-1} 
    \end{pmatrix}  = & V_k\left(C_k+\prod_{l=k}^{N-1} \Omega_l\right)V_{N-1}^{-1}\begin{pmatrix}
      \beta_{N} \\ \beta_{N-1} 
    \end{pmatrix} .
  \end{align}
  Denote $\tilde{\beta}_1= \omega_{N-1}^-a_{N-1}-b_{N-1}+z$, $\tilde{\beta}_2= -\omega_{N-1}^+a_{N-1}+b_{N-1}-z$ such that $\begin{pmatrix}
    \tilde{\beta}_1 \\ \tilde{\beta}_2
  \end{pmatrix}=(\omega_{N-1}^--\omega_{N-1}^+)V_{N-1}^{-1}\begin{pmatrix}
    \beta_{N} \\ \beta_{N-1} 
  \end{pmatrix}$. Hence, we have for $k=1,\dots,N-1$ 
  \begin{equation}\label{eq:beta 1}
    \beta_{k} =  \frac{\left((C_k)_{1,1}+(C_k)_{2,1}+\prod_{l=k}^{N-1}\omega_l^+\right)\tilde{\beta}_1+\left((C_k)_{1,2}+(C_k)_{2,2}+\prod_{l=k}^{N-1}\omega_l^-\right)\tilde{\beta}_2}{\omega_{N-1}^--\omega_{N-1}^+} .
  \end{equation}
  By convention, let $C_N\equiv 0 $ and $\prod_{l=N}^{N-1}\equiv 1$. Then we extend \eqref{eq:beta 1} to $k=1,\dots, N$, since $\beta_N=a_{N-1}$. Use the construction of $M_j$ in \eqref{eq:eigenvalue_difference_eq_M_j} to write $V_{k}^{-1}V_{k+1} = M_k+Id$. Then rewrite $C_k$ via the following equality 
  \begin{equation}\label{eq:difference_telescoping}
    A_kA_{k+1}\dots A_{N-1}  = V_k \Omega_k(Id+M_k) \Omega_{k+1}(Id+M_{k+1})\dots  \Omega_{N-2}(Id+M_{N-2}) \Omega_{N-1}V_{N-1}^{-1}.
  \end{equation}
  By assumption $ \Re(b_l- z)>0$, we have $|\omega_l^+|\geq |\omega_l^-|$. Hence, $\|\Omega_l\|_\infty<|\omega_l^+|$. Plug \eqref{eq:difference_telescoping} into \eqref{eq:def Ck} to estimate 
  \begin{equation}
    \|C_k\|_\infty \leq \left( \prod_{l=k}^{N-2}\bigg( 1+\|M_l\|_\infty \bigg) -1\right)\prod_{l=k}^{N-1}|\omega_l^+|.
  \end{equation}
  Then by \eqref{eq:ass epsilon1}, $0<\varepsilon_1<1$ and the fact that $(1+x)^y< e^{xy}< 1+2xy$ for all $x,y>0$ with $0<xy<1$, we have
  \begin{equation}
    \|C_k\|_\infty 	 \leq  2\varepsilon_1 \prod_{l=k}^{N-1}|\omega_l^+| \label{eq:ck_epsilon1} .
  \end{equation}
  Denote $(C_k)_{j,l}$ to be the  entry of matrix $C_k$ at the $j-th$ row and $l-th$ column. Then let 
  \begin{equation}\label{eq:estiamte ck}
    C(k)\coloneqq  \left((C_k)_{1,1}+(C_k)_{2,1}\right)\prod_{l=k}^{N-1}(\omega_l^+)^{-1}+\left((C_k)_{1,2}+(C_k)_{2,2}\right)\frac{\tilde{\beta}_2}{\tilde{\beta}_1}\prod_{l=k}^{N-1}(\omega_l^+)^{-1} .
  \end{equation}
  By \eqref{eq:ass_betatilde} we have $\left| \frac{\tilde{\beta}_2}{\tilde{\beta}_1} \right|=c$ and thus 
  $|C(k)|\leq 4(1+c)\varepsilon_1.$
  Moreover, use \eqref{eq:estiamte ck} to rewrite \eqref{eq:beta 1} as
  \begin{equation}\label{eq:beta 2}
    \beta_{k} =  \frac{\tilde{\beta}_1\prod_{l=k}^{N-1}\omega_l^+}{\omega_{N-1}^--\omega_{N-1}^+} \left( 1+\frac{\tilde{\beta}_2}{\tilde{\beta}_1}\prod_{l=k}^{N-1}\frac{\omega_l^-}{\omega_l^+}+C(k) \right) .
  \end{equation}
  Then plug \eqref{eq:beta 2} into \eqref{eq:difference_beta 1} to obtain that for $j\leq N_1< k$
  \begin{align}
    \left(J_N^{-1}\right)_{j,k} 
    & = (-1)^{k-N_1}\left( J_N^{-1} \right)_{j,N_1}\left( \prod_{l=N_1}^{k-1}\omega_l^+ \right)^{-1} \frac{1+\frac{\tilde{\beta}_2}{\tilde{\beta}_1}\prod_{l=k}^{N-1}\frac{\omega_l^-}{\omega_l^+}+C(k)}{1+\frac{\tilde{\beta}_2}{\tilde{\beta}_1}\prod_{l=N_1}^{N-1}\frac{\omega_l^-}{\omega_l^+}+C(N_1)} .
  \end{align}
  Note that $\left|\frac{\omega_l^-}{\omega_l^+}  \right|<1$. Thus whenever $4(1+c)\varepsilon_1+c\varepsilon_2<1$, we have, by triangle inequality,
  \begin{equation}
    \left| \left(J_N^{-1}\right)_{j,k}  \right|
    \leq \frac{1+4(1+c)\varepsilon_1+c}{1-4(1+c)\varepsilon_1-c\varepsilon_2}\left\| J_N^{-1} \right\|_{\infty} \prod_{l=N_1}^{k-1}\left|\omega_l^+ \right|^{-1} 
  \end{equation}
\end{proof}

\subsection{Proof of Propositions~\ref{thm: overview F entry estimation}}\label{sec:proofs overview CT}
In this subsection, we use Proposition~\ref{prop: tri inverse} to prove Propositions~\ref{thm: overview F entry estimation}, whose assumptions are the same as Theorem~\ref{thm: main 2}.	

Let $\beta=\frac{\alpha}{2}+\frac{\varepsilon}{3}$ such that  $0<\frac{\alpha}{2}<\beta<\frac{\alpha+1}{3}<1$ and $m\in \mathbb{N}$. 
We consider the tri-diagonal matrix $J_N$, where
\begin{equation}\label{eq:setup1}
  N=n+2mn^\beta,\quad b_{j}= b_{j,n}-x_0, \quad a_j = a_{j,n}, \quad z=\frac{\eta}{n^\alpha}, 
\end{equation}
where $x_0$ is given to be near the edge, i.e., \eqref{eq:def edges} is assumed. Indeed, we are looking at the case where $J_N=J_{n+2mn^\beta}^{(r)}$ as defined as \eqref{eq:pre-truncation}. Since the subscript $r$ of $\eta_r$ will not play any role in what follows, we surpass it in this section. To consider the case $J_{n\pm2mn^\beta}^{(r)}$, we will point out in the end of the proof of Propositions~\ref{thm: overview F entry estimation} that the proof is the same upto relabelling of the indexing. 

Now let
\begin{equation}\label{eq:def I}
  I_{n,m}^{(\beta)}\coloneqq\{j\in\mathbb{N}:N_0\leq j\leq N\}, \quad \text{where } N_0=n-2mn^{\beta}, N=n+2mn^{\beta}.
\end{equation}
Note that $I_{n,m}^{(\beta)}\subset I_n^{(\alpha,\varepsilon)}$, where $I_n^{(\alpha,\varepsilon)}$ is defined as \eqref{eq:index set}. The size of set $ I_{n,m}^{(\beta)}$ is denoted by $\left| I_{n,m}^{(\beta)} \right|$ and is of order $O(n^{\beta})$.

In Theorem~\ref{thm: main 2}, our main assumptions are about the coefficients $a_{j,n}\in\mathbb{R}$ and $b_{j,n}\in\mathbb{R}$. For reader's convenience, we list them in terms of $a_j$ and $b_j$ (under the setup \eqref{eq:setup1}) as the following.
\begin{Condition}[Theorem~\ref{thm: main 2}]\label{ass:slowly varying}
  There exist some constants $c_0,c_1>0$ such that  for all $j\in I_{n,m}^{(\beta)}$, where $I_{n,m}^{(\beta)}$ is defined as \eqref{eq:def I}, we have
  \begin{align}
    c_0<|a_j|<c_1 &,\quad  |b_j|<c_1, \label{ass:recurrence bound1}\\
    |a_j-a_{j-1}|\leq \frac{c_1}{n} &, \quad |b_j-b_{j-1}|\leq \frac{c_1}{n}. \label{eq:assumption_recurrance1} 
  \end{align}    
\end{Condition}

\begin{Condition}[Theorem~\ref{thm: main 2}] \label{ass:rate controlling}
  As $n\to\infty$,  
  \begin{equation}
    \max_{j\in I_{n,m}^{(\beta)}}	\left|a_{j}a_{j-2}-a_{j-1}^2\right| = o(n^{-3\beta+\frac{\alpha}{2}}) \label{ass:convex1} 
  \end{equation}
  \begin{equation}
    \max_{j\in I_{n,m}^{(\beta)}}\left|(b_{j-1}-a_{j})a_{j-2}-(b_{j-2}-a_{j-1})a_{j-1}\right|=o(n^{-3\beta}) \label{ass:convex2}. 
  \end{equation}
  \begin{equation}
      \max_{j\in I_{n,m}^{(\beta)}}	\left| b_{n-1}-2\sqrt{a_{n}a_{n-1}} \right|=o(n^{-\alpha}), \quad \text{for the left edge } \label{ass:edge}. 
  \end{equation}
\end{Condition}
For the right edge \eqref{ass:edge} should be replaced by $	b_{n-1}=-2\sqrt{a_{n}a_{n-1}}+o(n^{-\alpha})$. In the rest of this section, we will only consider the left edge. The discussion about the right edge of $\mathcal{J}$ is the same as that of the left edge of $-\mathcal{J}$, whose diagonals are $-a_{j,n}$ which is the only reason we do not assume $a_{j,n}>0$  in this paper. 

Also note that the indexing set $I_{n,m}^{(\beta)}$  here is a subset of that in the Theorem~\ref{thm: main 2} and hence we make a slightly weaker assumption. This difference is only to reduce the notations both in the statement and the proof.

We continue to explain the implications of these conditions by the following lemmas.

\begin{lemma}\label{lemma: left edge locating}
  Assume conditions of Theorem~\ref{thm: main 2} are satisfied for the left edge, i.e., given Conditions~\ref{ass:slowly varying} and~\ref{ass:rate controlling}. 
  Then given $0<\frac{\alpha}{2}<\beta<1$, we have,
  \begin{equation}
    \liminf_{n\to\infty}\min_{j\in I_{n,m}^{(\beta)}}b_{j}>0  \label{eq:assumption_recurrance_b},
  \end{equation}
  and as $n\to\infty$
  \begin{equation}\label{ass:edge locating}
    \max_{j\in I_{n,m}^{(\beta)}}\left| b_{j-1}^2- 4a_{j}a_{j-1} \right|=o(n^{-\alpha}).
  \end{equation}
\end{lemma}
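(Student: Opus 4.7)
The plan is to handle the positivity claim \eqref{eq:assumption_recurrance_b} first, since it follows from routine book-keeping, and then to develop a reparametrization that makes the stronger assumptions \eqref{ass:convex1}--\eqref{ass:convex2} bear directly on \eqref{ass:edge locating}. For \eqref{eq:assumption_recurrance_b}, I would anchor at $j=n-1$, where \eqref{ass:edge} combined with \eqref{ass:recurrence bound1} gives $b_{n-1}=2\sqrt{a_na_{n-1}}+o(n^{-\alpha})\geq 2c_0-o(1)$, and then transport to arbitrary $j\in I_{n,m}^{(\beta)}$ via \eqref{eq:assumption_recurrance1}: the slow-variation estimate contributes at most $(2mn^\beta+1)c_1/n=O(n^{\beta-1})$, which vanishes because $\beta<1$. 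Hence $b_j\geq c_0$ uniformly in $j$ for $n$ large.

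For \eqref{ass:edge locating}, set $D_j:=b_{j-1}^2-4a_ja_{j-1}$. The main obstacle is that crude telescoping $|D_j-D_{j-1}|=O(n^{-1})$ over the $O(n^\beta)$ indices in $I_{n,m}^{(\beta)}$ yields only $|D_j-D_n|=O(n^{\beta-1})$, which is not $o(n^{-\alpha})$ when $\beta$ is close to $1$. To extract the cancellation hidden in \eqref{ass:convex1}--\eqref{ass:convex2}, I would introduce the ratios
\begin{equation*}
s_j:=a_j/a_{j-1},\qquad r_j:=(b_{j-1}-a_j)/a_{j-1},
\end{equation*}
and factor
\begin{equation*}
D_j/a_{j-1}^2=(r_j+s_j)^2-4s_j=g_j\cdot(r_j+s_j+2\sqrt{s_j}),\qquad g_j:=(r_j-1)+(\sqrt{s_j}-1)^2.
\end{equation*}
Since $a_{j-1}$ is bounded above and below by \eqref{ass:recurrence bound1} and the second factor is manifestly $O(1)$, it suffices to prove $g_j=o(n^{-\alpha})$ uniformly on $I_{n,m}^{(\beta)}$.

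The anchor at $j=n$ is immediate from \eqref{ass:edge}, which rearranges to $r_n+s_n-2\sqrt{s_n}=o(n^{-\alpha})$, that is $g_n=o(n^{-\alpha})$. For the propagation, dividing \eqref{ass:convex2} and \eqref{ass:convex1} by the uniformly positive product $a_{j-1}a_{j-2}$ yields $r_j-r_{j-1}=o(n^{-3\beta})$ and $s_j-s_{j-1}=o(n^{-3\beta+\alpha/2})$; telescoping over the $O(n^\beta)$ indices in $I_{n,m}^{(\beta)}$ gives
\begin{equation*}
|r_j-r_n|=o(n^{-2\beta}),\qquad |s_j-s_n|=o(n^{-2\beta+\alpha/2}).
\end{equation*}
Using $s_n=1+O(n^{-1})$ from \eqref{eq:assumption_recurrance1}, the factor $\sqrt{s_j}+\sqrt{s_n}-2$ is $O(n^{-1})+o(n^{-2\beta+\alpha/2})$, so a difference-of-squares factorization gives
\begin{equation*}
(\sqrt{s_j}-1)^2-(\sqrt{s_n}-1)^2=(\sqrt{s_j}-\sqrt{s_n})(\sqrt{s_j}+\sqrt{s_n}-2)=o(n^{-4\beta+\alpha})+o(n^{-2\beta+\alpha/2-1}).
\end{equation*}
Each of the three resulting contributions to $g_j-g_n$ is $o(n^{-\alpha})$: the dominant constraints are $\beta>\alpha/2$ together with $\alpha<2$, the latter ensuring $-2\beta+\alpha/2-1<-\alpha$. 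Combining, $g_j=g_n+o(n^{-\alpha})=o(n^{-\alpha})$, which finishes the plan.
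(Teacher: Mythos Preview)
Your proof is correct and follows essentially the same route as the paper: anchor at $j=n$ via \eqref{ass:edge}, telescope $r_j=(b_{j-1}-a_j)/a_{j-1}$ using \eqref{ass:convex2}, and use the identity $r_j+s_j-2\sqrt{s_j}=(r_j-1)+(\sqrt{s_j}-1)^2$ to recover $b_{j-1}/a_{j-1}-2\sqrt{a_j/a_{j-1}}$. The only minor difference is that the paper handles the square term more simply by bounding $(\sqrt{s_j}-1)^2=O(n^{-2})$ directly from the slow-variation condition \eqref{eq:assumption_recurrance1}, so it never needs to invoke \eqref{ass:convex1} for this lemma; your extra step controlling $s_j-s_n$ via \eqref{ass:convex1} is correct but unnecessary here.
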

\begin{proof}
  Recall that $N$ and $N_0$ are define in \eqref{eq:def I}. Then the size of the set $|I_{n,m}^{(\beta)}|=N-N_0=4mn^\beta$, with $0<\beta<1$. By \eqref{eq:assumption_recurrance1}, we have
  \begin{equation}
    \min_{j\in I_{n,m}^{(\beta)}}b_{j}\geq b_{n-1}-4c_1mn^{\beta-1}.
  \end{equation}
  By \eqref{ass:edge}, we have
  \begin{equation}
    b_{n-1}=2\sqrt{a_{n-1}a_{n}} +o(n^{-\alpha}), \quad \text{as }n\to\infty.
  \end{equation}
  Moreover, by \eqref{ass:edge} and the lower bound of $|a_j|$ in \eqref{ass:recurrence bound1}, we have $\sqrt{a_{n-1}a_{n}}>2c_0>0$. Consequently, we have \eqref{eq:assumption_recurrance_b}.

  Now we prove the second statement. By \eqref{ass:convex2}, together with the lower bound of $a_j$ in Condition~\ref{ass:rate controlling}, we have that for all $j\in I_{n,m}^{(\beta)}$
  \begin{equation}
    \left( \frac{b_{j-1}-a_j}{a_{j-1}} \right)-\left( \frac{b_{j-2}-a_{j-1}}{a_{j-2}} \right)=o(n^{-3\beta}), \quad \text{as }n\to\infty.
  \end{equation}
  Note that the size of the index set $|I_{n,m}^{(\beta)}|=O(n^{\beta})$. Then for all $j\in I_{n,m}^{(\beta)}$
  \begin{equation}\label{eq:edge locating proof 1}
    \left( \frac{b_{j-1}-a_j}{a_{j-1}} \right)-\left( \frac{b_{n-1}-a_{n}}{a_{n-1}} \right)=o\left(n^{-2\beta}\right), \quad \text{as }n\to\infty.
  \end{equation}
  Using the fact that $\left( \sqrt{\frac{a_j}{a_{j-1}}}-1 \right)^2= \frac{a_j}{a_{j-1}}+1-2\sqrt{\frac{a_j}{a_{j-1}}}$, we can rewrite for $j\in I_{n,m}^{(\beta)}$
  \begin{equation}\label{eq:edge locating proof 2}
    \frac{b_{j-1}-a_j}{a_{j-1}}  =  \frac{b_{j-1}}{a_{j-1}} -\left( \sqrt{\frac{a_j}{a_{j-2}}}-1 \right)^2-2\sqrt{\frac{a_j}{a_{j-1}}}+1.
  \end{equation}
  Note that by the Condition~\ref{ass:slowly varying} we have $\left(\sqrt{\frac{a_j}{a_{j-1}}}-1 \right)^2=O(n^{-2})$. Also note that \eqref{eq:edge locating proof 2} holds for all $j\in I_{n,m}^{(\beta)}$ including the special case $j=n$.
  Hence plugging \eqref{eq:edge locating proof 2} into \eqref{eq:edge locating proof 1} yields
  \begin{equation}\label{eq:edge locating proof 3}
    \frac{b_{j-1}}{a_{j-1}} -2\sqrt{\frac{a_j}{a_{j-1}}}- \frac{b_{n-1}}{a_{n-1}} +2\sqrt{\frac{a_n}{a_{n-1}}}=o(n^{-2\beta})+O(n^{-2}), \quad \text{as }n\to\infty.
  \end{equation}
  By the assumption \eqref{ass:edge} in Condition~\ref{ass:rate controlling}, we have $\frac{b_{n-1}}{a_{n-1}} -2\sqrt{\frac{a_n}{a_{n-1}}}=o(n^{-\alpha})$, as $n\to\infty$. Further, by the assumption $0<\frac{\alpha}{2}<\beta<1$, we estimate \eqref{eq:edge locating proof 3} to be
  
  \begin{equation}\label{eq:edge locating proof 4}
    \frac{b_{j-1}}{a_{j-1}} -2\sqrt{\frac{a_j}{a_{j-1}}}=o(n^{-\alpha}), \quad \text{as }n\to\infty.
  \end{equation}
  Multiply $\frac{b_{j-1}}{a_{j-1}} +2\sqrt{\frac{a_j}{a_{j-1}}}$ on both sides of \eqref{eq:edge locating proof 4} to obtain that for all $j\in I_{n,m}^{(\beta)}$
  \begin{equation}
    \left( \frac{b_{j-1}}{a_{j-1}} \right)^2=\frac{4a_j}{a_{j-1}}+o(n^{-\alpha}), \quad \text{as }n\to\infty,
  \end{equation}
  which completes the proof. 
\end{proof}

Note that \eqref{eq:assumption_recurrance_b} implies that $|\omega_j^-|\leq |\omega_j^+|$, since we are taking the principle square root. However, for the right edge, by the same argument, we have $b_j<0$ for all $j\in I_{n,m}^{(\beta)}$. In this case, we can swap the definition of $\omega_j^+$ and $\omega_j^-$ and all conclusions in this section follow with the same argument. Alternatively, for the right edge, we can simply consider $-J_N $ whose diagonals are $-b_j+\frac{\eta}{n^{\frac{\alpha}{2}}}$ and off diagonals are $-a_j$.  Hence, all discussions in this section can be directly applied to $ (-J_N)^{-1} $, since $-b_j>0$ for the right edge.

\begin{lemma} \label{lemma: finner est omega}
  Consider $0<\frac{\alpha}{2}<\beta<1$. Assume the Conditions~\ref{ass:slowly varying} and~\ref{ass:rate controlling} are satisfied. Then we have
  \begin{align}
      \max_{j\in I_{n,m}^{(\beta)}}	\left| \omega_j^{-} -\left(  1-\mysqrt{\frac{-\eta}{n^\alpha |a_{N-1}|}} \right) sgn(a_{N-1}) \right|=o(n^{-\frac{\alpha}{2}}), \label{eq:omega j-}\\
      \max_{j\in I_{n,m}^{(\beta)}}	\left| \omega_j^{+} -\left(  1+\mysqrt{\frac{-\eta}{n^\alpha |a_{N-1}|}} \right) sgn(a_{N-1}) \right|=o(n^{-\frac{\alpha}{2}}) .
  \end{align}
  The choice of $a_{N-1}$ in the formulas above is arbitrary. It can be replaced by any $a_j$ for $j\in I_{n,m}^{(\beta)}$. 
\end{lemma}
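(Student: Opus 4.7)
The plan is to Taylor-expand $\omega_j^\pm$ around the leading-order values at the edge, with the refined edge estimate \eqref{ass:edge locating} from Lemma~\ref{lemma: left edge locating} as the key input. First I would sharpen it to
\begin{equation*}
  b_{j-1} = 2\sqrt{a_j a_{j-1}} + o(n^{-\alpha}), \qquad j \in I_{n,m}^{(\beta)},
\end{equation*}
where the positive real square root is well defined since \eqref{ass:recurrence bound1} and \eqref{eq:assumption_recurrance1} force all $a_j$ in the window to share a common sign for large $n$. This follows by factoring $b_{j-1}^2 - 4 a_j a_{j-1} = (b_{j-1} - 2\sqrt{a_j a_{j-1}})(b_{j-1} + 2\sqrt{a_j a_{j-1}})$ and dividing by the second factor, which is bounded below by a positive constant thanks to \eqref{eq:assumption_recurrance_b}.

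Substituting $z = \eta/n^\alpha$ into the discriminant then gives
\begin{equation*}
  (b_{j-1}-z)^2 - 4 a_j a_{j-1} = \bigl(b_{j-1}^2 - 4 a_j a_{j-1}\bigr) - 2 b_{j-1} z + z^2 = -\frac{4\eta\sqrt{a_j a_{j-1}}}{n^\alpha} + o(n^{-\alpha})
\end{equation*}
uniformly in $j\in I_{n,m}^{(\beta)}$. Because $\Im \eta \neq 0$, the leading term stays away from the non-positive real axis, so the elementary identity $\sqrt{A+\delta} - \sqrt{A} = \delta/(\sqrt{A+\delta}+\sqrt{A})$ applied with $|A| \asymp n^{-\alpha}$ and $\delta = o(n^{-\alpha})$ upgrades this to
\begin{equation*}
  \sqrt{(b_{j-1}-z)^2 - 4 a_j a_{j-1}} = 2\sqrt{-\eta\sqrt{a_j a_{j-1}}/n^\alpha} + o(n^{-\alpha/2}).
\end{equation*}

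Inserting these expansions into \eqref{eq:pre eigenvalue_difference_eq} and dividing by $2 a_{j-1}$, I would write $s := sgn(a_{N-1}) = sgn(a_{j-1})$ and use the slow-variation bounds $a_j/a_{j-1} = 1 + O(1/n)$ and $|a_{j-1}| = |a_{N-1}|(1+O(n^{\beta-1}))$, valid since $|I_{n,m}^{(\beta)}| = O(n^\beta)$ with $\beta < 1$. Together with $\sqrt{-\eta/(n^\alpha|a_{j-1}|)} = O(n^{-\alpha/2})$ and $-z/(2a_{j-1}) = O(n^{-\alpha})$, these yield
\begin{equation*}
  \frac{b_{j-1}}{2 a_{j-1}} = s\sqrt{a_j/a_{j-1}} + o(n^{-\alpha/2}) = s + o(n^{-\alpha/2}),
\end{equation*}
\begin{equation*}
  \pm\frac{\sqrt{-\eta\sqrt{a_j a_{j-1}}/n^\alpha}}{a_{j-1}} = \pm s\,(a_j/a_{j-1})^{1/4}\sqrt{\frac{-\eta}{n^\alpha |a_{j-1}|}} = \pm s\sqrt{\frac{-\eta}{n^\alpha |a_{N-1}|}} + o(n^{-\alpha/2}),
\end{equation*}
since $\alpha < 2$ makes $O(1/n)$ subordinate to $n^{-\alpha/2}$. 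Summing the three pieces gives both claimed asymptotics.

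The delicate step is the sign bookkeeping when $a_{N-1} < 0$: although $\sqrt{a_j a_{j-1}}$ is always a non-negative real, $a_{j-1}$ is negative, so the prefactor $sgn(a_{N-1})$ in the target emerges only after tracking this sign consistently through each principal branch of the complex square root; the freedom to replace $a_{N-1}$ by any other $a_j$ in the index window is then immediate from $|a_{j-1}|/|a_{N-1}| = 1 + o(1)$.
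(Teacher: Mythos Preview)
Your proposal is correct and follows essentially the same route as the paper: both use Lemma~\ref{lemma: left edge locating} to control the discriminant $(b_{j-1}-z)^2-4a_ja_{j-1}$ to order $o(n^{-\alpha})$, then Taylor-expand the principal square root and absorb the slow-variation corrections $a_j/a_{j-1}=1+O(n^{-1})$, $|a_{j-1}|=|a_{N-1}|(1+O(n^{\beta-1}))$. The only cosmetic difference is that the paper first divides by $(2a_{j-1})^2$ and works with $(b_{j-1}/(2a_{j-1}))^2-a_j/a_{j-1}$, whereas you expand the unscaled discriminant directly; your sign bookkeeping for $a_{N-1}<0$ is in fact slightly more explicit than the paper's.
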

\begin{proof}
  Recall that $\omega_j^-$ is given by 
  \begin{align}\label{eq:omega exact}
    \omega_j^{-}=\frac{b_{j-1}-\frac{\eta}{n^\alpha}-\mysqrt{\left( b_{j-1}-\frac{\eta}{n^\alpha} \right)^2-4a_{j-1}a_{j}}}{2a_{j-1}}.
  \end{align}
  
  By Condition \ref{ass:slowly varying} we have $\frac{a_l}{a_{l-1}}=1+O(n^{-1})$. Further, by Lemma~\ref{lemma: left edge locating}, we obtain 
  \begin{equation}\label{eq:estimate recurrence 1}
     \left( \frac{b_{j-1} }{2a_{j-1}}\right)^2-\frac{a_{j}}{a_{j-1}} = o(n^{-\alpha}), \quad \frac{b_{j-1}}{2|a_{j-1}|} = 1+o(n^{-\alpha}) +O(n^{-1}), \quad \text{as } n\to\infty.
  \end{equation}
  Condition \ref{ass:slowly varying} also implies $a_{j-1}=a_{N-1}+O(n^{\beta-1})$. We can then further estimate as $n\to\infty$
  \begin{multline*}
    \mysqrt{\left( \frac{b_{j-1}-\frac{\eta}{n^\alpha} }{2a_{j-1}}\right)^2-\frac{a_{j}}{a_{j-1}}}= 	\mysqrt{\left( \frac{b_{j-1} }{2a_{j-1}}\right)^2-\frac{a_{j}}{a_{j-1}}-\frac{b_{j-1}}{2a_{j-1}} \frac{\eta}{n^\alpha a_{j-1}}+\left(  \frac{\eta}{2n^\alpha a_{j-1}} \right)^2} \\
    =\mysqrt{o(n^{-\alpha})-\left( 1+ o(n^{-\alpha}) +O(n^{-1}) \right)\frac{\eta}{2n^\alpha\left(  |a_{N-1}| +O(n^{\beta-1})\right)} +O(n^{2\alpha})} 
    = \mysqrt{\frac{-\eta}{n^\alpha |a_{N-1}|}} +o(n^{-\frac{\alpha}{2}}) .
  \end{multline*}
  
  Hence we proved \eqref{eq:omega j-} for $\omega_j^{-}$. The proof for $\omega_j^{+}$ follows from the same argument. 
\end{proof}
Note that for the right edge where $b_j<0$, we will get the following by the same argument, 
\begin{align}
  \omega_j^{-} =\left(  -1-\mysqrt{\frac{\eta}{n^\alpha |a_{N-1}|}} \right) sgn(a_{N-1})+o(n^{-\frac{\alpha}{2}}), \\
  \omega_j^{+} =\left(  -1+\mysqrt{\frac{\eta}{n^\alpha |a_{N-1}|}} \right) sgn(a_{N-1})+o(n^{-\frac{\alpha}{2}}).
\end{align}

\begin{lemma}\label{lemma:small M} Let $0<\frac{\alpha}{2}<\beta<\frac{\alpha+1}{3}<1$.
  Assume Conditions~\ref{ass:slowly varying}, and~\ref{ass:rate controlling} are satisfied. Then we have as $n\to\infty$
  \begin{equation}
    \max_{j\in I_{n,m}^{(\beta)}}\left| \omega_j^+-\omega_{j-1}^+ \right| = o(n^{-3\beta+\frac{\alpha}{2}}),\quad 	\max_{j\in I_{n,m}^{(\beta)}}\left| \omega_j^--\omega_{j-1}^- \right| = o(n^{-3\beta+\frac{\alpha}{2}}).
  \end{equation}
  Consequently we also have as $n\to\infty$
  \begin{equation}
    \max_{j\in I_{n,m}^{(\beta)}}\|M_j\|_\infty = o(n^{-3\beta+\alpha}) .
  \end{equation}
\end{lemma}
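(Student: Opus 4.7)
The plan is to analyze $\omega_j^\pm-\omega_{j-1}^\pm$ via the formula $\omega_j^\pm=(p_j\pm\sqrt{p_j^2-4q_j})/2$ with $p_j=(b_{j-1}-z)/a_{j-1}$ and $q_j=a_j/a_{j-1}$, exploiting Condition~\ref{ass:rate controlling}, and then to substitute into the formula \eqref{eq:eigenvalue_difference_eq_M_j} for $M_j$. I will concentrate on the left edge (the right edge follows by the sign-flip trick pointed out after Lemma~\ref{lemma: finner est omega}). The argument has three steps: sharp estimates on the differences of $p_j,q_j$; transfer to $\omega_j^\pm$ through a square-root expansion; substitution into $M_j$.

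In step one I record three estimates. By \eqref{ass:convex1} and the uniform lower bound on $|a_j|$, (i) $q_j-q_{j-1}=(a_ja_{j-2}-a_{j-1}^2)/(a_{j-1}a_{j-2})=o(n^{-3\beta+\alpha/2})$. Decomposing the numerator of $p_j-p_{j-1}$ as
\begin{equation*}
(b_{j-1}-z)a_{j-2}-(b_{j-2}-z)a_{j-1}=[(b_{j-1}-a_j)a_{j-2}-(b_{j-2}-a_{j-1})a_{j-1}]+(a_ja_{j-2}-a_{j-1}^2)-z(a_{j-2}-a_{j-1}),
\end{equation*}
the three summands are $o(n^{-3\beta})$ by \eqref{ass:convex2}, $o(n^{-3\beta+\alpha/2})$ by \eqref{ass:convex1}, and, using $|z|=O(n^{-\alpha})$, $|a_{j-2}-a_{j-1}|=O(n^{-1})$ together with $\beta<(\alpha+1)/3$ (so that $n^{-\alpha-1}=o(n^{-3\beta+\alpha/2})$), also $o(n^{-3\beta+\alpha/2})$; hence (ii) $p_j-p_{j-1}=o(n^{-3\beta+\alpha/2})$. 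Crucially, forming $(p_j-p_{j-1})-(q_j-q_{j-1})$ causes the $a_ja_{j-2}-a_{j-1}^2$ contribution to cancel, and \eqref{ass:convex2} alone yields the sharper bound (iii) $(p_j-p_{j-1})-(q_j-q_{j-1})=o(n^{-3\beta})$.

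For step two I use $\sqrt{A_j}-\sqrt{A_{j-1}}=(A_j-A_{j-1})/(\sqrt{A_j}+\sqrt{A_{j-1}})$ with $A_j=p_j^2-4q_j$, and split
\begin{equation*}
A_j-A_{j-1}=4\bigl[(p_j-p_{j-1})-(q_j-q_{j-1})\bigr]+(p_j-p_{j-1})\bigl(p_j+p_{j-1}-4\bigr).
\end{equation*}
By Lemma~\ref{lemma: left edge locating} and the definition of $p_j$, one has $p_j\to 2$ and more precisely $p_j+p_{j-1}-4=O(n^{-\min(1,\alpha)})$; combined with (ii) this bounds the second bracket by $o(n^{-3\beta+\alpha/2-\min(1,\alpha)})$, a subset of $o(n^{-3\beta})$, while (iii) controls the first. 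Hence $A_j-A_{j-1}=o(n^{-3\beta})$. By Lemma~\ref{lemma: finner est omega} the denominator $\sqrt{A_j}+\sqrt{A_{j-1}}$ has modulus of order $n^{-\alpha/2}$, so $\sqrt{A_j}-\sqrt{A_{j-1}}=o(n^{-3\beta+\alpha/2})$, and combining with (ii) yields the desired $|\omega_j^\pm-\omega_{j-1}^\pm|=o(n^{-3\beta+\alpha/2})$.

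The consequent bound on $M_j$ is then immediate: by \eqref{eq:eigenvalue_difference_eq_M_j} each entry of $M_j$ has the form $(\omega_j^\pm-\omega_{j+1}^\pm)/(\omega_j^--\omega_j^+)$ with numerator $o(n^{-3\beta+\alpha/2})$ and denominator of modulus $\Theta(n^{-\alpha/2})$ (again by Lemma~\ref{lemma: finner est omega}), giving $\|M_j\|_\infty=o(n^{-3\beta+\alpha})$ uniformly in $j\in I_{n,m}^{(\beta)}$. The main obstacle is step two: the naive route of bounding $A_j-A_{j-1}$ only by $o(n^{-3\beta+\alpha/2})$ from (i) and (ii) and dividing by a denominator of order $n^{-\alpha/2}$ yields only $o(n^{-3\beta+\alpha})$ for the square-root difference, too weak by a factor $n^{\alpha/2}$ for the lemma's assertion. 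The improvement rests precisely on the cancellation identity (iii), which exploits \eqref{ass:convex2} beyond the slowly-varying structure encoded in Condition~\ref{ass:slowly varying}.
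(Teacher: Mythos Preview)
Your proof is correct and follows essentially the same route as the paper's. Both arguments split $\omega_j^\pm-\omega_{j-1}^\pm$ into a linear part and a square-root part, rationalize the latter, and then exploit the key cancellation that reduces the numerator to the combination governed by \eqref{ass:convex2} at order $o(n^{-3\beta})$. Your packaging via $p_j,q_j,A_j$ and the explicit identity $A_j-A_{j-1}=4[(p_j-p_{j-1})-(q_j-q_{j-1})]+(p_j-p_{j-1})(p_j+p_{j-1}-4)$ is a tidier version of the paper's computation (which first derives the auxiliary bound $|b_{j-1}a_{j-2}-b_{j-2}a_{j-1}|=o(n^{-3\beta+\alpha/2})$ and then expands the numerator more informally), and your bound $p_j+p_{j-1}-4=O(n^{-\min(1,\alpha)})$ is slightly sharper than the paper's $O(n^{-\alpha/2})$, though either suffices.
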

\begin{proof}
  First note that the difference of equation \eqref{ass:convex1} and \eqref{ass:convex2} amounts to
  \begin{equation}
    \max_{j\in I_{n,m}^{(\beta)}}\left| b_{j-1}a_{j-2}-b_{j-2}a_{j-1}  \right|= o(n^{-3\beta+\frac{\alpha}{2}}), \quad \text{as } n\to\infty \label{ass:convex3} .
  \end{equation}
  Recall the definition of $\omega_j^+$ and $\omega_j^-$ in \eqref{eq:pre eigenvalue_difference_eq} and we have $\omega_j^+-\omega_j^-=\frac{\mysqrt{(b_{j-1}-\frac{\eta}{n^{\alpha}})^2-4a_ja_{j-1}}}{a_{j-1}}$, where the square root is taken as the principle branch.  By Lemma~\ref{lemma: left edge locating} and \eqref{ass:recurrence bound1} we have $|\omega_j^+-\omega_j^-|^{-1}=O(n^\frac{\alpha}{2})$ for all $j\in I_{n,m}^{(\beta)}$. Moreover, 
  \begin{multline*}
    \omega_j^+-\omega_{j-1}^+=\frac{b_{j-1}}{2a_{j-1}}-\frac{b_{j-2}}{2a_{j-2}}-\frac{\eta}{2n^{\alpha}}\left( \frac{1}{a_{j-1}}-\frac{1}{a_{j-2}}\right) 
    +\mysqrt{\left( \frac{b_{j-1}-\frac{\eta}{n^{\alpha}}}{2a_{j-1}} \right)^2-\frac{a_{j}}{a_{j-1}}}-\mysqrt{\left( \frac{b_{j-2}-\frac{\eta}{n^{\alpha}}}{2a_{j-2}} \right)^2-\frac{a_{j-1}}{a_{j-2}}} .
  \end{multline*}
  The first part  can be estimated as $\frac{b_{j-1}}{2a_{j-1}}-\frac{b_{j-2}}{2a_{j-2}}=o(n^{-3\beta+\frac{\alpha}{2}})$, by \eqref{ass:convex3}. The second part can be estimated as $\frac{\eta}{2n^{\alpha}}\left( \frac{1}{a_{j-1}}-\frac{1}{a_{j-2}}\right)=O(n^{-\alpha-1})$, by Condition~\ref{ass:slowly varying}. For the third part, we have
  \begin{multline}
    \mysqrt{\left( \frac{b_{j-1}-\frac{\eta}{n^{\alpha}}}{2a_{j-1}} \right)^2-\frac{a_{j}}{a_{j-1}}}-\mysqrt{\left( \frac{b_{j-2}-\frac{\eta}{n^{\alpha}}}{2a_{j-2}} \right)^2-\frac{a_{j-1}}{a_{j-2}}} \\
    = \frac{\left( \frac{b_{j-1}-\frac{\eta}{n^{\alpha}}}{2a_{j-1}} -\frac{b_{j-2}-\frac{\eta}{n^{\alpha}}}{2a_{j-2}}\right)\left( \frac{b_{j-1}-\frac{\eta}{n^{\alpha}}}{2a_{j-1}} +\frac{b_{j-2}-\frac{\eta}{n^{\alpha}}}{2a_{j-2}} \right)-\frac{a_{j}}{a_{j-1}}+\frac{a_{j-1}}{a_{j-2}}}{\mysqrt{\left( \frac{b_{j-1}-\frac{\eta}{n^{\alpha}}}{2a_{j-1}} \right)^2-\frac{a_{j}}{a_{j-1}}}+\mysqrt{\left( \frac{b_{j-2}-\frac{\eta}{n^{\alpha}}}{2a_{j-2}} \right)^2-\frac{a_{j-1}}{a_{j-2}}}}.  \label{eq:difference of omega}
  \end{multline}
  By the same argument as before, we have $1$ over the denominator is also of order $O(n^{\frac{\alpha}{2}})$. For the numerator, Lemma~\ref{lemma: left edge locating} together with  \eqref{ass:recurrence bound1} implies that $\frac{b_{j-1}-\frac{\eta}{n^{\alpha}}}{2a_{j-1}}=\left| \frac{a_j}{a_{j-1}} \right|^{\frac{1}{2}}+O(n^{-\alpha})$ . Moreover, Condition~\ref{ass:slowly varying} implies $\frac{a_j}{a_{j-1}}=1+O(n^{-1})$. Since $\alpha\in(0,2)$, we have $\frac{b_{j-1}-\frac{\eta}{n^{\alpha}}}{2a_{j-1}} +\frac{b_{j-2}-\frac{\eta}{n^{\alpha}}}{2a_{j-2}}=2+O(n^{-\frac{\alpha}{2}})$. The numerator of \eqref{eq:difference of omega} now becomes
  \begin{equation}\label{eq:numerator}
    \frac{b_{j-1}-a_j}{a_{j-1}}-\frac{b_{j-2}-a_{j-1}}{a_{j-2}}-\frac{\eta}{n^{\alpha}}\left( \frac{1}{a_{j-1}}-\frac{1}{a_{j-2}}\right) +\left( \frac{b_{j-1}}{a_{j-1}}-\frac{b_{j-2}}{a_{j-2}} \right)O(n^{-\frac{\alpha}{2}}) .
  \end{equation}
  The sum of the first two terms is $o(n^{-3\beta})$ by \eqref{ass:convex2}. The third term is order $O(n^{-\alpha-1})$, by Condition~\ref{ass:slowly varying}. For the rest term, we have $\frac{b_{j-1}}{a_{j-1}}-\frac{b_{j-2}}{a_{j-2}}=o(n^{-3\beta+\frac{\alpha}{2}})$, by  \eqref{ass:convex3}. Then \eqref{eq:numerator} is of order $o(n^{-3\beta})$, by the assumption $\beta<\frac{\alpha+1}{3}$. Combing the arguments above, we have $\omega_j^+-\omega_{j-1}^+=o(n^{-3\beta+\frac{\alpha}{2}})$. Similarly, we also have $\omega_j^--\omega_{j-1}^-=o(n^{-3\beta+\frac{\alpha}{2}})$. Then by the definition of $M_j$ \eqref{eq:eigenvalue_difference_eq_M_j}, we have $\|M_j\|_\infty=o(n^{-3\beta+\alpha})$.
\end{proof}

  Now we are ready to verify the Proposition~\ref{thm: overview F entry estimation}. Recall that $\mathcal{J}$ is the Jacobi matrix with entries $\{a_{j,n},b_{j,n}\}_{j}$. The truncated operators $J_{n+2mn^\beta}^{(r)}$,  $J_{n\pm2mn^\beta}^{(r)}$, $F_{n+2mn^\beta}$ and $F_{n\pm 2mn^\beta} $ are defined in \eqref{eq:truncation 1}, \eqref{eq:truncation 2} and \eqref{eq:truncation 3}.
  
  \begin{proof}[Proof of Proposition~\ref{thm: overview F entry estimation}] 
    First, consider $$J_N = J_{n+2mn^\beta}^{(r)}.$$ 
    It is sufficient to verify the conditions in Proposition~\ref{prop: tri inverse}.
    
    For all $l\in I_{n,m}^{(\beta)}$, by Lemma~\ref{lemma: left edge locating} we have $b_l>0$. By Lemma ~\ref{lemma: finner est omega} we have $\omega_l^{-}=1+O(n^{-\frac{\alpha}{2}})$ and $\omega_l^{-}=1+O(n^{-\frac{\alpha}{2}})$. Hence 
    \begin{equation}
      \left|\frac{-\omega_{n+2mn^\beta-1}^+a_{n+2mn^\beta-1}+b_{n+2mn^\beta-1}-z}{\omega_{n+2mn^\beta-1}^-a_{n+2mn^\beta-1}-b_{n+2mn^\beta-1}+z}\right|  =1+ O(n^{-\frac{\alpha}{2}}), \quad \text{as } n\to\infty.
    \end{equation}
    This shows \eqref{eq:ass_betatilde} is bounded. 
    
    Let $M_j$ be defined as \eqref{eq:eigenvalue_difference_eq_M_j}. By Lemma~\ref{lemma:small M} we have as $n\to\infty$
    \begin{equation}
      \left| I_{n,m}^{(\beta)} \right|\max_{l\in I_{n,m}^{(\beta)}}\|M_l\|_\infty =o(n^{-2\beta+\alpha})
    \end{equation}
    This shows \eqref{eq:ass epsilon1} to be $\varepsilon_1=o(n^{-2\beta+\alpha})$. Moreover, use Lemma ~\ref{lemma: finner est omega}  to estimate that there exist constants $d>0$ and $n_0\in\mathbb{N}$ such that for all $n>n_0$ and all integer $N_1$ with $N_0\leq N_1<N$
    \begin{equation} 
      \prod_{l=N_1}^{N-1}\left| \frac{\omega_l^-}{\omega_l^+} \right| \leq \left( 1-\frac{d}{n^{\frac{\alpha}{2}}} \right)^{N-N_1}\leq e^{-d n^{-\frac{\alpha}{2}}(N-N_1)}.
    \end{equation}
    This shows \eqref{eq:ass epsilon2} to be $\varepsilon_2 =O(e^{-d'n^{\beta-\frac{\alpha}{2}}})$ which is exponentially small. 
    
    Then all assumptions of Proposition~\ref{prop: tri inverse} are cleared. Hence, for all $j\leq N_1<k$ we have 
    \begin{equation}
      \left| \left(J_N^{-1}\right)_{j,k}  \right|
      \leq c\left\| J_N^{-1} \right\|_{\infty}e^{-d n^{-\frac{\alpha}{2}}(k-N_1)} .
    \end{equation} 
    for some constant $c>0$. Then take $N_1=k-n^\beta\geq N_0$ to obtain \eqref{eq:overview improve CT J1}.
    
    Note that above argument is taken as $b_j>0$ as assumed in \eqref{eq:assumption_recurrance_b}. For $b_j<0$, we consider $-J_N $ whose diagonals are $-b_j+\frac{\eta}{n^{\frac{\alpha}{2}}}$ and off diagonals are $-a_j$. Apply the result above, and we reach the same estimate for $ -J_N^{-1} $.
    
    Note that  the entry $\left( J_{n\pm2mn^\beta}^{(r)} \right)_{j,k}$ is the same as $\left( J_{n+2mn^\beta}^{(r)} \right)_{j,k}$ for all $j,k \in I_{n,m}^{(\beta)}$  and the rest of the entries are in the trivial blocks. Hence, the result \eqref{eq:overview improve CT J2} for $J_N = J_{n\pm2mn^\beta}^{(r)}$ follows the same approach by relabelling the indices appropriately.
    
    Then, the estimates \eqref{eq:overview improve CT 1} and \eqref{eq:overview improve CT 2} follow from the definition of $F_{n+2mn^\beta}$ and $F_{n\pm 2mn^\beta}$ in \eqref{eq:truncation 1} and \eqref{eq:truncation 3} and triangle inequality. 
  \end{proof}

  \subsection{Comparison in the Trace Norm}
  
  The following lemma is essential to prove Proposition~\ref{prop: overview trimming}.
  
  \begin{lemma}\label{cutoff}
    Let $0<\frac{\alpha}{2}<\beta<1$.  Let $m\in\mathbb{N}$. Assume conditions in Proposition~\ref{prop: overview trimming} are satisfied. Then for any $l\leq m$ and $n>n_0$ for some constant $n_0\in\mathbb{N}$ we have 
    \begin{align}
      \left\|Q_{n+ln^\beta}F_{n+2mn^\beta}^{l}P_n\right\|_1\leq & C_m e^{-d'n^{\beta-\frac{\alpha}{2}}} ,\label{eq:ess_banded}\\
      \left\|Q_{n+ln^\beta}F_{n\pm2mn^\beta}^{l}P_n\right\|_1\leq & C_m e^{-d'n^{\beta-\frac{\alpha}{2}}} ,\label{eq:ess_bandedPM}\\
      \left\| \left( F^{l}-F_{n+2mn^\beta}
      ^{l} \right)P_n\right\|_1\leq & C_m e^{-d'n^{\beta-\frac{\alpha}{2}}}, \label{eq:cutoff 1}
    \end{align}
    where $C_m>0$ is a constant depends on $m$ and  $d'>0$ is also an universal constant.  
  \end{lemma}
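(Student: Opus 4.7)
The plan is to establish the three estimates in order, since \eqref{eq:ess_banded} feeds into the proof of \eqref{eq:cutoff 1}. Proposition~\ref{thm: overview F entry estimation} supplies the entry-wise decay that drives everything.

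For \eqref{eq:ess_banded} I proceed by induction on $l$. The base case $l=1$ is immediate: the matrix $Q_{n+n^\beta}F_{n+2mn^\beta}P_n$ has support concentrated in at most $O(n^{1+\beta})$ entries---because $F_{n+2mn^\beta}$ is block diagonal in the $P_{n+2mn^\beta}/Q_{n+2mn^\beta}$ decomposition, rows with index larger than $n+2mn^\beta$ cannot contribute to columns $\leq n$---and each entry is bounded by \eqref{eq:overview improve CT 1} since $|j-k|\ge n^\beta$ and $\max\{j,k\}>n-(2m-1)n^\beta$; the polynomial entry-count prefactor is absorbed into $e^{-d_0n^{\beta-\alpha/2}}$. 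For the inductive step I insert $Id=P_{n+(l-1)n^\beta}+Q_{n+(l-1)n^\beta}$ between two consecutive factors of $F_{n+2mn^\beta}$ to obtain
\begin{equation*}
Q_{n+ln^\beta}F_{n+2mn^\beta}^{l}P_n = Q_{n+ln^\beta}F_{n+2mn^\beta}P_{n+(l-1)n^\beta}F_{n+2mn^\beta}^{l-1}P_n + Q_{n+ln^\beta}F_{n+2mn^\beta}Q_{n+(l-1)n^\beta}F_{n+2mn^\beta}^{l-1}P_n.
\end{equation*}
The first summand is controlled by the trace norm of $Q_{n+ln^\beta}F_{n+2mn^\beta}P_{n+(l-1)n^\beta}$ (exponentially small by the same entry-wise count applied to this index pair) times $\|F_{n+2mn^\beta}\|_\infty^{l-1}=O(n^{(l-1)\alpha})$, obtained from \eqref{eq:hermitian inequality} applied to $(P_{n+2mn^\beta}J^{(r)}P_{n+2mn^\beta})^{-1}$. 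The second summand is bounded by $\|F_{n+2mn^\beta}\|_\infty$ times $\|Q_{n+(l-1)n^\beta}F_{n+2mn^\beta}^{l-1}P_n\|_1$, which is exponentially small by the inductive hypothesis. The proof of \eqref{eq:ess_bandedPM} is structurally identical with \eqref{eq:overview improve CT 2} in place of \eqref{eq:overview improve CT 1}; one simply observes that the top and bottom blocks of the three-block-diagonal operator $F_{n\pm 2mn^\beta}$ are constant multiples of the identity, so that only the middle block contributes to $Q_{n+ln^\beta}F_{n\pm 2mn^\beta}^{l}P_n$ for $l\ge 1$.

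For \eqref{eq:cutoff 1}, I telescope
\begin{equation*}
F^{l}-F_{n+2mn^\beta}^{l} = \sum_{k=0}^{l-1} F^{l-1-k}(F-F_{n+2mn^\beta})F_{n+2mn^\beta}^{k}.
\end{equation*}
The key computation uses the resolvent identity \eqref{eq:selfadjoint resolvent identity}: since $F_{n+2mn^\beta}^{k}P_n$ is supported in $P_{n+2mn^\beta}$ and $(J^{(r)}_{n+2mn^\beta}-J^{(r)})P_{n+2mn^\beta}$ equals the rank-one matrix $-a_{n+2mn^\beta,n}|n+2mn^\beta+1\rangle\langle n+2mn^\beta|$, one obtains the factorization
\begin{equation*}
(F-F_{n+2mn^\beta})F_{n+2mn^\beta}^{k}P_n = -a_{n+2mn^\beta,n}\sum_{r}c_r\,\bigl(G_{x_0+\eta_r/n^\alpha}e_{n+2mn^\beta+1}\bigr)\bigl(e_{n+2mn^\beta}^{T}(J^{(r)}_{n+2mn^\beta})^{-1}F_{n+2mn^\beta}^{k}P_n\bigr).
\end{equation*}
Each summand is rank one, so its trace norm equals the product of two $L^2$ norms. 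The column factor has $L^2$ norm bounded by $\|G_{x_0+\eta_r/n^\alpha}\|_\infty=O(n^\alpha)$ via Lemma~\ref{lemma: selfadjoint 1}. For the row factor I split at column index $n+(2m-1)n^\beta$: the far portion (columns $\le n+(2m-1)n^\beta$) is controlled directly by \eqref{eq:overview improve CT J1}, and the boundary strip (columns in $(n+(2m-1)n^\beta,\,n+2mn^\beta]$) is paired with the restriction $(P_{n+2mn^\beta}-P_{n+(2m-1)n^\beta})F_{n+2mn^\beta}^{k}P_n=Q_{n+(2m-1)n^\beta}F_{n+2mn^\beta}^{k}P_n$, whose operator norm is dominated by its trace norm. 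The latter is exponentially small by \eqref{eq:ess_banded} applied with power $k\le l-1\le m-1$: the projector index $n+(2m-1)n^\beta\ge n+kn^\beta$ only strengthens that estimate. The remaining factor $\|F^{l-1-k}\|_\infty=O(n^{(l-1-k)\alpha})$ is absorbed into the exponential.

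The main obstacle is \eqref{eq:cutoff 1}: neither the Combes-Thomas estimate (as discussed in Subsection~\ref{sec:CT discussion}) nor Proposition~\ref{thm: overview F entry estimation} applies directly to the untruncated operator $F$, so one must route the comparison through the truncated resolvent. The essential technical features I exploit are, first, that $J^{(r)}-J^{(r)}_{n+2mn^\beta}$ reduces to a single-entry perturbation when applied to any matrix supported in $P_{n+2mn^\beta}$, and second, that the resulting rank-one expression contains a boundary row of the truncated resolvent that can be handled by combining \eqref{eq:overview improve CT J1} with the already-established \eqref{eq:ess_banded}. This self-consistency is what dictates the order in which the three estimates are proved.
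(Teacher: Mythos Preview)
Your proof is correct and follows essentially the same approach as the paper: induction on $l$ for \eqref{eq:ess_banded} and \eqref{eq:ess_bandedPM} driven by the entry-wise decay from Proposition~\ref{thm: overview F entry estimation}, and for \eqref{eq:cutoff 1} a telescoping sum combined with the resolvent identity to localize $F-F_{n+2mn^\beta}$ to a boundary row of the truncated resolvent, which is then controlled by splitting and feeding back \eqref{eq:ess_banded}. Your explicit rank-one formulation of $(F-F_{n+2mn^\beta})F_{n+2mn^\beta}^{k}P_n$ is slightly cleaner than the paper's version (which keeps the two-entry perturbation $E$ and the projection $P_{N+1}Q_{N-1}$), and your fixed splitting point $n+(2m-1)n^\beta$ differs from the paper's $n+(l-j-1)n^\beta$, but both choices work for the same reason.
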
	
  
  \begin{proof}To shorten the notation in the proof, we take $$N=n+2mn^\beta.$$ 
    
    Recall the trace norm inequality $\|A\|_1\leq \sum_{k_1,k_2}|(A)_{k_1,k_2}|$ for any matrix $A$.  By Proposition~\ref{thm: overview F entry estimation}, we have
    \begin{equation}
      \left\| Q_{n+n^\beta}F_NP_n\right\|_1\leq  \sum_{j=n+n^\beta+1}^N\sum_{k=1}^n C_0n^{1+\alpha}e^{-d_0n^{\beta-\frac{\alpha}{2}}} = C_0(2m-1)n^{2+\alpha+\beta}e^{-d_0n^{\beta-\frac{\alpha}{2}}}, \label{eq:indunction l is 0}
    \end{equation}
    similarly, 
    \begin{multline}\label{eq:induction stepl+1}
      \left\| Q_{n+(l+1)n^\beta}F_NP_{n+ln^\beta}\right\|_1\leq  \sum_{j=n+(l+1)n^\beta+1}^N\sum_{k=1}^{n+ln^\beta}C_0n^{1+\alpha}e^{-d_0n^{\beta-\frac{\alpha}{2}}} 
      =C_0(2m-l-1)n^{1+\alpha+\beta}(n+ln^\beta)e^{-d_0n^{\beta-\frac{\alpha}{2}}} .
    \end{multline}
    We will prove the following statement for all $l\leq m$ by induction:
    \begin{align}
      \left\|Q_{n+ln^\beta}F_N^lP_n\right\|_1\leq & C_0(2m-1)ln^{1+\alpha+\beta}(n+mn^{\beta})\|F_N\|_\infty^{l-1}e^{-d_0n^{\beta-\frac{\alpha}{2}}} . \label{eq:ess_banded precise}
    \end{align}
    For $l=1$ the statement follows by \eqref{eq:indunction l is 0}. Then write $F_N^{l+1}=F_N(P_{n+ln^\beta}+Q_{n+ln^\beta})F_N^{l}$. With the triangle inequality and trace norm inequality $\|AB\|_1\leq \|A\|_1\|B\|_\infty$, we get
    \begin{equation}
      \left\| Q_{n+(l+1)n^\beta}F_N^{l+1}P_n\right\|_1\leq \left\| Q_{n+(l+1)n^\beta} F_NP_{n+l n^\beta}\right\|_1\|F_N\|^l_\infty+\|F_N\|_\infty \left\| Q_{n+ln^\beta}F_N^{l}P_n\right\|_1.
    \end{equation}
    Then apply \eqref{eq:induction stepl+1} to the first trace norm and the induction hypothesis \eqref{eq:ess_banded precise} for the second trace norm above, and we obtain
    \begin{equation}
      \left\| Q_{n+(l+1)n^\beta}F_N^{l+1}P_n\right\|_1	\leq C_0(2m-1)(l+1)n^{1+\alpha+\beta}(n+mn^\beta)\|F_N\|_\infty^{l}  e^{-d_0n^{\beta-\frac{\alpha}{2}}}.
    \end{equation}
    This concludes the induction.
    
    Moreover since $\mathcal{J}$ is real symmetric and $\Im(\eta)/n^{\alpha}\neq 0$,  we have $\left\|\left( J_N^{(r)} \right)^{-1}\right\|_\infty\leq \frac{n^{\alpha}}{|\Im(\eta_r)|}$. Hence $\|F_N\|_\infty\leq \sum_r\left| \frac{c_r}{\Im(\eta_r)} \right| n^{\alpha}$. By inserting this in \eqref{eq:ess_banded precise}, we see \eqref{eq:ess_banded} holds.
    
    The second statement \eqref{eq:ess_banded} follows in the same way, since Proposition~\ref{thm: overview F entry estimation} also holds for $F_{n\pm 2mn^\beta}$. 
    
    For the third statement \eqref{eq:cutoff 1}, recall that $J^{(r)}$ is defined in \eqref{eq:pre-truncation} and we define
    \begin{equation}
      E \coloneqq J^{(r)}-P_{N}J^{(r)}P_{N}-Q_{N}J^{(r)}Q_{N} .
    \end{equation}
    Note that $E$ is independent of $r$. It is because $E$ is a semi-infinite matrix with only two entries non zero, i.e. $(E)_{N+1,N}=(E)_{N,N+1} = a_{N}$. This also implies $E=EP_{N+1}Q_{N-1}$.   
    
    Hence by the resolvent identity, i.e.,  \eqref{eq:selfadjoint resolvent identity} and the comment thereafter, we have 
    \begin{equation}
      F-F_N=\sum_r c_rG_{x_0+\frac{\eta_r}{n^{\alpha}}} \left(Q_N-EP_{N+1}Q_{N-1}-Q_{N}J^{(r)}Q_{N} \right)\left( J_N^{(r)} \right)^{-1} .
    \end{equation}
    
    Further by telescopic sum, we have 
    \begin{multline}
      \left(F^l-F_N
      ^l \right)P_n=	\sum_{j=0}^{l-1}F^{j}\left(F-F_N
      \right)F_N^{l-j-1}P_n \\=	\sum_{j=0}^{l-1}\sum_rF^{j}  c_rG_{x_0+\frac{\eta_r}{n^{\alpha}}} \left(Q_N-EP_{N+1}Q_{N-1}-Q_{N}J^{(r)}Q_{N} \right)\left( J_N^{(r)} \right)^{-1} F_N^{l-j-1}P_n  .
    \end{multline}
    Note that $Q_N\left( J_N^{(r)} \right)^{-1} F_N^{l-j-1}=(\sum_{r=1}^{2M}c_r)^{l-j-1}Q_N$,  by definition $J_N^{(r)}$ and $F_N$ in \eqref{eq:truncation 1}. Also, $\left( J_N^{(r)} \right)^{-1} F_N^{l-j-1}P_n=P_N\left( J_N^{(r)} \right)^{-1} F_N^{l-j-1}P_n$, since $N> n$. Hence, by $Q_NP_N=0$,
    \begin{equation}
      \left(F^l-F_N
      ^l \right)P_n
      =-\sum_{j=0}^{l-1}\sum_rF^{j}  c_rG_{x_0+\frac{\eta_r}{n^{\alpha}}} EP_{N+1}Q_{N-1}\left( J_N^{(r)} \right)^{-1} F_N
      ^{l-j-1}P_n.
    \end{equation}
    By the triangle inequality and the trace norm inequality, we find 
    \begin{equation}
      \left\| \left( F^{l}-F_N
      ^{l} \right)P_n\right\|_1 \leq \sum_{j=0}^{l-1}\sum_r|c_r|\|F\|_\infty^{j}  \left\|G_{x_0+\frac{\eta_r}{n^{\alpha}}}\right\|_\infty \|E\|_\infty\left\|P_{N+1}Q_{N-1}\left( J_N^{(r)} \right)^{-1} F_N
      ^{l-j-1}P_n\right\|_1 . \label{eq:telescpoc 1}
    \end{equation}
    
    Write $\left( J_N^{(r)} \right)^{-1}=\left( J_N^{(r)} \right)^{-1}\left(P_{n+(l-j-1)n^{\beta}}+Q_{n+(l-j-1)n^{\beta}}\right)$. Define, for $j<l-1$,  $$R_{l-j-1}\coloneqq\left\|\left( J_N^{(r)} \right)^{-1}\right\|_\infty \left\|Q_{n+(l-j-1)n^\beta}F_N^{l-j-1}P_n\right\|_1,\quad  R_{0}  \equiv  0.$$ Use the triangle inequality and the trace norm inequality $\|AB\|_1\leq\|A\|_\infty\|B\|_1$ and we find
    \begin{equation}
      \left\| P_{N+1}Q_{N-1}\left( J_N^{(r)} \right)^{-1} F_N
      ^{l-j-1}P_n\right\|_1
      \leq \left\| P_{N+1}Q_{N-1}\left( J_N^{(r)} \right)^{-1}P_{n+(l-j-1)n^\beta}\right\|_1 \|F_N\|_\infty^{l-j-1}+R_{l-j-1}, \label{eq:estimate 1}
    \end{equation}
    For $j<l-1$ we can estimate $R_{l-j-1}$ via \eqref{eq:ess_banded precise} to be exponentially small for large $n$ i.e., 
    \begin{align}
      R_{l-j-1} \leq & C_0(2m-1)(l-j-1)n^{1+\alpha+\beta}(n+mn^{\beta})\left\|\left( J_N^{(r)} \right)^{-1}\right\|_\infty\|F_N\|_\infty^{l-j-2}e^{-d_0n^{\beta-\frac{\alpha}{2}}}. \label{eq:estimate 2}
    \end{align}
    Recall that we let $N=n+2mn^\beta$ and $l\leq m$.  Using the trace norm inequality $\|A\|_1\leq \sum_{k_1,k_2}|(A)_{k_1,k_2}|$ a matrix $A$ and estimating the entries via Proposition~\ref{thm: overview F entry estimation}, we have 
    \begin{multline}
      \left\| P_{N+1}Q_{N-1}\left( J_N^{(r)} \right)^{-1}P_{n+(l-j-1)n^\beta}\right\|_1 \leq \sum_{j=N}^{N+1}\sum_{k=1}^{n+(l-j-1)n^{\beta}}\left| \left( \left(J_N^{(r)}  \right)^{-1} \right)_{j,k} \right|\\ \leq \sum_{j=N}^{N+1}\sum_{k=1}^{n+(l-j-1)n^\beta}  C_0^{(r)}n^{1+\alpha}e^{-d_0n^{\beta-\frac{\alpha}{2}}} \leq 2C_0^{(r)} (n+ln^\beta) n^{1+\alpha}e^{-d_0n^{\beta-\frac{\alpha}{2}}},  \label{eq:estimate 3}
    \end{multline}
    where $C^{(r)}_0>0$ is a constant. 
    
    Note that the sum over $r$ is finite and independent of $n$.  Use the  estimates \eqref{eq:estimate 1}, \eqref{eq:estimate 2}, and \eqref{eq:estimate 3} into \eqref{eq:telescpoc 1} to obtain that for all $l\leq m$ 
    \begin{equation*}
      \left\| \left( F^{l}-F_N
      ^{l} \right)P_n\right\|_1 \leq  |a_N|C_m' e^{-d'n^{\beta-\frac{\alpha}{2}}} ,
    \end{equation*}
    where $d'\in (0,d_0)$ and $C_m'>0$ are some universal constants. Note that $\|E\|_\infty=|a_{N}|<c_1$ by Condition~\ref{ass:slowly varying}. The lemma is concluded.
  \end{proof}

    \begin{lemma}\label{cutoff 2}
    Let $0<\frac{\alpha}{2}<\beta<1$.  Let $m\in\mathbb{N}$.   Then for any $l\leq m$ we have 
    \begin{align}
      \left\|P_{n-ln^\beta}F_{n\pm 2mn^\beta}^{l}Q_{n}\right\|_1 \leq & C_m e^{-d'n^{\beta-\frac{\alpha}{2}}}  \label{eq:ess_banded2} \\
      \left\|\left( F_{n+2mn^\beta}^{l}-F_{n\pm 2mn^\beta}^{l} \right)Q_{n-mn^\beta}\right\|_1 \leq & C_m e^{-d'n^{\beta-\frac{\alpha}{2}}}  \label{eq:cutoff2} 
    \end{align}
    where $a_{n-2mn^\beta}$ is the $n-2mn^\beta+1,n-2mn^\beta$-th entry of $\mathcal{J}$,  $C_m>0$ is a constant depends on $m$ and  $d'>0$ is also a universal constant.  
    
  \end{lemma}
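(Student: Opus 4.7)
The plan is to prove both statements by techniques paralleling those used for Lemma~\ref{cutoff}, adapted to exploit the block structure of $F_{n\pm 2mn^\beta}$ and the exponential decay of its entries guaranteed by Proposition~\ref{thm: overview F entry estimation}.

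For \eqref{eq:ess_banded2}, I would mirror the inductive argument that established \eqref{eq:ess_bandedPM}, reversing the direction of trimming (from the upper-left rather than the lower-right). The base case $l=1$ amounts to summing the entries $(F_{n\pm 2mn^\beta})_{j,k}$ for $j\leq n-n^\beta$ and $k>n$; these satisfy $|j-k|\geq n^\beta$ and $\max\{j,k\}\geq n+1\geq n-(2m-1)n^\beta$, so Proposition~\ref{thm: overview F entry estimation} bounds each by $C_0n^\alpha e^{-d_0 n^{\beta-\alpha/2}}$ and the trace norm follows from a polynomial counting in $n$. The inductive step decomposes
\[
P_{n-(l+1)n^\beta}F_{n\pm 2mn^\beta}^{l+1}Q_n = P_{n-(l+1)n^\beta}F_{n\pm 2mn^\beta}P_{n-ln^\beta}\,F_{n\pm 2mn^\beta}^{l}Q_n + P_{n-(l+1)n^\beta}F_{n\pm 2mn^\beta}Q_{n-ln^\beta}\,F_{n\pm 2mn^\beta}^{l}Q_n,
\]
handling the first piece by the induction hypothesis combined with $\|F_{n\pm 2mn^\beta}\|_\infty=O(n^\alpha)$, and the second by an entry-wise estimate (the row-column gap remains $\geq n^\beta$ and $\max\{j,k\}\geq n-ln^\beta > n-(2m-1)n^\beta$ since $l\leq m$).

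For \eqref{eq:cutoff2}, the main observation is that $\Delta_r\coloneqq J_{n\pm 2mn^\beta}^{(r)}-J_{n+2mn^\beta}^{(r)}$ is supported on the top-left corner (rows and columns $\leq n-2mn^\beta$) plus the two off-diagonal connectors at $(n-2mn^\beta, n-2mn^\beta+1)$ and $(n-2mn^\beta+1, n-2mn^\beta)$, since both operators agree on the middle block and on the identity region $Q_{n+2mn^\beta}$. Using the resolvent identity
\[
(J_{n+2mn^\beta}^{(r)})^{-1}-(J_{n\pm 2mn^\beta}^{(r)})^{-1} = -(J_{n+2mn^\beta}^{(r)})^{-1}\Delta_r(J_{n\pm 2mn^\beta}^{(r)})^{-1},
\]
the identity action of $(J_{n\pm 2mn^\beta}^{(r)})^{-1}$ on $P_{n-2mn^\beta}$ combined with $Q_{n-mn^\beta}$ on the right annihilates the top-left contribution of $\Delta_r$, leaving only the single boundary row of $\Delta_r (J_{n\pm 2mn^\beta}^{(r)})^{-1}Q_{n-mn^\beta}$, proportional to $(J_{n\pm 2mn^\beta}^{(r)})^{-1}_{n-2mn^\beta+1,k}$ for $k>n-mn^\beta$. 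Those entries have row-column gap at least $mn^\beta-1\geq n^\beta$ and hence decay exponentially by Proposition~\ref{thm: overview F entry estimation}; summing over $r$ gives the case $l=1$.

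For the general $l$-th power, I would telescope
\[
F_{n+2mn^\beta}^l-F_{n\pm 2mn^\beta}^l = \sum_{j=0}^{l-1}F_{n+2mn^\beta}^{l-1-j}\,(F_{n+2mn^\beta}-F_{n\pm 2mn^\beta})\,F_{n\pm 2mn^\beta}^{j}
\]
and, in each summand, insert the splitting $Id=P_{n-(m+j)n^\beta}+Q_{n-(m+j)n^\beta}$ just before the difference factor. The $Q$-piece is controlled by the same resolvent argument as $l=1$, now with projection $Q_{n-(m+j)n^\beta}$, which remains valid because $m+j\leq 2m-1$ for $j\leq l-1\leq m-1$. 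The $P$-piece is handled by a mild generalization of \eqref{eq:ess_banded2} with the projection $Q_{n-mn^\beta}$ in place of $Q_n$, proved by the same induction scheme as above. The principal technical obstacle is the book-keeping of these nested resolvent expansions so that every index involved remains in the range where Proposition~\ref{thm: overview F entry estimation} applies; this is precisely what ties the admissible power $l$ to the truncation parameter $m$.
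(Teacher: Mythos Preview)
Your proposal is correct and follows essentially the same route as the paper: induction on $l$ using the entry-wise bounds of Proposition~\ref{thm: overview F entry estimation} for \eqref{eq:ess_banded2}, and a resolvent identity plus telescoping sum for \eqref{eq:cutoff2}, with the block structure of $J_{n\pm 2mn^\beta}^{(r)}$ killing the top-left contribution. The only cosmetic differences are that the paper isolates the two connector entries into a matrix $E$ from the start (rather than your full $\Delta_r$, whose top-left block you then argue away), and inserts the auxiliary projections after the resolvent factor rather than before the difference factor; also note your resolvent identity has a harmless sign slip.
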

  
  \begin{proof}
    
    This proof follows the same recipe as the one of Lemma~\ref{cutoff}. Using the trace norm inequality $\|A\|_1\leq \sum_{j,k}|(A)_{j,k}|$ for any matrix $A$ and applying the second statement of Proposition~\ref{thm: overview F entry estimation}, we obtain
    
    \begin{align}
      \left\| P_{n-n^\beta} F_{n\pm 2mn^\beta}Q_{n}\right\|_1\leq  \sum_{j=n-2mn^\beta+1}^{n-(1+m)n^\beta}\sum_{k=n-mn^\beta+1}^{n+2mn^\beta}C_0n^{1+\alpha}e^{-d_0n^{\beta-\frac{\alpha}{2}}}= C_0(2m-1)2mn^{1+\alpha+2\beta}e^{-d_0n^{\beta-\frac{\alpha}{2}}}, \label{eq:prop2 estimate1}
    \end{align}
    similarly, 
    \begin{multline}\label{eq:induction stepl+l 2}
      \left\| P_{n-(l+1)n^\beta} F_{n\pm 2mn^\beta}Q_{n-ln^\beta}\right\|_1\leq  \sum_{j=n-2mn^\beta+1}^{n-(l+1)n^\beta}\sum_{k=n-ln^\beta+1}^{n+2mn^\beta}C_0n^{1+\alpha}e^{-d_0n^{\beta-\frac{\alpha}{2}}} \\
      =C_0(2m-l-1)(2m+l)n^{1+\alpha+2\beta}e^{-d_0n^{\beta-\frac{\alpha}{2}}}.
    \end{multline}
    
    We will prove the following statement for all $l\leq m$ by induction:
    \begin{equation}
      \|P_{n-ln^\beta}F_{n\pm 2mn^\beta}^lQ_{n}\|_1 \leq  3C_0(2m-1)mn^{1+\alpha+2\beta}l\|F_{n\pm mn^\beta}\|_\infty^{l-1} e^{-d_0n^{\beta-\frac{\alpha}{2}}}. \label{eq:ess_banded2 precise} 
    \end{equation}

    For $l=1$ the statement follows by  \eqref{eq:prop2 estimate1}. Then write $F_{n\pm 2mn^\beta}^{l+1}=F_{n\pm 2mn^\beta}(P_{n+ln^\beta}+Q_{n+ln^\beta})F_{n\pm 2mn^\beta}^{l}$. With the triangle inequality and trace norm inequality $\|AB\|_1\leq \|A\|_1\|B\|_\infty$, we get
    \begin{multline}
      \left\| P_{n-(l+1)n^\beta} F_{n\pm 2mn^\beta}^{l+1}Q_{n}\right\|_1 \\ 
      \leq \left\| P_{n-(l+1)n^\beta}F_{n\pm 2mn^\beta}Q_{n-ln^\beta}\right\|_1\|F_{n\pm 2mn^\beta}\|^l_\infty  
      +\|F_{n\pm 2mn^\beta}\|_\infty \left\|P_{n-ln^\beta} F_{n\pm 2mn^\beta}Q_{n}\right\|_1 
    \end{multline}
    Then apply \eqref{eq:induction stepl+l 2} to the first trace norm and the induction hypothesis \eqref{eq:ess_banded2 precise} for the second trace norm above, and we obtain
    \begin{equation}
      \left\| P_{n-(l+1)n^\beta} F_{n\pm 2mn^\beta}^{l+1}Q_{n}\right\|_1	\leq 3C_0(2m-1)mn^{1+\alpha+2\beta}(l+1)\|F_{n\pm mn^\beta}\|_\infty^{l} e^{-d_0n^{\beta-\frac{\alpha}{2}}}.
    \end{equation}
    This concludes the induction.
    
    For the second statement, to shorten the notation, let us denote 
    $$\tilde{N}= n-2mn^\beta.$$
    Let
    \begin{align}
      E \coloneqq J_{n+2mn^\beta}^{(r)}-P_{\tilde{N}}J_{n+2mn^\beta}^{(r)}P_{\tilde{N}}-Q_{\tilde{N}}J_{n+2mn^\beta}^{(r)}Q_{\tilde{N}}.
    \end{align}
    Note that $E$ is independent of $r$. It is because that $E$ is a semi-infinite matrix with only two entries non zero, i.e. $(E)_{\tilde{N}+1,\tilde{N}}=(E)_{\tilde{N},\tilde{N}+1} = a_{\tilde{N}}$. This also implies that $E=EP_{\tilde{N}+1}Q_{\tilde{N}-1}$. Moreover,  $ \left( J_{n\pm 2mn^\beta}^{(r)} \right)^{-l}Q_{n-mn^\beta}=\left(J_{n+2mn^\beta}^{(r)}-E\right)^{-l}Q_{n-mn^\beta}$. 
    
    Hence 
    \begin{equation*}
      F_{n+2mn^\beta}-F_{n\pm 2mn^\beta}=\sum_r c_r\left( J_{n+2mn^\beta}^{(r)} \right)^{-1} \left(-EP_{\tilde{N}+1}Q_{\tilde{N}-1}-P_{\tilde{N}}J_{n+2mn^\beta}^{(r)}P_{\tilde{N}} \right)\left( J_{n\pm2mn^\beta}^{(r)} \right)^{-1} .
    \end{equation*}
    
    Further by telescopic sum, we have 
    \begin{multline*}
      \left(F_{n+2mn^\beta}^l-F_{n\pm2mn^\beta}
      ^l \right)Q_{n-mn^\beta} 
      =-\sum_{j=0}^{l-1}\sum_rF_{n+2mn^\beta}^{j}  c_r\left( J_{n+2mn^\beta}^{(r)} \right)^{-1} EP_{\tilde{N}+1}Q_{\tilde{N}-1}\left( J_{n\pm2mn^\beta}^{(r)} \right)^{-1} F_{n\pm2mn^\beta}
      ^{l-j-1}Q_{n-mn^\beta},
    \end{multline*}
    which follows from $P_{\tilde{N}} \left( J_{n\pm2mn^\beta}^{(r)} \right)^{-1}F_{n\pm2mn^\beta}
    ^{l-j-1}Q_{n-mn^\beta}=0$.
    
    By triangle inequality and trace norm inequality, we have 
    \begin{multline}
      \left\| 	\left(F_{n+2mn^\beta}^l-F_{n\pm2mn^\beta}
      ^l \right)Q_{n-mn^\beta}\right\|_1\\
      \leq \sum_{j=0}^{l-1}\sum_r|c_r|\|F_{n+2mn^\beta}\|_\infty^{j}  \left\|\left( J_{n+2mn^\beta}^{(r)} \right)^{-1}\right\|_\infty \|E\|_\infty\left\|P_{\tilde{N}+1}Q_{\tilde{N}-1}\left( J_{n\pm2mn^\beta}^{(r)} \right)^{-1} F_{n\pm2mn^\beta}
      ^{l-j-1}Q_{n-mn^\beta}\right\|_1 . \label{eq:telescpoc 2}
    \end{multline}
    
    Write $\left( J_{n\pm2mn^\beta}^{(r)} \right)^{-1}=\left( J_{n\pm2mn^\beta}^{(r)} \right)^{-1}\left(P_{n-(l-j-1+m)n^\beta}+Q_{n-(l-j-1+m)n^\beta}\right)$, use triangle inequality and we have 
    \begin{multline*}
      \left\|P_{\tilde{N}+1}Q_{\tilde{N}-1}\left( J_{n\pm2mn^\beta}^{(r)} \right)^{-1} F_{n\pm2mn^\beta}
      ^{l-j-1}Q_{n-mn^\beta}\right\|_1 
      \leq \left\| P_{\tilde{N}+1}Q_{\tilde{N}-1}\left( J_{n\pm2mn^\beta}^{(r)} \right)^{-1}Q_{n-(l-j-1+m)n^\beta}\right\|_1  \left\|F_{n+2mn^\beta}\right\|_\infty^{l-j-1}+\tilde{R}_{l-j-1}
    \end{multline*}
    where $\tilde{R}_{l-j-1}=\left\|\left( J_{n\pm2mn^\beta}^{(r)} \right)^{-1} \right\|_\infty\left\|P_{n-(l-j-1+m)n^\beta}F_{n\pm2mn^\beta}^{l-j-1}Q_{n-mn^\beta}\right\|_1$. Note that $\tilde{R} _0=0$. For $j<l-1$ we can estimate $\tilde{R}_{l-j-1}$ via  \eqref{eq:ess_banded2 precise} to be 
    \begin{equation*}
      \tilde{R}_{l-j-1}\leq 4C_0(m-1)mn^{1+\alpha+2\beta}(l-j-1)\|J_{n\pm mn^\beta}^{-1}\|_\infty^{l-j-1}e^{-d_0n^{\beta-\frac{\alpha}{2}}}.
    \end{equation*}
    Further using the trace norm inequality $\|A\|_1\le \sum_{j,k}|(A)_{j,k}|$ for any matrix $A$ and estimating the entries via Proposition~\ref{thm: overview F entry estimation}, we have 
    \begin{multline*}
      \left\| P_{\tilde{N}+1}Q_{\tilde{N}-1}J_{n\pm2mn^\beta}^{-1}Q_{n-(l-j-1+m)n^\beta}\right\|_1  \leq  \sum_{k=n-(l-j-1+m)n^\beta+1}^{n+2mn^\beta} C_0n^{1+\alpha}n^{-d_0n^{\beta-\frac{\alpha}{2}}} 
      \leq C_0(3m+l)n^{1+\alpha+\beta}e^{-d_0n^{\beta-\frac{\alpha}{2}}}.
    \end{multline*}
    Assembling the above estimates into \eqref{eq:telescpoc 2}, we have for all $l\leq m$ 
    \begin{align*}
      \left\| \left( F_{n+2mn^\beta}^{l}-F_{n\pm2mn^\beta}
      ^{l} \right)Q_{n-mn^\beta}\right\|_1 \leq \|E\|_\infty C_m'e^{-d'n^{\beta-\frac{\alpha}{2}}}
    \end{align*}
    where $d'\in(0,d_0)$ and $C_m'>0$ some universal constant.  Note that  $\|E\|_\infty=|a_{\tilde{N}}|<c_1$. This concludes the lemma. 
  \end{proof}	

  \subsection{Proof of Proposition~\ref{prop: overview trimming}}\label{sec:proof overview trim}
  Now we are ready to prove the Proposition~\ref{prop: overview trimming} by Lemmas~\ref{cutoff} and~\ref{cutoff 2}.
  
  \begin{proof}[Proof of Proposition~\ref{prop: overview trimming}]
    Recall that for any linear operator $\mathcal{A}$ we write by \eqref{eq:cumulatn operator} 
    \begin{equation*}
      \mathcal{C}_m^{(n)}(\mathcal{A}) = m!\sum_{j=2}^{m}\frac{(-1)^{j+1}}{j}\sum_{l_1+\dots +l_j=m, l_i\geq 1}\frac{\Tr(\mathcal{A})^{l_1}P_n\dots (\mathcal{A})^{l_j}P_n-\Tr(\mathcal{A}^mP_n)}{l_1!\dots l_j!}.
    \end{equation*}
    
    Let $m\geq 2$, $j=2,\dots, m$ and $l_i\geq 1$ with $l_1+\dots +l_j=m$. We can write 
    \begin{equation*}
      \mathcal{A}^mP_n=\mathcal{A}^{l_1}(P_n+Q_n)\mathcal{A}^{l_2}(P_n+Q_n)\cdots \mathcal{A}^{l_j}P_n.
    \end{equation*} 
    By expanding the formula above, we find
    \begin{multline}\label{eq:telescopic sum 1}
      \mathcal{A}^{l_1}P_n\cdots \mathcal{A}^{l_j}P_n-\mathcal{A}^mP_n \\
      = -	\mathcal{A}^{l_1}Q_n\mathcal{A}^{l_2}P_n\cdots \mathcal{A}^{l_j}P_n-	\mathcal{A}^{l_1+l_2}Q_n\mathcal{A}^{l_3}P_n\cdots \mathcal{A}^{l_j}P_n-\cdots -	\mathcal{A}^{l_1+\cdots+l_{j-1}}Q_n\mathcal{A}^{l_j}P_n.
    \end{multline}
    Using the cyclic property of the trace, we get
    \begin{equation}
      \Tr\left(\mathcal{A}^{l_1}P_n\cdots \mathcal{A}^{l_j}P_n\right)-\Tr\left(\mathcal{A}^mP_n\right)	= - \sum_{k=2}^j\Tr\left(\mathcal{A}^{l_{k}}P_n\cdots P_n\mathcal{A}^{l_j}P_n\mathcal{A}^{l_1+\cdots+l_{k-1}}Q_n\right) . 
    \end{equation}
    
    Similarly, using the telescoping sum and cyclic property of the trace operator, we also get
    \begin{multline}\label{eq:cumulant telescopic}
      \left( \Tr\left(\mathcal{A}^{l_1}P_n\cdots \mathcal{A}^{l_j}P_n\right)-\Tr\left(\mathcal{A}^mP_n\right) \right)-\left( \Tr\left(\mathcal{B}^{l_1}P_n\cdots \mathcal{B}^{l_j}P_n\right)-\Tr\left(\mathcal{B}^mP_n\right) \right) \\ 
      = - \sum_{k=2}^jTr\left(\mathcal{A}^{l_{k}}P_n\cdots P_n\mathcal{A}^{l_j}P_n\mathcal{A}^{l_1+\cdots+l_{k-1}}Q_n\right)+ \Tr\left(\mathcal{B}^{l_{k}}P_n\cdots P_n\mathcal{B}^{l_j}P_n\mathcal{B}^{l_1+\cdots+l_{k-1}}Q_n\right) \\
      = - \sum_{k=2}^{j-1}\Bigg( \Tr\left((\mathcal{A}^{l_k}-\mathcal{B}^{l_k})P_n\mathcal{A}^{l_{k+1}}\cdots P_n\mathcal{A}^{l_j}P_n\mathcal{A}^{l_1+\cdots+l_{k-1}}Q_n\right)\\
      +\sum_{i=k}^{j-1}\Tr\left(\mathcal{B}^{l_k}P_n\cdots P_n\mathcal{B}^{l_{i}}P_n(\mathcal{A}^{l_{i+1}}-\mathcal{B}^{l_{i+1}})P_n\cdots \mathcal{A}^{l_{j}}P_n\mathcal{A}^{l_1+\cdots+l_{k-1}}Q_n\right)\\
      + \Tr\left(\mathcal{B}^{l_{k}}P_n\cdots P_n\mathcal{B}^{l_j}P_n(\mathcal{A}^{l_1+\cdots+l_{k-1}}-\mathcal{B}^{l_1+\cdots+l_{k-1}})Q_n\right) \Bigg),
    \end{multline}
    here we take the convention that $\sum_{j=a}^b\equiv 0$ for any integers $b<a$.
    
    \subsubsection*{Step 1 Consider $\mathcal{A}=F$ and $\mathcal{B}=F_{n+2mn^\beta}$}
    By Lemma~\ref{cutoff} we have for any $l\leq m$ we have $\left\| \left( \mathcal{A}^{l}-\mathcal{B}
    ^{l} \right)P_n\right\|_1\leq C_m e^{-d'n^{\beta-\frac{\alpha}{2}}}$. Note that in this case $\mathcal{A}$ and $\mathcal{B}$ are symmetric. Hence we also have $\left\| P_n\left( \mathcal{A}^{l}-\mathcal{B}
    ^{l} \right)\right\|_1\leq C_m e^{-d'n^{\beta-\frac{\alpha}{2}}}$. Note that we have $\|\mathcal{A}\|_\infty,\|\mathcal{B}\|_\infty\leq\sum_r \left| \frac{c_r}{\Im(\eta_r)} \right|n^\alpha$ for this choice. Then use the trace norm inequality $|\Tr(ABC)| \leq \|A\|_\infty \|B\|_1 \|C\|_\infty$we get
    \begin{equation}
      \left|\Tr(\mathcal{A})^{l_1}P_n\cdots (\mathcal{A})^{l_j}P_n-\Tr(\mathcal{A}^mP_n)-\Tr(\mathcal{B})^{l_1}P_n\cdots (\mathcal{B})^{l_j}P_n+\Tr(\mathcal{B}^mP_n)\right|\leq C'' e^{-d''n^{\beta-\frac{\alpha}{2}}},  \label{eq:cumulant_comparison 1}
    \end{equation}
    for some constant $C''>0, d''>0$. Hence plug the estimate \eqref{eq:cumulant_comparison 1} into the  cumulant formula \eqref{eq:cumulatn operator} to get
    \begin{equation}\label{eq:step 1 result}
      \left| 	\mathcal{C}_m^{(n)}(F) -	\mathcal{C}_m^{(n)}(F_{n+2mn^\beta})  \right| \leq C_m''e^{-d''n^{\beta-\frac{\alpha}{2}}}.
    \end{equation}
    
    \subsubsection*{Step 2 Consider $\mathcal{A}=F_{n\pm2mn^\beta}$ and $\mathcal{B}=F_{n+2mn^\beta}$}

    Note that we have $\|\mathcal{A}\|_\infty,\|\mathcal{B}\|_\infty\leq\sum_r \left| \frac{c_r}{\Im(\eta_r)} \right|n^\alpha$ for this choice.  That is, there exists a constant $C_{op}>0$ such that
    \begin{equation}
      \|\mathcal{A}\|_\infty,\|\mathcal{B}\|_\infty\leq n^\alpha C_{op}.
    \end{equation}
    
    We are going to estimate each summand of \eqref{eq:cumulant telescopic}.
    
    Recall that $l_1+\dots+l_j=m$. Similar to \eqref{eq:telescopic sum 1}, we write $P_n=Id-Q_n$, use the telescopic sum, and get 
    \begin{multline}\label{eq:step2 difference AB}
      (\mathcal{A}^{l_k}-\mathcal{B}^{l_k})P_n\mathcal{A}^{l_{k+1}}\cdots P_n\mathcal{A}^{l_j}P_n\mathcal{A}^{l_1+\cdots+l_{k-1}}Q_n \\
      =  -(\mathcal{A}^{l_k}-\mathcal{B}^{l_k})Q_n\mathcal{A}^{l_{k+1}}\cdots P_n\mathcal{A}^{l_j}P_n\mathcal{A}^{l_1+\cdots+l_{k-1}}Q_n \\-(\mathcal{A}^{l_k}-\mathcal{B}^{l_k})\mathcal{A}^{l_{k+1}}Q_n\mathcal{A}^{l_{k+2}}P_n\cdots P_n\mathcal{A}^{l_j}P_n\mathcal{A}^{l_1+\cdots+l_{k-1}}Q_n\\-(\mathcal{A}^{l_k}-\mathcal{B}^{l_k})\mathcal{A}^{l_{k+1}+l_{k+2}}Q_n\mathcal{A}^{l_{k+3}}P_n\cdots P_n\mathcal{A}^{l_j}P_n\mathcal{A}^{l_1+\cdots+l_{k-1}}Q_n\\
      -\ldots -(\mathcal{A}^{l_k}-\mathcal{B}^{l_k})\mathcal{A}^{l_{k+1}+l_{k+2}+\cdots+l_j}Q_n\mathcal{A}^{l_1+\cdots+l_{k-1}}Q_n + (\mathcal{A}^{l_k}-\mathcal{B}^{l_k})\mathcal{A}^{m-l_k}Q_n.
    \end{multline}
    
    Use the trace norm inequality $\|AB\|_1\leq \|A\|_1\|B\|_\infty$ and the operator norm inequality $\|AB\|_\infty\leq \|A\|_\infty\|B\|_\infty$ to obtain
    \begin{multline}
      \left|\Tr\left((\mathcal{A}^{l_k}-\mathcal{B}^{l_k})P_n\mathcal{A}^{l_{k+1}}\cdots P_n\mathcal{A}^{l_j}P_n\mathcal{A}^{l_1+\cdots+l_{k-1}}Q_n\right)\right|
      \leq \left\|\left( \mathcal{A}^{l_k}-\mathcal{B}^{l_k} \right)Q_n\right\|_1\left( \sum_r \left| \frac{c_r}{\Im(\eta_r)} \right|n^\alpha \right)^{m-l_k}\\
      +\sum_{i=k+1}^j\left\|\left( \mathcal{A}^{l_k}-\mathcal{B}^{l_k} \right)\mathcal{A}^{l_{k+1}+\dots+ l_{i}}Q_n\right\|_1\left( \sum_r \left| \frac{c_r}{\Im(\eta_r)} \right|n^\alpha \right)^{m-l_k-l_{k+1}-\dots- l_i}
      +\left\|\left( \mathcal{A}^{l_k}-\mathcal{B}^{l_k} \right)\mathcal{A}^{m-l_k}Q_n\right\|_1. \label{eq:modifiedJacobi_inequal1} 
    \end{multline}
    To apply Lemma~\ref{cutoff 2}, let us define an error term, which is exponentially small for large $n$, 
    \begin{equation}
      R_n\coloneqq 2C_m \left( \sum_r \left| \frac{c_r}{\Im(\eta_r)} \right|n^\alpha \right)^{m}e^{-d'n^{\beta-\frac{\alpha}{2}}}.
    \end{equation}
    Writing $\mathcal{A}^{l_k}-\mathcal{B}^{l_k}=\sum_{i=0}^{l_k-1}\mathcal{B}^{l_k-1-i}(\mathcal{A}-\mathcal{B})\mathcal{A}^{i}$  applying \eqref{eq:ess_banded2} in Lemma~\ref{cutoff 2}, we have for any $l\in\mathbb{N}$,
    \begin{multline}
      \left\|\left( \mathcal{A}^{l_k}-\mathcal{B}^{l_k} \right)\mathcal{A}^{l}Q_n\right\|_1\leq \sum_{i=0}^{l_k-1}	\left\|\mathcal{B}^{l_k-1-i}(\mathcal{A}-\mathcal{B})\mathcal{A}^{i+l}Q_n\right\|_1\\
      \leq \sum_{i=0}^{l_k-1}	\left( \left\|\mathcal{B}^{l_k-1-i}\right\|_\infty\left\|(\mathcal{A}-\mathcal{B})Q_{n-(i+l)n^\beta}\mathcal{A}^{i+l}Q_n\right\|_1+
      \left\|\mathcal{B}^{l_k-1-i}(\mathcal{A}-\mathcal{B})\right\|_\infty C_me^{-d'n^{\beta-\frac{\alpha}{2}}} \right) \\
      \leq l_k\left( \sum_r \left| \frac{c_r}{\Im(\eta_r)} \right|n^\alpha \right)^{l_k-1+l}\left\|(\mathcal{A}-\mathcal{B})Q_{n-mn^\beta}\right\|_1+l_kR_n, 
    \end{multline}
    where in the last inequality we use the fact that $Q_{n-(i+l)n^\beta}=Q_{n-mn^\beta}Q_{n-(i+l)n^\beta} $ and the trace norm inequality $\|AB\|_1\leq\|A\|_1\|B\|_\infty$. Then \eqref{eq:modifiedJacobi_inequal1} can be further estimated as 
    \begin{multline}\label{eq:step2 1}
      \left|\Tr\left((\mathcal{A}^{l_k}-\mathcal{B}^{l_k})P_n\mathcal{A}^{l_{k+1}}\cdots P_n\mathcal{A}^{l_j}P_n\mathcal{A}^{l_1+\cdots+l_{k-1}}Q_n\right)\right| \\
      \leq (j-k)l_k\left( \left( \sum_r \left| \frac{c_r}{\Im(\eta_r)} \right|n^\alpha \right)^{m-1}\left\|(\mathcal{A}-\mathcal{B})Q_{n-mn^\beta}\right\|_1+R_n \right).
    \end{multline}
    
    Similarly for $i=k,\dots, j-1$
    \begin{multline}
      \left|\Tr\left(\mathcal{B}^{l_k}P_n\cdots P_n\mathcal{B}^{l_{i}}P_n\left(\mathcal{A}^{l_{i+1}}-\mathcal{B}^{l_{i+1}}\right)P_n\cdots \mathcal{A}^{l_{j}}P_n\mathcal{A}^{l_1+\cdots+l_{k-1}}Q_n\right)\right| \\
      \leq(j-i) l_{i+1} \left( \left( \sum_r \left| \frac{c_r}{\Im(\eta_r)} \right|n^\alpha \right)^{m-1}\left\|(\mathcal{A}-\mathcal{B})Q_{n-mn^\beta}\right\|_1+R_n \right). \label{eq:step2 2}
    \end{multline}
    
    Lastly, 
    \begin{multline} 
      \left|\Tr\left(\mathcal{B}^{l_{k}}P_n\cdots P_n\mathcal{B}^{l_j}P_n(\mathcal{A}^{l_1+\cdots+l_{k-1}}-\mathcal{B}^{l_1+\cdots+l_{k-1}})Q_n\right)\right|\\
      \leq \left( l_1+\cdots+l_{k-1} \right)\left( \left( \sum_r \left| \frac{c_r}{\Im(\eta_r)} \right|n^\alpha \right)^{m-1}\left\|(\mathcal{A}-\mathcal{B})Q_{n-mn^\beta}\right\|_1 + R_n  \right).\label{eq:step2 3}
    \end{multline}
    
    Hence, 
    \begin{multline}
      \left| \Tr(\mathcal{A})^{l_1}P_n\cdots (\mathcal{A})^{l_j}P_n-\Tr(\mathcal{A}^mP_n)-\Tr(\mathcal{B})^{l_1}P_n\cdots (\mathcal{B})^{l_j}P_n+\Tr(\mathcal{B}^mP_n) \right| \\ 
      \leq  mj^2\left(  \left( \sum_r \left| \frac{c_r}{\Im(\eta_r)} \right|n^\alpha \right)^{m-1} \|(\mathcal{A}-\mathcal{B})Q_{n-mn^\beta}\|_1 + R_n  \right).
    \end{multline}
    
    Then by \eqref{eq:cutoff2}, we have
    \begin{align}
      \left|\Tr(\mathcal{A})^{l_1}P_n\cdots (\mathcal{A})^{l_j}P_n-\Tr(\mathcal{A}^mP_n)-\Tr(\mathcal{B})^{l_1}P_n\cdots (\mathcal{B})^{l_j}P_n+\Tr(\mathcal{B}^mP_n)\right|\leq C'' e^{-d''n^{\beta-\frac{\alpha}{2}}},  \label{eq:cumulant_comparison}
    \end{align}
    for some constant $C''>0, d''>0$. Hence plug the estimate \eqref{eq:cumulant_comparison} into the definition \eqref{eq:cumulatn operator} and we have 
    \begin{equation}\label{eq:step 2 result}
      \left| 	\mathcal{C}_m^{(n)}(F_{n+2mn^\beta}) -	\mathcal{C}_m^{(n)}(F_{n\pm2mn^\beta})  \right| \leq C_m''e^{-d''n^{\beta-\frac{\alpha}{2}}}.
    \end{equation}
    Recall that by definition of $F$, we have $n^{\alpha m}\mathcal{C}_m(X^{(n)}_{f,\alpha,x_0})=\mathcal{C}_m(F)$. Combining \eqref{eq:step 1 result} from Step~$1$ and \eqref{eq:step 2 result} from Step~$2$,  we conclude \eqref{eq:overview cutoff} in Proposition~\ref{prop: overview trimming}.
  \end{proof}

    \section{Proof of Propositions~\ref{prop: overview tri-inverse} and~\ref{prop: overview trimming 2}}\label{sec:cumulant truncation}
    In this section, we  prove Propositions~\ref{prop: overview tri-inverse} and~\ref{prop: overview trimming 2}. Essentially, we are studying the inverse of the blocked operator $J_{n\pm 2mn^\beta}^{(r)}$ defined in \eqref{eq:truncation 2}, with $\beta=\frac{\alpha}{2}+\frac{\varepsilon}{3}$.  It is the middle block, 
    $P_{n+2mn^\beta}Q_{n-2mn^\beta}J^{(r)}Q_{n-2mn^\beta}P_{n+2mn^\beta}$,
    that is non-trivial and matters in the analysis. 
    
    In Section~\ref{sec:Inverse of A Tri-diagonal Matrix: Part II}, we will compute the resolvent for a general $N\times N$ matrix with $N\in\mathbb{N}$, (cf.Proposition~\ref{prop: tri-inverse}). Later,  in Sections~\ref{sec:proof tri-inverse} and~\ref{sec:proof overview trim 2}, we will apply Proposition\ref{prop: tri-inverse}  to the middle block the resolvent of $J_{n\pm2mn^\beta}^{(r)}$, by taking the size of the matrix to be $4mn^\beta$ with relabelling of the indexes.

    \subsection{Inverse of a Tri-diagonal Matrix: Part II} \label{sec:Inverse of A Tri-diagonal Matrix: Part II}
    Consider an $N\times N$ non singular symmetric matrix with $a_j\in\mathbb{R}, b_j\in\mathbb{R}$, $N\in\mathbb{N}$ and $z\in\mathbb{C}$ with $\Im z\neq 0$.
    \begin{align} \label{eq:assumption_J_inverse copy}
      J_{N}= \begin{pmatrix}b_{0} & a_{1} & 0 & 0 & 0& , \cdots, & 0\\a_{1} & b_{1} & a_{2} & 0 & 0 & ,\cdots, & 0 \\0 & a_{2} & b_{2} & a_{3} & 0 & ,\cdots, & 0\\ \cdots \\0 & 0& 0  & 0 & a_{N-2} & b_{N-2} & a_{N-1} \\0 & 0& 0&0 & 0 & a_{N-1} & b_{N-1}  \end{pmatrix} -z Id
    \end{align}
    
    The goal of this subsection is to continue to develop a theory to estimate the inverse of $J_N$ entry-wise by imposing stronger assumptions than in Section~\ref{sec:improve CB}.
    
    \begin{proposition}[Almost Toeplitz]\label{prop: tri-inverse}
      Consider an $N\times N$ non singular symmetric matrix $J_N$ with $a_j\in\mathbb{R}, b_j-\Re z>0$ and $N\in\mathbb{N}$ as defined as \eqref{eq:assumption_J_inverse copy}. Let $\omega_j^+, \omega_j^-$ as defined as \eqref{eq:pre eigenvalue_difference_eq}. Let $M_j$ as defined as \eqref{eq:eigenvalue_difference_eq_M_j}. Define
      \begin{equation}
        c_0=\min_j|a_j|, \quad c_1=\max\left\{ \max_j|a_j|,\max_j\left|b_{j-1}-z\right| \right\} \label{ass:recurrence bound} ,
      \end{equation}
      \begin{equation}
        c_2=\max\left\{1, \left|\frac{-a_{1}+\omega_2^-(b_{0}-z)}{a_{1}-\omega_2^+(b_{0}-z)}\right|, \left|\frac{-\omega_{N-1}^+a_{N-1}+b_{N-1}-z}{\omega_{N-1}^-a_{N-1}-b_{N-1}+z}\right|   \right\}\label{eq:assumption_betatilde} ,
      \end{equation}
      \begin{equation}
        \varepsilon_1=N \max_{j}\|M_j\|_\infty \label{eq:assuption_tech2},
      \end{equation}
      \begin{equation}
        \varepsilon_2=\prod_{l=2}^{N-1}\left| \frac{\omega_l^-}{\omega_l^+} \right|.
      \end{equation}
      Assume that $c_0>0$,  and both $\varepsilon_1, \varepsilon_2$ are sufficiently small. \footnote{ More precisely, we assume $0<\varepsilon_1< 3^{-1}\left( \frac{(1+\sqrt{5})c_1}{2c_0} \right)^{-2},0<\varepsilon_2$ and 
        $12\left(1+\left( \frac{(1+\sqrt{5})c_1}{2c_0} \right)^2 c_2^2\right)\varepsilon_1+ c_2\varepsilon_2<\left( \frac{(1+\sqrt{5})c_1}{2c_0} \right)^{-2}$.}
      Then we have
      \begin{align}
        J_N^{-1} = T_N(\eta) + H_N(\eta),
      \end{align} 
      such that  for all $ j\leq k$
      \begin{align}
        \left| (T_N(\eta))_{j,k}   \right|
        \leq & Constant \left| \frac{\prod_{l=j}^{k-1}\omega_l^-}{a_{k-1}\omega_j^-\left( \omega_{N-1}^--\omega_{N-1}^+  \right)}  \right|, \label{eq:T estimate} \\
        \left| (H_N(\eta))_{j,k} \right| \leq & Constant \left| \frac{\prod_{l=j}^{k-1}\omega_l^-\left( \prod_{l=2}^j\frac{\omega_l^-}{\omega_l^+} +\prod_{l=k}^{N-1}\frac{\omega_l^-}{\omega_l^+} \right)}{a_{k-1}\omega_j^-\left( \omega_{N-1}^--\omega_{N-1}^+  \right)} \right| \label{eq:H estimate},
      \end{align}
      where $Constant$ is given by \eqref{eq:constant} that only depends on $c_0,c_1,c_2,\varepsilon_1$ and $\varepsilon_2$ and remains bounded as $\varepsilon_1,\varepsilon_2\to0$.
      
      Further we have  for any $j,k,l$, as $\varepsilon_1,\varepsilon_2\to 0$, 
      \begin{equation}
        (T_N(\eta))_{j,k} 
        =  \frac{(-1)^{k-j}\omega_{N-1}^-}{\omega_{N-1}^+-\omega_{N-1}^-} \frac{\prod_{m=j}^{k-1}\omega_m^-}{a_{k-1}\omega_j^-}(1+O(\varepsilon_1+\varepsilon_2)) \label{eq:T}
      \end{equation}
      and
      \begin{equation}
        \frac{(T_N(\eta))_{j,k}  }{(T_N(\eta))_{j+l,k+l} }
        = \left( \prod_{m=j}^{k-1}\frac{\omega_m^-}{\omega_{m+l}^-} \right)\frac{\omega_{l+j}^-a_{k+l-1}}{\omega_{j}^-a_{k-1}}  \left( 1+O(\varepsilon_1) \right) \label{eq:T ratio estimate} .
      \end{equation}
      By convention we take $\prod_{l=k}^{k-1}\equiv 1$. Note that $J_N$ is symmetric and so is $J_N^{-1}$. Therefore we have the approximation of each entry of the inverse. 
      
    \end{proposition}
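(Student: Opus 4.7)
\textbf{Proof Plan for Proposition~\ref{prop: tri-inverse}.} The plan is to combine the two linearizations of the tri-diagonal inverse from Section~\ref{sec:pre inversion} in a way that isolates a ``Toeplitz-like'' contribution $T_N$ from genuinely boundary corrections collected in $H_N$. The starting point is the explicit formula~\eqref{eq:pre  difference_beta 2}, namely $(J_N^{-1})_{j,k} = (-1)^{k-j}\gamma_j\beta_k/(\beta_N a_N \gamma_{N+1})$ for $j\leq k$, so the entire analysis reduces to precise asymptotic expansions of $\beta_k$ and $\gamma_j$.

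For $\beta_k$, I would repeat verbatim the diagonalization argument already used in the proof of Proposition~\ref{prop: tri inverse}: writing $A_l = V_l\Omega_l V_l^{-1}$ and $V_l^{-1}V_{l+1} = \mathrm{Id} + M_l$, telescoping gives
\begin{equation*}
\beta_k = \frac{\tilde\beta_1\prod_{l=k}^{N-1}\omega_l^+}{\omega_{N-1}^- - \omega_{N-1}^+}\left(1 + \frac{\tilde\beta_2}{\tilde\beta_1}\prod_{l=k}^{N-1}\frac{\omega_l^-}{\omega_l^+} + C(k)\right),
\end{equation*}
with $|C(k)| \leq 4(1+c_2)\varepsilon_1$, cf.~\eqref{eq:beta 2}. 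A completely symmetric argument starting from the recursion~\eqref{eq:pre transfer matrix B} and the diagonalization $B_l = W_l\Lambda_l W_l^{-1}$, with $E_l$ playing the role of $M_l$, produces an analogous expansion for $\gamma_j$ in terms of $\prod_{l=2}^{j-1}\lambda_l^+$, a correction $\prod_{l=2}^{j-1}\lambda_l^-/\lambda_l^+$, and an error $D(j)$ bounded by a multiple of $\varepsilon_1$. Here one uses the relation $\lambda_l^\pm = (a_{l-1}/a_l)\,\omega_l^\pm$ to express norms of $E_l$ in terms of the same quantities appearing in $M_l$ (up to bounded ratios of $a_l$'s).

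Next I would multiply these two expansions and simplify: converting $\prod\lambda_l^+$ into $\prod\omega_l^+$ via the scalar factors $a_{l-1}/a_l$, and then using $\omega_l^+\omega_l^- = a_l/a_{l-1}$ to rewrite $\prod_{l=j}^{k-1}1/\omega_l^+ = (a_{j-1}/a_{k-1})\prod_{l=j}^{k-1}\omega_l^-$. The ``main'' contribution—obtained from the product of the leading $1$'s in both expansions—yields the Toeplitz-like piece
\begin{equation*}
T_N(\eta)_{j,k} = \frac{(-1)^{k-j}\omega_{N-1}^-}{\omega_{N-1}^+ - \omega_{N-1}^-}\cdot\frac{\prod_{m=j}^{k-1}\omega_m^-}{a_{k-1}\omega_j^-}(1+O(\varepsilon_1+\varepsilon_2)),
\end{equation*}
which is exactly \eqref{eq:T}. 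The remaining cross-terms, involving either $\prod_{l=2}^{j-1}\omega_l^-/\omega_l^+$ or $\prod_{l=k}^{N-1}\omega_l^-/\omega_l^+$ (or both), contribute to $H_N(\eta)$ and automatically carry the boundary-decay factors appearing in~\eqref{eq:H estimate}. The bound~\eqref{eq:T estimate} for $T_N$ then follows from~\eqref{eq:T} after estimating $|\omega_l^\pm|$ by $c_0,c_1$, and~\eqref{eq:H estimate} follows from collecting the extra boundary factors.

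For the ratio estimate~\eqref{eq:T ratio estimate}, I would track how $T_N(\eta)_{j,k}$ changes when both indices are shifted by the same amount $l$: each factor $\omega_m^-$ is replaced by $\omega_{m+l}^-$, and since $\|M_l\|_\infty$ is small, the consecutive ratios $\omega_m^-/\omega_{m+l}^-$ differ from 1 by at most $l\cdot \max_l\|M_l\|_\infty = O(\varepsilon_1)$, which after taking the product over $m$ in an interval of length $k-j = O(N)$ still gives $1+O(\varepsilon_1)$ by the hypothesis $\varepsilon_1 = N\max_l\|M_l\|_\infty$ small. The remaining prefactors $a_{k+l-1}/a_{k-1}$ and $\omega_{j+l}^-/\omega_j^-$ are collected explicitly.

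The principal obstacle will be the careful bookkeeping that ensures the error terms $C(k), D(j)$, when multiplied through and combined with the various $\omega_l^-/\omega_l^+$ corrections, can be \emph{cleanly} split between $T_N$ and $H_N$ without leaving mixed cross-terms that fall outside the stated estimates. In particular, verifying that the constants in \eqref{eq:T estimate} and \eqref{eq:H estimate} depend on $c_0,c_1,c_2,\varepsilon_1,\varepsilon_2$ in a way that stays bounded as $\varepsilon_1,\varepsilon_2\to 0$ requires tracking the geometric-series bounds $(1+\varepsilon_1/N)^N - 1 \leq 2\varepsilon_1$ and using that the factor $|\tilde\beta_2/\tilde\beta_1|$ (resp.\ its $\gamma$-analog) is controlled by $c_2$, which is exactly the role of the assumption~\eqref{eq:assumption_betatilde}.
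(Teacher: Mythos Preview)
Your plan is essentially the same as the paper's proof: start from the product formula~\eqref{eq:pre  difference_beta 2}, expand $\beta_k$ via~\eqref{eq:beta 2}, expand $\gamma_j$ by the parallel diagonalization of the $B_l$'s, and split the resulting product into the $(1+D(j))(1+C(k))$ piece (defining $T_N$) and the boundary pieces carrying $\prod\omega_l^-/\omega_l^+$ (defining $H_N$). Two small corrections are worth making. First, the denominator $\gamma_{N+1}$ needs a separate computation: the paper applies $B_N W_{N-1}$ explicitly to the expansion at index $N-1$, introducing a third pair $\tilde\delta_1,\tilde\delta_2$ and a correction $\tilde D(N-1)$ that sits in the denominator of every entry; you should not fold this into the generic $\gamma_j$ expansion. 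Second, your explanation of the ratio estimate~\eqref{eq:T ratio estimate} is off: the product $\prod_m \omega_m^-/\omega_{m+l}^-$ is \emph{not} absorbed into the $O(\varepsilon_1)$ (and your argument that it equals $1+O(\varepsilon_1)$ would in fact fail, since you would be raising $1+O(\varepsilon_1)$ to a power of order $N$). That product is kept explicit in the formula; the $O(\varepsilon_1)$ comes solely from the ratio $(1+D(j))(1+C(k))/((1+D(j+l))(1+C(k+l)))$, each factor of which is $1+O(\varepsilon_1)$ directly.
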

    
    \begin{proof}
      Applying the inverse formula for a tri-diagonal matrix \eqref{eq:pre  difference_beta 2}, we have  for $j\leq k$
      \begin{equation}\label{eq:difference_beta}
        (J_N^{-1})_{j,k} = \frac{(-1)^{k-j}\gamma_{j}\beta_{k}}{\beta_{N}a_{N}\gamma_{N+1}}  .
      \end{equation}
      Choose any $ a_{N}\neq 0 $ (e.g. pick $a_N=a_{N-1}$) and
      Note that the inverse formula is independent on the choice of $a_{N}$.  Recall that in the proof of Proposition \ref{prop: tri inverse} we have computed $\beta_k$ to be \eqref{eq:beta 2}, i.e.,  for $k=1,\dots, N$
      
      \begin{equation}
        \beta_{k} =  \frac{\tilde{\beta}_1\prod_{l=k}^{N-1}\omega_l^+}{\omega_{N-1}^--\omega_{N-1}^+} \left( 1+\frac{\tilde{\beta}_2}{\tilde{\beta}_1}\prod_{l=k}^{N-1}\frac{\omega_l^-}{\omega_l^+}+C(k) \right)
      \end{equation}
      where  $C(k)$ is defined in \eqref{eq:estiamte ck} with $|C(k)|\leq 4(1+c_2)\varepsilon_1$ and 
      \begin{equation}
        \tilde{\beta}_1 \coloneqq \omega_{N-1}^-a_{N-1}-b_{N-1}+z, \qquad \tilde{\beta}_2 \coloneqq -\omega_{N-1}^+a_{N-1}+b_{N-1}-z. \label{eq:def:beta12}
      \end{equation}
      Note that we have $\beta_N=a_{N-1},$ $\left| \frac{\tilde{\beta}_2}{\tilde{\beta}_1}\prod_{l=k}^{N-1}\frac{\omega_l^-}{\omega_l^+} \right|\leq c_2\varepsilon_2$. 
      
      Recall that transfer matrices  $B_j$ is defined as  \eqref{eq:pre transfer matrix B}, and $\gamma_j$ is defined recursively to be
      \begin{equation}\label{eq:gamma 1}
        \begin{pmatrix}
          \gamma_j \\ \gamma_{j+1} 
        \end{pmatrix} =  B_jB_{j-1}\dots B_{2} \begin{pmatrix}
          \gamma_{1} \\ \gamma_{2} 
        \end{pmatrix}, \quad \gamma_1=a_1, \quad \gamma_2=b_{0}-z. 
      \end{equation}
      
      Recall that the eigenvalues $\lambda_j^+$ and $\lambda_j^-$ of $B_j$ are defined as \eqref{eq:pre eigenvalue_difference_eq}. $B_j$ can be diagonalized as $	B_j = W_j \Lambda_j W_j^{-1}$ where, $W_j \coloneqq \begin{pmatrix}
        1 & 1 \\ \lambda_j^+ & \lambda_j^- 
      \end{pmatrix}$ and $ \Lambda_j  \coloneqq \begin{pmatrix}
        \lambda_j^+ & 0 \\0 &  \lambda_j^- 
      \end{pmatrix}$.

      Further, define 
      \begin{equation}
        D_j\coloneqq W_j^{-1}\left(B_jB_{j-1}\dots B_{2}- W_j\left(\prod_{l=2}^{j} \Lambda_l\right)W_{2}^{-1}\right)W_{2}, \quad j=2,3,\dots, N, \quad \text{and } D_1\coloneqq 0. 
      \end{equation} 
      We can rewrite \eqref{eq:gamma 1}  for all $j=2,3,\dots, N$ to be 
      \begin{equation}
        \begin{pmatrix}
          \gamma_j \\ \gamma_{j+1} 
        \end{pmatrix}  = W_j\left(D_j+\prod_{l=2}^j \Lambda_l\right)W_{2}^{-1}\begin{pmatrix}
          \gamma_{1} \\ \gamma_{2} 
        \end{pmatrix} .
      \end{equation}
      Let $\tilde{\gamma}_1\coloneqq \lambda_{2}^-a_{1}-b_{0}+z$, $\tilde{\gamma}_2\coloneqq -\lambda_{2}^+a_{1}+b _{0}-z$ such that $\frac{1}{\lambda_2^--\lambda_2^+}\begin{pmatrix}
        \tilde{\gamma}_1\\\tilde{\gamma}_2
      \end{pmatrix}= W_{2}^{-1}\begin{pmatrix}
      \gamma_{1} \\ \gamma_{2} \end{pmatrix} $. Then, we have for $j=2,\dots, N$
      \begin{align}
        \gamma_{j} = & \frac{\left((D_j)_{1,1}+(D_j)_{2,1}+\prod_{l=2}^{j}\lambda_l^+\right)\tilde{\gamma}_1+\left((D_j)_{1,2}+(D_j)_{2,2}+\prod_{l=2}^{j}\lambda_l^-\right)\tilde{\gamma}_2}{\lambda_{2}^--\lambda_{2}^+} .
      \end{align}
      By convention, let $D_1\equiv 0 $ and $\prod_{l=2}^{1}\equiv 1$ and we can allow $j=1,\dots, N$ for the formula $\gamma_j$ above.

      For $\gamma_{N+1}$, we keep  $B_{N}$ unchanged, estimate from $B_{N-1}$ and write 
      \begin{equation}
        \begin{pmatrix}
          \gamma_N \\ \gamma_{N+1} 
        \end{pmatrix}  = B_{N}W_{N-1}\left(D_{N-1}+\prod_{l=2}^{N-1} \Lambda_l\right)W_{2}^{-1}\begin{pmatrix}
          \gamma_{1} \\ \gamma_{2} 
        \end{pmatrix} .\label{eq:gamma Nplus1}
      \end{equation}
      Note that 
      \begin{equation}
        B_{N}W_{N-1}=\begin{pmatrix}
          \lambda_{N-1}^+ & \lambda_{N-1}^- \\ \frac{\left( b_{N-1} -z\right)\lambda_{N-1}^+-a_{N-1}}{a_N} & \frac{\left( b_{N-1} -z\right)\lambda_{N-1}^--a_{N-1}}{a_N} 
        \end{pmatrix} .
      \end{equation}
      Denote 
      \begin{equation}
        \tilde{\delta}_{1} \coloneqq \left( b_{N-1} -z\right)\lambda_{N-1}^+-a_{N-1} , \qquad 	\tilde{\delta}_{2} \coloneqq \left( b_{N-1} -z\right)\lambda_{N-1}^--a_{N-1} . \label{eq:def:lambda12}
      \end{equation}
      Compute the second entry of \eqref{eq:gamma Nplus1} to obtain
      \begin{equation}
        \gamma_{N+1} =  \frac{\tilde{\delta}_{1} \left((D_{N-1})_{1,1}+(D_{N-1})_{2,1}+\prod_{l=2}^{N-1}\lambda_l^+\right)\tilde{\gamma}_1+\tilde{\delta}_{2} \left((D_{N-1})_{1,2}+(D_{N-1})_{2,2}+\prod_{l=2}^{N-1}\lambda_l^-\right)\tilde{\gamma}_2}{a_N\left( \lambda_{2}^--\lambda_{2}^+ \right)} .
      \end{equation}
      By convention we let $D_1\equiv 0 $ and $\prod_{l=2}^1\equiv 1$ and we can allow $j=1,\dots, N$ for all above. Moreover, by the construction of $E_j$  in \eqref{eq:eigenvalue_difference_eq_E_j}, we have $E_j \equiv W_j^{-1}W_{j-1}-Id$. Now we estimate $D_j$ via the expansion 
      \begin{equation}
        B_jB_{j-1}\dots B_{2}  = W_j \Lambda_j(Id+E_j) \Lambda_{j-1}(Id+E_{j-1})\dots  \Lambda_{3}(Id+E_{3}) \Lambda_{2}W_{2}^{-1}.
      \end{equation}
      Also, note that by assumption $b_j>0$, we have $|\lambda_j^+|> |\lambda_j^-|$. Hence, $\|W_j\|_\infty\leq |\lambda_j^+|$. Then, we have
      \begin{equation}\label{eq:DjupperBound}
        \|D_j\|_\infty \leq \left( \prod_{l=2}^{j}\left( 1+\|E_l\|_\infty \right) -1\right)\prod_{l=2}^{j}|\lambda_l^+|.
      \end{equation}

      Now we are about to estimate $E_j$. We will do so by revealing some connections between $\lambda_j^{\pm}$ and $\omega_j^{\pm}$ and between $E_j$ and $M_j$.
      
      By \eqref{ass:recurrence bound},
      \begin{align}
        |\omega_j^+|\leq & \frac{\left|b_{j-1}-z\right|+\mysqrt{\left|b_{j-1}-z\right|^2+4|a_{j-1}a_j|}}{2|a_{j-1|}}\leq \frac{(1+\sqrt{5})c_1}{2c_0}, \label{eq:omega+bound}\\
        |\omega_j^-|\geq & \frac{2|a_{j|}}{\left|b_{j-1}-z\right|+\mysqrt{\left|b_{j-1}-z\right|^2+4|a_{j-1}a_j|}}\geq \frac{2c_0}{(1+\sqrt{5})c_1} \label{eq:omega-bound} .
      \end{align}
      Note that each entry of the $2$ by $2$ matrix  $E_j$ is bounded by its operator norm. 
      Since $\lambda_j^-\omega_j^+=\lambda_j^+\omega_j^-=1$, by \eqref{eq:omega+bound} and \eqref{eq:omega-bound}, we have each entry of $E_j$ is bounded by $\left( \frac{(1+\sqrt{5})c_1}{2c_0} \right)^2\|M_j\|_{\infty}$. Hence, $\|E_j\|_\infty\leq 3\left( \frac{(1+\sqrt{5})c_1}{2c_0} \right)^2\|M_j\|_\infty$. By \eqref{eq:assuption_tech2} and $0<3\left( \frac{(1+\sqrt{5})c_1}{2c_0} \right)^2\varepsilon_1< 1$, by the same engagement as in \eqref{eq:ck_epsilon1}, we further estimate \eqref{eq:DjupperBound} to be
      \begin{equation}
        \|D_j\|_\infty \leq 6 \left( \frac{(1+\sqrt{5})c_1}{2c_0} \right)^2\varepsilon_1\prod_{l=2}^{j}|\lambda_l^+| . \label{eq:dj_epsilon1}
      \end{equation}

      Then, let 
      \begin{align}
        D(j)\coloneqq &  \left((D_j)_{1,1}+(D_j)_{2,1}\right)\prod_{l=2}^{j}(\lambda_l^+)^{-1}+\left((D_j)_{1,2}+(D_j)_{2,2}\right)\frac{\tilde{\gamma}_2}{\tilde{\gamma}_1}\prod_{l=2}^{j}(\lambda_l^+)^{-1}, \\
        \tilde{D}(N-1)\coloneqq &  \left((D_{N-1})_{1,1}+(D_{N-1})_{2,1}\right)\prod_{l=2}^{N}(\lambda_l^+)^{-1}+\frac{\tilde{\delta}_2}{\tilde{\delta}_1}\left((D_{N-1})_{1,2}+(D_{N-1})_{2,2}\right)\frac{\tilde{\gamma}_2}{\tilde{\gamma}_1}\prod_{l=2}^{N-1}(\lambda_l^+)^{-1} .
      \end{align}
      Hence, by\eqref{eq:dj_epsilon1}
      \begin{equation}
          |D(j)|\leq 12 \left( 1+\left| \frac{\tilde{\gamma}_2}{\tilde{\gamma}_1} \right| \right)\left( \frac{(1+\sqrt{5})c_1}{2c_0} \right)^2\varepsilon_1, \quad |\tilde{D}(N-1)|\leq12 \left( 1+\left| \frac{\tilde{\delta}_2\tilde{\gamma}_2}{\tilde{\delta}_1\tilde{\gamma}_1} \right| \right)\left( \frac{(1+\sqrt{5})c_1}{2c_0} \right)^2\varepsilon_1.
      \end{equation}
      Note that $\lambda_j^-\omega_j^+=\lambda_j^+\omega_j^-=1$ and we have $\frac{\tilde{\delta}_2}{\tilde{\delta}_1}=\frac{\left( b_{N-1} -z\right)\lambda_{N-1}^--a_{N-1} }{\left( b_{N-1} -z\right)\lambda_{N-1}^+-a_{N-1} }=\frac{\omega_{N-1}^-}{\omega_{N-1}^+} \frac{\omega_{N-1}^+a_{N-1}-b_{N-1}+z}{\omega_{N-1}^-a_{N-1}-b_{N-1}+z}$ and  $\frac{\tilde{\gamma}_2}{\tilde{\gamma}_1}=\frac{-\lambda_2^+a_1+b_0-z}{\lambda_2^-a_1-b_0+z}=\frac{\omega_2^+}{\omega_2^-}\frac{-a_1+\omega_2^-(b_0-z)}{a_1-\omega_2^+(b_0-z)}$. Then by \eqref{eq:omega+bound} \eqref{eq:omega-bound}, we estimate, 
      \begin{equation}
        \left|  \frac{\tilde{\delta}_2}{\tilde{\delta}_1}\right| \leq c_2, \quad	\left|  \frac{\tilde{\gamma}_2}{\tilde{\gamma}_1}\right| \leq \left( \frac{(1+\sqrt{5})c_1}{2c_0} \right)^2 c_2.
      \end{equation}
      Recall that $c_2\geq 1$. Further define 
      \begin{equation}
        \tilde{\varepsilon_1} \coloneqq 12\left(1+\left( \frac{(1+\sqrt{5})c_1}{2c_0} \right)^2 c_2^2\right)\left( \frac{(1+\sqrt{5})c_1}{2c_0} \right)^2\varepsilon_1.
      \end{equation}
      Then by \eqref{eq:dj_epsilon1} we have $|C(k)|, |D(j)|, |\tilde{D}(N-1)|<\tilde{\varepsilon_1}$ for all $j,k$. Also, note that by definition we have the following handy relations  
      \begin{equation}
        \lambda_j^-=(\omega_j^+)^{-1}, \qquad \lambda_j^+=(\omega_j^-)^{-1}, \qquad \omega_j^+\omega_j^-=\frac{a_j}{a_{j-1}}, \qquad   \frac{\tilde{\beta}_1}{\tilde{\delta}_1} = -\omega_{N-1}^-.
      \end{equation}
      Hence for $j\leq k$ 
      \begin{multline}
        (J_N^{-1})_{j,k}  =  \frac{(-1)^{k-j}\beta_{k}}{\beta_{N}a_{N}} \frac{\gamma_{j}}{\gamma_{N+1}} \\
        =   \frac{(-1)^{k-j}\beta_k}{a_{N-1}a_{N}} \frac{\left((D_j)_{1,1}+(D_j)_{2,1}+\prod_{l=2}^{j}\lambda_l^+\right)\tilde{\gamma}_1+\left((D_j)_{1,2}+(D_j)_{2,2}+\prod_{l=2}^{j}\lambda_l^-\right)\tilde{\gamma}_2}{\tilde{\delta}_1\left((D_{N-1})_{1,1}+(D_{N-1})_{2,1}+\prod_{l=2}^{N}\lambda_l^+\right)\tilde{\gamma}_1+\tilde{\delta}_1\left((D_{N-1})_{1,2}+(D_{N-1})_{2,2}+\prod_{l=2}^{N}\lambda_l^-\right)\tilde{\gamma}_2}\\
        =
         \frac{(-1)^{k-j}\omega_{N-1}^-}{a_{k-1}} \frac{\prod_{l=j}^{k-1}\omega_l^-}{\left( \omega_{N-1}^+-\omega_{N-1}^-  \right)\omega_j^-} 
        \frac{\left( 1+\frac{\tilde{\gamma}_2}{\tilde{\gamma}_1}\prod_{l=2}^j\frac{\omega_l^-}{\omega_l^+}+D(j) \right)\left( 1+\frac{\tilde{\beta}_2}{\tilde{\beta}_1}\prod_{l=k}^{N-1}\frac{\omega_l^-}{\omega_l^+}+C(k) \right)}{ \left( 1+\frac{\tilde{\gamma}_2}{\tilde{\gamma}_1}\prod_{l=2}^{N-1}\frac{\omega_l^-}{\omega_l^+}+\tilde{D}(N-1)  \right)}  .
      \end{multline}
      Let 
      \begin{align}
        (T_N(\eta))_{j,k}  
        \coloneqq & \frac{(-1)^{k-j}\omega_{N-1}^-}{a_{k-1}} \frac{\prod_{l=j}^{k-1}\omega_l^-}{\left( \omega_{N-1}^+-\omega_{N-1}^-  \right)\omega_j^-}   \frac{\left( 1+D(j) \right)\left( 1+C(k) \right)}{ \left( 1+\frac{\tilde{\gamma}_2}{\tilde{\gamma}_1}\prod_{l=2}^{N-1}\frac{\omega_l^-}{\omega_l^+}+\tilde{D}(N-1)  \right)},   \\
        H_N(\eta)\coloneqq & J_N^{-1}-T_N(\eta) .\label{eq:def H}
      \end{align}
      Hence as $\varepsilon_1\to 0$ we have 
      \begin{align}
        \frac{(T_N(\eta))_{j,k}  }{(T_N(\eta))_{j+l,k+l} }
        = &\left( \prod_{m=j}^{k-1}\frac{\omega_m^-}{\omega_{m+l}^-} \right)\frac{\omega_{l+j}^-a_{k+l-1}}{\omega_{j}^-a_{k-1}}  \frac{\left( 1+D(j) \right)\left( 1+C(k) \right)}{\left( 1+D(j+l) \right)\left( 1+C(k+l) \right)} \\
        = &\left( \prod_{m=j}^{k-1}\frac{\omega_m^-}{\omega_{m+l}^-} \right)\frac{\omega_{l+j}^-a_{k+l-1}}{\omega_{j}^-a_{k-1}} \left( 1+O(\varepsilon_1) \right) .
      \end{align}
      
      Hence, whenever $\tilde{\varepsilon_1}+\left( \frac{(1+\sqrt{5})c_1}{2c_0} \right)^2 c_2\varepsilon_2<1$ 
      \begin{equation}\label{eq:constant}
        \left|  \frac{\omega_{N-1}^-\left( 1+D(j) \right)\left( 1+C(k) \right)}{ \left( 1+\frac{\tilde{\gamma}_2}{\tilde{\gamma}_1}\prod_{l=2}^{N-1}\frac{\omega_l^-}{\omega_l^+}+\tilde{D}(N-1)  \right)} \right|\leq \frac{\frac{(1+\sqrt{5})c_1}{2c_0}(1+\tilde{\varepsilon}_1)^2}{1-\left( \frac{(1+\sqrt{5})c_1}{2c_0} \right)^2 c_2\varepsilon_2-\tilde{\varepsilon_1}}  \eqqcolon Constant' . 
      \end{equation}

      Hence, 
      \begin{align}
        \left| (T_N(\eta))_{j,k}   \right|
        \leq & Constant' \left| \frac{\prod_{l=j}^{k-1}\omega_l^-}{a_{k-1}\omega_j^-\left( \omega_{N-1}^--\omega_{N-1}^+  \right)}  \right|, \\
        \left| (H_N(\eta))_{j,k} \right| \leq & 2 Constant' \left| \frac{\left( \prod_{l=j}^{k-1}\omega_l^- \right)\left( \prod_{l=2}^j\frac{\omega_l^-}{\omega_l^+} +\prod_{l=k}^{N-1}\frac{\omega_l^-}{\omega_l^+} \right)}{a_{k-1}\left( \omega_{N-1}^--\omega_{N-1}^+  \right)} \right|. 
      \end{align}
    Take the $Constant=2Constant'$ and we conclude the proof of Proposition~\ref{prop: tri-inverse}. 
    \end{proof}
    
    \subsection{Proof of Proposition~\ref{prop: overview tri-inverse} }\label{sec:proof tri-inverse}

    \begin{proof}[Proof of Proposition~\ref{prop: overview tri-inverse} ] \label{proof: prop2}
      
      Recall that we define $J_{n\pm 2mn^\beta}^{(r)}$ as \eqref{eq:truncation 2}, which can be regarded as a three-block diagonal matrix. Recall we define the indexing set $I_{n,m}^{(\beta)}= \left\{ n-2mn^\beta, n-2mn^\beta+1, \dots, n+2mn^\beta \right\}.$			
      Hence, the non-trivial block of $J_{n\pm 2mn^\beta}^{(r)}$ is where the entries have the indices both belonging to this set $I_{n,m}^{(\beta)}$, .i.e., for all $j,k\in I_{n,m}^{(\beta)}$ 
      \begin{equation}\label{eq:middle block}
        \left(J_{n\pm 2mn^\beta}^{(r)}  \right)_{j,k} = \left( P_{n+2mn^\beta}Q_{n-2mn^\beta}J^{(r)}Q_{n-2mn^\beta}P_{n+2mn^\beta} \right)_{j,k}.
      \end{equation}
      In the following, we apply Proposition~\ref{prop: tri-inverse} to this non-trivial block of $J_{n\pm 2mn^\beta}^{(r)}$, i.e., \eqref{eq:middle block}.
      To short the notation, let
      \begin{equation}
        b_{j}= b_{j,n}-x_0, \quad a_j= a_{j,n}, \quad  \text{and } z=\frac{\eta}{n^\alpha}.
      \end{equation}
      Then, at the left edge, for all $j\in I_{n,m}^{(\beta)}$, $a_j$ and $b_j$ satisfy Conditions~\ref{ass:slowly varying} and~\ref{ass:rate controlling}. Hence, the quantities in \eqref{ass:recurrence bound} are bounded below and above as desired. 
      
      By Lemma~\ref{lemma: left edge locating}, we have $b_l>0$, for all $l\in I_{n,m}^{(\beta)}$. By Lemma ~\ref{lemma: finner est omega}, we have as $n\to\infty$
      \begin{equation}
        \max_{l\in I_{n,m}^{(\beta)}}|\omega_l^{-}-1|=O(n^{-\frac{\alpha}{2}}),\quad \max_{l\in I_{n,m}^{(\beta)}}|\omega_l^{+}-1|=O(n^{-\frac{\alpha}{2}})
      \end{equation}
      Hence, as $n\to\infty$
      \begin{multline*}
        \left|\frac{-a_{n-2mn^\beta}+\omega_{n-2mn^\beta+1}^-(b_{n-2mn^\beta-1}-z)}{a_{n-2mn^\beta}-\omega_{n-2mn^\beta+1}^+(b_{n-2mn^\beta-1}-z)}\right|  =1+ O(n^{-\frac{\alpha}{2}}), \quad
        \left|\frac{-\omega_{n+2mn^\beta-1}^+a_{n+2mn^\beta-1}+b_{n+2mn^\beta-1}-z}{\omega_{n+2mn^\beta-1}^-a_{n+2mn^\beta-1}-b_{n+2mn^\beta-1}+z}\right|  =1+ O(n^{-\frac{\alpha}{2}}).
      \end{multline*}
      This shows $c_2=1+O(n^{\alpha/2})$ in \eqref{eq:assumption_betatilde}.

      Let $M_j$ be defined as \eqref{eq:eigenvalue_difference_eq_M_j}. By Lemma~\ref{lemma:small M} we have as $n\to\infty$
      \begin{equation}
        4mn^\beta\max_{l\in I_{n,m}^{(\beta)}}\|M_l\|_\infty=o(n^{-2\beta+\alpha}).
      \end{equation}
      This shows $\varepsilon_1=o(n^{-2\beta+\alpha})$ in \eqref{eq:assuption_tech2}. Moreover, use Lemma ~\ref{lemma: finner est omega}  again to obtain that there exists a constant $d>0$ such that for all $n>n_0$ 
      \begin{equation}
        \prod_{l=n-2mn^\beta+2}^{n+2mn^\beta-1}\left| \frac{\omega_l^-}{\omega_l^+} \right| \leq \left( 1-\frac{d}{n^{\frac{\alpha}{2}}} \right)^{4mn^\beta-2}\leq e^{-d n^{-\frac{\alpha}{2}}(4mn^\beta-2)}.
      \end{equation}
      This shows $\varepsilon_2 =O(e^{-d'n^{\beta-\frac{\alpha}{2}}})$ in \eqref{eq:ass epsilon2}, which is exponentially small. 
      
      Hence, the assumption of Proposition~\ref{prop: tri-inverse} is verified for the middle block of $J_{n\pm 2mn^\beta}^{(r)} $. Hence, we obtain the estimate of $T_{n\pm2mn\beta}(\eta_r)$ to be \eqref{eq:T}. Further, by $a_{j}=a_{n}+O(n^{\beta-1})$, \eqref{eq:T} and Lemma~\ref{lemma: finner est omega}, we have as $n\to\infty$
      \begin{equation}
        (T_{n\pm 2mn^\beta}(\eta_r))_{j,k} 
        =  \frac{(-1)^{k-j}}{2a_{n}\mysqrt{\frac{-\eta_r}{n^\alpha |a_{n}|}}}\prod_{l=j}^{k-1}\left( 1- \mysqrt{\frac{-\eta_r}{n^\alpha |a_{n}|}}+\xi_l \right)(1+o(1)), 
      \end{equation}
      where $\xi_l\coloneqq 	\omega_l^{-} -\left(  1-\mysqrt{\frac{-\eta}{n^\alpha |a_{N-1}|}} \right) sgn(a_{N-1}) =o\left(n^{-\frac{\alpha}{2}}\right)$.

      To prove \eqref{eq:overview tri-inverse T ratio}.  Note that since each entry of $M_l$ is bounded by the operator norm, we have for all $l=n-2mn^\beta,\dots, n+2mn^\beta$,
      \begin{equation*}
        |\omega_{l}^--\omega_{l+1}^-| \leq \|M_l\|_\infty 	|\omega_{l}^--\omega_{l}^+| .
      \end{equation*}
      By  Lemma~\ref{lemma:small M} we have $\|M_l\|_\infty=o(n^{-3\beta+\alpha})$, given that $|\omega_{l}^--\omega_{l}^+| $ is of order $n^{-\frac{\alpha}{2}}$. Hence we have  
      \begin{equation}
        \max_{l\in I_{n,m}^{(\beta)}}|\omega_{l}^--\omega_{l+1}^-| =o(n^{-3\beta+\frac{\alpha}{2}}),
        \quad \text{ as } n\to\infty.
      \end{equation}	
      Lemma~\ref{lemma: finner est omega} says that $\omega_m^-$ is bounded away from zero, hence we estimate \eqref{eq:T ratio estimate} to be 
      \begin{align}
        \frac{(T_{n\pm 2mn^\beta}(\eta_r))_{j,k}  }{(T_{n\pm 2mn^\beta}(\eta_r))_{j+l,k+l} }
        = &\left(1+o(n^{-3\beta+\frac{\alpha}{2}}) \right)^{(k-j)l}\left( 1+O(n^{\beta-1}) \right) \left( 1+O(\varepsilon_1)\right) ,
        \quad \text{ as } n\to\infty.
      \end{align}
      Note that $1+O(n^{\beta-1})=\frac{a_{k+l-1}}{a_{k-1}}$ comes from  Condition~\ref{ass:slowly varying}. Also note that $\varepsilon_1=o(n^{-2\beta+\alpha})$. Moreover, $(k-j)l=O(n^{2\beta})$, since $1\leq k-j,l\leq 4mn^\beta$. Note that all the of the big-$O$s and small-$o$ are uniform in the indices by assumptions. Together with the assumption $0<\frac{\alpha}{2}<\beta<\frac{\alpha+1}{3}<1$, we have 
      
      \begin{equation*}
        \frac{(T_{n\pm 2mn^\beta}(\eta_r))_{j,k}  }{(T_{n\pm 2mn^\beta}(\eta_r))_{j+l,k+l} }
        = 1+o(n^{-\beta+\frac{\alpha}{2}}),
        \quad \text{ as } n\to\infty.
      \end{equation*}
      
      This shows the statement \eqref{eq:overview tri-inverse T ratio}.
      
      Note that above argument is taken at the left edge i.e., $b_j>0$ as assumed in \eqref{eq:assumption_recurrance_b}. For the right edge, we have $b_j<0$. Then we consider $-J_{n\pm 2mn^\beta} $ whose diagonals are $-b_j+\frac{\eta_r}{n^{\frac{\alpha}{2}}}$ and off diagonals are $-a_j$. Apply the result above by replacing $a_{n}$ by $-a_{n}$ and $\eta_r$ by $-\eta_r$ to estimate entries of $\left(- J_{n\pm 2mn^\beta}^{(r)} \right)^{-1}$ as in the statement. Then the result at the right edge is obtained by the simple fact $\left(J_{n\pm 2mn^\beta}^{(r)} \right)^{-1}=-\left(- J_{n\pm 2mn^\beta}^{(r)} \right)^{-1}$.
      
      This completes the proof. 
    \end{proof}

    \subsection{Proof of Proposition~\ref{prop: overview trimming 2}}\label{sec:proof overview trim 2}
    Now we are ready to prove the Proposition~\ref{prop: overview trimming 2} by Lemmas~\ref{cutoff} and~\ref{cutoff 2} and Proposition~\ref{prop: overview tri-inverse}.

    \begin{proof}[Proof of Proposition~\ref{prop: overview trimming 2}]
      Recall that for any linear operator $\mathcal{A}$ we write by \eqref{eq:cumulatn operator} 
      \begin{equation*}
        \mathcal{C}_m^{(n)}(\mathcal{A}) =m! \sum_{j=2}^{m}\frac{(-1)^{j+1}}{j}\sum_{l_1+\cdots +l_j=m, l_i\geq 1}\frac{\Tr(\mathcal{A}^{l_1}P_n\cdots \mathcal{A}^{l_j}P_n)-\Tr(\mathcal{A}^mP_n)}{l_1!\cdots l_j!}.
      \end{equation*}
      Consider $m\geq 2$, $j=2,\dots, m$ and $l_i\geq 1$ with $l_1+\dots +l_j=m$. We can write $P_n=-Q_n+Id$ and 
      \begin{multline}
          \mathcal{A}^{l_1}P_n\cdots \mathcal{A}^{l_j}P_n = 	-\mathcal{A}^{l_1}Q_n\cdots \mathcal{A}^{l_j}P_n+	\mathcal{A}^{l_1+l_2}P_n\cdots \mathcal{A}^{l_j}P_n \\
          = 	-\mathcal{A}^{l_1}Q_n\cdots \mathcal{A}^{l_j}P_n - \mathcal{A}^{l_1+l_2}Q_n\cdots \mathcal{A}^{l_j}P_n + \mathcal{A}^{l_1+l_2+l_3}P_n\cdots \mathcal{A}^{l_j}P_n 
      \end{multline}
      Continue this argument $j$ times and we get 
      \begin{multline}\label{eq:telescopic sum 1 step3}
        \mathcal{A}^{l_1}P_n\cdots \mathcal{A}^{l_j}P_n-\mathcal{A}^mP_n \\
        = -	\mathcal{A}^{l_1}Q_n\mathcal{A}^{l_2}P_n\cdots \mathcal{A}^{l_j}P_n-	\mathcal{A}^{l_1+l_2}Q_n\mathcal{A}^{l_3}P_n\cdots \mathcal{A}^{l_j}P_n-\ldots -	\mathcal{A}^{l_1+\cdots+l_{j-1}}Q_n\mathcal{A}^{l_j}P_n.
      \end{multline}
      Using the cyclic property of the trace, we get
      \begin{equation}
        \Tr\left(\mathcal{A}^{l_1}P_n\cdots \mathcal{A}^{l_j}P_n\right)-\Tr\left(\mathcal{A}^mP_n\right)	= - \sum_{k=2}^j\Tr\left(\mathcal{A}^{l_{k}}P_n\cdots P_n\mathcal{A}^{l_j}P_n\mathcal{A}^{l_1+\cdots+l_{k-1}}Q_n\right) . 
      \end{equation}
      
      Similarly, using the telescoping sum and cyclic property of the trace operator, we also get
      \begin{multline}\label{eq:cumulant telescopic step3}
        \left( \Tr\left(\mathcal{A}^{l_1}P_n\cdots \mathcal{A}^{l_j}P_n\right)-\Tr\left(\mathcal{A}^mP_n\right) \right)-\left( \Tr\left(\mathcal{B}^{l_1}P_n\cdots \mathcal{B}^{l_j}P_n\right)-\Tr\left(\mathcal{B}^mP_n\right) \right) \\ 
        = \sum_{k=2}^j-Tr\left(\mathcal{A}^{l_{k}}P_n\cdots P_n\mathcal{A}^{l_j}P_n\mathcal{A}^{l_1+\cdots+l_{k-1}}Q_n\right)+ \Tr\left(\mathcal{B}^{l_{k}}P_n\cdots P_n\mathcal{B}^{l_j}P_n\mathcal{B}^{l_1+\cdots+l_{k-1}}Q_n\right) \\
        = - \sum_{k=2}^{j-1}\Bigg( \Tr\left((\mathcal{A}^{l_k}-\mathcal{B}^{l_k})P_n\mathcal{A}^{l_{k+1}}\cdots P_n\mathcal{A}^{l_j}P_n\mathcal{A}^{l_1+\cdots+l_{k-1}}Q_n\right)\\
        +\sum_{i=k}^{j-1}\Tr\left(\mathcal{B}^{l_k}P_n\cdots P_n\mathcal{B}^{l_{i}}P_n(\mathcal{A}^{l_{i+1}}-\mathcal{B}^{l_{i+1}})P_n\cdots \mathcal{A}^{l_{j}}P_n\mathcal{A}^{l_1+\cdots+l_{k-1}}Q_n\right)\\
        + \Tr\left(\mathcal{B}^{l_{k}}P_n\cdots P_n\mathcal{B}^{l_j}P_n(\mathcal{A}^{l_1+\cdots+l_{k-1}}-\mathcal{B}^{l_1+\cdots+l_{k-1}})Q_n\right) \Bigg),
      \end{multline}
      here we take the convention that $\sum_{j=a}^b\equiv 0$ for any integers $b<a$.
      
      \subsubsection*{Consider $\mathcal{A}=F_{n\pm2mn^\beta}$ and $\mathcal{B}=\sum_{r}c_rT_{n\pm 2mn^\beta}\left(\eta_r \right)$}
      Recall that the operator norm of a linear operator can be estimated by entries, i.e., $\|\mathcal{B}\|_\infty\leq \sum_{j=-\infty}^{\infty}\sup_k|(\mathcal{B})_{k,k+j}|$. Together with \eqref{eq:overview T Toeplitz} in Proposition~\ref{prop: overview tri-inverse}, we compute that $n^{-\alpha}\|\mathcal{B}\|_{\infty}$ is uniformly bounded for all $n$. That is there exists a constant $C_{op}>0$ such that 
      \begin{equation}
        \max\{\|\mathcal{A}\|_\infty,\|\mathcal{B}\|_\infty\}\leq n^\alpha C_{op}.
      \end{equation}
      Without loss of generality we will assume $C_{op}>1$.
      
      Apply \eqref{eq:ess_bandedPM} in Lemma~\ref{cutoff}, and apply the trace norm inequality, $\|AB\|_1\leq\|A\|_\infty\|B\|_1$, and the fact that, for $i<m$ to obtain
      \begin{equation}\label{eq:step3 approx 10}
        \|Q_{n+mn^\beta}\mathcal{A}^{i}P_n\|_1=\|Q_{n+mn^\beta}Q_{n+in^\beta}\mathcal{A}^{i}P_n\|_1\leq \|Q_{n+mn^\beta}\|_\infty\|Q_{n+in^\beta}\mathcal{A}^{i}P_n\|_1\leq C_me^{-d'n^{\beta-\frac{\alpha}{2}}}.
      \end{equation}
      Use the fact $\mathcal{A}^{l_k}-\mathcal{B}^{l_k}=\sum_{i=0}^{l_k-1}\mathcal{B}^{l_k-1-i}(\mathcal{A}-\mathcal{B})\mathcal{A}^{i}$, together with \eqref{eq:step3 approx 10}, to obtain
      \begin{multline}\label{eq:step 3 approx 1}
        \left\| \left( \mathcal{A}^{l_k}-\mathcal{B}^{l_k} \right)P_n -\sum_{i=0}^{l_k-1}\mathcal{B}^{l_k-1-i}(\mathcal{A}-\mathcal{B})P_{n+mn^\beta}\mathcal{A}^{i}P_n\right\|_1
        =\left\|\sum_{i=0}^{l_k-1}\mathcal{B}^{l_k-1-i}(\mathcal{A}-\mathcal{B})Q_{n+mn^\beta}\mathcal{A}^{i}P_n\right\|_1\\
        \leq  \sum_{i=0}^{l_k-1}	\left\|\mathcal{B}^{l_k-1-i}(\mathcal{A}-\mathcal{B})\right\|_\infty\|Q_{n+mn^\beta}\mathcal{A}^{i}P_n\|_1\leq 2l_k\left( n^\alpha C_{op}\right)^{l_k-i} C_me^{-d'n^{\beta-\frac{\alpha}{2}}}.
      \end{multline}
      Let us define an error term, which is exponentially small for large $n$, 
      \begin{equation}
        R_n\coloneqq 2mC_m \left( C_{op} \right)^{m}n^{\alpha m}e^{-d'n^{\beta-\frac{\alpha}{2}}}.
      \end{equation}
      Then, for $k=2,\dots, j-1$, use \eqref{eq:step 3 approx 1} to estimate the first summand in \eqref{eq:cumulant telescopic step3} to be
      \begin{multline}\label{eq:step 3 0}
        \left| \Tr\left((\mathcal{A}^{l_k}-\mathcal{B}^{l_k})P_n\mathcal{A}^{l_{k+1}}\cdots P_n\mathcal{A}^{l_j}P_n\mathcal{A}^{l_1+\cdots+l_{k-1}}Q_n\right) \right| \\
        \leq 	\left\| \sum_{i=0}^{l_k-1}\mathcal{B}^{l_k-1-i}(\mathcal{A}-\mathcal{B})P_{n+mn^\beta}\mathcal{A}^{i}P_n\mathcal{A}^{l_{k+1}}\cdots P_n\mathcal{A}^{l_j}P_n\mathcal{A}^{l_1+\cdots+l_{k-1}}Q_n\right\|_1+R_n.
      \end{multline}
      We will commute $Q_n$ from the right to the left of \eqref{eq:step 3 0}. To this end, similarly to \eqref{eq:step2 difference AB} in the proof of Proposition~\ref{prop: overview trimming}, write $P_n=Id-Q_n$, use the telescopic sum to get 
      \begin{multline}\label{eq:step 3 11}
        \mathcal{A}^{i}P_n\mathcal{A}^{l_{k+1}}\cdots P_n\mathcal{A}^{l_j}P_n\mathcal{A}^{l_1+\cdots+l_{k-1}}Q_n =  -\mathcal{A}^{i}Q_n\mathcal{A}^{l_{k+1}}\cdots P_n\mathcal{A}^{l_j}P_n\mathcal{A}^{l_1+\cdots+l_{k-1}}Q_n \\-\mathcal{A}^{i+l_{k+1}}Q_n\mathcal{A}^{l_{k+2}}P_n\cdots P_n\mathcal{A}^{l_j}P_n\mathcal{A}^{l_1+\cdots+l_{k-1}}Q_n\\
        -\mathcal{A}^{i+l_{k+1}+l_{k+2}}Q_n\mathcal{A}^{l_{k+3}}P_n\cdots P_n\mathcal{A}^{l_j}P_n\mathcal{A}^{l_1+\cdots+l_{k-1}}Q_n\\-\cdots 
        -\mathcal{A}^{i+l_{k+1}+l_{k+2}+\cdots+l_j}Q_n\mathcal{A}^{l_1+\cdots+l_{k-1}}Q_n + \mathcal{A}^{i+m-l_k}Q_n.
      \end{multline}
      Note that there are $j-k+2$ terms on the right-hand side of \eqref{eq:step 3 11} and $j-k+2\leq m$. 
      
      Applying \eqref{eq:ess_banded2} in Lemma~\ref{cutoff 2} to \eqref{eq:telescopic sum 1 step3}, we have, for any $l\in\mathbb{N}$ with $l\leq m$, 
      \begin{equation}\label{eq:step 3 12}
        \|\mathcal{A}^{l}Q_n-Q_{n-mn^\beta}\mathcal{A}^{l}Q_n\|_1=\|P_{n-mn^\beta}P_{n-ln^\beta}\mathcal{A}^{l}Q_n\|_1\leq C_me^{-d'n^{\beta-\frac{\alpha}{2}}}. 
      \end{equation} 
      Hence, estimate \eqref{eq:step 3 11} by \eqref{eq:step 3 12} to obtain
      \begin{multline}\label{eq:step 3 111}
      \left\| 	\mathcal{A}^{i}P_n\mathcal{A}^{l_{k+1}}\cdots P_n\mathcal{A}^{l_j}P_n\mathcal{A}^{l_1+\cdots+l_{k-1}}Q_n -	Q_{n-mn^\beta}\mathcal{A}^{i}P_n\mathcal{A}^{l_{k+1}}\cdots P_n\mathcal{A}^{l_j}P_n\mathcal{A}^{l_1+\cdots+l_{k-1}}Q_n  \right\|_1\\\leq (j-k+2)C_m \left( n^\alpha C_{op} \right)^{m-l_k-i}e^{-d'n^{\beta-\frac{\alpha}{2}}}.
      \end{multline}
      Plug \eqref{eq:step 3 111} into the right-hand side of \eqref{eq:step 3 0}, , and we get
      \begin{multline}\label{eq:step 3 13}
        \left\| \mathcal{B}^{l_k-1-i}(\mathcal{A}-\mathcal{B})P_{n+mn^\beta}\mathcal{A}^{i}P_n\mathcal{A}^{l_{k+1}}\cdots P_n\mathcal{A}^{l_j}P_n\mathcal{A}^{l_1+\cdots+l_{k-1}}Q_n\right\|_1\\
        \leq (n^\alpha C_{op})^{m-1}\|(\mathcal{A}-\mathcal{B})P_{n+mn^\beta}Q_{n-mn^\beta}\|_1 +R_n .
      \end{multline}
      Plug \eqref{eq:step 3 13} into the right-hand side of \eqref{eq:step 3 0}, and we get
      \begin{multline}\label{eq:step 3 1}
        \left| \Tr\left((\mathcal{A}^{l_k}-\mathcal{B}^{l_k})P_n\mathcal{A}^{l_{k+1}}\cdots P_n\mathcal{A}^{l_j}P_n\mathcal{A}^{l_1+\cdots+l_{k-1}}Q_n\right) \right| \\
        \leq l_k(n^\alpha C_{op})^{m-1}\|(\mathcal{A}-\mathcal{B})P_{n+mn^\beta}Q_{n-mn^\beta}\|_1 +(l_k+1)R_n.
      \end{multline}
      Now we estimate the second summand in \eqref{eq:cumulant telescopic step3}. For $i=k,\dots, j-1$, similarly, 
      \begin{multline}\label{eq:step 3 2}
        \left|  \Tr\left(\mathcal{B}^{l_k}P_n\cdots P_n\mathcal{B}^{l_{i}}P_n(\mathcal{A}^{l_{i+1}}-\mathcal{B}^{l_{i+1}})P_n\cdots \mathcal{A}^{l_{j}}P_n\mathcal{A}^{l_1+\cdots+l_{k-1}}Q_n\right)\right|\\
        \leq l_{i+1}(n^\alpha C_{op})^{m-1}\|(\mathcal{A}-\mathcal{B})P_{n+mn^\beta}Q_{n-mn^\beta}\|_1 +(l_{i+1}+1)R_n.
      \end{multline}
      
      Lastly, we turn to estimate the last summand in \eqref{eq:cumulant telescopic step3}. By \eqref{eq:overview T Toeplitz} in Proposition~\ref{prop: overview tri-inverse}, we have for all $|j-k|\geq n^{\beta}$
      \begin{equation}
        \left| \left( \mathcal{B} \right)_{j,k} \right|\leq  C_0n^{\frac{\alpha}{2}}e^{-d_0n^{\beta-\frac{\alpha}{2}}} .
      \end{equation}
      Then use the same proof as in Lemma~\ref{cutoff}, we have for any $l\in \mathbb{N}$ with $l\leq m$, 
      
      \begin{equation}
        \|P_n\mathcal{B}^l-P_n\mathcal{B}^lP_{n+mn^\beta}\|_1\leq C_me^{-d'n^{\beta-\frac{\alpha}{2}}}.
      \end{equation}
      Hence
      \begin{multline}
        \left|  \Tr\left(\mathcal{B}^{l_{k}}P_n\cdots P_n\mathcal{B}^{l_j}P_n(\mathcal{A}^{l_1+\cdots+l_{k-1}}-\mathcal{B}^{l_1+\cdots+l_{k-1}})Q_n\right) \right|\\
        \leq \left|  \Tr\left(\mathcal{B}^{l_{k}}P_n\cdots P_n\mathcal{B}^{l_j}P_n(\mathcal{A}^{l_1+\cdots+l_{k-1}}-\mathcal{B}^{l_1+\cdots+l_{k-1}})P_{n+mn^\beta}Q_n\right) \right| +R_n.
      \end{multline}
      Repeat the argument similar to \eqref{eq:step 3 1} to obtain
      \begin{multline}\label{eq:step 3 3}
        \left|  \Tr\left(\mathcal{B}^{l_{k}}P_n\cdots P_n\mathcal{B}^{l_j}P_n(\mathcal{A}^{l_1+\cdots+l_{k-1}}-\mathcal{B}^{l_1+\cdots+l_{k-1}})Q_n\right) \right|\\
        \leq (l_1+\cdots+l_{k-1}) (n^\alpha C_{op})^{m-1}\|(\mathcal{A}-\mathcal{B})P_{n+mn^\beta}Q_{n-mn^\beta}\|_1 +((l_1+\cdots+l_{k-1})+1)R_n.
      \end{multline}
      
      Hence, plugging three estimates \eqref{eq:step 3 1}, \eqref{eq:step 3 2} and \eqref{eq:step 3 3} into the formula \eqref{eq:cumulant telescopic step3}, we get
      \begin{multline}\label{eq:step 3}
        \left| \Tr(\mathcal{A})^{l_1}P_n\cdots (\mathcal{A})^{l_j}P_n-\Tr(\mathcal{A}^mP_n)-\Tr(\mathcal{B})^{l_1}P_n\cdots (\mathcal{B})^{l_j}P_n+\Tr(\mathcal{B}^mP_n) \right| \\ 
        \leq  (m+1)\left(  \left( n^\alpha C_{op}\right)^{m-1} \|(\mathcal{A}-\mathcal{B})P_{n+mn^\beta}Q_{n-mn^\beta}\|_1 + R_n  \right).
      \end{multline}
      Note that,
      \begin{equation*}
        \|(\mathcal{A}-\mathcal{B})P_{n+mn^\beta}Q_{n-mn^\beta}\|_1 \leq \sum_r|c_r|\|H_{n\pm 2mn^\beta}(\eta_r)P_{n+mn^\beta}Q_{n-mn^\beta}\|_1,
      \end{equation*}
      where $H_{n\pm 2mn^\beta}(\eta_r)$ is defined in \eqref{eq:def H}. Thus  by Proposition~\ref{prop: overview tri-inverse}, we have 
      \begin{multline}
        \|H_{n\pm 2mn^\beta}(\eta_r)P_{n+mn^\beta}Q_{n-mn^\beta}\|_1
        \leq \sum_{j = n-2mn^\beta+1}^{n+2mn^\beta}\sum_{k=n-mn^\beta +1}^{n+mn^\beta} \\
        Constant\quad  n^{\frac{\alpha}{2}}\left( \left( 1-\frac{d}{n^{\frac{\alpha}{2}}} \right)^{\max\{j,k\} -n+2mn^\beta}+\left( 1-\frac{d}{n^{\frac{\alpha}{2}}} \right)^{n+2mn^\beta-\min\{j,k\}} \right) \\
        \leq Constant \quad 16m n^{2\beta+\frac{\alpha}{2}}e^{-dmn^{\beta-\frac{\alpha}{2}}}.
      \end{multline}
      This shows that \eqref{eq:step 3} is exponentially small. Therefore we conclude \eqref{eq:trim_cumulant}. 
      
    \end{proof}

    \section{Proof of Proposition~\ref{thm: overview strong szego}}\label{sec:strong szego}
    Before we prove the Proposition~\ref{thm: overview strong szego}, we need to study the limiting behaviour of the following 
    \begin{equation}
      \log \det \left(1+P_n\left(e^{t n^{-\alpha}\sum_{r}c_rT_{n\pm 2mn^\beta}\left(\eta_r \right)}-1\right)P_n\right)e^{-t \Tr P_n n^{-\alpha}\sum_{r}c_rT_{n\pm 2mn^\beta}\left(\eta_r \right)},
    \end{equation}
    where $T_{n\pm 2mn^\beta}$ comes from Proposition~\ref{prop: overview tri-inverse}.
    \subsection{Limiting Behaviour of Fredholm Determinants}
    Suppose $A$ and $B$ are bounded operators such that $[A,B]$ is trace class. Then \cite{ehrhardt2003generalization} shows the following principle
    \begin{equation}
      e^{-A}e^{A+B}e^{-B}-Id \text{ is trace class},
    \end{equation}
    and 
    \begin{equation}
      \det e^{-A}e^{A+B}e^{-B} =e^{-\frac{1}{2}\Tr[A,B]}.
    \end{equation}
    This is a generalisation of the Helton-Howe-Pincus formula. It is useful to study generating functions for some Gaussian random variables, see for example \cite{breuer2016universality}.  We will also use this principle in our setup following \cite{breuer2016universality}, which is a direct consequence of Lemmas~\ref{lemma:eee expansion} and~\ref{lemma: det Toeplitz }.
    
    \begin{lemma}\label{lemma:eee expansion}
      For any bounded operators $A,B$ we have 
      \begin{equation}
        e^{-A}e^{A+B}e^{-B}-Id = \sum_{m_1,m_2,m_3=0}^\infty\sum_{j=0}^{m_2-1}\frac{(-1)^{m_1+m_3}A^{m_1}(A+B)^j[A,B](A+B)^{m_2-j-1}B^{m_3}}{m_1!m_2!m_3!(m_1+m_2+m_3+1)}.
      \end{equation}
      Moreover, if $[A,B]$ is trace class then $e^{-A}e^{A+B}e^{-B}-Id $ is trace class.
    \end{lemma}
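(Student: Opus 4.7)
The plan is to introduce the interpolating operator
\begin{equation*}
F(t) \coloneqq e^{-tA} e^{t(A+B)} e^{-tB}, \qquad t\in[0,1],
\end{equation*}
so that $F(0)=Id$ and $F(1)=e^{-A}e^{A+B}e^{-B}$, and then recover the desired expansion from the fundamental theorem of calculus $F(1)-F(0)=\int_0^1 F'(t)\,dt$. Since $A$ commutes with $e^{-tA}$ and $B$ with $e^{-tB}$, a direct application of the product rule gives two cancelling $(A+B)$ terms and produces
\begin{equation*}
F'(t) = e^{-tA}\bigl( B e^{t(A+B)} - e^{t(A+B)} B \bigr) e^{-tB}
= e^{-tA}\,[B,\,e^{t(A+B)}]\,e^{-tB},
\end{equation*}
which is the crucial point where a commutator appears even though $A$ and $B$ need not commute.

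Next, I would expand $e^{t(A+B)}=\sum_{k\ge 0}\frac{t^k}{k!}(A+B)^k$ and use the standard commutator identity
\begin{equation*}
[B,(A+B)^k] = \sum_{j=0}^{k-1}(A+B)^j[B,A+B](A+B)^{k-1-j} = -\sum_{j=0}^{k-1}(A+B)^j[A,B](A+B)^{k-1-j},
\end{equation*}
thereby isolating the commutator $[A,B]$ inside each term. Expanding the two outer exponentials as power series in $-tA$ and $-tB$ respectively yields a triple power series in $t$, and integrating $\int_0^1 t^{m_1+m_2+m_3}\,dt = 1/(m_1+m_2+m_3+1)$ produces exactly the claimed denominators. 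After relabelling (with $m_2\ge 1$ enforced by the empty inner sum when $m_2=0$), one obtains the stated formula, up to a possible overall sign convention in how the commutator is oriented.

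For the trace-class statement, the key observation is that $[A,B]$ appears as a factor in every term, while all other factors are polynomials in $A$, $B$, $A+B$ — all of which are bounded. Using the ideal property $\|XSY\|_1\le \|X\|_\infty\|S\|_1\|Y\|_\infty$ we get
\begin{equation*}
\bigl\| A^{m_1}(A+B)^j[A,B](A+B)^{m_2-j-1}B^{m_3}\bigr\|_1 \le \|A\|^{m_1}\|A+B\|^{m_2-1}\|B\|^{m_3}\,\|[A,B]\|_1,
\end{equation*}
and the inner sum over $j\in\{0,\ldots,m_2-1\}$ contributes a factor $m_2$. Substituting into the triple series and using $m_2/(m_2!(m_1+m_2+m_3+1))\le 1/(m_2-1)!$ gives a majorant of the form $\|[A,B]\|_1 \cdot e^{\|A\|}e^{\|A+B\|}e^{\|B\|}$, which converges absolutely in trace norm; hence $e^{-A}e^{A+B}e^{-B}-Id$ is trace class. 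The main technical obstacle I anticipate is purely bookkeeping — keeping track of index ranges and of the sign that comes out of $[B,A+B]=-[A,B]$ — rather than anything conceptually subtle; the identity $F(0)=Id$ already guarantees that no additional non-commutator terms survive the integration.
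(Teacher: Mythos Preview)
The paper does not give its own proof of this lemma; it simply refers the reader to Lemma~4.3 in \cite{breuer2016universality}. Your interpolation argument via $F(t)=e^{-tA}e^{t(A+B)}e^{-tB}$, $F'(t)=e^{-tA}[B,e^{t(A+B)}]e^{-tB}$, followed by series expansion and term-wise integration, is precisely the standard route to this identity (and is in fact the argument used in the cited reference), and your trace-class bound via the ideal property and the exponential majorant is correct. Your caveat about the overall sign is well placed: carrying the computation through gives $[B,A+B]=-[A,B]$ and hence an extra global minus sign relative to the displayed formula, but this is immaterial for the only use made of the lemma in the paper (bounding $\|Q_n(R(t,\eta)^{-1}-Id)\|_1$ and concluding trace-class).
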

    \begin{proof}
      See Lemma $4.3$ in \cite{breuer2016universality}.
    \end{proof}
    \begin{lemma} \label{lemma: det Toeplitz }
      Let $T_{n\pm 2mn^\beta}\left(\eta_r \right)_-$ be the strict upper triangle part of  $T_{n\pm 2mn^\beta}\left(\eta_r \right)$ and $T_{n\pm 2mn^\beta}\left(\eta_r \right)_+$ be the lower triangle part of $T_{n\pm 2mn^\beta}\left(\eta_r \right)$. Then we have 
      \begin{multline}
        \det \left(Id+P_n\left(e^{t n^{-\alpha}\sum_{r}c_rT_{n\pm 2mn^\beta}\left(\eta_r \right)}-Id\right)P_n\right)e^{-t n^{-\alpha} \Tr P_n \sum_{r}c_rT_{n\pm 2mn^\beta}\left(\eta_r \right)} \\
        =  e^{-\frac{t^2}{2n^{2\alpha}}\Tr\left[\sum_{r}c_rT_{n\pm 2mn^\beta}\left(\eta_r \right)_+,\sum_{r}c_rT_{n\pm 2mn^\beta}\left(\eta_r \right)_-\right]}\det \left(Id+Q_n( R(t,\eta)^{-1}-Id )\right),
      \end{multline}
      where 
      \begin{equation}\label{eq:def R inverse}
        R(t,\eta) \coloneqq e^{-t n^{-\alpha}\sum_{r}c_rT_{n\pm 2mn^\beta}\left(\eta_r \right)_+}e^{t n^{-\alpha} \sum_{r}c_rT_{n\pm 2mn^\beta}\left(\eta_r \right)}e^{-t n^{-\alpha} \sum_{r}c_rT_{n\pm 2mn^\beta}\left(\eta_r \right)_-} .
      \end{equation}
    \end{lemma}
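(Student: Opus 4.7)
The plan is to establish this identity via a triangular factorisation of $e^{S}$, where I abbreviate $S \coloneqq tn^{-\alpha}\sum_r c_r T_{n\pm 2mn^\beta}(\eta_r)$ with triangular pieces $S_\pm \coloneqq tn^{-\alpha}\sum_r c_r T_{n\pm 2mn^\beta}(\eta_r)_\pm$ so that $S = S_+ + S_-$. Since each $T_{n\pm 2mn^\beta}(\eta_r)$ is supported in a finite middle block of size $O(n^\beta)$, all operators $S$, $S_\pm$, and $R(t,\eta)-Id$ are of finite rank and every determinant below reduces to an ordinary finite-dimensional determinant, so no trace-class subtleties arise. By the very definition \eqref{eq:def R inverse} we have the factorisation $e^{S} = e^{S_+}R(t,\eta) e^{S_-}$, and applying Lemma~\ref{lemma:eee expansion} together with the Helton-Howe-Pincus formula quoted just before it (with $A = S_+$ and $B = S_-$) yields
\begin{equation*}
\det R(t,\eta) = e^{-\frac{1}{2}\Tr[S_+, S_-]} = e^{-\frac{t^2}{2n^{2\alpha}}\Tr\left[\sum_{r} c_r T_{n\pm 2mn^\beta}(\eta_r)_+,\, \sum_{r}c_r T_{n\pm 2mn^\beta}(\eta_r)_-\right]},
\end{equation*}
which is exactly the Gaussian factor appearing on the right-hand side.

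Next I will exploit triangularity to commute the $P_n$ projections past the triangular exponentials. Since $S_+$ is lower triangular (with diagonal) and $S_-$ is strictly upper triangular, $e^{S_+}$ is lower triangular with diagonal entries $e^{(S_+)_{ii}}$ while $e^{S_-}$ is upper triangular with unit diagonal. Consequently $P_n e^{S_+} Q_n = 0$ and $Q_n e^{S_-} P_n = 0$, which by writing $R(t,\eta) = P_nR(t,\eta)P_n + P_nR(t,\eta)Q_n + Q_nR(t,\eta)P_n + Q_nR(t,\eta)Q_n$ and eliminating the zero terms gives
\begin{equation*}
P_n e^{S} P_n = \bigl(P_n e^{S_+} P_n\bigr)\bigl(P_n R(t,\eta) P_n\bigr)\bigl(P_n e^{S_-} P_n\bigr).
\end{equation*}
Taking determinants on the $n$-dimensional range of $P_n$, the triangular factors contribute $\det(P_n e^{S_-} P_n) = 1$ (unit diagonal) and $\det(P_n e^{S_+} P_n) = \prod_{i=1}^n e^{(S_+)_{ii}} = e^{\Tr P_n S_+} = e^{\Tr P_n S}$, where the last equality uses that the diagonal of $S_-$ vanishes. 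Therefore
\begin{equation*}
\det\bigl(Id + P_n(e^{S} - Id)P_n\bigr)\, e^{-\Tr P_n S} = \det\bigl(Id + P_n(R(t,\eta) - Id)P_n\bigr).
\end{equation*}

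The final step is to rewrite the $P_n$-block determinant of $R(t,\eta)$ as a $Q_n$-block determinant. For this I would write $R(t,\eta)$ in block form with respect to $P_n \oplus Q_n$ and invoke the Schur complement identity: a direct computation gives $Q_n R(t,\eta)^{-1} Q_n |_{Q_n} = (R_{QQ} - R_{QP} R_{PP}^{-1} R_{PQ})^{-1}$ and, combined with $\det R(t,\eta) = \det R_{PP} \cdot \det(R_{QQ} - R_{QP} R_{PP}^{-1} R_{PQ})$, produces
\begin{equation*}
\det\bigl(Id + P_n(R(t,\eta) - Id)P_n\bigr) = \det R(t,\eta)\cdot \det\bigl(Id + Q_n(R(t,\eta)^{-1}-Id)Q_n\bigr),
\end{equation*}
which equals $\det R(t,\eta)\cdot \det(Id + Q_n(R(t,\eta)^{-1}-Id))$ since the extra $Q_n$ on the right of $(R^{-1}-Id)$ does not affect the block-triangular determinant. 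Substituting the Ehrhardt formula for $\det R(t,\eta)$ closes the argument. The only real obstacle is the bookkeeping of trace class / finite-rank justification for the Schur and exponential manipulations, but this is handled trivially here because the support of each $T_{n\pm 2mn^\beta}(\eta_r)$ is a finite block, reducing every Fredholm determinant to an ordinary finite-dimensional one.
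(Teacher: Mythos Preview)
Your argument is correct and is essentially the standard triangular-factorisation proof that the paper cites from \cite{breuer2016universality} (Lemmas~4.2 and~4.4 there): the paper gives no proof of its own here, only that reference. Your three steps --- the Ehrhardt/Helton--Howe--Pincus formula for $\det R(t,\eta)$, the commutation of $P_n$ past the triangular factors $e^{S_\pm}$, and the Jacobi/Schur passage from the $P_n$-block determinant to the $Q_n$-block determinant of the inverse --- are exactly the ingredients of that argument, and your observation that all operators are supported in the finite block of width $O(n^\beta)$ cleanly reduces every Fredholm determinant to an ordinary one.

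One small remark: in your Schur step you invoke $R_{PP}^{-1}$, which presupposes $\det R_{PP}\neq 0$. This is not established a priori, but the gap is harmless: either use Jacobi's complementary-minor identity $\det R_{PP}=\det R\cdot\det((R^{-1})_{QQ})$ directly (which holds for any invertible $R$, regardless of whether $R_{PP}$ is invertible), or note that for small $t$ one has $R(t,\eta)$ close to $Id$ so $R_{PP}$ is invertible, and then extend by analyticity in $t$.
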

    \begin{proof}
      See the proof of Lemmas $4.2$ and $4.4$ in \cite{breuer2016universality}.
    \end{proof}
    
    \begin{lemma} \label{lemma: T trace norm}
      Set 
      \begin{equation*}
        T\coloneqq\sum_{r}c_rT_{n\pm 2mn^\beta}\left(\eta_r \right) .
      \end{equation*}
      Let $T_-$ be the strict upper triangle part of $T$ and   $T_+$ be the lower triangle part of $T$. Then there exists a constant $C'>0$ such that
      \begin{equation}
        n^{-2\alpha}\left\|[T_+,T_-]\right\|_1< C',
      \end{equation}
      \begin{equation}\label{eq:Qcommutator}
        \lim\limits_{n\to\infty}n^{-2\alpha}\left\|Q_{n-(2m-1)n^\beta}[T_+,T_-]\right\|_1 = 0 ,
      \end{equation}
      \begin{equation}
        \lim\limits_{n\to\infty}n^{-2\alpha}\left( \Tr [T_+,T_-]  + \sum_{j=n-2mn^\beta+1}^n\sum_{l=2j-n+2mn^\beta}^{n+2mn^\beta}\left( T_{j,l} \right)^2 \right)= 0 . \label{eq:variance entry}
      \end{equation}
    \end{lemma}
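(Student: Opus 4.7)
The plan is to approximate $T$ by a genuine Toeplitz matrix $T^{(0)}$ on the middle block of size $N := 4mn^\beta$, and then reduce all three claims to analogous claims for $T^{(0)}$. By Proposition~\ref{prop: overview tri-inverse}, the entries of $T$ decay exponentially in $|k-j|$ on scale $n^{\alpha/2}$ (from~\eqref{eq:overview T Toeplitz}) and are approximately constant along each diagonal (from~\eqref{eq:overview tri-inverse T ratio}). I would set $T^{(0)}$ to be the Toeplitz matrix supported on the middle block whose $m$-th diagonal equals the central value of the $m$-th diagonal of $T$. A direct Hilbert-Schmidt estimate using the two formulae of Proposition~\ref{prop: overview tri-inverse}, combined with the bound $\|[A,B]-[C,D]\|_1 \leq \|A-C\|_2(\|B\|_2+\|D\|_2)+\|C\|_2\|B-D\|_2$, yields $\|[T_+,T_-]-[T^{(0)}_+,T^{(0)}_-]\|_1 = o(n^{2\alpha})$ once the constraint $\beta < (\alpha+1)/3$ is used. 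Hence each of the three claims reduces to the analogous claim for $T^{(0)}$.

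For Claim (1), the $(j,k)$-entry of $[T^{(0)}_+,T^{(0)}_-]$ is an explicit truncated convolution of the Toeplitz coefficients $\tau_m$. Using $|\tau_m|\lesssim n^{\alpha/2} e^{-c|m|n^{-\alpha/2}}$, one bounds the trace norm by the $\ell^1$-norm of the entries, and a geometric summation gives $\|[T^{(0)}_+,T^{(0)}_-]\|_1 = O(n^{2\alpha})$, which is the desired uniform bound.

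For Claim (2), the commutator $[T^{(0)}_+,T^{(0)}_-]$ is concentrated within radius $O(n^{\alpha/2})$ of the two corners $(n-2mn^\beta,n-2mn^\beta)$ and $(n+2mn^\beta,n+2mn^\beta)$ of the middle block. The projection $Q_{n-(2m-1)n^\beta}$ removes rows within distance $n^\beta$ of the top-left corner. Since $n^\beta \gg n^{\alpha/2}$, the entire top-left contribution is exponentially small in $n^{\beta-\alpha/2}$, and the top-right/bottom-left/bottom-right regions are unaffected by the $Q$-projection because they lie above the cut. The resulting trace norm is thus $o(n^{2\alpha})$.

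The main obstacle is Claim~(3), which identifies $\Tr[T^{(0)}_+,T^{(0)}_-]$ with $-\sum_{j,l} (T_{j,l})^2$ over the prescribed triangular range to leading order. The proof I envisage follows the pattern of Lemma~4.5 in \cite{breuer2016universality}: expand
\[
\Tr[T^{(0)}_+,T^{(0)}_-] \;=\; \sum_{j}\bigl((T^{(0)}_+ T^{(0)}_-)_{j,j} - (T^{(0)}_- T^{(0)}_+)_{j,j}\bigr)
\]
as a double sum of products of Toeplitz coefficients, then change variables and use Fubini to regroup by the differences $|k-j|$. The prescribed range $l \geq 2j - n + 2mn^\beta$ (equivalently $\tilde{l}\geq 2\tilde{j}$ after relabeling to the middle block) arises precisely as the support of the non-cancelling contributions once the two triangular pieces are paired against each other through the Toeplitz symmetry $\tau_{-m}=\tau_m$. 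The delicate point is matching the remainder against the approximation error from Step~1 so that the combined error is $o(n^{2\alpha})$; this is largely a combinatorial bookkeeping exercise, but it is where the bulk of the technical work lies.
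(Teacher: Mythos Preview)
Your overall strategy and the paper's are essentially the same mechanism in different packaging: both rest on (i) the exponential off-diagonal decay $|T_{j,k}|\lesssim n^{\alpha/2}e^{-d|j-k|n^{-\alpha/2}}$ and (ii) the near-Toeplitz ratio estimate $T_{j,k}/T_{j+l,k+l}=1+o(n^{-\beta+\alpha/2})$ from Proposition~\ref{prop: overview tri-inverse}. The difference is that the paper never introduces an auxiliary Toeplitz matrix $T^{(0)}$; it computes $([T_+,T_-])_{j,k}$ directly and uses (ii) \emph{in situ} by pairing the summand indexed by $l$ with the one indexed by $j+k-l$. That reflection splits the commutator entry into a ``near-Toeplitz error'' $\sum_l o(n^{-\beta+\alpha/2})\,T_{j,l}T_{l,k}$ and a Hankel-type boundary remainder $\sum_{l\ge j+k-(n-2mn^\beta)}T_{j,l}T_{l,k}\,\chi_{j+k\le 2n}$. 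The first piece is controlled in trace norm by $o(n^{-\beta+\alpha/2})\|\widetilde T_+\|_2\|\widetilde T_-\|_2$ (with $\widetilde T$ the entrywise absolute value), and the second by $\|H\|_2^2$ for an explicit Hankel $H$; both Hilbert--Schmidt norms are computed by geometric sums. Claims~(1) and~(2) follow from these two bounds together with the corresponding $\|Q_{n-(2m-1)n^\beta}H\|_2^2$ bound.

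Your reduction $\|[T_+,T_-]-[T^{(0)}_+,T^{(0)}_-]\|_1=o(n^{2\alpha})$ is exactly the global version of the paper's near-Toeplitz error piece, so for Claims~(1) and~(2) the two routes are equivalent. For Claim~(3), however, the paper's reflection $l\mapsto j+k-l$ is more direct than your Fubini/regrouping sketch: on the diagonal $j=k$ and for $j\le n$, the boundary remainder \emph{is} literally $-\sum_{l=2j-n+2mn^\beta}^{n+2mn^\beta}T_{j,l}^2$, so the identity in~\eqref{eq:variance entry} falls out of the entrywise formula without any combinatorial bookkeeping. Your approach via $T^{(0)}$ works too, but you should be aware that for a \emph{finite} symmetric Toeplitz block one has $\Tr[T^{(0)}_+,T^{(0)}_-]=0$ identically (finite rank), so the ``non-cancelling contributions'' you allude to do not survive at the Toeplitz level alone; the content of Claim~(3) must already be encoded in the boundary/Hankel term, which is precisely what the paper's reflection isolates.
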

    \begin{proof}
      Then by Proposition \ref{prop: overview tri-inverse} we have for all $r,j,k,l$ with $n-2mn^\beta\leq j,j+l,k,k+l\leq n+2mn^\beta$,
      \begin{equation}\label{eq:T estimate proof}
        \frac{\left(T_{n\pm 2mn^\beta}\left(\eta_r \right)\right)_{j,k}  }{\left(T_{n\pm 2mn^\beta}\left(\eta_r \right)\right)_{j+l,k+l} }
        =  1+o\left(n^{-\beta+\frac{\alpha}{2}}\right), \quad 
        \left| \left(T_{n\pm 2mn^\beta}\left(\eta_r \right)\right)_{j,k}   \right|
        \leq C n^{\frac{\alpha}{2}}\left| 1-\frac{d}{n^{\frac{\alpha}{2}}} \right|^{|k-j|}.
      \end{equation}
      This implies that
      \begin{equation}
        \frac{(T)_{j,k}  }{(T)_{j+l,k+l} }
        =  1+o\left(n^{-\beta+\frac{\alpha}{2}}\right), \quad 
        \left| (T)_{j,k}   \right|
        \leq C n^{\frac{\alpha}{2}} \left| 1-\frac{d}{n^{\frac{\alpha}{2}}} \right|^{|k-j|} \label{eq:T approximate}.
      \end{equation}
      
      Note that $T_-=P_{n+2nm^\beta}Q_{n-2mn^\beta}T_-=T_-P_{n+2nm^\beta}Q_{n-2mn^\beta}$. Hence the commutator  $$[T_+,T_-] = T_+P_{n+2nm^\beta}Q_{n-2mn^\beta}T_--T_-P_{n+2nm^\beta}Q_{n-2mn^\beta}T_+ .$$
      This implies that for $j,k = n-2mn^\beta+1,\dots,n+2mn^\beta$ 
      \begin{multline} 
        \left( [T_+,T_-]  \right)_{j,k} =  \sum_{l=n-2mn^\beta+1}^{\min\{j,k+1\}}\left( T \right)_{j,l}\left( T \right)_{l,k}  - \sum_{l=\max\{j-1,k\}}^{n+2mn^\beta} \left( T \right)_{j,l}\left( T \right)_{l,k} \\
        =  \sum_{l=n-2mn^\beta+1}^{\min\{j,k+1\}}\left( \left( T \right)_{j,l}\left( T \right)_{l,k} - \left( T \right)_{j,j+k-l}\left( T \right)_{j+k-l,k}  \right)- \sum_{l= j+k-n+2mn^\beta}^{n+2mn^\beta} \left( T \right)_{j,l}\left( T \right)_{l,k} \chi_{j+k\leq 2n}\\
        =  \sum_{l=n-2mn^\beta+1}^{\min\{j,k+1\}}o\left(n^{-\beta+\frac{\alpha}{2}}\right)\left( T \right)_{j,l}\left( T \right)_{l,k}- \sum_{l=j+k-n+2mn^\beta}^{n+2mn^\beta} \left( T \right)_{j,l}\left( T \right)_{l,k}  \chi_{j+k\leq 2n}, \label{eq:commutator entry}
      \end{multline}
      where $ \chi_{j+k\leq 2n}=1$ if $j+k\leq 2n$ and zero otherwise. For other values of $j,k $ we have $\left( [T_+,T_-]  \right)_{j,k} =0$.
      Note that though the actual formula of $o\left(n^{-\beta+\frac{\alpha}{2}}\right)$ above depends on $j,k,l$, it has an uniform limiting behaviour as $n\to\infty$. Denote $\widetilde{T}$ to be the linear operator such that each entries 
      \begin{align*}
        \left( \widetilde{T} \right)_{j,k} \coloneqq \left| \left( T \right)_{j,k} \right|.
      \end{align*}
      Also define 
      \begin{equation*}
        \left( H \right)_{j,k} \coloneqq \begin{cases}
          Cn^{\frac{\alpha}{2}}\left(1-\frac{d}{n^{\frac{\alpha}{2}}}\right)^{j+k} & \qquad \text{for } n-2mn^\beta<j,k \leq n+2mn^\beta, \\
          0& \qquad \text{otherwise} .
        \end{cases} 
      \end{equation*}
      Then we estimate 
      \begin{equation*}
        \|[T_+,T_-]\|_1 \leq o\left(n^{-\beta+\frac{\alpha}{2}}\right)\|\widetilde{T_+ }\|_2\|\widetilde{T_- } \|_2+\|H\|_2^2 .
      \end{equation*}
      Using the matrix norm relation $\|AB\|_1\leq\|A\|_2\|B\|_2 $ and $\|T\|_2\leq \|\widetilde{T}\|_2$.   Further using $\|T\|_2^2 =\sum_{j,k}|(T)_{j,k}|^2$, we have for any $n$ with $n^{\frac{\alpha}{2}}>d$
      \begin{align}
        \|\widetilde{T_+ }\|_2\|\widetilde{T_- } \|_2 \leq & C^2n^\alpha\sum_{l=0}^{4mn^\beta}(4mn^\beta-l)\left| 1- \frac{d}{n^{\frac{\alpha}{2}}} \right|^{2l}  \leq \frac{C^2c'}{d}n^{\beta+
          \frac{3\alpha}{2}}, \label{eq:T tilde L2}\\
        \|H\|_2^2 \leq & n^\alpha\sum_{j,k=1}^{4mn^\beta}\left| 1-\frac{d}{n^{\frac{\alpha}{2}}}\right|^{2j+2k} \leq \frac{n^{2\alpha}}{d^2} ,\\
        \|Q_{n-(2m-1)n^\beta}H\|_2^2 \leq &  n^\alpha\sum_{j=n^\beta}^{4mn^\beta} \sum_{k=1}^{4mn^\beta} \left| 1-\frac{d}{n^{\frac{\alpha}{2}}}\right|^{2j+2k} \leq \frac{n^{2\alpha}}{d^2}e^{-2dn^{\beta-\frac{\alpha}{2}}}.
      \end{align}
      Together with the assumption $\frac{\alpha}{2}<\beta<\frac{\alpha+1}{3}$, this shows 
      \begin{equation}
        \frac{1}{n^{2\alpha}}\left\|[T_+,T_-]\right\|_1<  \infty, \quad
        \frac{1}{n^{2\alpha}}\left\|Q_{n^\beta}[T_+,T_-]\right\|_1\to  0 .
      \end{equation}
      Combine \eqref{eq:commutator entry} and \eqref{eq:T tilde L2} and we conclude 
      \begin{equation*}
        \lim\limits_{n\to\infty}n^{-2\alpha}\left( \Tr [T_+,T_-]  + \sum_{j=n-2mn^\beta+1}^n\sum_{l=2j-n+2mn^\beta}^{n+2mn^\beta}\left( T_{j,l} \right)^2 \right) = 0 .
      \end{equation*}
    \end{proof}

    \begin{lemma}\label{lemma:  CLT residual term}
      Recall that $R(t,\eta)^{-1}$ is defined as \eqref{eq:def R inverse}. Then for any $\beta$ such that $0<\frac{\alpha}{2}<\beta<\frac{\alpha+1}{3}<1$, there exists a constant $C>0$ such that 
      \begin{equation}\label{eq: truncationR}
        \limsup_{n\to\infty}\|Q_n( R(t,\eta)^{-1}-Id )\|_1\leq C|t|^{2m+2}e^{C|t|} ,  \quad \text{for all } t.
      \end{equation}
      In particular \eqref{eq: truncationR} means that the first $2m+1$ coefficients of the expansion of $Q_n( R(t,\eta)^{-1}-Id )$ around $t=0$ are $o(1)$ as $n\to\infty$ in the trace norm. 
    \end{lemma}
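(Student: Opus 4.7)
The plan is to start from Lemma~\ref{lemma:eee expansion} applied with $A=-tn^{-\alpha}T_-$ and $B=-tn^{-\alpha}T_+$, writing $T\coloneqq\sum_r c_r T_{n\pm 2mn^\beta}(\eta_r)$; the identity $R(t,\eta)^{-1}=e^{-A}e^{A+B}e^{-B}$ holds since $A+B=-tn^{-\alpha}T$. This yields an explicit series for $R(t,\eta)^{-1}-Id$ in which every term contains one commutator factor $[A,B]=-t^{2}n^{-2\alpha}[T_+,T_-]$, namely
\begin{equation*}
\frac{(-1)^{m_1+m_3}\,t^{k}n^{-k\alpha}}{m_1!\,m_2!\,m_3!\,k}\,T_-^{m_1}T^{j}[T_-,T_+]T^{m_2-j-1}T_+^{m_3},\qquad k=m_1+m_2+m_3+1\geq 2.
\end{equation*}
The strategy is to split this series into a low-order part $k\leq 2m+1$, shown to have $Q_n$-trace norm $o(1)$, and a tail $k\geq 2m+2$, bounded crudely by $C|t|^{2m+2}e^{C|t|}$.

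For the low-order part, I would fix $k\leq 2m+1$ and a single admissible tuple $(m_1,j,m_2,m_3)$, and insert $Id=Q_{n-(2m-1)n^\beta}+P_{n-(2m-1)n^\beta}$ immediately to the left of the commutator $[T_-,T_+]$. The first piece is estimated via $\|XYZ\|_1\leq\|X\|_\infty\|Y\|_1\|Z\|_\infty$ by
\begin{equation*}
\|Q_n T_-^{m_1}T^{j}\|_\infty\cdot\|Q_{n-(2m-1)n^\beta}[T_-,T_+]\|_1\cdot\|T^{m_2-j-1}T_+^{m_3}\|_\infty=O(n^{(k-1)\alpha})\cdot o(n^{2\alpha}),
\end{equation*}
where the middle factor uses \eqref{eq:Qcommutator} of Lemma~\ref{lemma: T trace norm}, and the outer factors use the row-sum bound $\|T\|_\infty,\|T_\pm\|_\infty=O(n^\alpha)$ that follows from the entry estimate $|T_{j',k'}|\leq Cn^{\alpha/2}(1-d/n^{\alpha/2})^{|j'-k'|}$ coming from Proposition~\ref{prop: overview tri-inverse}. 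After dividing by $n^{k\alpha}$ this contribution is $o(1)$. The second piece reduces to showing that $\|Q_n T_-^{m_1}T^{j}P_{n-(2m-1)n^\beta}\|_1$ is exponentially small in $n^{\beta-\alpha/2}$, which combined with $\|[T_-,T_+]\|_1=O(n^{2\alpha})$ and $\|T^{m_2-j-1}T_+^{m_3}\|_\infty=O(n^{(m_2-j-1+m_3)\alpha})$ again gives $o(n^{k\alpha})$ after normalization.

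The main obstacle is this band-structure estimate. The plan is a direct path-sum argument: iterating $(T^p)_{j',k'}=\sum_{l_1,\ldots,l_{p-1}}T_{j',l_1}\cdots T_{l_{p-1},k'}$, applying the triangle inequality $|j'-l_1|+\cdots+|l_{p-1}-k'|\geq|j'-k'|$, and summing each intermediate index geometrically in the excess exponent, one obtains
\begin{equation*}
|(T^p)_{j',k'}|\leq (Cn^{\alpha/2})^{p}(|j'-k'|+Cn^{\alpha/2})^{p-1}(1-d/n^{\alpha/2})^{|j'-k'|},
\end{equation*}
and the same bound applies entry-wise to $T_-^{m_1}T^{j}$ since $|(T_-)_{a,b}|\leq|T_{a,b}|$. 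Restricting to $|j'-k'|\geq(2m-1)n^\beta$, the factor $(1-d/n^{\alpha/2})^{(2m-1)n^\beta}\leq e^{-d(2m-1)n^{\beta-\alpha/2}}$ dominates any polynomial prefactor precisely because $\beta>\alpha/2$, and the sum over the $O(n^{2\beta})$ relevant entries preserves this smallness. Finally, the tail $k\geq 2m+2$ is handled by the crude trace-norm estimate $\|A^{m_1}(A+B)^{j}[A,B](A+B)^{m_2-j-1}B^{m_3}\|_1\leq (C|t|)^{k-2}C'|t|^2$, where $C'$ absorbs the first statement of Lemma~\ref{lemma: T trace norm}, namely $n^{-2\alpha}\|[T_+,T_-]\|_1<C'$; summing the multinomial factors and extracting $|t|^{2m+2}$ from the shift $k\mapsto k-2m-2$ yields a convergent series bounded by $\widetilde{C}|t|^{2m+2}e^{3C|t|}$, which together with the vanishing low-order part proves \eqref{eq: truncationR}. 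The assertion on Taylor coefficients then follows directly from the low-order analysis, since each coefficient of $t^k$ for $k\leq 2m+1$ is shown above to have $Q_n$-trace norm $o(1)$ as $n\to\infty$.
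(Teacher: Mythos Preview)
Your proposal is correct and follows essentially the same approach as the paper: the same expansion via Lemma~\ref{lemma:eee expansion}, the same split into low- and high-order parts (the paper splits on $m_2\le 2m$ versus $m_2\ge 2m+1$ rather than on the total degree $k$, which is equivalent for the purpose of extracting $|t|^{2m+2}$), and the same localization of the commutator via \eqref{eq:Qcommutator}; where you run a direct path-sum to get the band estimate on $Q_nT_-^{m_1}T^jP_{n-(2m-1)n^\beta}$, the paper instead uses the triangularity $Q_nT_-=Q_nT_-Q_n$ and then cites the already-established bound \eqref{eq:Tess_banded2}. One minor slip: the product of the two operator-norm factors is $O(n^{(k-2)\alpha})$, not $O(n^{(k-1)\alpha})$ (there are $m_1+j+(m_2-j-1)+m_3=k-2$ factors of $T$ outside the commutator), which only strengthens your $o(1)$ conclusion.
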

    
    \begin{proof}
      Set 
      \begin{equation*}
        T\coloneqq\sum_{r}c_rT_{n\pm 2mn^\beta}\left(\eta_r \right).
      \end{equation*}
      Let $T_-$ be the strict upper triangle part of $T$ and   $T_+$ be the lower triangle part of $T$.
      Then 
      \begin{equation}
        R(t,\eta)^{-1} =e^{t n^{-\alpha}T_-}e^{-t n^{-\alpha} T}e^{t n^{-\alpha} T_+}.
      \end{equation}
      Use the expansion formula in Lemma~\ref{lemma:eee expansion} to obtain 
      \begin{equation*}
        R(t,\eta)^{-1}-Id  = \sum_{m_1,m_2,m_3=0}^\infty(-1)^{m_2-1}(tn^{-\alpha})^{m_1+m_2+m_3+1}\sum_{j=0}^{m_2-1}\frac{(T_-)^{m_1}T^j[T_-,T_+]T^{m_2-j-1}(T_+)^{m_3}}{m_1!m_2!m_3!(m_1+m_2+m_3+1)}.
      \end{equation*}
      By the operator norm inequality $\|A\|_\infty\leq \sum_{j=-\infty}^\infty \sup_k|(A)_{k,k+j}|$ and the second estimate of \eqref{eq:T approximate} we have
      \begin{align}
        \max\{	\|T_+\|_\infty, 	\|T_-\|_\infty\}\leq \sum_r|c_r|\sum_{l=0}^\infty C n^{\frac{\alpha}{2}} \left| 1-\frac{d}{n^{\frac{\alpha}{2}}} \right|^{l} =\frac{ C\sum_r|c_r|}{d}n^\alpha .
      \end{align}
      By Lemma~\ref{lemma: T trace norm}, there exists a constant $C'>0$ such that
      \begin{equation}
        n^{-2\alpha}\left\|[T_-,T_+]\right\|_1<  C'.
      \end{equation}
      Hence there exists a constant $c>0$ such that for all $n\in\mathbb{N}$
      \begin{multline}\label{eq:QR1}
        \left\|Q_n \sum_{m_1,m_3=0}^\infty\sum_{m_2=2m+1}^\infty(-1)^{m_2-1}(tn^{-\alpha})^{m_1+m_2+m_3+1}\sum_{j=0}^{m_2-1}\frac{(T_-)^{m_1}T^j[T_-,T_+]T^{m_2-j-1}(T_+)^{m_3}}{m_1!m_2!m_3!(m_1+m_2+m_3+1)}\right\|_1 
        \leq |ct|^{2m+2}e^{3c|t|}.
      \end{multline}
      Moreover, by \eqref{eq:T estimate proof} and the same argument as \eqref{eq:ess_banded2} Lemma~\ref{cutoff 2} we obtain 
      \begin{equation}\label{eq:Tess_banded2}
        \left\| Q_nT^j-Q_nT^jQ_{n-jn^\beta} \right\|_1\leq C_me^{-d'n^{\beta-\frac{\alpha}{2}}}.
      \end{equation}
      Then by $Q_nT_-=Q_nT_-Q_n$, \eqref{eq:Tess_banded2}, \eqref{eq:Qcommutator} in Lemma~\ref{lemma: T trace norm} and the dominated convergence theorem, we have
        \begin{multline}\label{eq:QR2}
        \lim_{n\to\infty}\left\|Q_n \sum_{m_1,m_3=0}^\infty\sum_{m_2=0}^{2m}(-1)^{m_2-1}(tn^{-\alpha})^{m_1+m_2+m_3+1}\sum_{j=0}^{m_2-1}\frac{(T_-)^{m_1}T^j[T_-,T_+]T^{m_2-j-1}(T_+)^{m_3}}{m_1!m_2!m_3!(m_1+m_2+m_3+1)}\right\|_1\\
        =\lim_{n\to\infty}\left\| \sum_{m_1,m_3=0}^\infty\sum_{m_2=0}^{2m}(-1)^{m_2-1}(tn^{-\alpha})^{m_1+m_2+m_3+1}\sum_{j=0}^{m_2-1}\frac{(T_-)^{m_1}T^jQ_{n-jn^\beta}[T_-,T_+]T^{m_2-j-1}(T_+)^{m_3}}{m_1!m_2!m_3!(m_1+m_2+m_3+1)}\right\|_1\\
        =0.
      \end{multline}
      Combine \eqref{eq:QR1} and \eqref{eq:QR2} to complete the proof.
    \end{proof}
    
    \begin{lemma}\label{lemma: power sum}
      Consider $c>0$ and a sequence $\xi_i\in\mathbb{C}$ such that  $\sup_{i\in\mathbb{N}}|\xi_i|=o(n^{-\frac{\alpha}{2}})$ as $n\to\infty$. Then we have
      \begin{equation}
        \lim_{n\to\infty}\frac{1}{n^{\frac{\alpha}{2}}}\sum_{k=0}^\infty\left|\prod_{i=1}^k \left( 1-\frac{c}{n^{\frac{\alpha}{2}}}+\xi_i \right)- \left( 1-\frac{c}{n^{\frac{\alpha}{2}}} \right)^k \right|=0.
      \end{equation}
    \end{lemma}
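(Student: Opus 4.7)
\begin{proof}[Proof sketch of Lemma~\ref{lemma: power sum}]
Write $r_n \coloneqq 1 - c/n^{\alpha/2}$ and $\eta_n \coloneqq \sup_i |\xi_i| = o(n^{-\alpha/2})$. For $n$ sufficiently large, $r_n \in (1/2, 1)$, so the first step is to factor out $r_n^k$:
\begin{equation*}
\prod_{i=1}^k\!\left(1 - \tfrac{c}{n^{\alpha/2}} + \xi_i\right) - r_n^k
\;=\; r_n^k\left(\prod_{i=1}^k\!\left(1 + \tfrac{\xi_i}{r_n}\right) - 1\right).
\end{equation*}
Setting $w_i = \xi_i/r_n$, we have $\sup_i|w_i| \leq 2\eta_n$ for $n$ large.

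The next step is the elementary bound
\begin{equation*}
\left|\prod_{i=1}^k(1+w_i) - 1\right| \;\leq\; k\,\sup_i|w_i|\cdot\prod_{i=1}^k(1+|w_i|) \;\leq\; 2k\eta_n\, e^{2k\eta_n},
\end{equation*}
which follows by telescoping $\prod_{i=1}^k(1+w_i)-1 = \sum_{j=1}^k w_j \prod_{i<j}(1+w_i)$, together with $\prod(1+|w_i|)\leq e^{\sum|w_i|}$.

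Combining the two estimates and using $r_n = e^{\log r_n}$ with $\log r_n \leq -c/n^{\alpha/2}$, we get
\begin{equation*}
r_n^k\, e^{2k\eta_n} \;\leq\; \exp\!\left(k\!\left(-\tfrac{c}{n^{\alpha/2}} + 2\eta_n\right)\right)
\;\leq\; \exp\!\left(-\tfrac{kc}{2n^{\alpha/2}}\right)
\end{equation*}
for all $n$ large enough that $2\eta_n \leq c/(2n^{\alpha/2})$. Summing the resulting geometric-type series,
\begin{equation*}
\sum_{k=0}^\infty k\, e^{-kc/(2n^{\alpha/2})} = \frac{e^{-c/(2n^{\alpha/2})}}{(1-e^{-c/(2n^{\alpha/2})})^2} = O(n^{\alpha}),
\end{equation*}
and therefore
\begin{equation*}
\frac{1}{n^{\alpha/2}}\sum_{k=0}^\infty r_n^k\cdot 2k\eta_n e^{2k\eta_n} \;\leq\; \frac{2\eta_n}{n^{\alpha/2}}\cdot O(n^{\alpha}) \;=\; O(\eta_n\, n^{\alpha/2}) \;=\; o(1),
\end{equation*}
since $\eta_n = o(n^{-\alpha/2})$. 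This gives the claim.
\end{proof}

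The only delicate point is ensuring the competition between the decay $r_n^k \sim e^{-kc/n^{\alpha/2}}$ and the growth $e^{2k\eta_n}$ arising from the perturbation; the hypothesis $\eta_n = o(n^{-\alpha/2})$ is exactly what guarantees the decay wins uniformly in $k$ for $n$ large. The rest is bookkeeping with the geometric sum $\sum_k k r^k \sim (1-r)^{-2}$, which converts the prefactor $1/n^{\alpha/2}$ and the gain $\eta_n$ into a product of order $\eta_n n^{\alpha/2} = o(1)$.
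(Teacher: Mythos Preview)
Your proof is correct and follows essentially the same approach as the paper: telescope the product to extract a factor of $k\cdot\sup_i|\xi_i|$, bound the remaining product by a geometric term of the form $(1-c/(2n^{\alpha/2}))^k$ (or equivalently $e^{-kc/(2n^{\alpha/2})}$), and then use $\sum_k k r^{k-1}=(1-r)^{-2}\sim n^\alpha$. The only cosmetic difference is that you first factor out $r_n^k$ and telescope $\prod_i(1+\xi_i/r_n)-1$, whereas the paper telescopes $\prod_i(r_n+\xi_i)-r_n^k$ directly; the resulting bounds and the concluding arithmetic are identical.
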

    \begin{proof}
      Use the telescoping sum to obtain for $k\geq 1$
      \begin{equation}
        \prod_{i=1}^k \left( 1-\frac{c}{n^{\frac{\alpha}{2}}}+\xi_i \right)- \left( 1-\frac{c}{n^{\frac{\alpha}{2}}} \right)^k =  \sum_{l=1}^{k}\xi_l\left( 1-\frac{c}{n^{\frac{\alpha}{2}}} \right)^{l-1}	\prod_{i=l}^k \left( 1-\frac{c}{n^{\frac{\alpha}{2}}}+\xi_i \right).
      \end{equation}
      Hence as $n\to\infty$, 
      \begin{equation}
        \left| \prod_{i=1}^k \left( 1-\frac{c}{n^{\frac{\alpha}{2}}}+\xi_i \right)- \left( 1-\frac{c}{n^{\frac{\alpha}{2}}} \right)^k  \right|\leq k \left( 1-\frac{c}{2n^{\frac{\alpha}{2}}} \right)^{k-1} o(n^{-\frac{\alpha}{2}}).
      \end{equation}
      By the fact that 
      \begin{equation}
        \sum_{k=1}^\infty k \left( 1-\frac{c}{2n^{\frac{\alpha}{2}}} \right)^{k-1} = 4c^{-2}n^{\alpha},
      \end{equation}
      we conclude
      \begin{equation}
        \lim\limits_{n\to\infty}\frac{1}{n^{\frac{\alpha}{2}}}\sum_{k=0}^\infty\left|\prod_{i=1}^k \left( 1-\frac{c}{n^{\frac{\alpha}{2}}}+\xi_i \right)- \left( 1-\frac{c}{n^{\frac{\alpha}{2}}} \right)^k \right|=0.
      \end{equation}
    \end{proof}

    \begin{lemma}\label{lemma: variance}
      Set 
      \begin{equation*}
        T\coloneqq\sum_{r=1}^{2M}c_rT_{n\pm 2mn^\beta}\left(\eta_r \right). 
      \end{equation*}
      Consider the test function $f(x)=\sum_{r=1}^{2M}\frac{c_r}{x-\eta_r}$ with $c_j,\eta_r$ defined as \eqref{eq:resolvent fun}. 

      Then, at the left edge, we have 
      \begin{equation}
        \lim_{n\to\infty}\frac{1}{n^{2\alpha}}\Tr[T_+,T_-] = \frac{-1}{8\pi^2}\int\int_{\mathbb{R}^2}\left( \frac{f(x^2)-f(y^2)}{x-y} \right)^2dxdy.
      \end{equation}
      At the right edge, we have
      \begin{equation}
        \lim_{n\to\infty}\frac{1}{n^{2\alpha}}\Tr[T_+,T_-] = \frac{-1}{8\pi^2}\int\int_{\mathbb{R}^2}\left( \frac{f(-x^2)-f(-y^2)}{x-y} \right)^2dxdy.
      \end{equation}
    \end{lemma}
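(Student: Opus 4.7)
The plan is to use equation~\eqref{eq:variance entry} of Lemma~\ref{lemma: T trace norm}, which reduces the claim to evaluating
\begin{equation*}
\lim_{n\to\infty}\frac{1}{n^{2\alpha}}\,S_n,\qquad S_n\coloneqq\sum_{j=n-2mn^\beta+1}^{n}\sum_{l=2j-n+2mn^\beta}^{n+2mn^\beta}(T_{j,l})^2.
\end{equation*}
Equivalently, I want $\lim n^{-2\alpha}S_n$ to equal the positive quantity $\sigma_f^2$ on the right-hand side of the lemma (with the minus sign dropped). Changing variables to $u=n-j$ and $v=l-n$, the summation region becomes $\{(u,v):0\le u\le 2mn^\beta{-}1,\;-2u+2mn^\beta\le v\le 2mn^\beta\}$; on each diagonal $q=u+v=l-j$ it contains $\min(q,4mn^\beta-q)$ lattice points. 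Because $|(T_{n\pm2mn^\beta}(\eta_r))_{j,l}|\le C n^{\alpha/2}(1-d n^{-\alpha/2})^{|l-j|}$ by Proposition~\ref{prop: overview tri-inverse}, only diagonals with $q=O(n^{\alpha/2})$ contribute non-negligibly, so the minimum effectively reduces to $q$.

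Next, I would use the almost-Toeplitz ratio~\eqref{eq:overview tri-inverse T ratio} combined with Lemma~\ref{lemma: power sum} to replace $(T_{j,l})^2$ on each diagonal $q$ by a common diagonal value $(T^{\mathrm{diag}}_q)^2$ where
\begin{equation*}
T^{\mathrm{diag}}_q\coloneqq\frac{(-1)^q}{2a_{n,n}}\sum_{r=1}^{2M}\frac{c_r n^{\alpha/2}}{\tilde\omega_r}\Bigl(1-\frac{\tilde\omega_r}{n^{\alpha/2}}\Bigr)^q,\qquad \tilde\omega_r\coloneqq\sqrt{\frac{-\mathrm{sgn}(b_{n-1,n}-x_0)\,\eta_r}{a_{n,n}}},
\end{equation*}
read off from Proposition~\ref{prop: overview tri-inverse}. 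This gives $S_n=\sum_{q\ge 1}q\,(T^{\mathrm{diag}}_q)^2+o(n^{2\alpha})$. Expanding the square and applying the geometric-series identity $\sum_{q\ge 1}qz^q=z/(1-z)^2$ with $z=(1-\tilde\omega_{r_1}/n^{\alpha/2})(1-\tilde\omega_{r_2}/n^{\alpha/2})$, and noting that $1-z=(\tilde\omega_{r_1}+\tilde\omega_{r_2})/n^{\alpha/2}+O(n^{-\alpha})$, I obtain
\begin{equation*}
\lim_{n\to\infty}\frac{S_n}{n^{2\alpha}}=\frac{1}{4}\sum_{r_1,r_2=1}^{2M}\frac{c_{r_1}c_{r_2}}{\Omega_{r_1}\Omega_{r_2}(\Omega_{r_1}+\Omega_{r_2})^2},\qquad \Omega_r\coloneqq\sqrt{-\mathrm{sgn}(b_{n-1,n}-x_0)\,\eta_r},
\end{equation*}
where the factors of $a_{n,n}$ cancel between numerator and denominator.

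Finally, I would match this rational expression against $\sigma_f^2$ by residue calculus. For $f(x)=\sum_r c_r/(x-\eta_r)$,
\begin{equation*}
\frac{f(x^2)-f(y^2)}{x-y}=-\sum_{r=1}^{2M}\frac{c_r(x+y)}{(x^2-\eta_r)(y^2-\eta_r)};
\end{equation*}
squaring and integrating, the $2xy$ cross term vanishes by oddness. Using $\int_{\mathbb R}dx/(x^2-\eta)=\pi/\sqrt{-\eta}$ and partial fractions one gets $\int x^2\,dx/[(x^2-\eta_1)(x^2-\eta_2)]=\pi/(\sqrt{-\eta_1}+\sqrt{-\eta_2})$ and $\int dx/[(x^2-\eta_1)(x^2-\eta_2)]=\pi/[\sqrt{-\eta_1}\sqrt{-\eta_2}(\sqrt{-\eta_1}+\sqrt{-\eta_2})]$, so that
\begin{equation*}
\int\int_{\mathbb R^2}\Bigl(\frac{f(x^2)-f(y^2)}{x-y}\Bigr)^2 dx\,dy=\sum_{r_1,r_2}\frac{2\pi^2 c_{r_1}c_{r_2}}{\sqrt{-\eta_{r_1}}\sqrt{-\eta_{r_2}}(\sqrt{-\eta_{r_1}}+\sqrt{-\eta_{r_2}})^2}.
\end{equation*}
At the left edge $\mathrm{sgn}=+1$ and $\Omega_r=\sqrt{-\eta_r}$, so dividing the last display by $8\pi^2$ reproduces $\lim n^{-2\alpha}S_n$ and completes the matching; the right edge ($\mathrm{sgn}=-1$) sends $\Omega_r\mapsto\sqrt{\eta_r}$, which corresponds to $f(x^2)\mapsto f(-x^2)$ in the integral after the change $x\mapsto ix$.

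The main technical obstacle will be the Toeplitz-replacement step: each $(T^{\mathrm{diag}}_q)^2$ is of size $n^\alpha$, so the target order $n^{2\alpha}$ emerges only after the $q$-weighted geometric summation $\sum_q q(1-cn^{-\alpha/2})^{2q}\sim n^\alpha$, and the pointwise $o(n^{-\alpha/2})$ bound on the defects $\xi_k^{(r)}$ is too weak to control the error directly. Lemma~\ref{lemma: power sum} is precisely the $\ell^1$-type estimate required; combined with the telescoping identity for the product $\prod_k(1-\tilde\omega_r/n^{\alpha/2}+\xi_k^{(r)})$ and the uniform bound $|T_{j,l}|\le C n^{\alpha/2}(1-dn^{-\alpha/2})^{|j-l|}$, it yields $\sum_q q\,|(T^{\mathrm{true}}_q)^2-(T^{\mathrm{diag}}_q)^2|=o(n^{2\alpha})$, which is the delicate quantitative input underlying the scheme.
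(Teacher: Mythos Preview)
Your proposal is correct and follows essentially the same route as the paper: reduce via \eqref{eq:variance entry}, insert the entry formula from Proposition~\ref{prop: overview tri-inverse}, reorganize the double sum by diagonal distance, invoke Lemma~\ref{lemma: power sum} to strip the defects $\xi_l^{(r)}$, evaluate the resulting geometric-type sum, and identify the answer with the double integral by residues. The paper keeps the double sum $\sum_{j}\sum_{k\ge j}$ rather than passing explicitly to $\sum_q q$, but after Fubini these are the same computation, and both arrive at $\tfrac14\sum_{r,s}c_rc_s\bigl(\Omega_r\Omega_s(\Omega_r+\Omega_s)^2\bigr)^{-1}$ before the residue matching.
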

    \begin{proof}
      By the limit  \eqref{eq:variance entry} of Lemma~\ref{lemma: T trace norm}, it is sufficient to compute the following limit
      \begin{equation}\label{eq:variance limit}
        \lim_{n\to\infty}\frac{1}{n^{2\alpha}} \sum_{j=n-2mn^\beta+1}^n\sum_{l=2j-n+2mn^\beta}^{n+2mn^\beta}\left( T_{j,l} \right)^2.
      \end{equation}				
      We estimate each entry of the fourth-fold sum \eqref{eq:variance limit}, by Proposition~\ref{prop: overview tri-inverse}, that is for any $r,s$ and $j,l$
\begin{equation}
\left( T_{n\pm 2mn^\beta}\left(\eta_r \right) \right)_{j,l} 	\left( T_{n\pm 2mn^\beta}\left(\eta_{s} \right) \right)_{j,l}
= \frac{n^{\alpha}\prod\limits_{i=j}^{l-1}\left( 1-\mysqrt{-\frac{\eta_r}{n^\alpha |a_{n,n}|}}-\mysqrt{-\frac{\eta_s}{n^\alpha |a_{n,n}|}} +\xi_{i}^{(r,s)}\right)(1+o(1))}{4a_n^2\mysqrt{\frac{-\eta_r}{|a_{n,n}|}}\mysqrt{\frac{-\eta_s}{|a_{n,n}|}}},
\end{equation}
as $n\to\infty$, where $\xi_{i}^{(r,s)}\coloneqq \xi_{i}^{(r)}+\xi_{i}^{(s)}+\xi_{i}^{(r)}\xi_{i}^{(s)}+\xi_{i}^{(r)}\mysqrt{-\frac{\eta_s}{n^\alpha |a_{n,n}|}}+\xi_{i}^{(s)}\mysqrt{-\frac{\eta_r}{n^\alpha |a_{n,n}|}}=o(n^{-\frac{\alpha}{2}})$.

Relabel the indices with respect to the summations to obtain, as $n\to\infty$,
\begin{multline}
\frac{1}{n^{2\alpha}} \sum_{j=n-2mn^\beta+1}^n\sum_{l=2j-n+2mn^\beta}^{n+2mn^\beta}\left( T_{j,l} \right)^2 \\=\frac{1}{n^{\alpha}}\sum_{r,s=1}^{2M}c_rc_s \sum_{j=n-2mn^\beta+1}^n\sum_{k=j-n+2mn^\beta}^{n+2mn^\beta-j} \frac{\prod\limits_{i=j}^{k+j-1}\left( 1-\mysqrt{-\frac{\eta_r}{ |a_{n,n}|}}-\mysqrt{-\frac{\eta_s}{n^\alpha |a_{n,n}|}} +\xi_i^{(r,s)}\right)(1+o(1))}{4a_n^2\mysqrt{\frac{-\eta_r}{|a_{n,n}|}}\mysqrt{\frac{-\eta_s}{|a_{n,n}|}}} \\
=\frac{1}{n^{\alpha}}\sum_{r,s=1}^{2M}c_rc_s \sum_{j=1}^{2mn^\beta}\sum_{k=j}^{4mn^\beta-j} \frac{\prod\limits_{i=j+n-2mn^\beta}^{k+j+n-2mn^\beta-1}\left( 1-\mysqrt{-\frac{\eta_r}{ |a_{n,n}|}}-\mysqrt{-\frac{\eta_s}{n^\alpha |a_{n,n}|}} +\xi_i^{(r,s)}\right)(1+o(1))}{4a_n^2\mysqrt{\frac{-\eta_r}{|a_{n,n}|}}\mysqrt{\frac{-\eta_s}{|a_{n,n}|}}}.
\end{multline}
Note that for all $0<\frac{\alpha}{2}<\beta$ we have $\left( 1-\frac{c}{n^\frac{\alpha}{2}} \right)^{n^\beta} = O(e^{-n^{\beta-\frac{\alpha}2}})$ for any $c\neq 0$, which is exponentially small. Hence, adding exponentially small terms does not change the sum in the limit, i.e., as $n\to\infty$,
\begin{multline}\label{eq:variance finite}
\frac{1}{n^{2\alpha}} \sum_{j=n-2mn^\beta+1}^n\sum_{l=2j-n+2mn^\beta}^{n+2mn^\beta}\left( T_{j,l} \right)^2 \\=	\frac{1}{n^{\alpha}}\sum_{r,s=1}^{2M}c_rc_s \sum_{j=1}^{2mn^\beta}\sum_{k=j}^{\infty} \frac{\prod\limits_{i=j+n-2mn^\beta}^{k+j+n-2mn^\beta-1}\left( 1-\mysqrt{-\frac{\eta_r}{ |a_{n,n}|}}-\mysqrt{-\frac{\eta_s}{n^\alpha |a_{n,n}|}} +\xi_i^{(r,s)}\right)(1+o(1))}{4a_n^2\mysqrt{\frac{-\eta_r}{|a_{n,n}|}}\mysqrt{\frac{-\eta_s}{|a_{n,n}|}}} +o(1).
\end{multline}

Then by Lemma~\ref{lemma: power sum} we estimate the power sum \eqref{eq:variance finite} to be, as $n\to\infty$,
\begin{multline}
\frac{1}{n^{2\alpha}} \sum_{j=n-2mn^\beta+1}^n\sum_{l=2j-n+2mn^\beta}^{n+2mn^\beta}\left( T_{j,l} \right)^2
=\frac{1}{n^{\alpha}}\sum_{r,s=1}^{2M}c_rc_s \sum_{j=1}^{2mn^\beta}\sum_{k=j}^{\infty} \frac{\left( 1-\mysqrt{-\frac{\eta_r}{ |a_{n,n}|}}-\mysqrt{-\frac{\eta_s}{n^\alpha |a_{n,n}|}} \right)^k(1+o(1))}{4a_n^2\mysqrt{\frac{-\eta_r}{|a_{n,n}|}}\mysqrt{\frac{-\eta_s}{|a_{n,n}|}}} +o(1)\\ =\sum_{r,s=1}^{2M}c_rc_s\frac{1}{4\mysqrt{-\eta_r}\mysqrt{-\eta_s}\left( \mysqrt{-\eta_r}+\mysqrt{-\eta_s} \right)^2}+o(1).
\end{multline}
      
      Recall that $c_j$ and $\eta_j$ are defined as in \eqref{eq:resolvent fun}. Hence we have, as $n\to\infty$,
      \begin{multline}\label{eq:variance estimate 1 }
        \frac{1}{n^{2\alpha}} \sum_{j=n-2mn^\beta+1}^n\sum_{l=2j-n+2mn^\beta}^{n+2mn^\beta}\left( T_{j,l} \right)^2 \\
        = \frac{1}{2}\Re\left( \sum_{r,s=1}^{M}\frac{-d_rd_s}{\mysqrt{-\lambda_r}\mysqrt{-\lambda_s}\left( \mysqrt{-\lambda_r}+\mysqrt{-\lambda_s} \right)^2}+\frac{d_rd_s}{\mysqrt{-\overline{\lambda_r}}\mysqrt{-\lambda_s}\left( \mysqrt{-\overline{\lambda_r}}+\mysqrt{-\lambda_s} \right)^2} \right)+o(1).
      \end{multline}
      
      From here the rest is to use the residual theorem to recover the double integral formula. Recall the formulation of $f$ and we have 
      \begin{equation*}
        \int \int_{\mathbb{R}^2}\left(\frac{f(x^2)-f(y^2)}{x-y}\right)^2dxdy=\sum_{r,s}c_rc_s \int \int_{\mathbb{R}^2}\left(\frac{\frac{1}{x^2-\eta_r}-\frac{1}{y^2-\eta_r}}{x-y}\right)\left(\frac{\frac{1}{x^2-\eta_s}-\frac{1}{y^2-\eta_s}}{x-y}\right)dxdy.
      \end{equation*}
      Each of the double integrals can then be rewritten in the following way 
      \begin{multline*}
        \int \int_{\mathbb{R}^2}\left(\frac{\frac{1}{x^2-\eta_r}-\frac{1}{y^2-\eta_r}}{x-y}\right)\left(\frac{\frac{1}{x^2-\eta_s}-\frac{1}{y^2-\eta_s}}{x-y}\right)dxdy \\
        =  2\int_{\mathbb{R}}\frac{x^2}{(x^2-\eta_r)(x^2-\eta_s)}dx\int_{\mathbb{R}}\frac{1}{(y^2-\eta_r)(y^2-\eta_s)}dy+
        2\left(\int_{\mathbb{R}}\frac{x}{(x^2-\eta_r)(x^2-\eta_s)}dx\right)^2.
      \end{multline*}
      Now we can apply the residual theorem to each of the three integrals above. For either $\Im(\eta_r)>0$ or $\Im(\eta_r)<0$, take the principle square root, we have $\Im(i\mysqrt{-\eta_r})>0$. So each integral there are two residuals to consider i.e. $i\mysqrt{-\eta_r}$ and $i\mysqrt{-\eta_s}$. Hence, 
      \begin{align*}
        \frac{1}{2\pi i}\int_\mathbb{R}\frac{x^2}{(x^2-\eta_r)(x^2-\eta_s)}dx = & \frac{1}{2i(\mysqrt{-\eta_r}+\mysqrt{-\eta_s})}, \\
        \frac{1}{2\pi i} \int_{\mathbb{R}}\frac{1}{(y^2-\eta_r)(y^2-\eta_s)}dy = &\frac{1}{2i\mysqrt{-\eta_r}\mysqrt{-\eta_s}(\mysqrt{-\eta_r}+\mysqrt{-\eta_s})},\\
        \frac{1}{2\pi i}\int_{\mathbb{R}}\frac{x}{(x^2-\eta_r)(x^2-\eta_s)}dx = & 0.
      \end{align*}
      
      Together we have, 
      \begin{multline*}
        \frac{1}{(2\pi i)^2}\int \int_{\mathbb{R}^2}\left(\frac{f(x^2)-f(y^2)}{x-y}\right)^2dxdy = \sum_{r,s=1}^{2M}c_rc_s\frac{-1}{2\mysqrt{-\eta_r}\mysqrt{-\eta_s}\left( \mysqrt{-\eta_r}+\mysqrt{-\eta_s} \right)^2} \\
        = -\Re\left( \sum_{r,s=1}^{M}\frac{-d_rd_s}{\mysqrt{-\lambda_r}\mysqrt{-\lambda_s}\left( \mysqrt{-\lambda_r}+\mysqrt{-\lambda_s} \right)^2}+\frac{d_rd_s}{\mysqrt{-\overline{\lambda_r}}\mysqrt{-\lambda_s}\left( \mysqrt{-\overline{\lambda_r}}+\mysqrt{-\lambda_s} \right)^2} \right).
      \end{multline*}

      Combing \eqref{eq:variance estimate 1 } and Lemma \ref{lemma: finner est omega}, we conclude that, at the left edge,
      
      \begin{equation}
        \lim_{n\to\infty}\frac{1}{n^{2\alpha}}\Tr[T_+,T_-] = \frac{1}{8\pi^2}\int\int_{\mathbb{R}^2}\left( \frac{f(x^2)-f(y^2)}{x-y} \right)^2dxdy.
      \end{equation}
      
      
      Note that at the right edge instead of \eqref{eq:omega exact} we have 
      \begin{align*}
        \omega_l^{(r)-}=\frac{x_0-b_{l,n}+\frac{\eta_r}{n^\alpha}-\mysqrt{\left( x_0-b_{l,n}+\frac{\eta_r}{n^\alpha} \right)^2-4a_{l-1,n}a_{l,n}}}{-2a_{l-1,n}}.
      \end{align*}
      Then by the same proof of Lemma \ref{lemma: finner est omega} we get 
      as $n\to\infty$, for $l\in\{n-2mn^{\beta},n-2mn^{\beta}+1,\dots,n+2mn^{\beta} \}$, 
      \begin{align}
        \omega_l^{(r)-} =\left(  1-\mysqrt{\frac{\eta_r}{n^\alpha |a_{n,n}|}} \right) sgn(-a_{n,n})+o(n^{-\frac{\alpha}{2}}) .
      \end{align}
      
      We can now follow the same computation and obtain that 
      \begin{equation}
        \lim_{n\to\infty}\frac{1}{n^{2\alpha}} \sum_{j=n-2mn^\beta+1}^n\sum_{l=2j-n+2mn^\beta}^{n+2mn^\beta}\left( T_{j,l} \right)^2 = \sum_{r,s=1}^{2M}c_rc_s\frac{1}{4\mysqrt{\eta_r}\mysqrt{\eta_s}\left( \mysqrt{\eta_r}+\mysqrt{\eta_s} \right)^2}.
      \end{equation}
      
      %
      
      From here the rest is to use the residual theorem to obtain the double integral formula. Recall the formulation of $f$ and we have 
      \begin{equation*}
        \int \int_{\mathbb{R}^2}\left(\frac{f(-x^2)-f(-y^2)}{x-y}\right)^2dxdy=\sum_{r,s}c_rc_s \int \int_{\mathbb{R}^2}\left(\frac{\frac{1}{x^2+\eta_r}-\frac{1}{y^2+\eta_r}}{x-y}\right)\left(\frac{\frac{1}{x^2+\eta_s}-\frac{1}{y^2+\eta_s}}{x-y}\right)dxdy.
      \end{equation*}
      Each of the double integrals can then be rewritten in the following way 
      \begin{multline*}
        \int \int_{\mathbb{R}^2}\left(\frac{\frac{1}{x^2+\eta_r}-\frac{1}{y^2+\eta_r}}{x-y}\right)\left(\frac{\frac{1}{x^2+\eta_s}-\frac{1}{y^2+\eta_s}}{x-y}\right)dxdy \\
        =  2\int_{\mathbb{R}}\frac{x^2}{(x^2+\eta_r)(x^2+\eta_s)}dx\int_{\mathbb{R}}\frac{1}{(y^2+\eta_r)(y^2+\eta_s)}dy+
        2\left(\int_{\mathbb{R}}\frac{x}{(x^2+\eta_r)(x^2+\eta_s)}dx\right)^2.
      \end{multline*}
      Now we can apply the residual theorem to each of the three integrals above. For either $\Im(\eta_r)>0$ or $\Im(\eta_r)<0$, take the principle square root, we have $\Im(i\mysqrt{\eta_r})>0$. So each integral there are two residuals in the upper half plane to consider i.e. $i\mysqrt{\eta_r}$ and $i\mysqrt{\eta_s}$. Hence, 
      \begin{align*}
        \frac{1}{2\pi i}\int_\mathbb{R}\frac{x^2}{(x^2+\eta_r)(x^2+\eta_s)}dx = & \frac{1}{2i(\mysqrt{\eta_r}+\mysqrt{\eta_s})}, \\
        \frac{1}{2\pi i} \int_{\mathbb{R}}\frac{1}{(y^2+\eta_r)(y^2+\eta_s)}dy = &\frac{1}{2i\mysqrt{\eta_r}\mysqrt{\eta_s}(\mysqrt{\eta_r}+\mysqrt{\eta_s})},\\
        \frac{1}{2\pi i}\int_{\mathbb{R}}\frac{x}{(x^2+\eta_r)(x^2+\eta_s)}dx = & 0. 
      \end{align*}
      
      Together we have, 
      \begin{align*}
        \frac{1}{(2\pi i)^2}\int \int_{\mathbb{R}^2}\left(\frac{f(-x^2)-f(-y^2)}{x-y}\right)^2dxdy = \sum_{r,s=1}^{2M}c_rc_s\frac{-1}{2\mysqrt{\eta_r}\mysqrt{\eta_s}\left( \mysqrt{\eta_r}+\mysqrt{\eta_s} \right)^2}.
      \end{align*}

      Combing \eqref{eq:variance estimate 1 } and Lemma \ref{lemma: finner est omega}, we get 
      
      \begin{align}
        \lim_{n\to\infty}\frac{1}{n^{2\alpha}}\Tr[T_+,T_-] = \frac{1}{8\pi^2}\int\int_{\mathbb{R}^2}\left( \frac{f(-x^2)-f(-y^2)}{x-y} \right)^2dxdy.
      \end{align}		
    \end{proof}
    
    \subsection{Proof of Proposition~\ref{thm: overview strong szego}}\label{sec proof overview strong szego}
    We are now ready to prove Proposition~\ref{thm: overview strong szego}.
      \begin{proof}[Proof of Proposition~\ref{thm: overview strong szego}]
        Recall that we consider $f(x)=\Im \sum_{r=1}^M d_r\frac{1}{x-\lambda_j}=\sum_{r=1}^{2M}\frac{c_r}{x-\eta_r}$ with $c_j,\eta_r$ defined as \eqref{eq:resolvent fun}. 
        We also define $F\coloneqq \sum_{r=1}^{2M}c_r\left( \mathcal{J}-x_0- \frac{\eta_r}{n^{\alpha}} \right)^{-1}$.

        Proposition \ref{trimming} shows that the first $m$ coefficients in the expansion around small $t$ of below
        \begin{equation}\label{eq:Fredholm det 1}
          log\mathbb{E}[\exp\left(tX_{f,\alpha,x_0}-t\mathbb{E}[X_{f,\alpha,x_0}]\right)] = \log \left( \det \left(1+P_n\left(e^{tn^{-\alpha}F}-1\right)P_n\right)e^{-tn^{-\alpha} \Tr \left( P_n F \right)} \right)
        \end{equation}
        are asymptotically the same as those of the following 
        \begin{multline}\label{eq:Fredholm det 2}
          \log \det \left(1+P_n\left(e^{t n^{-\alpha}\sum_{r}c_rT_{n\pm 2mn^\beta}\left(\eta_r \right)}-1\right)P_n\right)e^{-t n^{-\alpha} \Tr P_n \sum_{r}c_rT_{n\pm 2mn^\beta}\left(\eta_r \right)} \\=\sum_{k\geq 2} \frac{t^k}{k!}\mathcal{C}_k^{(n)}\left(\sum_{r}c_rn^{-\alpha}T_{n\pm 2mn^\beta}\left(\eta_r \right)\right).
        \end{multline}
        By Lemmas \ref{lemma: det Toeplitz },~\ref{lemma:  CLT residual term} and~\ref{lemma: variance}, we see that, as $n\to\infty$ 
        \begin{equation}
          \lim_{n\to\infty}\mathcal{C}_k^{(n)}\left(\sum_{r}c_rn^{-\alpha}T_{n\pm 2mn^\beta}\left(\eta_r \right)\right)
          =\begin{cases}
            \frac{1}{8\pi^2}\int\int_{\mathbb{R}^2}\left(\frac{f(x^2)-f(y^2)}{x-y}\right)^2dxdy, \quad &k=2\\
            0, \quad &k=3,\dots m.
          \end{cases}
        \end{equation}
        
        The consequence of Lemma~\ref{lemma: dominated} implies that the expansions of both of the right-hand sides of \eqref{eq:Fredholm det 1} and \eqref{eq:Fredholm det 2} are absolutely convergent. We conclude that, via the dominated convergent theorem,
        for any $0<\frac{\alpha}{2}<\beta<\frac{\alpha+1}{3}$ 
        around the left edge, i.e. $x_0=b_{n-1,n} - 2\mysqrt{a_{n,n}a_{n-1,n}}+o(n^{-\alpha})$
        \begin{equation}
          \lim_{n\to\infty}	\log\mathbb{E}\left[\exp\left(tX_{f,\alpha,x_0}^{(n)}-t\mathbb{E}[X_{f,\alpha,x_0}^{(n)}]\right)\right] =\frac{t^2}{16\pi^2}\int\int_{\mathbb{R}^2}\left(\frac{f(x^2)-f(y^2)}{x-y}\right)^2dxdy. 
        \end{equation}
        
        At the right edge, i.e. $x_0=b_{n-1,n} + 2\sqrt{a_{n,n}a_{n-1,n}}+o(n^{-\alpha})$,we consider $-f$. Hence, all the arguments of the left edge apply and we obtain
        
        \begin{equation}
          \lim_{n\to\infty}	\log\mathbb{E}\left[\exp\left(tX_{-f,\alpha,x_0}^{(n)}-t\mathbb{E}[X_{-f,\alpha,x_0}^{(n)}]\right)\right] =\frac{t^2}{16\pi^2}\int\int_{\mathbb{R}^2}\left(\frac{f(-x^2)-f(-y^2)}{x-y}\right)^2dxdy. 
        \end{equation}
        
        Then by the symmetry of centred Gaussian distribution, we conclude that at the right edge
        \begin{equation}
          \lim_{n\to\infty}	\log\mathbb{E}\left[\exp\left(tX_{f,\alpha,x_0}^{(n)}-t\mathbb{E}[X_{f,\alpha,x_0}^{(n)}]\right)\right] =\frac{t^2}{16\pi^2}\int\int_{\mathbb{R}^2}\left(\frac{f(-x^2)-f(-y^2)}{x-y}\right)^2dxdy. 
        \end{equation}
        
        Then by the moments method  we obtain $X_n(f_{\alpha,x_0})-\mathbb{E}[X_n(f_{\alpha,x_0})]$ has the Gaussian fluctuations in the limit. 
      \end{proof}
      
      Hence, together with Subsection~\ref{sec:Lipschitz extension} and we complete the proof of Theorem~\ref{thm: main 2}.

      \section{Proof of Theorems ~\ref{thm: regular}~\ref{thm: modify Jacobi}~\ref{thm: main 1}}\label{sec:proof of 123}
      We will use Theorem~\ref{thm: main 2} to prove Theorems ~\ref{thm: modify Jacobi} and ~\ref{thm: main 1}. We will prove Theorem~\ref{thm: regular} by Theorem~\ref{thm: main 1}.

      \begin{proof}[Proof of Theorem~\ref{thm: main 1} ]
        For $0<\alpha<\frac{2}{3}$, condition~\eqref{ass:slowly varying recurrence coe} implies that for all $j\in I^{(\alpha,\varepsilon)}_n$, as $n\to\infty$,
        \begin{equation*}
          a_{j,n}a_{j-2,n}-a_{j-1,n}^2 = O(n^{-1}), \quad
          (b_{j-1,n}-x_0-a_{j,n})a_{j-2,n}-(b_{j-2,n}-x_0-a_{j-1,n})a_{j-1,n}=O(n^{-1}).
        \end{equation*}
        Hence the conditions in Theorem~\ref{thm: main 2} are satisfied. Then apply Theorem~\ref{thm: main 2} and we conclude the proof of Theorem~\ref{thm: main 1}.
      \end{proof}
      
      \begin{proof}[Proof of Theorem~\ref{thm: modify Jacobi} ]
        
         The asymptotics of recurrence coefficients of the modified Jacobi polynomials with measure \eqref{eq:modified Jacobi} are given by the Riemann-Hilbert method in \cite{kuijlaars2004riemann} to be
        \begin{equation} \label{eq:recurrence modified Jacobi}
          a_{j} = \frac{1}{2} - \left( \frac{4\gamma_1^2-1}{32} + \frac{4\gamma_2^2-1}{32}  \right)\frac{1}{j^2} +O(j^{-3}), \quad 	b_{j} =  \frac{\gamma_2^2-\gamma_1^2}{4j^2}  +O(j^{-3}), \quad \text{as } j\to\infty, 
        \end{equation}
        where $\gamma_1$ and $\gamma_2$ are some constants explicitly given in \cite{kuijlaars2004riemann}. 
        Note that there is no scaling in this model and, hence, we remove the second subscription $n$ in the notations. Hence, 
        \begin{align*}
          |a_{j}-a_{j-1}| = O(j^{-3}), \quad 	|b_{j}-b_{j-1}| = O(j^{-3}), \qquad \text{as } j\to\infty. 
        \end{align*}
        Then the conditions \eqref{eq:ass:recurrence 1} and \eqref{eq:ass:recurrence 2} are satisfied for all $0<\frac{\alpha}{2}<\beta<1$ .Hence the limit of mesoscopic fluctuations holds for all $\alpha\in(0,2)$. Then apply Theorem~\ref{thm: main 2}, take $\beta\to1$ and we conclude the proof of Theorem~\ref{thm: modify Jacobi}.
      \end{proof}
      
      \begin{proof}[Proof of Theorem~\ref{thm: regular}]
        Consider $I=\{j\in\mathbb{N}: \frac{j}{n}\to 1, \quad \text{as } n\to\infty\}$. Then for all $j\in I_{n,m}^{(\beta)}$, let $c=\frac{j}{n}$ then we can rewrite 
        \begin{align}
          e^{-nV(x)}dx = e^{-\frac{j}{c}V(x)}dx. 
        \end{align}
        Then the conditions on $V$ and positivity about $c$ implies the existence and uniqueness  of equilibrium measure of $c^{-1}V $, denoted by $\mu_{c^{-1}V}$ by the potential theory. By the convexity of $c^{-1}V$, the support of $\mu_{c^{-1}V}$ is a single interval say $[\alpha_1,\alpha_2]$, which is determined by the following set of equations, 
        \begin{align}
          \int_{\alpha_1}^{\alpha_2}\frac{V'(s)}{\sqrt{(s-\alpha_1)(\alpha_2-s)}}ds=0, 
          \quad 	\int_{\alpha_1}^{\alpha_2}\frac{sV'(s)}{\sqrt{(s-\alpha_1)(\alpha_2-s)}}ds=2\pi c . \label{eq:Mhaskar-Rakhmanov-Saff}
        \end{align}
        In the literature $\alpha_1, \alpha_2$ are called the Mhaskar-Rakhmanov-Saff numbers, (cf. \cite{saff2013logarithmic} pages  203-234)
        Let $F:\mathbb{R}^3\to \mathbb{R}^2$ be a real analytic function such that  $	F(x,y,c) \coloneqq  \begin{pmatrix} \phi(x,y)\\ \varphi(x,y,c) \end{pmatrix}$ where
        \begin{align*}
          \phi(x,y) \coloneqq &	\int_{-1}^{1}V'\left(\frac{(y-x)t+x+y}{2}\right)\frac{dt}{\sqrt{1-t^2}}, \\
          \varphi(x,y,c) \coloneqq & \int_{-1}^{1}\frac{(y-x)t}{2}V'\left(\frac{(y-x)t+x+y}{2}\right)\frac{dt}{\sqrt{1-t^2}}-2\pi c . 
        \end{align*}
        Then we have $F(\alpha_1,\alpha_2,c) = \mathbf{0}$ by \eqref{eq:Mhaskar-Rakhmanov-Saff} and a change of variable argument. We further define 
        \begin{align*}
          d\mathcal{F}(t) \coloneqq V''\left(\frac{(\alpha_2-\alpha_1)t+\alpha_1+\alpha_2}{2}\right)\frac{dt}{\sqrt{1-t^2}}.
        \end{align*} 
        Then by convexity of $V$, we have $V''>0$ and thus $\mathcal{F}(t)$ defines a positive measure. Now computing the Jacobian of $F$ at $(\alpha_1,\alpha_2,c)$, we get  
        \begin{equation}\label{eq:IFT Jacobian}
          D_{(x,y)}F(\alpha_1,\alpha_2,c) 
          = \begin{pmatrix} \int_{-1}^1\left( 1-t \right)d\mathcal{F}(t) ,& \int_{-1}^1\left( 1+t \right)d\mathcal{F}(t)
            \\-\frac{2\pi c}{\alpha_2-\alpha_1}+\left( \alpha_2-\alpha_1 \right) \int_{-1}^1t\left( 1-t \right)d\mathcal{F}(t), & \frac{2\pi c}{\alpha_2-\alpha_1} + \left( \alpha_2-\alpha_1 \right) \int_{-1}^1t\left( 1+t \right)d\mathcal{F}(t) 
          \end{pmatrix}.
        \end{equation}
        The determinant of \eqref{eq:IFT Jacobian} can be computed as 
        \begin{multline*}
          \frac{4\pi c}{\alpha_2-\alpha_1}\int_{-1}^1d\mathcal{F}(t)\\
          +\left( \alpha_2-\alpha_1 \right)\left( \int_{-1}^1\left( 1-t \right)d\mathcal{F}(t) \int_{-1}^1t\left( 1+t \right)d\mathcal{F}(t) -\int_{-1}^1\left( 1+t \right)d\mathcal{F}(t) \int_{-1}^1t\left( 1-t \right)d\mathcal{F}(t)\right)\\
          =\frac{4\pi c}{\alpha_2-\alpha_1}\int_{-1}^1d\mathcal{F}(t)+2\left( \alpha_2-\alpha_1 \right)\left( \int_{-1}^1d\mathcal{F}(t) \int_{-1}^1t^2d\mathcal{F}(t) -\left( \int_{-1}^1t d\mathcal{F}(t)  \right)^2\right).
        \end{multline*}
        Use Cauchy-Schwartz inequality and we deduce $\left( \int_{-1}^1td\mathcal{F}(t) \right)^2\leq \int_{-1}^1t^2d\mathcal{F}(t)\int_{-1}^1d\mathcal{F}(t)$. Subsequently, the Jacobian matrix under consideration is strictly positive and hence non-singular. Then apply the Implicit Function Theorem to obtain that $\alpha_1$ and $\alpha_2$ are continuously differentiable functions of $c$, denoted by  
        \begin{align*}
          \alpha_1 = \alpha_1(c), \quad  \alpha_2 = \alpha_2(c). 
        \end{align*}  
        Moreover, one can apply the same analysis in \cite{deift1999uniform}  for $c^{-1}V$ and obtain a similar result in the Application 1 (1.64) (1.65) in \cite{deift1999uniform} for the regular case (also refer to the proof Theorem 2.1 in \cite{deift1999strong}). Note that in their notation in the one-cut case $\tilde{M}^{(\infty)}(z) $ found as (4.71) of  \cite{deift1999uniform} is reduced to
        \begin{equation*}
          \tilde{M}^{(\infty)}(z) = \frac{1}{2}\begin{pmatrix}
            \gamma(z)+\gamma(z)^{-1}, & i(\gamma(z)^{-1}-\gamma(z))\\
            i(\gamma(z)-\gamma(z)^{-1}), & \gamma(z)+\gamma(z)^{-1}
          \end{pmatrix},
        \end{equation*}
        which is equivalent to $N(z)$ defined in (6.13-6.16) of \cite{deift1999strong} after rescaling the support from $[\alpha_1,\alpha_2]$ to $[-1,1]$. 
        This means that the recurrence coefficients can be estimated as
        \begin{equation}\label{eq:RHP recurrence estimate}
          a_{j,n} =\frac{\alpha_2(c)- \alpha_1(c)}{4} + O(n^{-1}), \quad b_{j,n} =\frac{\alpha_2(c)+\alpha_1(c)}{2} + O(n^{-1}).
        \end{equation}
        
        Recall that $c=\frac{j}{n}$ and $\alpha_1(c), \alpha_2(c)$ are continuously differentiable functions. Then for all $j\in I $ we have 
        \begin{equation*}
          |a_{j,n}-a_{j-1,n}|=O(n^{-1}), \quad 	|b_{j,n}-b_{j-1,n}|=O(n^{-1}).
        \end{equation*}
        Moreover, by the normalisation we have $\alpha_1(1)=-1, \alpha_2(1)=1$. Hence, 
        \begin{equation*}
          \lim_{j/n\to 1}a_{j,n} = \frac{1}{2}, \quad \lim_{j/n\to 1}b_{j,n} =0.
        \end{equation*}
        From here we obtain that the edges of the fluctuations are at $x_0=-1$ or $1$.  
        Now we can employ Theorem~\ref{thm: main 1} and conclude Theorem~\ref{thm: regular}.
      \end{proof}
      
      Note that the strict convexity is not a necessary condition for the Jacobian to be non-singular nor the equilibrium measure to be supported on a single interval. Though it can be relaxed, convexity simplifies the argument. Furthermore, analyticity is only needed for the Riemann-Hilbert technique to obtain the asymptotics of recurrence coefficients. However, it is highly possible that \eqref{eq:RHP recurrence estimate} still holds for $V$ having some degree of smoothness, without assuming analyticity. In the literature, the Riemann–Hilbert–$\bar{\partial}$ method is able to deal with V with two Lipschitz continuous derivatives, see \cite{mclaughlin2008steepest}. Their result Theorem 1(1) indicates that the recurrence coefficients should be slowly varying with $O(n^{-\frac{1}{3}}\log n)$, in a more general setup.  Here we believe that Theorem~\ref{thm: regular} should still hold for $V$ being a convex (or the equilibrium measure being supported in a single interval) and  $C^{2+\varepsilon}(\mathbb{R})$ (for some $\varepsilon>0$) potential satisfying \eqref{eq:potential infinity}.  This requires a more detailed study about the potential theory and we leave it open.

      \section{Proof of Theorem~\ref{thm: main 3} }\label{sec:proof of 3}
      Assumption \eqref{eq:ass recurrence perturbation} allows us to to approximate the Jacobi matrix $\mathcal{J}$ by a Toeplitz matrix. Compared with Theorem~\ref{thm: main 2}, we have more control of the tails of the diagonals in this case. Hence, a direct comparison with a Toeplitz operator is possible by exploring the algebra of the cumulant formula. In this section we are to prove Theorem~\ref{thm: main 3} by fulfilling such idea.
      
      For clarity, throughout this section we let
      \begin{equation}
        \beta=\frac{\alpha+\varepsilon}{2}, \quad \text{for some } \varepsilon>0 \quad  \text{such that } 0<\frac{\alpha}{2}<\beta<1.
      \end{equation}
      As mentioned in the remark of Theorem~\ref{thm: main 3}, we will give a proof with a weaker assumption \eqref{eq:ass recurrence perturbation}. Without loss of generality we further assume 
      
        \begin{equation}.\label{eq:ass perturbation recurrence 1}
        \sup\limits_{j\geq n-n^{\beta+\varepsilon/2}}|a_{j,n} -1|=O(n^{-2\beta}),\quad \sup\limits_{j\geq n-n^{\beta+\varepsilon/2}}|b_{j,n} |= O(n^{-2\beta}).
      \end{equation}

      The general case follows by translating and scaling the point process by $(x-b)/a$, where $a\coloneqq\lim_{n}a_{n,n}$ and $b\coloneqq\lim_{n}b_{n-1,n}$.
      
      \subsection{Resolvent of a Toeplitz Operator}\label{sec:thm3 Toeplitz}
      
      Let's first consider the Chebyshev polynomial (of the second kind) ensemble whose measure is given by $$d\tilde{\mu}(x)=\frac{\sqrt{4-x^2}}{2\pi}dx, \quad  \text{supported on $[-2,2]$}.$$ Its recurrence coefficients are
      \begin{equation*}\label{eq:free recurrence}
        \tilde{a}_{j,n}=1 , \quad\tilde{b}_{j,n}=0.
      \end{equation*} 
      Hence its Jacobi matrix is a (semi-finite) Toeplitz matrix, i.e, entries remains constants along diagonals. This is also called the free Jacobi operator, 
      \begin{equation} \label{eq:assumption_free_J_inverse}
        \widetilde{\mathcal{J}}\coloneqq \begin{pmatrix}0 & 1& 0 & 0 & 0& \cdots\\1& 0 & 1 & 0 & 0 & \cdots \\0 &1 &0& 1& 0 & \cdots\\ & &\ddots &\ddots &\ddots \end{pmatrix}.
      \end{equation}
      
      Take $x_0=-2$ and define 
      \begin{equation}
        \omega_+\coloneqq \frac{2-\frac{\eta}{n^\alpha} +\mysqrt{\left( 2-\frac{\eta}{n^\alpha}  \right)^2-4}}{2}, \quad 	\omega_-\coloneqq \frac{2-\frac{\eta}{n^\alpha} -\mysqrt{\left( 2-\frac{\eta}{n^\alpha}  \right)^2-4}}{2},
      \end{equation}
      where the square root is taken at the principle branch so that $|\omega_+|>|\omega_-|$. The right edge ($x_0=2$) can be done in the exact same way by swapping the definitions of $\omega_+$ and $\omega_-$. 
      
      Since this $\widetilde{\mathcal{J}}$ is a Toeplitz operator, we can use the Wiener-Hopf factorization to obtain the exact inverse formula entry-wise, 
      
      \begin{equation}\label{eq:free J inverse}
        \left( \left(  \widetilde{\mathcal{J}}-\left( x_0+\frac{\eta}{n^\alpha}  \right)Id \right)^{-1} \right)_{j,k} = \frac{\left(-\omega_+\right)^{-|j-k|}-\left(-\omega_-\right)^{j+k}}{\omega_+-\omega_-}.
      \end{equation}
      One should compare this formula with Proposition~\ref{prop: overview tri-inverse}.
      
      Note that 
      \begin{equation}\label{eq:free J omega approx}
        \omega_+ = 1+\mysqrt{-\frac{\eta}{n^{\alpha}}} + O(n^{-\alpha}), \quad 	\omega_- = 1-\mysqrt{-\frac{\eta}{n^{\alpha}}} + O(n^{-\alpha}).
      \end{equation}
      From \eqref{eq:free J inverse} and \eqref{eq:free J omega approx}, we see that for all $j,k$ 
      \begin{equation}
        \left| 	\left( \left(  \widetilde{\mathcal{J}}-\left( x_0+\frac{\eta}{n^\alpha}  \right)Id \right)^{-1} \right)_{j,k}  \right| \leq C n^{\frac{\alpha}{2}}e^{-dn^{-\frac{\alpha}{2}}|j-k|}. \label{eq:free Combes Thomas}
      \end{equation}
      
      One convenient consequence of this estimate is the following lemma. 
      \begin{lemma}\label{lemma: HS norm}
        There exist a constant $C_0>0$, such that for any $m_1, m_2\in\mathbb{N}$, we have 
        \begin{equation}
          \left\|P_{m_1}Q_{m_2}  \left(  \widetilde{\mathcal{J}}-\left( x_0+\frac{\eta}{n^\alpha}  \right)Id \right)^{-1}  \right\|_2\leq \sqrt{C_0 n^{\frac{3\alpha}{2}}|m_1-m_2|}.
        \end{equation} 
      \end{lemma}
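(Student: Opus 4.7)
My plan is to use the explicit entry-wise bound \eqref{eq:free Combes Thomas} together with the definition of the Hilbert-Schmidt norm, $\|A\|_2^2 = \sum_{i,j}|(A)_{i,j}|^2$, and then sum a geometric series.

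First, I would dispose of the trivial case. Since $P_{m_1}$ projects onto the first $m_1$ coordinates and $Q_{m_2} = Id - P_{m_2}$ projects onto coordinates of index exceeding $m_2$, the product $P_{m_1}Q_{m_2}$ vanishes whenever $m_1 \leq m_2$, so the bound is trivial in that case. For $m_1 > m_2$, the operator $P_{m_1}Q_{m_2}$ is the orthogonal projection onto the coordinates $m_2 < j \leq m_1$, so
\begin{equation*}
\left\|P_{m_1}Q_{m_2}\left(\widetilde{\mathcal{J}}-\left(x_0+\tfrac{\eta}{n^\alpha}\right)Id\right)^{-1}\right\|_2^2 = \sum_{j=m_2+1}^{m_1}\sum_{k=1}^\infty\left|\left(\left(\widetilde{\mathcal{J}}-\left(x_0+\tfrac{\eta}{n^\alpha}\right)Id\right)^{-1}\right)_{j,k}\right|^2.
\end{equation*}

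Next, I would plug in the entry estimate \eqref{eq:free Combes Thomas}, which gives $|(R^{-1})_{j,k}| \leq Cn^{\alpha/2}e^{-dn^{-\alpha/2}|j-k|}$ for some constants $C,d>0$. Bounding the inner sum by extending $k$ over all of $\mathbb{Z}$ yields
\begin{equation*}
\sum_{k=1}^\infty \left|(R^{-1})_{j,k}\right|^2 \leq C^2 n^\alpha \sum_{l=-\infty}^{\infty} e^{-2dn^{-\alpha/2}|l|} = C^2 n^\alpha \cdot \frac{1+e^{-2dn^{-\alpha/2}}}{1-e^{-2dn^{-\alpha/2}}}.
\end{equation*}
For large $n$, the expansion $1-e^{-2dn^{-\alpha/2}} = 2dn^{-\alpha/2}(1+o(1))$ shows this right-hand side is at most $C_0 n^{3\alpha/2}$ for some constant $C_0>0$ uniform in $j$. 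Summing this bound over the $m_1 - m_2$ values of $j$ and taking a square root gives the claim.

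There is no real obstacle here; the only thing to check carefully is that the constants in \eqref{eq:free Combes Thomas} are genuinely uniform in $j$ and $k$ (which is manifest from the closed form \eqref{eq:free J inverse} together with \eqref{eq:free J omega approx}), and that the constant $C_0$ produced above does not depend on $n$ in the regime $n^{-\alpha/2} \to 0$. Both points are immediate from the asymptotics $|\omega_\pm| = 1 \pm \Re\mysqrt{-\eta/n^\alpha} + O(n^{-\alpha})$ combined with $\Im\eta \neq 0$, which ensures the decay constant $d$ is bounded below by a positive constant independent of $n$.
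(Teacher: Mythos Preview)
Your proof is correct and follows essentially the same approach as the paper: dispose of the trivial case $m_1\le m_2$, express the squared Hilbert--Schmidt norm as a double sum, apply the entry bound \eqref{eq:free Combes Thomas}, extend the inner sum to all of $\mathbb{Z}$ and sum the resulting geometric series, then use $1-e^{-2dn^{-\alpha/2}}\asymp 2dn^{-\alpha/2}$ (the paper phrases this via the inequality $1-e^{-x}\ge e^{-1}x$ on $[0,1]$) to extract the factor $n^{3\alpha/2}$ before summing over the $m_1-m_2$ indices~$j$.
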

      \begin{proof}
        For $m_1\leq m_2$, $P_{m_1}Q_{m_2} =0$ and the norm on left-hand side is zero. The inequality holds trivially. We now consider the case where $m_1>m_2$. 
        
        Recall for any linear Hilbert-Schmidt operator $A$, the Hilbert-Schmidt norm  $\|A\|_2^2= \sum_{j,k}|(A)_{j,k}|^2$. Then we can estimate by \eqref{eq:free Combes Thomas}
        
        \begin{multline}\label{eq:thm3 lemma 2 norm 1}
          \left\|P_{m_1}Q_{m_2}  \left(  \widetilde{\mathcal{J}}-\left( x_0+\frac{\eta}{n^\alpha}  \right)Id \right)^{-1}  \right\|_2^2\leq \sum_{j=m_2+1}^{m_1}\sum_{k=1}^\infty C^2 n^{\alpha}e^{-2dn^{-\frac{\alpha}{2}}|j-k|} \\
          \leq \sum_{j=m_2+1}^{m_1}\sum_{l\in \mathbb{Z}} C^2 n^{\alpha}e^{-2dn^{-\frac{\alpha}{2}}|l|}	\leq 2\sum_{j=m_2+1}^{m_1}\sum_{l=0}^\infty C^2 n^{\alpha}e^{-2dn^{-\frac{\alpha}{2}}l} =\frac{2C^2 n^{\alpha}(m_1-m_2)}{1-e^{-2dn^{-\frac{\alpha}{2}}}}, 
        \end{multline}
        where the second line is by adding more terms in the sum to make the series to be a power series. Note that $1-e^{-x}\geq e^{-1}x$ for all $x\in[0,1]$, we can further estimate \eqref{eq:thm3 lemma 2 norm 1} to be  
        \begin{equation}
          \left\|P_{m_1}Q_{m_2}  \left(  \widetilde{\mathcal{J}}-\left( x_0+\frac{\eta}{n^\alpha}  \right)Id \right)^{-1}  \right\|_2^2\leq 4eC^2d n^{\frac{3\alpha}{2}}(m_1-m_2).
        \end{equation}
        This completes the proof.
      \end{proof}
      
      \subsection{Lemmas of Proof of Theorem~\ref{thm: main 3}}
      
      We consider test function $f(x)=\sum_{r=1}^{2M}c_r \frac{1}{x-\eta_r}$ as described in \eqref{eq:resolvent fun}  for some $\eta\in\{x+iy | x\in\mathbb{R}, y \neq 0\}$. Let us define
      
      \begin{equation}\label{eq:def F tilde}
        \widetilde{F}\coloneqq \sum_{r=1}^{2M}c_r \left( \widetilde{\mathcal{J}}-x_0-\frac{\eta_r}{n^{\alpha}} \right)^{-1}.
      \end{equation}
      
      Denote the Jacobi matrix of the OPE with recurrence coefficients with $\{a_{j,n}, b_{j,n}\}$ satisfies \eqref{eq:ass perturbation recurrence 1} to be $\mathcal{J}$. Define 
      \begin{equation}\label{eq:def F}
        F\coloneqq \sum_{r=1}^{2M}c_r \left( \mathcal{J}-x_0-\frac{\eta_r}{n^{\alpha}} \right)^{-1}.
      \end{equation}
      
      \begin{lemma}\label{lemma: free exp} Let $0<\frac{\alpha}{2}<\beta<1$. Let $l\in\mathbb{N}$. Let $m_1\in \mathbb{N}$ such that $m_1>ln^\beta$.  Then there exists a $C_l>0$ such that
        \begin{equation}
          \left\|Q_{m_1+ln^\beta}\widetilde{F}^{l}P_{m_1}\right\|_1\leq  C_l e^{-d'n^{\beta-\frac{\alpha}{2}}} ,\label{eq:free ess_banded}
        \end{equation}
        \begin{equation}
          \left\|P_{m_1-ln^\beta}\widetilde{F}^{l}Q_{m_1}\right\|_1 \leq C_l e^{-d'n^{\beta-\frac{\alpha}{2}}} \label{eq:free ess_banded2}
        \end{equation}
      \end{lemma}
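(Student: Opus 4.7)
The plan is to mimic the proof of Lemma~\ref{cutoff} and Lemma~\ref{cutoff 2}, with the key simplification that the entry-wise Combes--Thomas estimate \eqref{eq:free Combes Thomas} for the Wiener--Hopf resolvent of $\widetilde{\mathcal{J}}$ is available in closed form and holds for all entries (not just away from a single block as in Proposition~\ref{thm: overview F entry estimation}). Consequently the whole argument reduces to bookkeeping of indices plus an induction in $l$.

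For the base case $l=1$, I would bound the trace norm by the sum of absolute values of entries. Using \eqref{eq:free Combes Thomas} together with $\widetilde{F}=\sum_r c_r(\widetilde{\mathcal{J}}-x_0-\eta_r/n^\alpha)^{-1}$, one obtains
\begin{equation*}
\|Q_{m_1+n^\beta}\widetilde{F}P_{m_1}\|_1 \leq \sum_r|c_r|\sum_{j\geq m_1+n^\beta+1}\sum_{k=1}^{m_1}Cn^{\alpha/2}e^{-dn^{-\alpha/2}(j-k)}.
\end{equation*}
A change of variables $u=j-(m_1+n^\beta+1)\geq 0$ and $v=m_1-k\geq 0$ gives $j-k\geq u+v+n^\beta+1$, so the double sum is bounded by $e^{-dn^{\beta-\alpha/2}}(1-e^{-dn^{-\alpha/2}})^{-2}=O(n^\alpha e^{-dn^{\beta-\alpha/2}})$. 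The polynomial prefactor is absorbed into the exponent by slightly decreasing $d$ to obtain some $d'\in(0,d)$, giving the required bound.

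For the inductive step, decompose $\widetilde{F}^{l+1}P_{m_1}=\widetilde{F}P_{m_1+ln^\beta}\widetilde{F}^lP_{m_1}+\widetilde{F}Q_{m_1+ln^\beta}\widetilde{F}^lP_{m_1}$ and multiply on the left by $Q_{m_1+(l+1)n^\beta}$. Applying the trace norm inequalities $\|AB\|_1\leq\|A\|_1\|B\|_\infty$ and $\|AB\|_1\leq\|A\|_\infty\|B\|_1$ yields
\begin{equation*}
\|Q_{m_1+(l+1)n^\beta}\widetilde{F}^{l+1}P_{m_1}\|_1 \leq \|Q_{m_1+(l+1)n^\beta}\widetilde{F}P_{m_1+ln^\beta}\|_1\|\widetilde{F}\|_\infty^l + \|\widetilde{F}\|_\infty\|Q_{m_1+ln^\beta}\widetilde{F}^lP_{m_1}\|_1.
\end{equation*}
The first trace norm is exponentially small by the base-case computation applied with $m_1$ replaced by $m_1+ln^\beta$; the second is exponentially small by the induction hypothesis. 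Since $\|\widetilde{F}\|_\infty\leq\sum_r|c_r/\Im\eta_r|n^\alpha$ is at most polynomial in $n$, it is absorbed into $d'$, closing the induction and giving \eqref{eq:free ess_banded}.

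The symmetric bound \eqref{eq:free ess_banded2} is obtained by exactly the same argument, summing entries over $j\leq m_1-ln^\beta$ and $k>m_1$ in the base case. There is no serious obstacle here: the explicit Toeplitz structure gives uniform entry-wise exponential decay of the resolvent on the whole matrix, whereas the analogous Lemma~\ref{cutoff} in the varying-coefficient setting had to be combined with the cutoff argument for $J^{(r)}_{n+2mn^\beta}$ to control boundary contributions. The only care needed is that the indices $m_1$ and $m_1\pm ln^\beta$ remain positive (guaranteed by the hypothesis $m_1>ln^\beta$) so that the projections $P_{\cdot}$ and $Q_{\cdot}$ are well defined throughout the induction.
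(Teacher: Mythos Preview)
Your proposal is correct and follows essentially the same approach as the paper: the paper's proof simply instructs one to repeat the induction argument of Lemma~\ref{cutoff} (for \eqref{eq:free ess_banded}) and Lemma~\ref{cutoff 2} (for \eqref{eq:free ess_banded2}), replacing the entry-wise bound of Proposition~\ref{thm: overview F entry estimation} by the explicit Wiener--Hopf estimate \eqref{eq:free Combes Thomas}. You have carried this out explicitly, correctly handling the infinite summation range via the geometric-series decay in $|j-k|$ rather than a uniform bound over a finite block.
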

      \begin{proof}
        Repeat the proof of \eqref{eq:ess_banded} in Lemma~\ref{cutoff} taking $N=\infty$, using \eqref{eq:free Combes Thomas} and one gets \eqref{eq:free ess_banded}. Repeat the proof of \eqref{eq:ess_banded2} in Lemma~\ref{cutoff 2} summing $j$ from $1$ and $k$ up to $\infty$, using \eqref{eq:free Combes Thomas} and one gets \eqref{eq:free ess_banded2}. 
      \end{proof}
      
      \begin{lemma} \label{lemma: thm3 step0}
        Let $0<\frac{\alpha}{2}<\beta<1$. Assume \eqref{eq:ass perturbation recurrence 1} holds for all $j>m_2-2n^\beta$. For any $m_1,m_2\in \mathbb{N}$ such that $2n^\beta<m_2<m_1$ and $m_1-m_2=O(n^{\beta})$, we have, as $n\to\infty$
        \begin{equation}
          \left\| P_{m_1}\left( \widetilde{F}-F \right)Q_{m_2} \right\|_1=O\left( n^{\frac{3\alpha}{2}-\beta} \right).
        \end{equation}
      \end{lemma}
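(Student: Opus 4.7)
The plan is to apply the resolvent identity
\begin{equation*}
R_r - S_r = -\,S_r E R_r,
\end{equation*}
where $R_r := (\widetilde{\mathcal{J}} - z_r)^{-1}$, $S_r := (\mathcal{J} - z_r)^{-1}$ with $z_r := x_0 + \eta_r/n^\alpha$, and $E := \mathcal{J} - \widetilde{\mathcal{J}}$ is tridiagonal with entries controlled by assumption \eqref{eq:ass perturbation recurrence 1} in the indices above $K := m_2 - 2n^\beta$. I split $E = E_l + E_s$, where $E_l := P_K E P_K$ is the ``large'' part (supported in $[1, K]^2$, $\|E_l\|_\infty = O(1)$) and $E_s := E - E_l$ is the ``small'' part (supported in $[K, \infty)^2$ with $\|E_s\|_\infty = O(n^{-2\beta})$). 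Then
\begin{equation*}
P_{m_1}(\widetilde{F} - F) Q_{m_2} = -\sum_r c_r\bigl(P_{m_1} S_r E_l R_r Q_{m_2} + P_{m_1} S_r E_s R_r Q_{m_2}\bigr).
\end{equation*}

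For the $E_l$ contribution I write $P_{m_1} S_r E_l R_r Q_{m_2} = (P_{m_1} S_r P_K)\, E_l\, (P_K R_r Q_{m_2})$. The last factor has rows in $[1, K]$ and columns in $(m_2, \infty)$, separated by $2n^\beta$. Combining the free-resolvent bound \eqref{eq:free Combes Thomas} with $\beta > \alpha/2$ and an entry-wise summation gives $\|P_K R_r Q_{m_2}\|_1 = O(n^{3\alpha/2} e^{-2dn^{\beta - \alpha/2}})$, which is exponentially small. Together with $\|S_r\|_\infty = O(n^\alpha)$ and $\|E_l\|_\infty = O(1)$ via $\|ABC\|_1 \le \|A\|_\infty \|B\|_\infty \|C\|_1$, this contribution is exponentially negligible.

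For the $E_s$ contribution I first substitute $S_r = R_r - R_r E S_r$ to reduce to a main term $P_{m_1} R_r E_s R_r Q_{m_2}$ plus a remainder. Since $E_s = Q_{K-1} E_s Q_{K-1}$, I then decompose $Q_{K-1} = P_{m_1} Q_{K-1} + Q_{m_1}$ on both sides of $E_s$, producing four sub-pieces. The principal diagonal piece $(P_{m_1} R_r P_{m_1} Q_{K-1})\, E_s\, (P_{m_1} Q_{K-1} R_r Q_{m_2})$ is estimated through $\|ABC\|_1 \le \|A\|_2 \|B\|_\infty \|C\|_2$: by Lemma~\ref{lemma: HS norm} both outer Hilbert--Schmidt norms are at most $\sqrt{C_0\, n^{3\alpha/2}(m_1 - K + 1)} = O(n^{3\alpha/4 + \beta/2})$ since $m_1 - K = O(n^\beta)$, and multiplying by $\|E_s\|_\infty = O(n^{-2\beta})$ yields exactly $O(n^{3\alpha/2 - \beta})$. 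The two cross pieces involving both $P_{m_1} Q_{K-1}$ and $Q_{m_1}$ reduce to rank-one operators because the tridiagonal $E_s$ can only connect these regions through the single boundary entry $(m_1, m_1+1)$; an explicit calculation using the per-row and per-column Hilbert--Schmidt bounds of $R_r$ gives $O(n^{3\alpha/2 - 2\beta})$, which is lower order. The fully $Q_{m_1}$-sandwiched piece is handled by an auxiliary cutoff at $L := m_1 + 2n^\beta$: within $[m_1, L]$ the same Lemma~\ref{lemma: HS norm} argument applies, and beyond $L$ the contribution is exponentially small by \eqref{eq:free Combes Thomas}.

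The main obstacle is controlling the remainder $P_{m_1} R_r E S_r E_s R_r Q_{m_2}$ coming from the resolvent substitution: a direct Neumann iteration does not obviously converge, since $\|E R_r\|_\infty$ is of order $n^\alpha$ rather than small. The route I would take is to exploit that the trailing $E_s$ contributes an extra factor $O(n^{-2\beta})$ and, combined with the same localization and four-piece decomposition applied to the inner $S_r E_s R_r$ block, show that the remainder is dominated by the principal term. Making this last comparison fully rigorous---in particular, transferring control from $S_r$ to $R_r$ in the relevant localized regions without losing the $n^{-\beta}$ improvement---is the technical heart of the lemma.
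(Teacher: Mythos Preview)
Your overall strategy---resolvent identity, split $E=E_l+E_s$, kill the $E_l$ piece by the exponential decay of the free resolvent, and estimate the main $R_rE_sR_r$ term via Lemma~\ref{lemma: HS norm}---matches the paper and gives the correct leading order $O(n^{3\alpha/2-\beta})$. The gap is exactly where you flag it: you do not close the remainder $P_{m_1}R_rES_rE_sR_rQ_{m_2}$, and the ``route'' you sketch (iterating the four-piece decomposition inside $S_rE_sR_r$) does not obviously terminate, because each substitution reintroduces the full $E$ rather than $E_s$.

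The paper resolves this not by iterating but by a bootstrap. Write $X:=P_{m_1}(R_r-S_r)Q_{m_2}$. After one resolvent identity and one insertion of $Id=P_{m_2}+Q_{m_2}$, the $Q_{m_2}$ part of the second-order term is \emph{exactly} $X$ again, multiplied on the right by $T:=(\mathcal J-\widetilde{\mathcal J})\,R_r\,Q_{m_2}$. This yields the closed equation
\[
X\,(Id+T)\;=\;P_{m_1}R_r(\mathcal J-\widetilde{\mathcal J})R_rQ_{m_2}\;+\;P_{m_1}(S_r-R_r)P_{m_2}(\mathcal J-\widetilde{\mathcal J})R_rQ_{m_2}.
\]
The crucial observation is that $T$, despite containing the full $E$, is small in \emph{operator} norm: by Lemma~\ref{lemma: free exp}, $R_rQ_{m_2}=Q_{m_2-n^\beta}R_rQ_{m_2}$ up to an exponentially small error, and on that range $\|E\,Q_{m_2-n^\beta}\|_\infty=O(n^{-2\beta})$ by hypothesis, so $\|T\|_\infty=O(n^{\alpha-2\beta})=o(1)$. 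Hence $(Id+T)^{-1}$ exists with bounded norm, and $\|X\|_1$ is controlled by the trace norms of the two right-hand terms, which are $O(n^{3\alpha/2-\beta})$ and $O(n^{5\alpha/2-3\beta})$ respectively (the latter via one more resolvent identity and the same Hilbert--Schmidt bound). In your language, this is the Neumann series with $E_sR_r$ rather than $ER_r$---which \emph{does} converge since $\|E_sR_r\|_\infty=O(n^{\alpha-2\beta})$---but packaged as a single inversion so that no explicit iteration on $S_r$ is needed.
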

      \begin{proof}
        Let us define
        \begin{align}
          \widetilde{F}^{(r)} \coloneqq &\left( \widetilde{\mathcal{J}}-x_0-\frac{\eta_r}{n^{\alpha}} \right)^{-1}\\
          F^{(r)} \coloneqq &  \left( \mathcal{J}-x_0-\frac{\eta_r}{n^{\alpha}} \right)^{-1}.
        \end{align}
        By triangle inequality we have 
        \begin{equation}\label{eq:thm3 lemma 1}
          \left\| P_{m_1}\left( \widetilde{F}-F \right)Q_{m_2} \right\|_1\leq \sum_r|c_r|\left\| P_{m_1}\left( \widetilde{F}^{(r)}-F^{(r)} \right)Q_{m_2} \right\|_1.
        \end{equation}
        Now we are going to study the trace norm for any $r$. For a cleaner notation, let
        \begin{equation*}
          \mathcal{A} \equiv \widetilde{\mathcal{J}}-x_0-\frac{\eta_r}{n^{\alpha}}, \quad 	\mathcal{B} \equiv\mathcal{J}-x_0-\frac{\eta_r}{n^{\alpha}}.
        \end{equation*}
        It is sufficient to estimate $\left\| 	P_{m_1}\left( \mathcal{A}^{-1}-\mathcal{B}^{-1} \right)Q_{m_2}  \right\|_1$. Note that estimates about $\mathcal{A}^{-1}$ are discussed in Section~\ref{sec:thm3 Toeplitz}.
        
        We use resolvent identity and rewrite $\mathcal{B}^{-1}=\mathcal{A}^{-1}+\left( \mathcal{B}^{-1} -\mathcal{A}^{-1}\right)$ to get
        \begin{multline}\label{eq:thm3 lemma sum}
          P_{m_1}\left( \mathcal{A}^{-1}-\mathcal{B}^{-1} \right)Q_{m_2} = P_{m_1}\mathcal{B}^{-1}\left(\mathcal{B}-\mathcal{A} \right) \mathcal{A}^{-1}Q_{m_2} \\
          = P_{m_1}\mathcal{A}^{-1}\left(\mathcal{B}-\mathcal{A} \right) \mathcal{A}^{-1}Q_{m_2}+ P_{m_1} \left( \mathcal{B}^{-1} -\mathcal{A}^{-1} \right)\left(\mathcal{B}-\mathcal{A} \right)\mathcal{A}^{-1}Q_{m_2}.
        \end{multline}
        Note that, by the fact that $Id=P_{m_2}+Q_{m_2}$ the second summand on the right-hand side can be written as $$ P_{m_1} \left( \mathcal{B}^{-1} -\mathcal{A}^{-1} \right)P_{m_2}\left(\mathcal{B}-\mathcal{A} \right)\mathcal{A}^{-1}Q_{m_2}+ P_{m_1} \left( \mathcal{B}^{-1} -\mathcal{A}^{-1} \right)Q_{m_2}\left(\mathcal{B}-\mathcal{A} \right)\mathcal{A}^{-1}Q_{m_2}.$$ Rearrange the formula \eqref{eq:thm3 lemma sum} and we get, 
        \begin{multline}\label{eq:thm3 lemma AB}
          P_{m_1}\left( \mathcal{A}^{-1}-\mathcal{B}^{-1} \right)Q_{m_2} \left( Id+\left(\mathcal{B}-\mathcal{A} \right)\mathcal{A}^{-1}Q_{m_2} \right)\\
          = P_{m_1}\mathcal{A}^{-1}\left(\mathcal{B}-\mathcal{A} \right) \mathcal{A}^{-1}Q_{m_2}+ P_{m_1} \left( \mathcal{B}^{-1} -\mathcal{A}^{-1} \right)P_{m_2}\left(\mathcal{B}-\mathcal{A} \right)\mathcal{A}^{-1}Q_{m_2}.
        \end{multline}
        
        Note that, by Lemma~\ref{lemma: free exp}, we have $\left\| \mathcal{A}^{-1}Q_{m_2} - Q_{m_2-n^\beta}\mathcal{A}^{-1}Q_{m_2} \right\|_1=O(e^{-d'n^{\beta-\frac{\alpha}{2}}})$. Moreover, since $\mathcal{B}-\mathcal{A} $ is three diagonal, we have $\left(\mathcal{B}-\mathcal{A} \right)Q_{m_2-n^\beta}= Q_{m_2-n^\beta-1}\left(\mathcal{B}-\mathcal{A} \right)Q_{m_2-n^\beta}$. Also by Lemma~\ref{lemma: free exp}, we have $\left\| \mathcal{A}^{-1}Q_{m_2-n^\beta-1} - Q_{m_2-2n^\beta-1}\mathcal{A}^{-1}Q_{m_2-n^\beta-1} \right\|_1=O(e^{-d'n^{\beta-\frac{\alpha}{2}}})$. Also note that $\|\mathcal{A}^{-1}\|_\infty=O(n^{\alpha})$, $\|\mathcal{A}\|_\infty=O(1)$ and $\|\mathcal{B}\|_\infty=O(1)$. Hence, the first summand on the right-hand side of \eqref{eq:thm3 lemma AB} can be estimated to be
        \begin{multline}\label{eq:thm3 lemma Qcommute}
          \left\| P_{m_1}\mathcal{A}^{-1}\left(\mathcal{B}-\mathcal{A} \right) \mathcal{A}^{-1}Q_{m_2} \right\|_1\\
          =\left\| P_{m_1}Q_{m_2-2n^\beta-1}\mathcal{A}^{-1}Q_{m_2-n^\beta-1}\left(\mathcal{B}-\mathcal{A} \right) Q_{m_2-n^\beta}\mathcal{A}^{-1}Q_{m_2} \right\|_1+O\left(n^{\alpha}e^{-d'n^{\beta-\frac{\alpha}{2}}}\right).
        \end{multline}
        Similarly, we commute $P_{m_1}$ from left to right on the right-hand side of \eqref{eq:thm3 lemma Qcommute} with exponentially small error. Precisely speaking, by Lemma~\ref{lemma: free exp}, we have $\left\| P_{m_1}Q_{m_2-2n^\beta-1}\mathcal{A}^{-1}-P_{m_1}Q_{m_2-2n^\beta-1}\mathcal{A}^{-1}P_{m_1+n^\beta} \right\|_1=O(e^{-d'n^{\beta-\frac{\alpha}{2}}}).$ Since $\mathcal{B}-\mathcal{A} $ is three diagonal, we have $P_{m_1+n^\beta}\left(\mathcal{B}-\mathcal{A} \right)= P_{m_1+n^\beta}\left(\mathcal{B}-\mathcal{A} \right)P_{m_1+n^\beta+1}.$ Also by Lemma~\ref{lemma: free exp}, we have $\left\| P_{m_1+n^\beta+1}Q_{m_2-n^\beta}\mathcal{A}^{-1}-P_{m_1+n^\beta+1}Q_{m_2-n^\beta}\mathcal{A}^{-1}P_{m_1+2n^\beta+1} \right\|_1=O(e^{-d'n^{\beta-\frac{\alpha}{2}}})$. Note that $\|\mathcal{A}^{-1}\|_\infty=O(n^{\alpha})$, $\|\mathcal{A}\|_\infty=O(1)$ and $\|\mathcal{B}\|_\infty=O(1)$.  Also note that the projections are commutative, i.e.,  $P_{m_1}Q_{m_2}=Q_{m_2}P_{m_1}$. Hence, \eqref{eq:thm3 lemma Qcommute} can be further estimated to be as $n\to\infty$
        \begin{multline}\label{eq:thm3 lemma pq commute}
          \left\| P_{m_1}\mathcal{A}^{-1}\left(\mathcal{B}-\mathcal{A} \right) \mathcal{A}^{-1}Q_{m_2} \right\|_1=\\
          \left\| P_{m_1}Q_{m_2-2n^\beta-1}\mathcal{A}^{-1}P_{m_1+n^\beta}Q_{m_2-n^\beta-1}\left(\mathcal{B}-\mathcal{A} \right) P_{m_1+n^\beta+1}Q_{m_2-n^\beta}\mathcal{A}^{-1}P_{m_1+2n^\beta+1}Q_{m_2} \right\|_1\\
          +O\left(n^{\alpha}e^{-d'n^{\beta-\frac{\alpha}{2}}}\right).
        \end{multline}
        Now use the trace norm inequality $\|ABC\|_1\leq \|A\|_2\|B\|_\infty\|C\|_2$ and we get as $n\to\infty$
        \begin{multline}\label{eq:thm3 lemma trace 1}
          \left\| P_{m_1}\mathcal{A}^{-1}\left(\mathcal{B}-\mathcal{A} \right) \mathcal{A}^{-1}Q_{m_2} \right\|_1\leq \\
          \left\| P_{m_1}Q_{m_2-2n^\beta-1}\mathcal{A}^{-1}\right\|_2\left\|P_{m_1+n^\beta}Q_{m_2-n^\beta-1}\left(\mathcal{B}-\mathcal{A} \right) P_{m_1+n^\beta+1}Q_{m_2-n^\beta}\right\|_\infty\left\|\mathcal{A}^{-1}P_{m_1+2n^\beta+1}Q_{m_2} \right\|_2\\
          +O\left(n^{\alpha}e^{-d'n^{\beta-\frac{\alpha}{2}}}\right).
        \end{multline}
        Recall that we assume $m_1-m_2=O(n^{\beta})$. By Lemma~\ref{lemma: HS norm}, we have  as $n\to\infty$
        \begin{align}
          \left\| P_{m_1}Q_{m_2-2n^\beta-1}\mathcal{A}^{-1}\right\|_2=O\left( n^{\frac{3\alpha}{4}+\frac{\beta}{2}} \right) \label{eq:thm3 lemma trace 11}\\
          \left\|\mathcal{A}^{-1}P_{m_1+2n^\beta+1}Q_{m_2} \right\|_2=O\left( n^{\frac{3\alpha}{4}+\frac{\beta}{2}} \right).\label{eq:thm3 lemma trace 12}
        \end{align}
        Since \eqref{eq:ass perturbation recurrence 1} holds for all $j>m_2-2n^\beta$ by assumption, we have as $n\to\infty$
        \begin{equation}
          \left\|P_{m_1+n^\beta}Q_{m_2-n^\beta-1}\left(\mathcal{B}-\mathcal{A} \right) P_{m_1+n^\beta+1}Q_{m_2-n^\beta}\right\|_\infty=O\left(n^{-2\beta}\right).\label{eq:thm3 lemma trace 13}
        \end{equation}
        Plug \eqref{eq:thm3 lemma trace 11}, \eqref{eq:thm3 lemma trace 12} and \eqref{eq:thm3 lemma trace 13} into \eqref{eq:thm3 lemma trace 1} to get as $n\to\infty$
        \begin{equation}\label{eq:thm3 lemma 01}
          \left\| P_{m_1}\mathcal{A}^{-1}\left(\mathcal{B}-\mathcal{A} \right) \mathcal{A}^{-1}Q_{m_2} \right\|_1 =O\left( n^{\frac{3\alpha}{2}-\beta} \right).
        \end{equation}
        
        For the second summand of the right-hand side of \eqref{eq:thm3 lemma AB}, we first use the resolvent identity and then continue with the same argument as \eqref{eq:thm3 lemma pq commute} to commute $Q_{m_2}$ from right to left to obtain that  as $n\to\infty$
        \begin{multline}\label{eq:thm3 lemma 22}
          \left\| P_{m_1} \left( \mathcal{B}^{-1} -\mathcal{A}^{-1} \right)P_{m_2}\left(\mathcal{B}-\mathcal{A} \right)\mathcal{A}^{-1}Q_{m_2} \right\|_1= \left\| P_{m_1}\mathcal{B}^{-1} \left( \mathcal{A} -\mathcal{B}  \right)\mathcal{A}^{-1}P_{m_2}\left(\mathcal{B}-\mathcal{A} \right)\mathcal{A}^{-1}Q_{m_2} \right\|_1 \\
          =\left\| P_{m_1}\mathcal{B}^{-1} \left( \mathcal{A} -\mathcal{B}  \right)Q_{m_2-2n^\beta-1}\mathcal{A}^{-1}P_{m_2}Q_{m_2-n^\beta-1}\left(\mathcal{B}-\mathcal{A} \right)P_{m_2+1}Q_{m_2-n^\beta}\mathcal{A}^{-1}Q_{m_2} \right\|_1 \\
          +O\left( n^{2\alpha}e^{-d'n^{\beta-\frac{\alpha}{2}}} \right).
        \end{multline}
        Use the trace norm inequality $\|ABCD\|_1\leq \|A\|_\infty\|B\|_2\|C\|_\infty\|D\|_2$ to get 
        \begin{multline} \label{eq:thm3 lemma 221}
          \left\| P_{m_1}\mathcal{B}^{-1} \left( \mathcal{A} -\mathcal{B}  \right)Q_{m_2-2n^\beta-1}\mathcal{A}^{-1}P_{m_2}Q_{m_2-n^\beta-1}\left(\mathcal{B}-\mathcal{A} \right)P_{m_2+1}Q_{m_2-n^\beta}\mathcal{A}^{-1}Q_{m_2} \right\|_1 \\
          \leq \left\| P_{m_1}\mathcal{B}^{-1} \left( \mathcal{A} -\mathcal{B}  \right)Q_{m_2-2n^\beta-1}\right\|_\infty\left\|\mathcal{A}^{-1}P_{m_2}Q_{m_2-n^\beta-1}\right\|_2
          \left\|P_{m_2}Q_{m_2-n^\beta-1}\left(\mathcal{B}-\mathcal{A} \right)\right\|_\infty\left\|P_{m_2+1}Q_{m_2-n^\beta}\mathcal{A}^{-1}Q_{m_2}\right\|_2
        \end{multline}
        In \eqref{eq:thm3 lemma 221}, two operator norms are of order $O(n^{\alpha-2\beta})$ and $O(n^{-2\beta})$ respectively by the assumption that  \eqref{eq:ass perturbation recurrence 1} holds for all $j>m_2-2n^\beta$; two Hilbert-Schmidt norms are both of order $O(n^{\frac{3\alpha}{4}+\frac{\beta}{2}})$ by Lemma~\ref{lemma: HS norm}. Hence, as $n\to\infty$
        \begin{equation} \label{eq:thm3 lemma 222}
          \left\| P_{m_1}\mathcal{B}^{-1} \left( \mathcal{A} -\mathcal{B}  \right)Q_{m_2-2n^\beta-1}\mathcal{A}^{-1}P_{m_2}Q_{m_2-n^\beta-1}\left(\mathcal{B}-\mathcal{A} \right)P_{m_2+1}Q_{m_2-n^\beta}\mathcal{A}^{-1}Q_{m_2} \right\|_1 =O \left( n^{\frac{5\alpha}{2}-3\beta} \right).
        \end{equation}
        Plug \eqref{eq:thm3 lemma 222} into \eqref{eq:thm3 lemma 22} to get
        \begin{equation}\label{eq:thm3 lemma 220}
          \left\| P_{m_1} \left( \mathcal{B}^{-1} -\mathcal{A}^{-1} \right)P_{m_2}\left(\mathcal{B}-\mathcal{A} \right)\mathcal{A}^{-1}Q_{m_2} \right\|_1= O \left( n^{\frac{5\alpha}{2}-3\beta} \right).
        \end{equation}
        We now use \eqref{eq:thm3 lemma 01} and  \eqref{eq:thm3 lemma 220} to estimate the trace norm of\eqref{eq:thm3 lemma AB}, and we have as $n\to\infty$
        \begin{equation}\label{eq:thm3 lemma 0 new 1}
          \left\| 	P_{m_1}\left( \mathcal{A}^{-1}-\mathcal{B}^{-1} \right)Q_{m_2} \left( Id+\left(\mathcal{B}-\mathcal{A} \right)\mathcal{A}^{-1}Q_{m_2} \right) \right\|_1=O\left( n^{\frac{3\alpha}{2}-\beta} \right).
        \end{equation}
        Note that we use the assumption $0<\frac{\alpha}{2}<\beta<1$ to determine $\frac{3\alpha}{2}-\beta> \frac{5\alpha}{2}-3\beta$.		  
        
        By Lemma~\ref{lemma: free exp}, 
        \begin{equation}
          \left\|\left(\mathcal{B}-\mathcal{A} \right)\mathcal{A}^{-1}Q_{m_2} \right\|_\infty=\left\|\left(\mathcal{B}-\mathcal{A} \right)Q_{m_2-n^\beta}\mathcal{A}^{-1}Q_{m_2} \right\|_\infty+O\left( e^{-d'n^{\beta-\frac{\alpha}{2}}} \right).
        \end{equation}
        Using the operator norm inequality we have 
        \begin{equation}
          \left\|\left(\mathcal{B}-\mathcal{A} \right)Q_{m_2-n^\beta}\mathcal{A}^{-1}Q_{m_2} \right\|_\infty\leq\left\|\left(\mathcal{B}-\mathcal{A} \right)Q_{m_2-n^\beta}\right\|_\infty\left\|\mathcal{A}^{-1}Q_{m_2} \right\|_\infty.
        \end{equation}
        Since \eqref{eq:ass perturbation recurrence 1} holds for all $j>m_2-2n^\beta$, we have $\left\|\left(\mathcal{B}-\mathcal{A} \right)Q_{m_2-n^\beta}\right\|_\infty=O(n^{-2\beta})$, as $n\to\infty$. Recall that $\|\mathcal{A}^{-1}\|_{\infty}=O(n^{\alpha})$. Now we have 
        \begin{equation}\label{eq:thm3 lemma 211}
          \left\|\left(\mathcal{B}-\mathcal{A} \right)\mathcal{A}^{-1}Q_{m_2} \right\|_\infty=O\left( n^{\alpha-2\beta} \right).
        \end{equation}
        Recall $0<\frac{\alpha}{2}<\beta$, and hence \eqref{eq:thm3 lemma 211} is of order $o(1)$. Then for large $n$,
        \begin{equation}\label{eq:thm3 lemma 0 new 2}
          \left\|  \left( Id+\left(\mathcal{B}-\mathcal{A} \right)\mathcal{A}^{-1}Q_{m_2} \right)^{-1} \right\|_\infty\leq \frac{1}{1-	\left\|\left(\mathcal{B}-\mathcal{A} \right)\mathcal{A}^{-1}Q_{m_2} \right\|_\infty}.
        \end{equation} 
        Moreover, by trace norm inequality $\|AB\|_1\leq \|A\|_1\|B\|_\infty$, we have 
        \begin{multline}\label{eq:thm3 lemma 0 new 0}
          \left\| 	P_{m_1}\left( \mathcal{A}^{-1}-\mathcal{B}^{-1} \right)Q_{m_2}  \right\|_1 \\
          \leq 	\left\| 	P_{m_1}\left( \mathcal{A}^{-1}-\mathcal{B}^{-1} \right)Q_{m_2} \left( Id+\left(\mathcal{B}-\mathcal{A} \right)\mathcal{A}^{-1}Q_{m_2} \right) \right\|_1\left\| \left( Id+\left(\mathcal{B}-\mathcal{A} \right)\mathcal{A}^{-1}Q_{m_2} \right)^{-1} \right\|_\infty.
        \end{multline}
        
        Plug\eqref{eq:thm3 lemma 0 new 1},\eqref{eq:thm3 lemma 211} and \eqref{eq:thm3 lemma 0 new 2} into \eqref{eq:thm3 lemma 0 new 0} to obtain as $n\to\infty$
        \begin{equation}
          \left\| 	P_{m_1}\left( \mathcal{A}^{-1}-\mathcal{B}^{-1} \right)Q_{m_2}  \right\|_1=O\left( n^{\frac{3\alpha}{2}-\beta} \right).
        \end{equation}
        
        Plug this estimate back into \eqref{eq:thm3 lemma 1} and we conclude as $n\to\infty$
        
        \begin{equation}
          \left\| P_{m_1}\left( \widetilde{F}-F \right)Q_{m_2} \right\|_1 = O\left( n^{\frac{3\alpha}{2}-\beta} \right).
        \end{equation}
        
      \end{proof}
      
      Lemma above also implies the following by induction. 
      
      \begin{lemma} \label{lemma: thm3 4}
        Let $0<\frac{\alpha}{2}<\beta<1$. 	Let $m\in \mathbb{N}$. Assume \eqref{eq:ass perturbation recurrence 1} holds for all $j>n-2n^\beta$. We have, for any $k=0,1,2,\dots, m$
        \begin{equation}\label{eq:thm3 lemma4 0}
          \left\| P_{n+mn^\beta}\left( \widetilde{F}-F \right)F^{k}Q_{n} \right\|_1=O\left( n^{k\alpha+\frac{3\alpha}{2}-\beta} \right).
        \end{equation}
      \end{lemma}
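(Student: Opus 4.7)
The plan is to prove, by induction on $k$, the slightly stronger statement that for every fixed integer $l\ge 0$,
$$\left\| P_{n+mn^\beta}(\widetilde F - F)\, F^{k}\, Q_{n-l n^\beta} \right\|_1 = O\!\left(n^{k\alpha + \frac{3\alpha}{2} - \beta}\right),$$
of which the lemma is the case $l=0$. The base case $k=0$ is a direct application of Lemma~\ref{lemma: thm3 step0} with $m_1=n+mn^\beta$ and $m_2=n-l n^\beta$: the size condition $m_1-m_2=(m+l)n^\beta=O(n^\beta)$ holds, and since $(l+2)n^\beta<n^{\beta+\varepsilon/2}$ for large $n$, the perturbation assumption \eqref{eq:ass perturbation recurrence 1} applies on $j>m_2-2n^\beta$, which is exactly what Lemma~\ref{lemma: thm3 step0} requires.

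For the inductive step, I insert $\mathrm{Id}=P_{n-(l+1)n^\beta}+Q_{n-(l+1)n^\beta}$ between $F^{k}$ and the trailing $F$ in $F^{k+1}Q_{n-ln^\beta}=F^{k}\cdot F Q_{n-ln^\beta}$:
$$P_{n+mn^\beta}(\widetilde F - F)F^{k+1}Q_{n-ln^\beta} = P_{n+mn^\beta}(\widetilde F - F)F^{k}Q_{n-(l+1)n^\beta}F Q_{n-ln^\beta} + P_{n+mn^\beta}(\widetilde F - F)F^{k}P_{n-(l+1)n^\beta}F Q_{n-ln^\beta}.$$
The $Q$-summand is controlled by $\|AB\|_1\le\|A\|_1\|B\|_\infty$ together with the induction hypothesis at $(k,l+1)$ and $\|F\|_\infty=O(n^\alpha)$, producing the desired $O(n^{(k+1)\alpha+3\alpha/2-\beta})$. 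The $P$-summand is exponentially small: the factor $P_{n-(l+1)n^\beta}F Q_{n-ln^\beta}$ consists only of entries $F_{ij}$ with $|i-j|>n^\beta$, so by the off-diagonal decay of the resolvent of $\mathcal{J}$ its trace norm is $O(e^{-d n^{\beta-\alpha/2}})$; multiplying by the operator-norm bound $\|P_{n+mn^\beta}(\widetilde F - F)F^{k}\|_\infty=O(n^{(k+1)\alpha})$ keeps the contribution exponentially small and hence negligible compared to the target.

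The main obstacle is the off-diagonal entry decay of the full $F$ used in the $P$-summand, since Proposition~\ref{thm: overview F entry estimation} is stated only for the truncated operators $J^{(r)}_{n\pm 2mn^\beta}$. Under the stronger assumption \eqref{eq:ass perturbation recurrence 1} this decay transfers to $F$ either by a direct Combes--Thomas estimate applied to the full $\mathcal{J}$ (which succeeds because the recurrence coefficients are uniformly bounded in the relevant window), or by approximating $F$ by its truncation at level $n+Cn^\beta$ with trace-norm error $O(e^{-d n^{\beta-\alpha/2}})$ via the resolvent identity, exactly as in the proof of Lemma~\ref{cutoff}. Once this decay is in hand, the induction closes and the bound $O(n^{k\alpha+3\alpha/2-\beta})$ follows for all $0\le k\le m$.
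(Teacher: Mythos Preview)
Your induction scheme is natural, but the treatment of the $P$-summand has a genuine gap. You need $\|P_{n-(l+1)n^\beta}\,F\,Q_{n-ln^\beta}\|_1$ to be exponentially small, and you justify this by ``the off-diagonal decay of the resolvent of $\mathcal{J}$''. No such decay is available here. The hypothesis \eqref{eq:ass perturbation recurrence 1} constrains $a_{j,n},b_{j,n}$ only for $j>n-n^{\beta+\varepsilon/2}$; for small $j$ nothing at all is assumed, so the Combes--Thomas estimate (which requires a uniform bound on all $a_j$) does not apply to the full $\mathcal{J}$. Even where it does apply, Combes--Thomas gives decay $e^{-c|i-j|/n^{\alpha}}$, not $e^{-c|i-j|/n^{\alpha/2}}$, and since $\beta=(\alpha+\varepsilon)/2<\alpha$ for $\alpha>\varepsilon$ this is useless at separation $n^\beta$. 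Proposition~\ref{thm: overview F entry estimation} does give the $n^{-\alpha/2}$ rate, but only for the \emph{truncated} operators and under the conditions of Theorem~\ref{thm: main 2}, not Theorem~\ref{thm: main 3}. Your second suggestion, truncating $F$ at level $n+Cn^\beta$ as in Lemma~\ref{cutoff}, cuts at the top, whereas the rows of $P_{n-(l+1)n^\beta}FQ_{n-ln^\beta}$ run all the way down to index $1$; truncation at the top says nothing about these entries.

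The paper's proof avoids ever needing off-diagonal decay of $F$ itself. Instead of inserting a projection inside $F^{k+1}$, it writes $F^{k+1}=\widetilde F^{k+1}+(F^{k+1}-\widetilde F^{k+1})$. For the first piece one commutes $Q_n$ through $\widetilde F^{k+1}$ using Lemma~\ref{lemma: free exp} (the free resolvent has the explicit decay \eqref{eq:free Combes Thomas}) and then applies Lemma~\ref{lemma: thm3 step0}. For the second piece one telescopes $F^{k+1}-\widetilde F^{k+1}=\sum_{l}F^{l}(F-\widetilde F)\widetilde F^{k-l}$, inserts $P_n+Q_n$ to the right of $F^l$, and treats the two halves with, respectively, the induction hypothesis $\|P_{m_1}(\widetilde F-F)F^{l}Q_n\|_1=O(n^{l\alpha+3\alpha/2-\beta})$ and the operator-norm bound $\|Q_n(F-\widetilde F)\|_\infty=O(n^{2\alpha-2\beta})$, the latter following from the resolvent identity and Lemma~\ref{lemma: free exp} using only the tail condition on the recurrence coefficients. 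The point is that every place where off-diagonal decay is needed, it is $\widetilde F$ (fully explicit) that carries the power, never $F$.
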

      \begin{proof}
        First note that the following operator norms are bounded
        \begin{equation}\label{eq:thm3 lemma opt}
          \sup_{n>0}\{n^{-\alpha}\|\widetilde{F}\|_\infty,n^{-\alpha}\|F\|_\infty\}<\infty.
        \end{equation}
        
        Take $m_1\equiv n+mn^\beta$ and $m_2\equiv n$ to shorten the notation. 
        
        Consider the induction argument. For $k=0$, \eqref{eq:thm3 lemma4 0} is reduced to Lemma~\ref{lemma: thm3 step0}. For any $k\geq0$, we use triangle inequality to obtain
        \begin{equation}\label{eq:thm3 lemma4 1}
          \left\| P_{m_1}\left( \widetilde{F}-F \right)F^{k+1}Q_{m_2} \right\|_1 \leq \left\| P_{m_1}\left( \widetilde{F}-F \right)\widetilde{F}^{k+1}Q_{m_2} \right\|_1+\left\| P_{m_1}\left( \widetilde{F}-F \right)\left( F^{k+1} -\widetilde{F}^{k+1}\right)Q_{m_2} \right\|_1.
        \end{equation}
        Use Lemma~\ref{lemma: free exp} and we estimate the first summand on the right-hand side of \eqref{eq:thm3 lemma4 1} to be, as $n\to\infty$,
        \begin{equation}\label{eq:thm3 lemma4 11}
          \left\| P_{m_1}\left( \widetilde{F}-F \right)\widetilde{F}^{k+1}Q_{m_2} \right\|_1= \left\| P_{m_1}\left( \widetilde{F}-F \right)Q_{m_2-(k+1)n^\beta}\widetilde{F}^{k+1}Q_{m_2} \right\|_1+O\left(n^{\alpha}e^{-d'n^{\beta-\frac{\alpha}{2}}}\right).
        \end{equation}
        By the trace norm inequality $\|AB\|_1\leq \|A\|_1\|B\|_\infty$, we have
        \begin{equation}
          \left\| P_{m_1}\left( \widetilde{F}-F \right)Q_{m_2-(k+1)n^\beta}\widetilde{F}^{k+1}Q_{m_2} \right\|_1\leq \left\| P_{m_1}\left( \widetilde{F}-F \right)Q_{m_2-(k+1)n^\beta} \right\|_1\left\| \widetilde{F}^{k+1}Q_{m_2} \right\|_\infty.\label{eq:thm3 lemma4 121}
        \end{equation}
        Then use Lemma~\ref{lemma: thm3 step0} to estimate the trace norm above to be of order $O(n^{3\alpha/2-\beta})$. By \eqref{eq:thm3 lemma opt}, the operator norm is of order  $O\left(n^{(k+1)\alpha}\right)$. Plug the estimate of \eqref{eq:thm3 lemma4 121} back to \eqref{eq:thm3 lemma4 11} to obtain, as $n\to\infty$
        \begin{equation}\label{eq:thm3 lemma4 12}
          \left\| P_{m_1}\left( \widetilde{F}-F \right)\widetilde{F}^{k+1}Q_{m_2} \right\|_1=O\left( n^{(k+1)\alpha+\frac{3\alpha}{2}-\beta} \right).
        \end{equation}
        For the second summand on the right-hand side of \eqref{eq:thm3 lemma4 1} we use the telescopic sum, $$F^{k+1} -\widetilde{F}^{k+1}=\sum_{l=0}^kF^{l}\left( F -\widetilde{F}\right)\widetilde{F}^{k-l},$$ the fact that $Q_{m_2}+P_{m_2}=Id$, and triangle inequality to obtain
        \begin{multline}\label{eq:thm3 lemma4 13}
          \left\| P_{m_1}\left( \widetilde{F}-F \right)\left( F^{k+1} -\widetilde{F}^{k+1}\right)Q_{m_2} \right\|_1 \leq \sum_{l=0}^k\left\| P_{m_1}\left( \widetilde{F}-F \right)F^{l}\left( Q_{m_2}+P_{m_2} \right)\left( F -\widetilde{F}\right)\widetilde{F}^{k-l}Q_{m_2} \right\|_1 \\
          \leq \sum_{l=0}^k\left( \left\| P_{m_1}\left( \widetilde{F}-F \right)F^{l}Q_{m_2} \left( F -\widetilde{F}\right)\widetilde{F}^{k-l}Q_{m_2} \right\|_1 +\left\| P_{m_1}\left( \widetilde{F}-F \right)F^{l}P_{m_2} \left( F -\widetilde{F}\right)\widetilde{F}^{k-l}Q_{m_2} \right\|_1  \right).
        \end{multline}
        Use the trace norm inequality $\|ABC\|_1\leq \|A\|_1\|B\|_\infty\|C\|_\infty$ to obtain an estimate of the first term on the right-hand side of \eqref{eq:thm3 lemma4 13}, 
        \begin{equation}\label{eq:thm3 lemma4 131}
          \left\| P_{m_1}\left( \widetilde{F}-F \right)F^{l}Q_{m_2} \left( F -\widetilde{F}\right)\widetilde{F}^{k-l}Q_{m_2} \right\|_1 
          \leq \left\| P_{m_1}\left( \widetilde{F}-F \right)F^{l}Q_{m_2} \right\|_1  \left\|Q_{m_2}\left( F -\widetilde{F}\right)\right\|_\infty\left\|\widetilde{F}^{k-l}Q_{m_2}\right\|_\infty.
        \end{equation}
        
        The induction hypothesis for all $l=0,\dots, k$ implies the first term on the right-hand side of  \eqref{eq:thm3 lemma4 131} to be
        \begin{equation}\label{eq:thm3 lemma induciton1}
          \left\| P_{m_1}\left( \widetilde{F}-F \right)F^{l}Q_{m_2} \right\|_1=O\left( n^{l\alpha+\frac{3\alpha}{2}-\beta} \right).
        \end{equation}
        
        For the second term on the right-hand side of \eqref{eq:thm3 lemma4 131}, let us define, 
        Let us define
        \begin{align}
          \widetilde{F}^{(r)} \coloneqq &\left( \widetilde{\mathcal{J}}-x_0-\frac{\eta_r}{n^{\alpha}} \right)^{-1}\\
          F^{(r)} \coloneqq &  \left( \mathcal{J}-x_0-\frac{\eta_r}{n^{\alpha}} \right)^{-1}.
        \end{align}
        Recall the definition of $\widetilde{F}$ and $F$ in \eqref{eq:def F tilde} and  \eqref{eq:def F}. We apply the triangle inequality and the resolvent identity to obtain
        \begin{equation}\label{eq:thm3 lemma4 1311}
          \left\|Q_{m_2}\left( F -\widetilde{F}\right)\right\|_\infty\leq \sum_r|c_r|\left\|  Q_{m_2}
          \left( F^{(r)} -\widetilde{F}^{(r)}\right)\right\|_\infty=\sum_r|c_r|\left\|  Q_{m_2}
          \widetilde{F}^{(r)}\left(\widetilde{\mathcal{J}} -\mathcal{J}\right)F^{(r)} \right\|_\infty.
        \end{equation}
        Use  Lemma~\ref{lemma: free exp}, and we get
        \begin{equation}\label{eq:thm3 lemma4 131111}
          \left\|  Q_{m_2}
          \widetilde{F}^{(r)}\left(\widetilde{\mathcal{J}} -\mathcal{J}\right)F^{(r)} \right\|_\infty= 	\left\|  Q_{m_2}
          \widetilde{F}^{(r)}Q_{m_2-n^\beta}\left(\widetilde{\mathcal{J}} -\mathcal{J}\right)F^{(r)} \right\|_\infty+O\left( n^\alpha e^{-d'n^{\beta-\frac{\alpha}{2}}} \right) .
        \end{equation}
        
        By the operator norm inequality $\|AB\|_\infty\leq\|A\|_\infty\|B\|_\infty$ we obtain, 
        \begin{equation}\label{eq:thm3 lemma4 131112}
          \left\|  Q_{m_2}
          \widetilde{F}^{(r)}Q_{m_2-n^\beta}\left(\widetilde{\mathcal{J}} -\mathcal{J}\right)F^{(r)} \right\|_\infty \leq \left\|  Q_{m_2}
          \widetilde{F}^{(r)}\right\|_\infty\left\|Q_{m_2-n^\beta}\left(\widetilde{\mathcal{J}} -\mathcal{J}\right)\right\|_\infty\left\|F^{(r)} \right\|_\infty .
        \end{equation}
        By the assumption that \eqref{eq:ass perturbation recurrence 1} holds for all $j>m_2-2n^\beta$, we get $\left\|Q_{m_2-n^\beta}\left(\widetilde{\mathcal{J}} -\mathcal{J}\right)\right\|_\infty=O(n^{-2\beta})$. Together with the fact that $\|F^{(r)}\|_\infty=O\left( n^{\alpha} \right)$ and $\|\widetilde{F}^{(r)}\|_\infty=O\left( n^{\alpha} \right)$, we estimate \eqref{eq:thm3 lemma4 131112} to be of order $O(n^{2\alpha-2\beta})$. Plugging it into \eqref{eq:thm3 lemma4 131111} and further back into  \eqref{eq:thm3 lemma4 1311}, we get
        \begin{equation}
          \label{eq:thm3 lemma induciton2}
          \left\|Q_{m_2}\left( F -\widetilde{F}\right)\right\|_\infty=O\left( n^{2\alpha-2\beta} \right).
        \end{equation}
        The last term  on the right-hand side of  \eqref{eq:thm3 lemma4 131}  is estimated to be $\left\|\widetilde{F}^{k-l}Q_{m_2}\right\|_\infty=O\left( n^{(k-l)\alpha} \right)$. Hence, together with \eqref{eq:thm3 lemma induciton1} and \eqref{eq:thm3 lemma induciton2}, \eqref{eq:thm3 lemma4 131}  is estimated to be 
        
        \begin{equation}\label{eq:thm3 lemma4 1310}
          \left\| P_{m_1}\left( \widetilde{F}-F \right)F^{l}Q_{m_2} \left( F -\widetilde{F}\right)\widetilde{F}^{k-l}Q_{m_2} \right\|_1 =O\left( n^{(k+2)\alpha+\frac{3\alpha}{2}-3\beta} \right).
        \end{equation}
        
        Now we turn to the second term on the right-hand side of \eqref{eq:thm3 lemma4 13}. Use  Lemma~\ref{lemma: free exp}, 
        
        \begin{multline}\label{eq:thm3 lemma4 132}
          \left\| P_{m_1}\left( \widetilde{F}-F \right)F^{l}P_{m_2} \left( F -\widetilde{F}\right)\widetilde{F}^{k-l}Q_{m_2} \right\|_1 \\=\left\| P_{m_1}\left( \widetilde{F}-F \right)F^{l}P_{m_2} \left( F -\widetilde{F}\right)Q_{m_2-(k-l)n^\beta}\widetilde{F}^{k-l}Q_{m_2} \right\|_1 +O\left( n^{(l+2)\alpha}e^{-d'n^{\beta-\frac{\alpha}{2}}} \right). 
        \end{multline}
        Use the operator norm inequality $\|ABC\|_1\leq\|A\|_\infty\|B\|_1\|C\|_\infty$ to obtain
        \begin{multline}\label{eq:thm3 lemma4 1321}
          \left\| P_{m_1}\left( \widetilde{F}-F \right)F^{l}P_{m_2} \left( F -\widetilde{F}\right)Q_{m_2-(k-l)n^\beta}\widetilde{F}^{k-l}Q_{m_2} \right\|_1\\
          \leq \left\| P_{m_1}\left( \widetilde{F}-F \right)F^{l}\right\|_\infty\left\|P_{m_2} \left( F -\widetilde{F}\right)Q_{m_2-(k-l)n^\beta} \right\|_1\left\| \widetilde{F}^{k-l}Q_{m_2} \right\|_\infty.
        \end{multline}
        The operator norms are estimated to be $\left\| P_{m_1}\left( \widetilde{F}-F \right)F^{l}\right\|_\infty=O\left( n^{(l+1)\alpha} \right)$ and $\left\| \widetilde{F}^{k-l}Q_{m_2} \right\|_\infty=O\left( n^{(k-l)\alpha} \right)$. The trace norm can be estimated by Lemma~\ref{lemma: thm3 step0} to be $\left\|P_{m_2} \left( F -\widetilde{F}\right)Q_{m_2-(k-l)n^\beta} \right\|_1=O\left( n^{\frac{3\alpha}{2}-\beta} \right)$. Plug these three estimates into \eqref{eq:thm3 lemma4 1321}. Then we get an estimate of \eqref{eq:thm3 lemma4 132} to be 
        \begin{equation}\label{eq:thm3 lemma4 1320}
          \left\| P_{m_1}\left( \widetilde{F}-F \right)F^{l}P_{m_2} \left( F -\widetilde{F}\right)\widetilde{F}^{k-l}Q_{m_2} \right\|_1 \\=O\left( n^{(k+1)\alpha+\frac{3\alpha}{2}-\beta} \right).
        \end{equation}
        Plug \eqref{eq:thm3 lemma4 1310} and \eqref{eq:thm3 lemma4 1320} back into \eqref{eq:thm3 lemma4 13}, and we estimate the second summand on the right-hand side of \eqref{eq:thm3 lemma4 1} to be
        \begin{equation}\label{eq:thm3 lemma4 130}
          \left\| P_{m_1}\left( \widetilde{F}-F \right)\left( F^{k+1} -\widetilde{F}^{k+1}\right)Q_{m_2} \right\|_1 =O\left( n^{(k+1)\alpha+\frac{3\alpha}{2}-\beta}\right).
        \end{equation}
        Here we use the assumption $0<\frac{\alpha}{2}<\beta<1$ to deduce $O\left( n^{(k+2)\alpha+\frac{3\alpha}{2}-3\beta}\right)=o\left( n^{(k+1)\alpha+\frac{3\alpha}{2}-\beta}\right)$. 
        
        Plugging \eqref{eq:thm3 lemma4 12} and \eqref{eq:thm3 lemma4 130} into \eqref{eq:thm3 lemma4 1}, we obtain
        
        \begin{equation}\label{eq:thm3 lemma4 10}
          \left\| P_{m_1}\left( \widetilde{F}-F \right)F^{k+1}Q_{m_2} \right\|_1 =O\left( n^{(k+1)\alpha+\frac{3\alpha}{2}-\beta} \right).
        \end{equation}
        
        This concludes the induction argument. 
      \end{proof}

      \subsection{Proof of Theorem~\ref{thm: main 3}}
      The essential step of Theorem~\ref{thm: main 3} is the following proposition. Recall that $\mathcal{C}_m^{(n)}(\mathcal{A})$ is the $m$-th cumulant for a linear operator $\mathcal{A}$, defined as \eqref{eq:cumulatn operator}.
      \begin{proposition}\label{prop: one side compare}
        Assume that the assumptions in Theorem~\ref{thm: main 3} are satisfied. Then we have
        \begin{align}
          \lim_{n\to\infty}n^{-m\alpha} \left| \mathcal{C}_m^{(n)}(\widetilde{F})  - 	\mathcal{C}_m^{(n)}(F)  \right|= 0.
        \end{align}
      \end{proposition}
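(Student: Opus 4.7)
The plan is to compare the two cumulants term-by-term using the telescopic identity \eqref{eq:cumulant telescopic} (applied with $\mathcal{A}=\widetilde{F}$ and $\mathcal{B}=F$). This identity expresses each piece of the difference
$$\left(\Tr(\widetilde{F}^{l_1}P_n\cdots \widetilde{F}^{l_j}P_n)-\Tr(\widetilde{F}^{m}P_n)\right)-\left(\Tr(F^{l_1}P_n\cdots F^{l_j}P_n)-\Tr(F^{m}P_n)\right)$$
as a finite sum (with $m$-dependent but $n$-independent length) of traces, each containing exactly one factor of the form $\widetilde{F}^{l}-F^{l}$ for some $1\leq l\leq m-1$, together with at least one $Q_{n}$ and total resolvent degree $m$.

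Next, I would expand each difference via the telescope
$$\widetilde{F}^{l}-F^{l}=\sum_{i=0}^{l-1}\widetilde{F}^{\,l-1-i}(\widetilde{F}-F)F^{i},$$
so each resulting trace contains a single $(\widetilde{F}-F)$. Using the cyclic property, I would rotate each trace so that $(\widetilde{F}-F)$ is immediately to the left of a block of the shape $F^{k}(\text{projections})Q_{n}$, with the remaining $m-k-1$ resolvent factors and projections sitting to its left; call the latter operator $A$. Next I would insert a projection $P_{n+mn^{\beta}}$ just to the left of $(\widetilde{F}-F)$ at the cost of an exponentially small error, using Lemma~\ref{lemma: free exp} together with the analogue for $F$. (The analogue for $F$ is straightforward: by the assumption \eqref{eq:ass perturbation recurrence 1}, the Combes-Thomas-type estimate \eqref{eq:free Combes Thomas} extends to $F$ up to constants, since the relevant diagonals of $\mathcal{J}$ agree with those of $\widetilde{\mathcal{J}}$ up to $O(n^{-2\beta})$; alternatively the same repeated-resolvent-identity argument used in Lemma~\ref{lemma: thm3 4} applies.)

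Once $P_{n+mn^{\beta}}$ is in place, apply the trace norm inequality $|\Tr(A\cdot P_{n+mn^{\beta}}(\widetilde{F}-F)F^{k}Q_{n})|\leq \|A\|_{\infty}\,\|P_{n+mn^{\beta}}(\widetilde{F}-F)F^{k}Q_{n}\|_{1}$, bound the trace-norm factor by Lemma~\ref{lemma: thm3 4} as $O(n^{\,k\alpha+3\alpha/2-\beta})=O(n^{(k+1)\alpha-\varepsilon/2})$ since $\beta=(\alpha+\varepsilon)/2$, and bound $\|A\|_{\infty}=O(n^{(m-k-1)\alpha})$ using that each resolvent has operator norm $O(n^{\alpha})$. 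This yields $O(n^{m\alpha-\varepsilon/2})$ per term, and summing the $O_m(1)$ terms gives $|\mathcal{C}_{m}^{(n)}(\widetilde{F})-\mathcal{C}_{m}^{(n)}(F)|=O(n^{m\alpha-\varepsilon/2})$, which is the claim.

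The main obstacle I anticipate is bookkeeping: in a generic trace from the telescope one must cyclically shift so that the unique $(\widetilde{F}-F)$ ends up on the left of the $Q_{n}$ rather than vice versa, and then insert the auxiliary projection $P_{n+mn^{\beta}}$ without damaging the remaining structure. This is essentially the same combinatorial bookkeeping as in the proof of Proposition~\ref{prop: overview trimming} for $m\geq 3$; the difference factor $\widetilde{F}-F$ here replaces the role played there by $F-F_{n\pm 2mn^{\beta}}$, and Lemma~\ref{lemma: thm3 4} plays the role of the improved resolvent estimate. Once these moves are systematised, the cancellation margin $n^{-\varepsilon/2}$ is uniform in the (finitely many) terms, yielding the desired limit.
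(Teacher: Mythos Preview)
Your overall strategy—telescopic identity \eqref{eq:cumulant telescopic} with $\mathcal{A}=\widetilde{F}$, $\mathcal{B}=F$, then reducing to trace-norm bounds of the form $\|P_{\bullet}(\widetilde{F}-F)F^{k}Q_{\bullet}\|_{1}$ via Lemma~\ref{lemma: thm3 4}—is exactly the paper's approach, and the final accounting $O(n^{m\alpha+\alpha/2-\beta})=o(n^{m\alpha})$ is the same.

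One point needs correction. Your parenthetical claim that ``the Combes--Thomas-type estimate \eqref{eq:free Combes Thomas} extends to $F$ up to constants'' is not justified: assumption \eqref{eq:ass perturbation recurrence 1} only controls $a_{j,n},b_{j,n}$ for $j\geq n-n^{\beta+\varepsilon/2}$, and for smaller or larger $j$ the entries of $\mathcal{J}$ may be unbounded (think of the Hermite case $a_{j,n}=\sqrt{j/n}$), so neither the classical Combes--Thomas nor the Toeplitz formula \eqref{eq:free J inverse} is available for the full resolvent $F$. Your alternative (the repeated-resolvent-identity argument) does work, but more to the point the paper sidesteps the issue entirely: for the traces of types \eqref{eq:perturbation modifiedJacobi_inequal1}--\eqref{eq:perturbation modifiedJacobi_inequal2} it takes the telescope in the \emph{other} order, $\widetilde{F}^{l}-F^{l}=\sum_{k}F^{l-1-k}(\widetilde{F}-F)\widetilde{F}^{k}$, so that everything between $(\widetilde{F}-F)$ and $Q_{n}$ is built from $\widetilde{F}$ only. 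Then Lemma~\ref{lemma: free exp} alone suffices to commute $Q_{n}$ leftward, and one lands on $\|(\widetilde{F}-F)P_{n}Q_{n-mn^{\beta}}\|_{1}$, which is bounded directly via a second resolvent identity and the Hilbert--Schmidt estimate of Lemma~\ref{lemma: HS norm} (this is precisely Lemma~\ref{lemma: thm3 step0}, i.e.\ the $k=0$ case of Lemma~\ref{lemma: thm3 4}). Only for the third type \eqref{eq:perturbation modifiedJacobi_inequal3} does the paper invoke the full Lemma~\ref{lemma: thm3 4}, exactly as you propose. So your plan is right; just flip the telescope direction for the first two trace types and you avoid ever needing an off-diagonal decay estimate for $F$ itself.
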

      \begin{proof}
        First note that the following operator norms are bounded
        \begin{equation}
          \sup_{n>0}\{n^{-\alpha}\|\widetilde{F}\|_\infty,n^{-\alpha}\|F\|_\infty\}\eqqcolon C_{op}<\infty.
        \end{equation}
        Since both sums are finite in the cumulant formula  \eqref{eq:cumulatn operator}, it is sufficient to show that 
        \begin{align}\label{eq:pf_cumulant_Comparison}
          \Tr(\widetilde{F})^{l_1}P_n\dots (\widetilde{F})^{l_j}P_n-\Tr(\widetilde{F}^mP_n)-\Tr(F)^{l_1}P_n\dots (F)^{l_j}P_n+\Tr(F^mP_n)=o(n^{m\alpha})
        \end{align}
        Similar to \eqref{eq:cumulant telescopic}, using the telescoping sum twice and cyclic property of the trace operator, we can rewrite the left-hand side of \eqref{eq:pf_cumulant_Comparison} to be 
        \begin{multline*}
          \sum_{k=2}^jTr\left(\widetilde{F}^{l_1+\dots+l_{k-1}}Q_n\widetilde{F}^{l_{k}}P_n\dots P_n\widetilde{F}^{l_j}P_n\right)-\Tr\left(F^{l_1+\dots+l_{k-1}}Q_nF^{l_{k}}P_n\dots P_nF^{l_j}P_n\right) \\
          =  \sum_{k=2}^jTr\left(\widetilde{F}^{l_{k}}P_n\dots P_n\widetilde{F}^{l_j}P_n\widetilde{F}^{l_1+\dots+l_{k-1}}Q_n\right)-\Tr\left(F^{l_{k}}P_n\dots P_nF^{l_j}P_nF^{l_1+\dots+l_{k-1}}Q_n\right) \\
          =  -\sum_{k=2}^{j-1}\Bigg( \Tr\left((\widetilde{F}^{l_k}-F^{l_k})P_n\widetilde{F}^{l_{k+1}}\dots P_n\widetilde{F}^{l_j}P_n\widetilde{F}^{l_1+\dots+l_{k-1}}Q_n\right)\\
          \quad +\sum_{i=k}^{j-1}\Tr\left(F^{l_k}P_n\dots P_nF^{l_{i}}P_n(\widetilde{F}^{l_{i+1}}-F^{l_{i+1}})P_n\dots \widetilde{F}^{l_{j}}P_n\widetilde{F}^{l_1+\dots+l_{k-1}}Q_n\right)\\
          \quad + \Tr\left(F^{l_{k}}P_n\dots P_nF^{l_j}P_n(\widetilde{F}^{l_1+\dots+l_{k-1}}-F^{l_1+\dots+l_{k-1}})Q_n\right) \Bigg)
        \end{multline*}
        Then writing $\widetilde{F}^{l}-F^{l}=\sum_{k=0}^{l-1}F^{l-1-k}(\widetilde{F}-F)\widetilde{F}^{k}$,  by Lemma~\ref{lemma: free exp}, we commute $Q_n$ from the right to left to get
        \begin{multline}
          \left|\Tr\left((\widetilde{F}^{l_k}-F^{l_k})P_n\widetilde{F}^{l_{k+1}}\dots P_n\widetilde{F}^{l_j}P_n\widetilde{F}^{l_1+\dots+l_{k-1}}Q_n\right)\right| 
          \leq l_k C_{op}^{m-1}n^{(m-1)\alpha}\|(\widetilde{F}-F)P_nQ_{n-mn^\beta}\|_1 + R_n, \label{eq:perturbation modifiedJacobi_inequal1} 
        \end{multline}
        \begin{multline}
          \left|\Tr\left(F^{l_k}P_n\dots P_nF^{l_{i}}P_n(\widetilde{F}^{l_{i+1}}-F^{l_{i+1}})P_n\dots \widetilde{F}^{l_{j}}P_n\widetilde{F}^{l_1+\dots+l_{k-1}}Q_n\right)\right| \\
          \leq l_{i+1} C_{op}^{m-1}n^{(m-1)\alpha}\|(\widetilde{F}-F)P_nQ_{n-mn^\beta}\|_1 + R_n ,\label{eq:perturbation modifiedJacobi_inequal2}
        \end{multline}
        \begin{multline}
          \left|\Tr\left(F^{l_{k}}P_n\dots P_nF^{l_j}P_n(\widetilde{F}^{l_1+\dots+l_{k-1}}-F^{l_1+\dots+l_{k-1}})Q_n\right)\right|\\
          \leq C_{op}^{m-(l_1+\dots+l_{k-1})}n^{(m-(l_1+\dots+l_{k-1}))\alpha}\|P_n(\widetilde{F}^{l_1+\dots+l_{k-1}}-F^{l_1+\dots+l_{k-1}})Q_{n}\|_1 + R_n, \label{eq:perturbation modifiedJacobi_inequal3}
        \end{multline}
        where $R_n = O(n^{m\alpha}e^{-d_0n^{\beta-\frac{\alpha}{2}}})$ is exponentially small.
        
        Define 
        \begin{equation}
          \widetilde{F}^{(r)} \coloneqq \left( \widetilde{\mathcal{J}}-x_0-\frac{\eta_r}{n^{\alpha}} \right)^{-1},\quad 
          F^{(r)} \coloneqq   \left( \mathcal{J}-x_0-\frac{\eta_r}{n^{\alpha}} \right)^{-1}.
        \end{equation}
        Use the triangle inequality to obtain
        \begin{equation}
          \|(\widetilde{F}-F)P_nQ_{n-mn^\beta}\|_1\leq \sum_{r=1}^{2M}|c_r|\left\|(\widetilde{F}^{(r)}-F^{(r)})P_nQ_{n-mn^\beta}\right\|_1. \label{eq:perturbation 0}
        \end{equation}
        Recall that for any linear operator $\mathcal{A}$ and $\mathcal{B}$, we have the resolvent identity $\mathcal{A}^{-1}-\mathcal{B}^{-1}=\mathcal{B}^{-1}(\mathcal{B}-\mathcal{A})\mathcal{A}^{-1}$. Further, we also have $\mathcal{B}^{-1}=\left(Id-\mathcal{B}^{-1}\left(  \mathcal{B}-\mathcal{A} \right)\right)\mathcal{A}^{-1}$. Combine these two formulas to get, 
        \begin{equation*}
          \mathcal{A}^{-1}-\mathcal{B}^{-1}=\left(Id-\mathcal{B}^{-1}\left(  \mathcal{B}-\mathcal{A} \right)\right)\mathcal{A}^{-1}(\mathcal{B}-\mathcal{A})\mathcal{A}^{-1}.
        \end{equation*}
        Take  $\mathcal{A}=\widetilde{\mathcal{J}}-x_0-\frac{\eta_r}{n^{\alpha}}=(\widetilde{F}^{(r)})^{-1}$ and $\mathcal{B}=\mathcal{J}-x_0-\frac{\eta_r}{n^{\alpha}}=(F^{(r)})^{-1}$. Rewrite each summand of the right-hand side of \eqref{eq:perturbation 0} to be
        \begin{equation}\label{eq:thm3 prop 1}
          \left\|(\widetilde{F}^{(r)}-F^{(r)})P_nQ_{n-mn^\beta}\right\|_1= \left\|\left(Id-F^{(r)}\left(  \mathcal{J}-\widetilde{\mathcal{J}} \right)\right)\widetilde{F}^{(r)}\left( \mathcal{J}-\widetilde{\mathcal{J}} \right)\widetilde{F}^{(r)}P_nQ_{n-mn^\beta}\right\|_1 .
        \end{equation}
        Recall that $\mathcal{J}-\widetilde{\mathcal{J}}$ is tri-diagonal and we have for any integer $m_1$ 
        
        \begin{align}
          \left( \mathcal{J}-\widetilde{\mathcal{J}} \right)P_{m_1}=P_{m_1+1}\left( \mathcal{J}-\widetilde{\mathcal{J}} \right)P_{m_1}.\\
          \left( \mathcal{J}-\widetilde{\mathcal{J}} \right)Q_{m_1}=Q_{m_1-1}\left( \mathcal{J}-\widetilde{\mathcal{J}} \right)Q_{m_1}.
        \end{align}
        Use Lemma~\ref{lemma: free exp} to get that $\widetilde{F}^{(r)}$  commutes with the projection operator $P_{m_1}$ and $Q_{m_1}$ with an exponential small error. That is
        \begin{multline}
          \left\|(\widetilde{F}^{(r)}-F^{(r)})P_nQ_{n-mn^\beta}\right\|_1\\
          \leq
          \Bigg\|\left(Id-F^{(r)}\left( \mathcal{J}-\widetilde{\mathcal{J}}  \right)\right)P_{n+2n^\beta+1}Q_{n-(m+2)n^\beta-1}\widetilde{F}^{(r)}P_{n+n^\beta+1}Q_{n-(m+1)n^\beta-1}
          \\
          \left( \widetilde{\mathcal{J}}-\mathcal{J} \right)P_{n+n^\beta}Q_{n-(m+1)n^\beta}\widetilde{F}^{(r)}P_nQ_{n-mn^\beta}\Bigg\|_1+ R_n, \label{eq:perturbation 10}
        \end{multline}
        where $R_n = O(n^{m\alpha}e^{-d_0n^{\beta-\frac{\alpha}{2}}})$ is exponentially small. Use the trace norm inequality $\|ABCD\|_1\leq \|A\|_\infty\|B\|_2\|C\|_\infty\|D\|_2$  and \eqref{eq:thm3 prop 1} to obtain
        \begin{multline}
          \left\|(\widetilde{F}^{(r)}-F^{(r)})P_nQ_{n-mn^\beta}\right\|_1\leq  \|(Id-F^{(r)}\left( \mathcal{J}-\widetilde{\mathcal{J}}  \right)P_{n+2n^\beta+1}Q_{n-(m+2)n^\beta-1}\|_\infty\\
          \|\widetilde{F}^{(r)}P_{n+n^\beta+1}Q_{n-(m+1)n^\beta-1}\|_2\|\left( \widetilde{\mathcal{J}}-\mathcal{J} \right)P_{n+n^\beta}Q_{n-(m+1)n^\beta}\|_\infty\|\widetilde{F}^{(r)}P_nQ_{n-mn^\beta}\|_2 + R_n, \label{eq:perturbation 1}
        \end{multline}
        where $R_n = O\left(n^{\alpha+2\beta}e^{-d_0n^{\beta-\frac{\alpha}{2}}}\right)$ is exponentially small. By the assumption \eqref{eq:ass perturbation recurrence 1} and the fact that $\|F^{(r)}\|_\infty=O(n^{\alpha})$ we have 
        \begin{align}
          \left\|\left( \widetilde{\mathcal{J}}-\mathcal{J} \right)P_{n+n^\beta}Q_{n-(m+1)n^\beta}\right\|_\infty = & O(n^{-2\beta}),  \label{eq:perturbation 1.1}\\
          \left\|F^{(r)}\left( \mathcal{J}-\widetilde{\mathcal{J}}  \right)P_{n+2n^\beta+1}Q_{n-(m+2)n^\beta-1}\right\|_\infty= & O(n^{\alpha-2\beta})  \label{eq:perturbation 1.2}.
        \end{align}
      Use Lemma~\ref{lemma: HS norm} to obtain
        \begin{equation}
          \|\widetilde{F}^{(r)}P_nQ_{n-mn^\beta}\|_2^2= O(n^{\beta+\frac{3\alpha}{2}})  \label{eq:perturbation 1.3}.
        \end{equation}
        Recall that $R_n$ is of exponentially small for $n$ large. Plug \eqref{eq:perturbation 1.1}, \eqref{eq:perturbation 1.2} and \eqref{eq:perturbation 1.3} into \eqref{eq:perturbation 1} to obtain, as $n\to\infty$,
        \begin{equation}\label{eq:thm3 prop 2}
          \left\|(\widetilde{F}^{(r)}-F^{(r)})P_nQ_{n-mn^\beta}\right\|_1=O(n^{-\beta+\frac{3\alpha}{2}}).
        \end{equation}
        The estimate \eqref{eq:thm3 prop 2}, together with \eqref{eq:perturbation 0}, implies that \eqref{eq:perturbation modifiedJacobi_inequal1} and \eqref{eq:perturbation modifiedJacobi_inequal2} are both of order $O\left( n^{m\alpha-\beta+\frac{\alpha}{2}} \right)$. 
        
        For \eqref{eq:perturbation modifiedJacobi_inequal3}, consider any $l=1,2,\dots,m$. Recall the telescoping sum
        \begin{equation}
          \widetilde{F}^{l}-F^{l}= \sum_{k=0}^{l-1} \widetilde{F}^{l-k-1}\left(\widetilde{F}- F \right)F^{k}
        \end{equation}
        Apply Lemma~\ref{lemma: free exp} to obtain
        \begin{equation}\label{eq:perturbation modifiedJacobi_trouble_term_2}
          \left\|P_n(\widetilde{F}^{l}-F^{l})Q_{n}\right\|_1\leq \sum_{k=0}^{l-1} (n^\alpha C_{op})^{l-k-1}\left\|P_{n+(l-k-1)n^\beta}(\widetilde{F}-F)F^{k}Q_n\right\|_1+R_n,
        \end{equation}
        where $R_n = O\left(n^{\alpha+2\beta}e^{-d_0n^{\beta-\frac{\alpha}{2}}}\right)$ is exponentially small. We estimate \eqref{eq:perturbation modifiedJacobi_trouble_term_2}, by Lemma~\ref{lemma: thm3 4}, to be 
        \begin{align}
          \|P_n(\widetilde{F}^{l}-F^{l})Q_{n}\|_1=O\left( n^{(l-1)\alpha+\frac{3\alpha}{2}-\beta} \right)
        \end{align}
        This shows that \eqref{eq:perturbation modifiedJacobi_inequal3} is of order $O\left(n^{m\alpha+\frac{\alpha}{2}-\beta}\right)$.
        
        In summary we have shown that \eqref{eq:perturbation modifiedJacobi_inequal1}, \eqref{eq:perturbation modifiedJacobi_inequal2} and  \eqref{eq:perturbation modifiedJacobi_inequal3} are all of order of $O\left( n^{m\alpha-\beta+\frac{\alpha}{2}} \right)$. Hence, by assumption $0<\frac{\alpha}{2}<\beta<2$, we conclude that \eqref{eq:pf_cumulant_Comparison} holds. Plug this into the cumulant formula \eqref{eq:cumulatn operator} and we complete the proof.
      \end{proof}

      Now we are ready to prove Theorem~\ref{thm: main 3}.
      \begin{proof}[Proof of Theorem~\ref{thm: main 3} ]
        
        First consider test functions like $f(x)=\sum_{r=1}^Md_r\Im (x- \lambda_r)^{-1}$ for $d_r\in \mathbb{R}$ and $\Im(\lambda_r)>0$,i.e., \eqref{eq:resolvent fun} .  Let $\widetilde{X_n}(f_{\alpha,x_0})$ be the mesoscopic linear statistics of the OPE given by the Chebyshev polynomial of the second kind at the edge as described in beginning of Subsection~\ref{sec:thm3 Toeplitz}. Apply Theorem~\ref{thm: main 2} and get the following convergence in distribution as $n\to\infty$
        \begin{align}
          \widetilde{X_n}(f_{\alpha,x_0})-\mathbb{E}[\widetilde{X_n}(f_{\alpha,x_0})] \to \mathcal{N}(0,\sigma_f^2),
        \end{align}
        where $\sigma_f^2$ is the variance in Theorem~\ref{thm: main 2}. This is equivalent to the convergence of the cumulants  
        \begin{align}
          \lim_{n\to\infty}n^{-m\alpha} \mathcal{C}_m(\widetilde{F}) =\begin{cases}
            \widetilde{\sigma_f}^2, &\quad m=2 \\
            0, &\quad m>2.
          \end{cases}
        \end{align}
        
        Then Proposition~\ref{prop: one side compare} implies that 
        \begin{align}
          \lim_{n\to\infty}n^{-m\alpha} \mathcal{C}_m(F) =\begin{cases}
            \widetilde{\sigma_f}^2, &\quad m=2 \\
            0, &\quad m>2.
          \end{cases}
        \end{align}
        This is equivalent to 
        \begin{equation}\label{eq:free clt resolvent}
          X_{f_{\alpha,x_0}}^{(n)}-\mathbb{E}[X_{f_{\alpha,x_0}}^{(n)}] \to \mathcal{N}(0,\sigma_f^2),
        \end{equation}
        where $X_{f_{\alpha,x_0}}^{(n)}$ is the mesoscopic linear statistics of for OPEs whose recurrence coefficients satisfy \eqref{eq:ass perturbation recurrence 1}.
        
        Now recall the argument in Section~\ref{sec:Lipschitz extension}. Use the exact same proof of the last step of Theorem~\ref{thm: main 2} in Section~\ref{sec:Lipschitz extension} and we extend \eqref{eq:free clt resolvent} to the test functions $f\in C_c^1$.
      \end{proof}

      \section{Examples}	 \label{sec:examples}
      Theorems~\ref{thm: main 3}, ~\ref{thm: main 1} and~\ref{thm: main 2} can be used to obtain the asymptotics of mesoscopic fluctuations at the edges of many classes of OPEs, that are known in the literature. Here we present some of the interesting examples with both continuous and discrete measures. We include classical examples as well as uncommon but still popular ones. 
      
      \subsection{Laguerre Unitary Ensemble} \label{sec:eg Laguerre}
      The orthogonality measure of scaled Laguerre polynomials is given by
      $$d\mu(x) = x^{\gamma}e^{-nx}dx, \quad x\geq 0,$$
      for some parameter $\gamma>-1$. 
      The recurrence relation reads  $$xp_j(x)=\frac{\sqrt{(j+1)(j+1+\gamma)}}{n}p_{j+1}(x)+\frac{2j+\gamma+1}{n}p_j(x)+\frac{\sqrt{j(j+\gamma)}}{n}p_{j-1}(x)$$ 
      and hence the recurrence coefficients are 
      \begin{equation}
        a_{j,n} = \frac{\sqrt{j(j+\gamma)}}{n} \quad,\quad b_{j,n}=\frac{2j+\gamma+1}{n}
      \end{equation}
      \begin{figure}[t]
        \centering
        \includegraphics[scale=0.6]{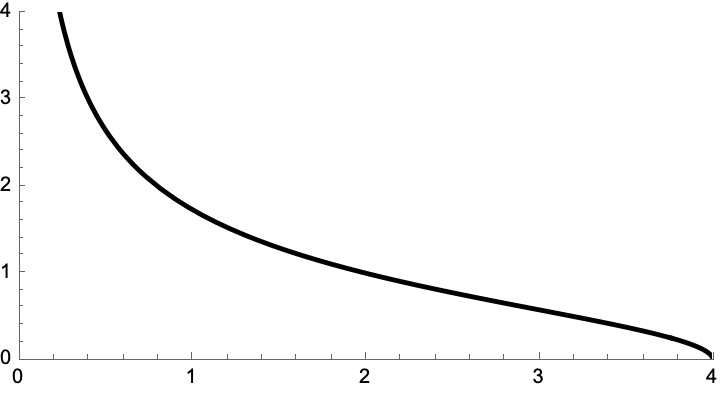}
        \caption{The equilibrium measure of Laguerre Unitary Ensemble.}
      \end{figure}
      
      Clearly, Condition~\ref{ass:slowly varying recurrence coe} is satisfied for all recurrence coefficients with indices in the following set $I$, 
      \begin{equation}
        I=\left\{j\in \mathbb{N}: \frac{j}{n}\to 1\right\}.
      \end{equation}

      At the left edge, $b_{n-1,n}-2\sqrt{a_{n,n}a_{n-1,n}}=\frac{\gamma^2+1}{4 n^2}+O\left(n^{-3}\right)$, by Taylor approximation. Take $x_0=0$. We compute that for all $j\in I_{n,m}^{(\beta)}$
      \begin{equation}
        a_{j,n}a_{j-2,n}-a_{j-1,n}^2 = -\frac{1}{n^2}+O(n^{-4}),
      \end{equation}
      \begin{equation}
        (b_{j-1,n}-x_0-a_{j,n})a_{j-2,n}-(b_{j-2,n}-x_0-a_{j-1,n})a_{j-1,n}=-\frac{\gamma^2}{2 n^3} +O(n^{\beta-4}).
      \end{equation}
      By Theorem~\ref{thm: main 2} we conclude that the mesoscopic CLT holds for all $0<\frac{\alpha}{2}<\beta<\frac{\alpha+1}{3}<1$. Taking $\beta\to\frac{\alpha}{2}$. We conclude that the mesoscopic CLT holds for all $0<\alpha<2$.
      
      At the right edge, $b_{n-1,n}+2\sqrt{a_{n,n}a_{n-1,n}}=4+(2\gamma-2)n^{-1}+O(n^{-2})$, by Taylor approximation. Take $x_0=4$, apply Theorem~\ref{thm: main 1} and we conclude that the asymptotics of the edge fluctuations holds for all $\alpha\in (0,\frac{2}{3})$.
      
      
      \subsection{Gaussian Unitary Ensemble}
      The orthogonality measure of scaled Hermite polynomials is given by
      $$d\mu(x) = e^{-nx^2/2}dx, \quad x\in\mathbb{R}. $$
      The recurrence coefficients read
      \begin{equation}
        a_{j,n} = \sqrt{\frac{j}{n}} \quad,\quad b_{j,n}=0.
      \end{equation}
      \begin{figure}[t]
        \centering
        \includegraphics[scale=0.6]{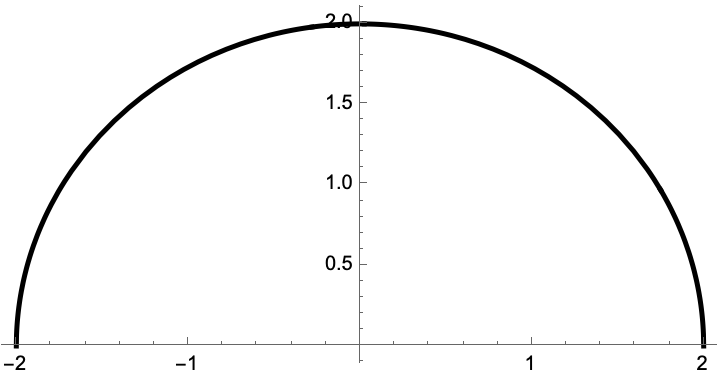}
        \caption{The equilibrium measure of Gaussian Unitary Ensemble.}
      \end{figure}
      Clearly, Condition~\ref{ass:slowly varying recurrence coe} is satisfied for all recurrence coefficients with indices in the following set $I$, 
      \begin{equation}
        I=\left\{j\in \mathbb{N}: \frac{j}{n}\to 1\right\}.
      \end{equation}
      
      Note that $b_{n-1,n}\pm2\sqrt{a_{n,n}a_{n-1,n}}=\pm 2 + O(n^{-1})$. Take $x_0=-2$ or $2$, apply Theorem~\ref{thm: main 1} and we conclude that the asymptotics of the edge fluctuations holds for all $\alpha\in (0,\frac{2}{3})$.
      
      \subsection{Jacobi Unitary Ensemble}\label{sec:eg JUE}
      The orthogonality measure of Jacobi polynomials is given by
      $$d\mu(x) = (x-2)^{\gamma_1}(x+2)^{\gamma_2}dx, \quad x\in[-2,2],$$
      for some parameter $\gamma_1, \gamma_2>-1$. The recurrence coefficients read 
      \begin{equation}
        a_{j} = \sqrt{\frac{16j(j+\gamma_1+\gamma_2)(j+\gamma_1)(j+\gamma_2)}{(2j+\gamma_1+\gamma_2-1)(2j+\gamma_1+\gamma_2)^2(2j+\gamma_1+\gamma_2+1)}},\quad
         b_{j}=\frac{2(\gamma_2^2-\gamma_1^2)}{(2j+\gamma_1+\gamma_2)(2j+\gamma_1+\gamma_2+2)}.
      \end{equation}
      Note that as $j\to\infty$, 
      $$a_j=1+\frac{1-2\gamma_1^2-2\gamma_2^2}{8j^2}+O(j^{-3}), \quad b_j=\frac{\gamma_2^2-\gamma_1^2}{2j^2}+O(j^{-3}).$$
      The equilibrium measure is given the the following figure. 
      \begin{figure}[t]
        \centering
        \includegraphics[scale=0.6]{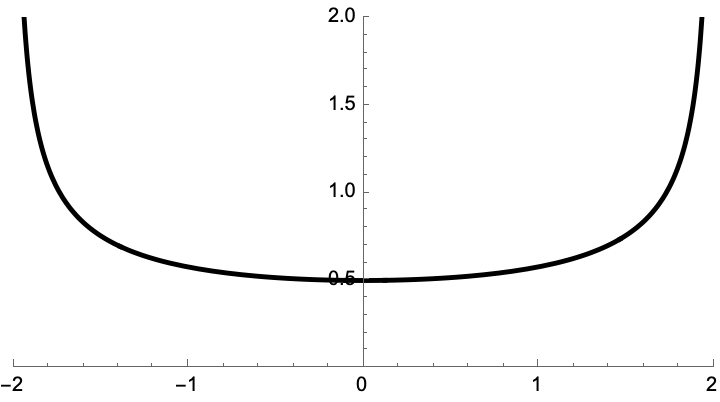}
        \caption{The equilibrium measure of Jacobi Unitary Ensemble.}
      \end{figure}
    
      The edges of fluctuations are at $x_0=2$ or $-2$. Hence, by Theorem~\ref{thm: main 2} or~\ref{thm: main 3} we have the mesoscopic fluctuations at the edges hold for all $\alpha\in(0,2)$. Note that for the Chebyshev polynomial of the second kind we have $\gamma_1=\gamma_2=-1/2$, $a_j=1$ and $b_j=0$.

      \subsection{Freud Weight}\label{sec:eg Freud}
        The orthogonality measure for Freud polynomials is given by Freud weight $$d\mu_n(x)=e^{-n|x|^\gamma}dx, \quad \gamma>0, \quad x\in\mathbb{R}.$$ 
        The recurrence is shown in \cite{lubinsky1988proof,kriecherbauer1999strong} that
      \begin{equation}
        a_{j,n}=\frac{1}{2}\left( \frac{\Gamma(\frac{\gamma}{2})\Gamma(\frac{1}{2})}{\Gamma(\frac{\gamma+1}{2})} \right)^{\frac{1}{\gamma}}\left( \frac{j}{n} \right)^{\frac{1}{\gamma}}(1+res_{\gamma}(j)) \quad\text{and } b_{j,n}=0,
      \end{equation}
      where $\Gamma$ is the Gamma function and 
      \begin{equation}
        res_{\gamma}(j)=\begin{cases}
          O(j^{-2}), &\quad \gamma\geq 2\text{ or } 0<\gamma\leq \frac{1}{2},\\
          O(j^{-\gamma}), &\quad 1<\gamma\leq 2,\\
          O(j^{-1}(\log j)^{-2}), &\quad \gamma=1,\\
          O(j^{-1/\gamma}), &\quad \gamma\geq 2\text{ or } \frac{1}{2}<\gamma <1
        \end{cases}, \quad \text{as } j\to\infty.
      \end{equation}
      Note that its Jacobi operator is essentially self-adjoint if and  only if $\gamma\geq 1$. In such case, the moment problem is determinant. Note that for $0<\gamma<1$, the set of Freud orthonormal polynomials is not dense in $L^2(\mu_n)$. The moment problem is indeterminant. 
      
      Clearly, Condition~\ref{ass:slowly varying recurrence coe} is satisfied for all recurrence coefficients with indices in the following set $I$, 
      \begin{equation}
        I=\left\{j\in \mathbb{N}: \frac{j}{n}\to 1\right\}.
      \end{equation}
      Note that $b_{n-1,n}\pm2\sqrt{a_{n,n}a_{n-1,n}}=\pm \left( \frac{\Gamma(\frac{\gamma}{2})\Gamma(\frac{1}{2})}{\Gamma(\frac{\gamma+1}{2})} \right)^{\frac{1}{\gamma}} + O(n^{-1})$. For both edges  $x_0=- \left( \frac{\Gamma(\frac{\gamma}{2})\Gamma(\frac{1}{2})}{\Gamma(\frac{\gamma+1}{2})} \right)^{\frac{1}{\gamma}} $ or $ \left( \frac{\Gamma(\frac{\gamma}{2})\Gamma(\frac{1}{2})}{\Gamma(\frac{\gamma+1}{2})} \right)^{\frac{1}{\gamma}} $, apply Theorem~\ref{thm: main 1} and we conclude that the asymptotics of the edge fluctuations holds for all $\alpha\in (0,\frac{2}{3})$.
      
      In particular, for $\gamma=2$, the Freud weight is reduced to be the Hermite case.

      \subsection{Tricomi-Carlitz Polynomial Ensemble}\label{sec:eg TC}
      Next, we consider the Tricomi-Carlitz polynomials. Their zero distributions were first studied by \cite{goh1994asymptotics, goh1997zero}. We mention this example to emphasis that the edge of fluctuations may not coincide with the edge of the equilibrium measure. Take $\gamma>1$. The orthogonal measure $\nu^{(\gamma)}$ is a step function with jumps at the points 
      \begin{align*}
        \nu^{(\gamma)}(x) = \frac{(k+\gamma)^{k-1}e^{-k}}{k!} \qquad \text{ at } x=\pm (k+\gamma)^{-\frac{1}{2}}, \qquad k=0,1,\dots.
      \end{align*}
      It gives a discrete polynomial $f_n^{(\gamma)}$. To obtain a reasonable limit we need to rescale $x$ by $n^{-1/2}$, see examples~$4.7$ in \cite{kuijlaars1999asymptotic}. That is we have a scaled measure 
      
      \begin{align*}
        \mu^{(\gamma)}(x)\coloneqq\nu^{(\gamma)}(\sqrt{n}x) = \frac{(k+\gamma)^{k-1}e^{-k}}{k!} \qquad \text{ at } x=\pm (k+\gamma)^{-\frac{1}{2}}\sqrt{n}, \qquad k=0,1,\dots.
      \end{align*}
      The scaled polynomial is given by $f_n^{(\alpha)}(n^{-1/2}x)$.
      However, its recurrence coefficients admits a simpler form 
      \begin{align}
        a_{j,n} = \sqrt{\frac{jn}{(j+\gamma-1)(j+\gamma)}} \quad,\quad b_{j,n}=0
      \end{align}
      \begin{figure}[t]
        \centering
        \includegraphics[scale=0.6]{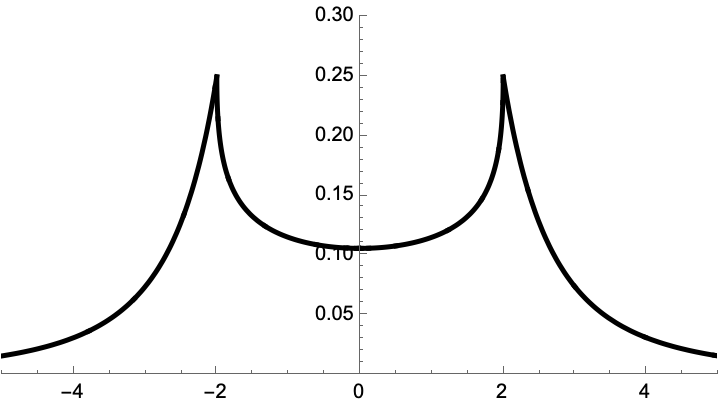}
        \caption{The equilibrium measure of Tricomi-Carlitz Polynomial Ensemble.}
      \end{figure}
      Clearly, Condition~\ref{ass:slowly varying recurrence coe} is satisfied for all recurrence coefficients with indices in the following set $I$, 
      \begin{equation}
        I=\left\{j\in \mathbb{N}: \frac{j}{n}\to 1\right\}.
      \end{equation}
      
      For both edges $b_{n-1,n}\pm2\sqrt{a_{n,n}a_{n-1,n}}=\pm 2 + O(n^{-1})$, by Taylor approximation. Take $x_0=-2$ or $2$, apply Theorem~\ref{thm: main 1} and we conclude that the asymptotics of the edge fluctuations holds for all $\alpha\in (0,\frac{2}{3})$. 
      
      \subsection{Weight with Logarithm-Singularity}\label{sec:example log}
      
      Deift and Piorkowski in the work \cite{deift2024recurrence} consider orthogonal polynomials with orthogonality measure by 
      \begin{equation*}
        d\mu(x) = \log\left( \frac{2}{1-x} \right), \qquad x\in[-1,1).
      \end{equation*} 
      They show the asymptotics of its recurrence coefficients to be 
      \begin{multline*}
        a_{k,n} = \frac{1}{2}-\frac{1}{16k^2} - \frac{3}{32k^2\log^2(k)} +O\left( \frac{1}{k^2\log^3(k)} \right), \quad
         b_{k,n} = \frac{1}{4k^2} - \frac{3}{16k^2\log^2(n)}+O\left( \frac{1}{k^2\log^3(k)} \right). 
      \end{multline*}
      Hence for both edges,  $b_{n-1,n}\pm2\sqrt{a_{n,n}a_{n-1,n}}=\pm 1 + O(n^{-2})$. Take $x_0= -1$ or $1$, apply Theorem~\ref{thm: main 3} and we conclude that the asymptotics of the edge fluctuations holds for all $\alpha\in (0,2)$.
      
      \subsection{Krawtchouk Polynomial Ensemble}\label{sec:eg Krawtchouk}
      Given $K\in \mathbb{N}$ and $p\in(0,1) $. The Krawtchouk polynomial $k_j(x;p, K)$ is a discrete polynomial on $\{0,1,\dots, K\}$. The orthogonality weight $\nu$ is
      \begin{equation*}
        \nu(x) = \binom{K}{x}p^x(1-p)^{K-x}, \qquad x=0,1,\dots, K.
      \end{equation*}
      The recurrence coefficients of the Krawtchouk polynomials are
      \begin{equation*}
        a_j=\sqrt{(n-j+1)jp(1-p)}, \quad b_j=(n-j)p+j(1-p).
      \end{equation*}
     The OPE with the Krawtchouk weight describes uniformly
     distributed domino tilings of the Aztec diamond, see \cite{johansson2002non}. Here K is related to the size of the diamond and it is particularly interesting to let $K$ to infinity. Following \cite{johansson2002non}, we consider the case where $\frac{K}{n}\to t$ for $t\geq 2$ as $n\to\infty$. Of interests are the scaled polynomials $k_j(nx;p,N)$. Then the scaled orthogonal weight $\nu$ is
      \begin{equation*}
        \mu(x) = \binom{K}{nx}p^{nx}(1-p)^{K-nx}, \qquad x=0,\frac{1}{n},\frac{2}{n}, \dots, \frac{K}{n}.
      \end{equation*}
      The scaled Krawtchouk polynomials have the recurrence coefficients, 
      \begin{equation*}
        a_{j,n}=\sqrt{\frac{(K-j+1)jp(1-p)}{n^2}}, \quad b_j=\frac{(K-j)p+j(1-p)}{n}.
      \end{equation*}
      Clearly, Condition~\ref{ass:slowly varying recurrence coe} is satisfied for all recurrence coefficients with indices in the following set $I$, 
      \begin{equation}
        I=\left\{j\in \mathbb{N}: \frac{j}{n}\to 1\right\}.
      \end{equation}
      
      For both edges $b_{n-1,n}\pm2\sqrt{a_{n,n}a_{n-1,n}}=(t-2)p+1\pm(t-1)\sqrt{p(1-p)} + O(n^{-1})$, by Taylor approximation. Take $x_0=(t-2)p+1\pm(t-1)\sqrt{p(1-p)}$, apply Theorem~\ref{thm: main 1} and we conclude that the asymptotics of the edge fluctuations holds for all $\alpha\in (0,\frac{2}{3})$.
      
      \subsection{Hahn Polynomial Ensemble}\label{sec:eg Hahn}
      Given $a,b>-1$ and $a,b,N\in\mathbb{N}$. The Hahn orthonormal weight is defined as 
      \begin{equation}
        \nu_N^{(a,b)}(x)=\binom{a+x}{x}\binom{b+N-x}{N-x}, \quad, x=0,1,\dots, N.
      \end{equation}
      
      The Hahn ensemble appears in the lozenge tilings of a hexagon with uniform weights. The parameters $a,b,N$ are related to the size of the hexagon. To study the large hexagon, of interested when the sizes grows linearly in $n$ as $n\to\infty$. Let 
      \begin{equation*}
        \frac{a}{n}\to t_1, \quad, \frac{b}{n}\to t_2, \quad \frac{N}{n}\to t_3, \quad \text{ for some } t_1,t_2>0., t_3\geq1.
      \end{equation*}
      The scaled Hahn weight is 
      \begin{equation}
        \mu_N^{(a,b)}(x)=\binom{a+nx}{nx}\binom{b+N-nx}{N-nx}, \quad, x=0,\frac{1}{n},\dots, \frac{N}{n}.
      \end{equation}
      Then the scaled Hahn polynomials have the recurrence coefficients, 
      \begin{equation*}
        a_{j,n}=\frac{j(j+a+b+N+1)(j+b)}{N(2j+a+b)(2j+a+b+1)}\sqrt{\frac{(N-j)(j+a+b)(a+j)(2j+a+b+1)}{j(j+a+b+N+1)(b+j)(2j+a+b-1)}},
      \end{equation*}
      \begin{equation*}
        b_{j,n}= \frac{(N-j)(j+a+b+1)(j+a+1)}{N(2j+a+b+N+1)(2j+a+b+2)}.
      \end{equation*}
      Clearly, Condition~\ref{ass:slowly varying recurrence coe} is satisfied for all recurrence coefficients with indices in the following set $I$, 
      \begin{equation}
        I=\left\{j\in \mathbb{N}: \frac{j}{n}\to 1\right\}.
      \end{equation}
      
      For both edges, clearly, the limits $\lim_na_{n,n}\eqqcolon a_0$ and $\lim_nb_{n-1,n}\eqqcolon b_0$ exist, with the rate of convergence at most $O(n^{-1})$. Take $x_0=b_0\pm 2a_0$, apply Theorem~\ref{thm: main 1} and we conclude that the asymptotics of the edge fluctuations holds for all $\alpha\in (0,\frac{2}{3})$.

      \section*{Acknowledgements}
      The author would like to express his deepest gratitude to Maurice Duits, for proposing this research question and for his invaluable guidance and supports throughout this work. The author is also very grateful to Grzegorz \'Swiderski for bringing the Freud polynomials and the moment problem to the author's attention; to Mateusz Pi\'{o}rkowski for bringing the logarithm singularity example which motivates the development of Theorem~\ref{thm: main 3}.

\bibliographystyle{abbrv} 
\bibliography{mesoedge}       

\begin{thebibliography}{10}

\bibitem{basor2003asymptotics}
E.~L. Basor and T.~Ehrhardt.
\newblock Asymptotics of determinants of bessel operators.
\newblock {\em Communications in mathematical physics}, 234:491--516, 2003.

\bibitem{basor1999determinants}
E.~L. Basor and H.~Widom.
\newblock Determinants of airy operators and applications to random matrices.
\newblock {\em Journal of statistical physics}, 96(1-2):1--20, 1999.

\bibitem{bekerman2018mesoscopic}
F.~Bekerman and A.~Lodhia.
\newblock Mesoscopic central limit theorem for general beta-ensembles.
\newblock {\em Annales de L'Institut Henri Poincare Section (B) Probability and Statistics}, 54(4):1917--1938, 2018.

\bibitem{berezanskiui1968expansions}
J.~M. Berezansky.
\newblock {\em Expansions in eigenfunctions of selfadjoint operators}, volume~17.
\newblock American Mathematical Society, 1968.

\bibitem{borodin2009determinantal}
A.~Borodin.
\newblock Determinantal point processes.
\newblock {\em The Oxford handbook of random matrix theory}, pages 231--249, 2011.

\bibitem{breuer2014nevai}
J.~Breuer and M.~Duits.
\newblock The nevai condition and a local law of large numbers for orthogonal polynomial ensembles.
\newblock {\em Advances in Mathematics}, 265:441--484, 2014.

\bibitem{breuer2016universality}
J.~Breuer and M.~Duits.
\newblock Universality of mesoscopic fluctuations for orthogonal polynomial ensembles.
\newblock {\em Communications in Mathematical Physics}, 342(2):491--531, 2016.

\bibitem{breuer2017central}
J.~Breuer and M.~Duits.
\newblock Central limit theorems for biorthogonal ensembles and asymptotics of recurrence coefficients.
\newblock {\em Journal of the American Mathematical Society}, 30(1):27--66, 2017.

\bibitem{breuer2024mesoscopic}
J.~Breuer and D.~Ofner.
\newblock Mesoscopic universality for circular orthogonal polynomial ensembles.
\newblock {\em arXiv preprint arXiv:2409.09803}, 2024.

\bibitem{combes1973asymptotic}
J.~M. Combes and L.~Thomas.
\newblock Asymptotic behaviour of eigenfunctions for multiparticle schr{\"o}dinger operators.
\newblock 1973.

\bibitem{deift1999orthogonal}
P.~Deift.
\newblock Orthogonal polynomials and random matrices: a riemann--hilbert approach, courant lect.
\newblock {\em Notes Math}, 3, 1999.

\bibitem{deift2007universality}
P.~Deift and D.~Gioev.
\newblock Universality at the edge of the spectrum for unitary, orthogonal, and symplectic ensembles of random matrices.
\newblock {\em Communications on Pure and Applied Mathematics: A Journal Issued by the Courant Institute of Mathematical Sciences}, 60(6):867--910, 2007.

\bibitem{deift1999strong}
P.~Deift, T.~Kriecherbauer, K.~T.-R. McLaughlin, S.~Venakides, and X.~Zhou.
\newblock Strong asymptotics of orthogonal polynomials with respect to exponential weights.
\newblock {\em Communications on Pure and Applied Mathematics: A Journal Issued by the Courant Institute of Mathematical Sciences}, 52(12):1491--1552, 1999.

\bibitem{deift1999uniform}
P.~Deift, T.~Kriecherbauer, K.~T.-R. McLaughlin, S.~Venakides, and X.~Zhou.
\newblock Uniform asymptotics for polynomials orthogonal with respect to varying exponential weights and applications to universality questions in random matrix theory.
\newblock {\em Communications on Pure and Applied Mathematics: A Journal Issued by the Courant Institute of Mathematical Sciences}, 52(11):1335--1425, 1999.

\bibitem{deift2024recurrence}
P.~Deift and M.~Piorkowski.
\newblock Recurrence coefficients for orthogonal polynomials with a logarithmic weight function.
\newblock {\em SIGMA. Symmetry, Integrability and Geometry: Methods and Applications}, 20:004, 2024.

\bibitem{duits2018global}
M.~Duits.
\newblock On global fluctuations for non-colliding processes.
\newblock {\em The Annals of Probability}, 46(3):1279--1350, 2018.

\bibitem{duits2024lozenge}
M.~Duits, E.~Duse, and W.~Liu.
\newblock Lozenge tilings of a hexagon and q-racah ensembles.
\newblock {\em Journal of Physics A: Mathematical and Theoretical}, 57(40):405202, 2024.

\bibitem{duits2018mesoscopic}
M.~Duits and K.~Johansson.
\newblock On mesoscopic equilibrium for linear statistics in dyson’s brownian motion.
\newblock {\em American Mathematical Society}, 255(1222), 2018.

\bibitem{ehrhardt2003generalization}
T.~Ehrhardt.
\newblock A generalization of pincus’ formula and toeplitz operator determinants.
\newblock {\em Archiv der Mathematik}, 80(3):302--309, 2003.

\bibitem{goh1994asymptotics}
W.~M. Goh and J.~Wimp.
\newblock On the asymptotics of the tricomi--carlitz polynomials and their zero distribution (i).
\newblock {\em SIAM Journal on Mathematical Analysis}, 25(2):420--428, 1994.

\bibitem{goh1997zero}
W.~M. Goh and J.~Wimp.
\newblock The zero distribution of the tricomi-carlitz polynomials.
\newblock {\em Computers \& Mathematics with Applications}, 33(1-2):119--127, 1997.

\bibitem{hough2006determinantal}
J.~B. Hough, M.~Krishnapur, Y.~Peres, and B.~Vir{\'a}g.
\newblock Determinantal processes and independence.
\newblock 2006.

\bibitem{johansson2002non}
K.~Johansson.
\newblock Non-intersecting paths, random tilings and random matrices.
\newblock {\em Probability theory and related fields}, 123(2):225--280, 2002.

\bibitem{johansson2006random}
K.~Johansson.
\newblock Random matrices and determinantal processes.
\newblock In {\em Les Houches}, volume~83, pages 1--56. Elsevier, 2006.

\bibitem{koekoek2010hypergeometric}
R.~Koekoek, P.~A. Lesky, R.~F. Swarttouw, R.~Koekoek, P.~A. Lesky, and R.~F. Swarttouw.
\newblock {\em Hypergeometric orthogonal polynomials}.
\newblock Springer, 2010.

\bibitem{konig2005orthogonal}
W.~K{\"o}nig.
\newblock Orthogonal polynomial ensembles in probability theory.
\newblock {\em Probability Surveys}, 2:385--447, 2005.

\bibitem{kriecherbauer1999strong}
T.~Kriecherbauer and K.~T.-R. McLaughlin.
\newblock Strong asymptotics of polynomials orthogonal with respect to freud weights.
\newblock {\em IMRN: International Mathematics Research Notices}, 1999(6), 1999.

\bibitem{kuijlaars2011universality}
A.~B. Kuijlaars.
\newblock Chapter 6 universality.
\newblock {\em The Oxford Handbook on Random Matrix Theory}, pages 103--134, 2011.

\bibitem{kuijlaars2004riemann}
A.~B. Kuijlaars, K.~T.-R. McLaughlin, W.~Van~Assche, and M.~Vanlessen.
\newblock The riemann--hilbert approach to strong asymptotics for orthogonal polynomials on [- 1, 1].
\newblock {\em Advances in mathematics}, 188(2):337--398, 2004.

\bibitem{kuijlaars1999asymptotic}
A.~B. Kuijlaars and W.~Van~Assche.
\newblock The asymptotic zero distribution of orthogonal polynomials with varying recurrence coefficients.
\newblock {\em Journal of approximation theory}, 99(1):167--197, 1999.

\bibitem{lambert2018mesoscopic}
G.~Lambert.
\newblock Mesoscopic fluctuations for unitary invariant ensembles.
\newblock 2018.

\bibitem{landon2020applications}
B.~Landon and P.~Sosoe.
\newblock Applications of mesoscopic clts in random matrix theory.
\newblock {\em The Annals of Applied Probability}, 30(6):2769--2795, 2020.

\bibitem{levin2008universality}
E.~Levin and D.~S. Lubinsky.
\newblock Universality limits in the bulk for varying measures.
\newblock {\em Advances in Mathematics}, 219(3):743--779, 2008.

\bibitem{li2021central}
Y.~Li, K.~Schnelli, and Y.~Xu.
\newblock Central limit theorem for mesoscopic eigenvalue statistics of deformed wigner matrices and sample covariance matrices.
\newblock In {\em Annales de l'Institut Henri Poincar{\'e}, Probabilit{\'e}s et Statistiques}, volume~57, pages 506--546. Institut Henri Poincar{\'e}, 2021.

\bibitem{lubinsky1988proof}
D.~S. Lubinsky, H.~N. Mhaskar, and E.~B. Saff.
\newblock A proof of freud's conjecture for exponential weights.
\newblock {\em Constructive Approximation}, 4:65--83, 1988.

\bibitem{mclaughlin2008steepest}
K.~T.-R. McLaughlin and P.~D. Miller.
\newblock The steepest descent method for orthogonal polynomials on the real line with varying weights.
\newblock {\em International Mathematics Research Notices}, 2008:rnn075, 2008.

\bibitem{meurant1992review}
G.~Meurant.
\newblock A review on the inverse of symmetric tridiagonal and block tridiagonal matrices.
\newblock {\em SIAM Journal on Matrix Analysis and Applications}, 13(3):707--728, 1992.

\bibitem{ofner2024stability}
D.~Ofner.
\newblock Stability of mesoscopic fluctuations of orthogonal polynomial ensembles under sparse decaying perturbations.
\newblock {\em arXiv preprint arXiv:2410.07699}, 2024.

\bibitem{pastur2011eigenvalue}
L.~A. Pastur and M.~Shcherbina.
\newblock {\em Eigenvalue distribution of large random matrices}.
\newblock Number 171. American Mathematical Soc., 2011.

\bibitem{riabov2025linear}
V.~Riabov.
\newblock Linear eigenvalue statistics at the cusp.
\newblock {\em Probability Theory and Related Fields}, pages 1--55, 2025.

\bibitem{riabov2025mesoscopic}
V.~Riabov.
\newblock Mesoscopic eigenvalue statistics for wigner-type matrices.
\newblock In {\em Annales de l'Institut Henri Poincare (B) Probabilites et statistiques}, volume~61, pages 129--154. Institut Henri Poincar{\'e}, 2025.

\bibitem{saff2013logarithmic}
E.~B. Saff and V.~Totik.
\newblock {\em Logarithmic potentials with external fields}, volume 316.
\newblock Springer Science \& Business Media, 2013.

\bibitem{schmudgen2017moment}
K.~Schm{\"u}dgen.
\newblock {\em The moment problem}, volume~9.
\newblock Springer, 2017.

\bibitem{simon2005trace}
B.~Simon.
\newblock {\em Trace ideals and their applications}.
\newblock Number 120. American Mathematical Soc., 2005.

\bibitem{soshnikov2000central}
A.~Soshnikov.
\newblock The central limit theorem for local linear statistics in classical compact groups and related combinatorial identities.
\newblock {\em Annals of probability}, pages 1353--1370, 2000.

\end{thebibliography}

\end{document}